 \documentclass[a4paper,11pt]{article}
 \pagestyle{plain}
 \setlength{\oddsidemargin}{12pt}
 \setlength{\evensidemargin}{12pt}
 \setlength{\topmargin}{0pt}
 \setlength{\textwidth}{15cm}
 \setlength{\textheight}{21.5cm}
 \setlength{\parindent}{0.5cm}
 \setlength{\parskip}{1ex plus 0.2ex minus0.2ex}


 \usepackage[plainpages=false]{hyperref}
 \usepackage{amsfonts,amsmath,amssymb,amsthm}
 \usepackage{latexsym,lscape,rawfonts,mathrsfs}

 \usepackage[dvips]{color}
 \usepackage{multicol}


 \usepackage[all]{xy}
 \usepackage{eufrak}
 \usepackage{makeidx}
 \usepackage{graphicx,psfrag}

 \usepackage{array,tabularx}

 \usepackage{setspace}

 \usepackage{appendix}


\usepackage{txfonts}


 
 \newcommand{\C}{\ensuremath{\mathbb{C}}}
 \newcommand{\D}[2]{\ensuremath{ \frac{\partial{#1}}{\partial{#2}}}}

 \newcommand{\Q}{\ensuremath{\mathbb{Q}}}
 \newcommand{\R}{\ensuremath{\mathbb{R}}}
 \newcommand{\Z}{\ensuremath{\mathbb{Z}}}

 \newcommand{\ba}{\begin{align*}}
 \newcommand{\ea}{\end{align*}}
\def\Fut{{\mathrm{Fut}}}



 \newcommand{\norm}[2]{{ \ensuremath{\left\|} #1 \ensuremath{\right\|}}_{#2}}

 \makeatletter
 \def\ExtendSymbol#1#2#3#4#5{\ext@arrow 0099{\arrowfill@#1#2#3}{#4}{#5}}
 
 \makeatother

 \makeatletter
 \def\ExtendSymbol#1#2#3#4#5{\ext@arrow 0099{\arrowfill@#1#2#3}{#4}{#5}}
 \newcommand\longright[2][]{\ExtendSymbol{-}{-}{\rightarrow}{#1}{#2}}

 \definecolor{hao}{rgb}{1,0.5,0}
 \definecolor{miao}{cmyk}{0.5,0,0.2,0.2}
 \definecolor{qiao}{gray}{0.96}






\numberwithin{equation}{section}

\newtheorem{prop}{Proposition}[section]

\newtheorem{proposition}[prop]{Proposition}
\newtheorem{theoremin}[prop]{Theorem}
\newtheorem{theorem}[prop]{Theorem}

\newtheorem{lemma}[prop]{Lemma}
\newtheorem{claim}[prop]{Claim}

\newtheorem{corollary}[prop]{Corollary}

\newtheorem{remark}[prop]{Remark}
\newtheorem{example}[prop]{Example}

\newtheorem{definition}[prop]{Definition}
\newtheorem{conjecturein}[prop]{Conjecture}

 \newenvironment{dedication}
        {\begin{quotation}\begin{center}\begin{em}}
        {\par\end{em}\end{center}\end{quotation}}


\newcommand{\p}{\partial}
\newcommand{\vphi}{\varphi}
\newcommand{\om}{\omega}

\newcommand{\pd}[2]{\frac {\partial #1}{\partial #2}}

\newcommand{\al}{\alpha}

\newcommand{\la}{\lambda}
\newcommand{\La}{\Lambda}
\newcommand{\oo}{\omega}

\newcommand{\dd}{\delta}
\newcommand{\Na}{\nabla}

\newcommand{\ee}{\epsilon}

\def\Rm{{\rm Rm}}

\newcommand{\beq}{\begin{equation}}
\newcommand{\eeq}{\end{equation}}
\newcommand{\beqs}{\begin{eqnarray*}}
\newcommand{\eeqs}{\end{eqnarray*}}
\newcommand{\beqn}{\begin{eqnarray}}
\newcommand{\eeqn}{\end{eqnarray}}
\newcommand{\beqa}{\begin{array}}
\newcommand{\eeqa}{\end{array}}

\def\td{\tilde}
\def\p{\partial}

\def\ZZ{{\mathbb Z}}

\def\RR{{\mathbb R}}

\def\PP{{\mathbb P}}
\def\CC{{\mathbb C}}

\def\ri{\rightarrow}

\def\un{\underline}

\def\no{{\nonumber}}

\def\vol{{\rm Vol}}

\def\ba{{\mathbf a}}

\def\cF{{\mathcal F}}

\def\ba{\Xi}

\title{Regularity scales and convergence of  the Calabi flow}
\author{Haozhao Li \footnote{Supported by NSFC grant No. 11131007.}, Bing Wang \footnote{Supported by NSF grant DMS-1312836.}, and Kai Zheng}
\date{}

\begin{document}
\maketitle

\begin{dedication}
In memory of Professor Weiyue Ding
\end{dedication}

\begin{abstract}
  We define regularity scales as alternative quantities of $\displaystyle \left(\max_{M} |Rm| \right)^{-1}$ to study the behavior of the Calabi flow.
Based on estimates of the regularity scales, we obtain convergence  theorems of the Calabi flow on extremal K\"ahler surfaces,
under the assumption of global existence of the Calabi flow solutions.
Our results partially confirm Donaldson's conjectural picture for the Calabi flow in complex dimension 2.
Similar results hold in high dimension with an extra assumption that the scalar curvature is uniformly bounded.
\end{abstract}

\tableofcontents

\section{Introduction}
In the seminal work~\cite{[Cal1]},  E. Calabi studied the variational problem of
the functional $\int_M (S-\underline{S})^2$, the Calabi energy,
among K\"ahler metrics in a fixed cohomology class.
The vanishing points of the Calabi energy are called the constant scalar curvature K\"ahler (cscK) metrics.
The critical points of the Calabi energy are called the extremal K\"ahler (extK) metrics.
To search such metrics, Calabi introduced a geometric flow, which is now well known as the Calabi flow.
Actually,  on a K\"ahler manifold $(M^n, \omega, J)$, the Calabi flow deforms the metric by
\begin{align}
\frac{\partial}{\partial t} {g_{i\bar{j}}}=S_{, i\bar j}, \label{eq000}
\end{align}
where $g$ is the metric determined by $\omega(t)$ and $J$, and $S$ is the scalar curvature of $g$.
Note that in the class $[\omega]$, every metric form can be written as
$\omega+\sqrt{-1} \partial \bar{\partial} \varphi$ for some smooth K\"ahler potential function $\varphi$.
Therefore, on the K\"ahler potential level, the above equation reduces to
\begin{align}
   \frac{\partial}{\partial t} \varphi= S-\underline{S}
   =-g^{i\bar{j}} \left\{ \log \det \left( g_{k\bar{l}} + \varphi_{k\bar{l}}\right) \right\}_{,i\bar{j}}
     -\underline{S},
\label{eqn:HG29_2}
\end{align}
where $\underline{S}$ is the average of scalar curvature, which is a constant depending only on
the class $[\omega]$.  Note that equation (\ref{eqn:HG29_2}) is a fourth order fully nonlinear PDE.  This order incurs extreme technical difficulty.
In spite of this difficulty, the short time existence of equation (\ref{eqn:HG29_2}) was proved by X.X. Chen and  W.Y. He in~\cite{[ChenHe1]}.
Furthermore, they also proved the global existence of (\ref{eqn:HG29_2}) under the assumption that the Ricci curvature is uniformly bounded.

About two decades after the birth of the Calabi flow, in ~\cite{[Don]}, S.K. Donaldson (See \cite{Fujiki} also) pointed out that the Calabi flow
 fits into a general frame of moment map picture.
In fact, by fixing the underlying symplectic manifold $(M, \omega)$ and deforming the almost complex structures $J$ along
Hamiltonian vector fields, $C^{\infty}(M)$ has an infinitesimal action on the moduli space of almost complex structures.
The function $S-\underline{S}$ can be regarded as the moment map of this action, where $S$ is the Hermitian scalar curvature in general.
Therefore, $\int_M (S-\underline{S})^2$ is the moment map square function, defined on the moduli space of almost complex structures.
Then the downward gradient flow of the moment map norm square can be written as
\begin{align}
      \frac{d}{dt} J= -\frac{1}{2} J \circ \bar{\partial}_{J} X_{S},
\label{eqn:HL31_1}
\end{align}
where $X_S$ is the symplectic dual vector field of $dS$.
When the flow path of (\ref{eqn:HL31_1}) locates in the integrable almost complex structures, the Hermitian scalar coincides with the Riemannian scalar curvature.
Therefore, the flow (\ref{eqn:HL31_1}) is nothing but the classical Calabi flow (\ref{eq000}) up to diffeomorphisms.
Based on this moment map picture, Donaldson then described some conjectural behaviors of the Calabi flow.

\begin{conjecturein}[S.K. Donaldson \cite{[Don]}]
 Suppose the Calabi flows have global existence.
 Then the asymptotic behavior of the Calabi flow starting from $(M, \omega, J)$ falls into one of the four possibilities.
\begin{enumerate}
\item The flow converges to a cscK metric on the same complex manifold $(M,J)$.
\item The flow is asymptotic to a one-parameter family of extK metrics on the same complex manifold $(M,J)$, evolving by diffeomorphisms.
\item The manifold does not admit an extK metric but the transformed flow $J_t$ on $\mathcal{J}$ converges to $J'$.  Furthermore, one can construct a destabilizing test configuration of
$(M, J)$ such that $(M, J')$ is the central fiber.
\item The transformed flow $J_t$ on $\mathcal{J}$ does not converge in smooth topology and singularities develop.
         However, one can still make sufficient sense of the limit of $J_t$ to extract a scheme from it, and this scheme can be fitted in as the central fiber of a destabilizing test configuration.
\end{enumerate}
\label{cjein:J04_2}
\end{conjecturein}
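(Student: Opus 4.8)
The overarching strategy is to transport the finite-dimensional Kempf--Ness/GIT picture, in which the downward gradient flow of the moment-map norm square $\|\mu\|^2$ always converges and its limit detects (in)stability, to the infinite-dimensional setting of~(\ref{eqn:HL31_1}). The first and cleanest input is monotonicity: since the Calabi flow is the downward gradient flow of the Calabi energy $\int_M (S-\un S)^2$, this energy is nonincreasing and bounded below, so $\int_0^\infty \|S - \un S\|_{L^2}^2 \, dt < \infty$, and there is a sequence $t_k \to \infty$ along which $S - \un S \to 0$ in $L^2$. The whole analysis then bifurcates according to whether the regularity scale of $\om(t)$ stays uniformly bounded below on $[0,\infty)$.

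In the non-degenerate branch, suppose the regularity scale admits a uniform positive lower bound. Then I would feed it into the parabolic smoothing estimates to bound all covariant derivatives of the curvature along $\om(t_k)$, extract a smooth Cheeger--Gromov limit $(M_\infty, \om_\infty, J_\infty)$, and use the vanishing of the velocity to conclude that $\om_\infty$ is extremal. When $(M_\infty, J_\infty) \cong (M,J)$ one reads off alternatives~(1) and~(2): if the extremal vector field $X_S$ vanishes the limit is cscK (case~1), and otherwise the flow is asymptotic to the one-parameter family generated by $X_S$ acting by diffeomorphisms (case~2). The work in this branch is to exclude collapsing and loss of complex structure, and to identify the limiting diffeomorphisms through the Hamiltonian structure of the moment-map flow.

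The degenerate branch is where the real difficulty lies and where the conjecture remains open in full. If the regularity scale tends to zero along some sequence of times, curvature concentrates; I would perform a blow-up analysis at the concentration points, extract local geometric models, and attempt to glue them into a global algebraic degeneration of $(M,J)$ whose special fiber carries the (possibly singular) limiting complex structure $J'$. Two distinct outcomes are expected: when the rescaled flow $J_t$ converges in $\cJ$ the limit $J'$ should be the central fiber of a test configuration (case~3), and when it does not converge smoothly one must still extract a scheme from the singular set to play the role of the central fiber (case~4). The hard part---indeed the crux of the entire conjecture---is the algebro-geometric recognition step: showing that this analytic limit is genuinely the central fiber of a \emph{destabilizing} test configuration, i.e.\ one whose Donaldson--Futaki invariant $\Fut$ is nonpositive, thereby certifying K-instability of $(M,J)$. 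Matching the Cheeger--Gromov limit of $J_t$ with algebraic data and with the Yau--Tian--Donaldson correspondence is precisely the ingredient that is not yet available in general; the contribution I can make unconditionally is to settle alternatives~(1) and~(2) (in complex dimension two, or in higher dimension under a uniform scalar-curvature bound) and thereby to reduce~(3) and~(4) to this recognition problem.
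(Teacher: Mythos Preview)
The statement is a \emph{conjecture}, not a theorem: the paper records it as Conjecture~\ref{cjein:J04_2} and does not give a proof. There is therefore no ``paper's own proof'' to compare your proposal against. What the paper actually proves are Theorems~\ref{thmin:1}--\ref{thmin:4}, which confirm alternatives~(1) and~(2) under the additional hypothesis of global existence (and, in dimension $n>2$, a uniform scalar-curvature bound), and which treat a special case of alternative~(3). You seem aware of this in your final sentence, but the framing of the rest of the write-up as a ``proof'' of the full conjecture is misleading.

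Even as a heuristic sketch, two points in your non-degenerate branch are inaccurate. First, monotonicity of the Calabi energy gives $\int_0^\infty \|\nabla\nabla S\|_{L^2}^2\,dt<\infty$ (the integral of the \emph{gradient} norm squared), not $\int_0^\infty \|S-\un S\|_{L^2}^2\,dt<\infty$; the Calabi energy itself need not be integrable in time. Second, one cannot conclude that $S-\un S\to 0$ in $L^2$ in general: the Calabi energy tends to its infimum in the class, which is $Ca(\omega_{\mathrm{ext}})>0$ when the class carries a non-cscK extremal metric. What the paper uses instead (via Streets and He) is that the \emph{modified} Calabi energy $Ca(\omega_{\varphi(t)})-Ca(\omega)$ tends to zero, together with the backward regularity-improvement Theorems~\ref{thm:HL29_1}--\ref{thm:HL29_2} and a continuity/deformation argument in the parameter $s$, rather than a direct Cheeger--Gromov compactness argument on a single flow line. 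Your outline of the degenerate branch is a fair description of the open problem, but it is not a proof and the paper makes no claim to resolve it.
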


Conjecture~\ref{cjein:J04_2} has attracted a lot of attentions for the study of the Calabi flow.
On the way to understand it,  there are many important works.
For example, R. Berman~\cite{MR3107540}, W.Y. He~\cite{[He5]} and J. Streets~\cite{[Streets1]} proved the convergence
of the Calabi flow in various topologies, under different geometric conditions.
G. Sz{\'e}kelyhidi~\cite{MR2571959}  constructed examples of global solutions of the Calabi flow which collapse at time infinity.
A finite-dimension-approximation approach to study the Calabi flow was developed in~\cite{Fine} by J. Fine.

Note that the global existence of the Calabi flows is a fundamental assumption in Conjecture~\ref{cjein:J04_2}.
On Riemann surfaces, the global existence and the convergence of the Calabi flow have been proved by P.T. Chrusical~\cite{[Chru]},  X.X. Chen~\cite{[Chen]} and M. Struwe~\cite{[Stru]}.
However, much less is known in high dimension. It was conjectured by X.X. Chen~\cite{[Chenconj]} that every Calabi flow has global existence.
This conjecture sounds to be too optimistic at the beginning.  However,  there are positive evidences for it.
In~\cite{MR3197669},  J. Streets  proved the global existence of the minimizing movement flow, which can be regarded as weak Calabi flow solutions.
Therefore, the global existence of the Calabi flow can be proved if one can fully improve the regularity of the minimizing movement flow,  although there exist terrific analytic difficulties to achieve this.
In general,  Chen's conjecture was only confirmed in particular cases.  For example, if the underlying manifold is an Abelian surface and the initial metric is $T$-invariant,  H.N. Huang and R.J. Feng
proved the global existence in~\cite{HuangFeng}.


In short, the Calabi flow can be understood from two points of view: either as a flow of metric forms (Calabi's point of view) within a given cohomologous class on a fixed complex manifold,
or as a flow of complex structures (Donaldson's point of view) on a fixed symplectic manifold.
Let $(M^n, \omega, J)$ be a reference K\"ahler manifold, $g$ be the reference metric determined by $\omega$ and $J$.
If $J$ is fixed, then the Calabi flow evolves in the space
\begin{align}
  \mathcal{H} \triangleq \left\{ \omega_{\varphi}
  \left| \varphi \in C^{\infty}(M), \omega_{\varphi}=\omega + \sqrt{-1}\partial \bar{\partial} \varphi>0 \right.\right\}.
\label{eqn:HL30_1}
\end{align}
If $\omega$ is fixed, then the Calabi flow evolves in  the space
\begin{align}
   \mathcal{J} \triangleq \left\{  J' | J' \;  \textrm{is an integrable almost complex structure compatible with} \; \omega \right\}.
\label{eqn:HL31_2}
\end{align}
We equip both $\mathcal{H}$ and $\mathcal{J}$ with $C^{k,\frac{1}{2}}$ topology for some sufficiently large $k=k(n)$, with respect to the reference metric $g$.
Each point of view of the Calabi flow has its own advantage.
We shall take both points of view and may jump from one to the other without mentioning this explicitly.

\begin{theoremin}
 Suppose $(M^2, \omega, J)$ is an extK surface.
 Define
 \begin{align*}
  &\widetilde{\mathcal{LH}} \triangleq \left\{ \omega_{\varphi} \in \mathcal{H}| \textrm{The Calabi flow initiating from $\omega_{\varphi}$ has global existence} \right\}, \\
  &\mathcal{LH} \triangleq \textrm{The path-connected component of} \; \widetilde{\mathcal{LH}} \; \textrm{containing} \; \omega.
 \end{align*}
 Then the modified Calabi flow (c.f. Definition~\ref{dfn:HL27_1}) starting from any $\omega_{\varphi} \in \mathcal{LH}$ converges to $\varrho^\ast\omega$ for some $\varrho\in \mathrm{Aut}_0(M,J)$,
 in the smooth topology of K\"ahler potentials.
\label{thmin:1}
\end{theoremin}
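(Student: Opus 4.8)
The plan is a continuity (open--closed) argument on the connected set $\mathcal{LH}$, powered by the regularity-scale estimates of the previous sections together with a dimension-two integral curvature bound and the standard stability analysis near extremal metrics. The starting observation is the \emph{a priori energy bound}: along the Calabi flow the Calabi energy $\int_M (S-\underline S)^2$ is non-increasing, hence bounded by its initial value. On a K\"ahler surface the self-dual Weyl curvature satisfies $|W^+|^2 = S^2/24$, so the Chern--Gauss--Bonnet and signature formulas express $\int_M |Rm|^2\,dV$ as a universal combination of $\int_M S^2\,dV$ and the topological invariants $\chi(M),\tau(M)$; consequently $\int_M |Rm|^2\,dV_{g(t)}$ stays uniformly bounded along the whole flow. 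This uniform $L^2$ curvature bound is exactly the hypothesis under which the regularity-scale estimates apply: a lower bound on the regularity scale at a point yields local all-order estimates there, and the regularity scale can be small only where a definite amount of curvature energy has concentrated. (In higher dimensions one does not have this automatic $L^2$ control, which is why a uniform scalar curvature bound must be assumed.)

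\emph{Openness.} Let $\mathcal{G}\subseteq\mathcal{LH}$ denote the set of $\omega_\varphi$ for which the modified Calabi flow converges smoothly to $\varrho^\ast\omega$ for some $\varrho\in\mathrm{Aut}_0(M,J)$; note $\omega\in\mathcal{G}$. To see $\mathcal{G}$ is open, linearize the modified Calabi flow at an extremal metric: the linearized operator is $-\mathcal{D}^\ast\mathcal{D}$ for the Lichnerowicz operator $\mathcal{D}$, and after the modification (which absorbs the holomorphy potentials, i.e. $\ker\mathcal{D}$) this operator has a positive spectral gap. Hence the modified flow is exponentially stable in a $C^{k,1/2}$-neighborhood of $\varrho^\ast\omega$. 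Combining this with continuous dependence of the flow on initial data over any fixed finite time interval (from the Chen--He short-time theory), any initial datum in $\mathcal{LH}$ close enough to a point of $\mathcal{G}$ enters the stability basin in finite time and therefore also converges; thus $\mathcal{G}$ is open in $\mathcal{LH}$.

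\emph{Closedness.} Suppose $\omega_{\varphi_i}\to\omega_{\varphi_\infty}$ in $\mathcal{LH}$ with each $\omega_{\varphi_i}\in\mathcal{G}$; the flow from $\omega_{\varphi_\infty}$ has global existence. Monotonicity of the Calabi energy gives $\int_0^\infty \|\mathcal{D}S\|_{L^2}^2\,dt<\infty$, so there is a sequence $t_j\to\infty$ along which the flow is asymptotically extremal. If the curvature stays uniformly bounded along the flow, the regularity-scale estimates give uniform $C^\infty_{loc}$ bounds, so a subsequence of $\omega_{\varphi_\infty}(t_j)$ converges to an extremal metric in the \emph{fixed} class $[\omega]$, which by uniqueness of extremal metrics modulo $\mathrm{Aut}_0$ (Chen--Tian, Berman--Berndtsson) equals $\varrho^\ast\omega$; exponential stability then upgrades sequential convergence to genuine convergence and $\omega_{\varphi_\infty}\in\mathcal{G}$. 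The remaining possibility is that the regularity scale tends to zero along some sequence of times, forcing a definite quantum of curvature energy to concentrate at finitely many points and, after rescaling, producing a nontrivial complete scalar-flat ALE K\"ahler surface; this I would exclude using the uniform $L^2$ curvature bound together with quantization of the bubble energy and the fact that such concentration, being detected over long but finite times controlled by the regularity scale, would have to persist for the nearby $\omega_{\varphi_i}$ as well, contradicting $\omega_{\varphi_i}\in\mathcal{G}$. Hence only the bounded-curvature case occurs and $\mathcal{G}$ is closed in $\mathcal{LH}$.

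Since $\mathcal{LH}$ is path-connected and $\mathcal{G}$ is nonempty, open and closed in it, $\mathcal{G}=\mathcal{LH}$, which is the assertion. \textbf{The main obstacle} is the last step of the closedness argument: controlling, and ultimately ruling out, curvature concentration at time infinity. This is precisely where the regularity-scale machinery is indispensable, and where the restriction to surfaces is used in an essential way, since there the Calabi energy controls $\int_M|Rm|^2$ through the Gauss--Bonnet and signature identities.
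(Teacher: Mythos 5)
Your overall skeleton (an open--closed continuity argument along a path in $\mathcal{LH}$, with openness coming from stability of the modified flow near the extremal metric plus continuous dependence on initial data) matches the paper's proof, and the openness half is essentially sound, though the paper establishes the stability basin (Theorem~\ref{thm:HL24_1}) via monotonicity of the geodesic distance in $\mathcal{H}$ and the uniqueness of extK metrics rather than via a spectral gap for the linearized Lichnerowicz operator.

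The closedness half, however, contains a genuine gap, and it sits exactly where you flag ``the main obstacle.'' Your plan is to analyze the flow from the limit datum $\omega_{\varphi_\infty}$ directly at time infinity, split into a bounded-curvature case and a concentration case, and exclude the latter by an $L^2$ curvature bound from Gauss--Bonnet and signature together with ``quantization of the bubble energy.'' None of this is carried out, and it is not how the dimension-two restriction actually enters: the paper never forms or excludes ALE bubbles. Instead, for each $s<\bar{s}$ it takes $T_s$ to be the \emph{first} time at which $\hat{d}(s,t)=0.5\delta_0$ and proves (Claim~\ref{clm:HL23_1}) that $T_s$ is uniformly bounded. The mechanism is: along the limit flow the Calabi energy tends to its infimum (Streets, He), so by continuity the energy drop $Ca(\omega_{\varphi_{s_i}(T_i-1)})-Ca(\omega_{\varphi_{s_i}(T_i)})$ tends to zero; at time $T_i$ the metric is $C^{k,\frac{1}{2}}$-close to $\omega$ after an automorphism, so its harmonic scale is bounded below \emph{for free}; then the backward regularity improvement (Lemma~\ref{lem:900} and Theorem~\ref{thm:HL29_1}, which use the scale invariance of the Calabi energy in complex dimension $2$ through a compactness argument for the harmonic scale, not through $\chi$ and $\tau$) upgrades this to all-order bounds, the limit is extremal, and uniqueness of extK metrics forces $\hat{d}(s_i,T_i)\to 0$, contradicting $\hat{d}(s_i,T_i)=0.5\delta_0$. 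Your route also leaves unaddressed how a Cheeger--Gromov limit of $\omega_{\varphi_\infty}(t_j)$ under mere curvature bounds is identified as a metric in the \emph{fixed} class $[\omega]$ on the \emph{fixed} complex manifold (and why collapsing cannot occur); the paper avoids both issues precisely by only ever taking limits at the critical times $T_i$, where closeness to $\omega$ in a strong norm is built into the definition of $T_i$. Without a worked-out substitute for Claim~\ref{clm:HL23_1} and the backward regularity improvement, the closedness argument as proposed does not go through.
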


In the setup of Conjecture~\ref{cjein:J04_2}, we have $\mathcal{LH}=\widetilde{\mathcal{LH}}=\mathcal{H}$ automatically.
Therefore, Theorem~\ref{thmin:1} confirms the first two possibilities of Conjecture~\ref{cjein:J04_2} in complex dimension 2.

\begin{theoremin}
 Suppose $\left\{ (M^2, \omega, J_s), s \in D\right\}$ is a smooth family of K\"ahler surfaces parametrized by the disk $D=\{z| z \in \C, |z|<2\}$,
 with the following conditions satisfied.
 \begin{itemize}
   \item There is a smooth family of diffeomorphisms $\{\psi_s: s \in D \backslash \{0\} \}$ such that
      \begin{align*}
         [\psi_s^* \omega] = [\omega], \quad \psi_s^* J_{s}= J_{1}, \quad \psi_1=Id.
      \end{align*}
 \item  $[\omega]$ is integral.
 \item  $(M^2, \omega, J_0)$ is a cscK surface.
 \end{itemize}
Denote
 \begin{align*}
  &\widetilde{\mathcal{LJ}} \triangleq \left\{ J_{s}| s\in D, \textrm{the Calabi flow initiating from $(M, \omega, J_s)$ has global existence} \right\}, \\
  &\mathcal{LJ} \triangleq \textrm{The path-connected component of} \; \widetilde{\mathcal{LJ}} \; \textrm{containing} \; J_0.
 \end{align*}
 Then the Calabi flow starting from any $J_s \in \mathcal{LJ}$ converges to $\psi^*(J_0)$ in the smooth topology of sections of $TM \otimes T^*M$,
 where $\psi \in Symp(M,\omega)$ depends on $J_s$.
 \label{thmin:2}
\end{theoremin}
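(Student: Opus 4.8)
\medskip

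\noindent The plan is to run a continuity argument along the connected set $\mathcal{LJ}$. Let $\mathcal{B}\subseteq\mathcal{LJ}$ be the set of those $J_s$ whose Calabi flow has global existence (automatic, since $J_s\in\mathcal{LJ}$) and converges, in the smooth topology of sections of $TM\otimes T^{*}M$, to $\psi^{*}J_0$ for some $\psi\in\mathrm{Symp}(M,\omega)$. Since $(M,\omega,J_0)$ is cscK the flow from $J_0$ is stationary, so $J_0\in\mathcal{B}$; as $\mathcal{LJ}$ is path-connected, it suffices to show that $\mathcal{B}$ is open and closed in $\mathcal{LJ}$. The engine of the openness part is a local statement that I would establish first: there is a $C^{k,\frac{1}{2}}$-neighbourhood $\mathcal{U}$ of $J_0$ in $\mathcal{J}$ such that every globally existing Calabi flow which enters $\mathcal{U}$ converges smoothly to $\varrho^{*}J_0$ for some $\varrho\in\mathrm{Symp}(M,\omega)$, and likewise near every $\varrho^{*}J_0$.

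To prove this local statement I would follow the {\L}ojasiewicz--Simon scheme for the Calabi energy $\int_M(S-\underline{S})^2\,\omega^n$ at its critical point $J_0$: first, near $J_0$ the zero set of the Calabi energy is exactly the $\mathrm{Aut}_0(M,J_0)$-orbit of $\omega$, by uniqueness of cscK metrics in a fixed K\"ahler class together with the Matsushima reductivity of $\mathrm{Aut}_0(M,J_0)$ (this is where the cscK, rather than merely extK, hypothesis on the central fibre is used, and why the unmodified Calabi flow suffices); second, a {\L}ojasiewicz inequality at $J_0$ promotes the monotone energy decay along the flow to a definite decay rate, hence to an $L^1$-in-time bound on the flow velocity; third -- and this is where the paper's machinery enters -- the estimates on the regularity scales turn that integral bound into uniform higher-order control of $J_t$ and rule out concentration, so that $J_t$ converges smoothly to a zero of the Calabi energy inside $\mathcal{U}$, i.e.\ to some $\varrho^{*}J_0$. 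Conjugating by symplectomorphisms gives the statement near each $\varrho^{*}J_0$.

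Openness of $\mathcal{B}$ then follows quickly: if $J_{s_0}\in\mathcal{B}$, the flow from $J_{s_0}$ converges to $J'=\psi_{s_0}^{*}J_0$, so for some finite $T$ the time-$T$ slice $J_T^{(s_0)}$ already lies in the neighbourhood of $J'$ furnished above; by parabolic regularity and the a priori estimates the flow depends continuously on its initial datum uniformly on $[0,T]$, so for $J_s\in\mathcal{LJ}$ close to $J_{s_0}$ the slice $J_T^{(s)}$ is defined and still lies near $J'$, whence $J_s\in\mathcal{B}$. Closedness is where the real work lies. Given $s_j\to s_\infty$ with $J_{s_j}\in\mathcal{B}$, the flow $J_t^{(\infty)}$ from $J_{s_\infty}$ exists for all time; by the regularity-scale estimates it either converges smoothly or, along some $t_i\to\infty$, the regularity scale collapses, in which case a rescaling bubbles off a non-trivial complete scalar-flat (more generally extK) K\"ahler surface. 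Using that $[\omega]$ is integral, the Kodaira embedding and Hilbert scheme compactness organise the bubbling and limiting data into an algebraic degeneration of $(M,J_{s_\infty})$; but the given family already exhibits the K-polystable cscK fibre $(M,J_0)$ as such a degeneration, $J_{s_\infty}$ lies in the same component $\mathcal{LJ}$, and the whole family carries the fixed polarisation $[\omega]$, so the flow cannot destabilize past $(M,J_0)$ and the putative bubbling is vacuous. Hence $J_t^{(\infty)}$ converges to a structure $J_\infty$ that is cscK for $\omega$ in class $[\omega]$; uniqueness of cscK metrics in $[\omega]$ and Moser's lemma then produce $\psi\in\mathrm{Symp}(M,\omega)$ with $J_\infty=\psi^{*}J_0$, so $J_{s_\infty}\in\mathcal{B}$ and the continuity argument closes.

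I expect the main obstacle to be exactly the closedness step, and within it the need for estimates that are \emph{uniform along the family} $\{J_s\}$: one must rule out genuine bubbling -- or global solutions that fail to converge -- without any smallness hypothesis that persists along $\mathcal{LJ}$, and one must match whatever degeneration can occur with the polystable central fibre rather than with a competing extremal structure on the generic fibre. This is precisely the task the regularity scales are designed for (they supply the compactness and the convergence-versus-bubbling dichotomy), it is where the integrality of $[\omega]$ is indispensable (to make the limiting objects algebraic and to bring K-stability of $(M,J_0)$ to bear), and it is where the restriction to complex dimension $2$ is used, through the gap and classification properties of scalar-flat K\"ahler surfaces. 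The local steps, by contrast, amount to the {\L}ojasiewicz--Simon argument already underlying Theorem~\ref{thmin:1}.
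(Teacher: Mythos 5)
Your skeleton (continuity method along $\mathcal{LJ}$, a local convergence lemma near $J_0$, openness easy, closedness hard) matches the paper's, and your local lemma is essentially the paper's Lemma~\ref{lma:SL05_1}, which is imported from Chen--Sun (Theorem 5.3 of~\cite{[ChenSun]}) rather than re-derived via a {\L}ojasiewicz--Simon argument. The genuine gap is in the closedness step, which you correctly identify as the crux but then resolve by an argument that is neither carried out nor the one that actually makes the proof work. You propose to analyze the flow from $J_{s_\infty}$ directly, rule out bubbling via rescaling and a classification of complete scalar-flat surfaces, and then invoke integrality of $[\omega]$, Hilbert scheme compactness and K-polystability of the central fibre to exclude degeneration. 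None of this is developed to a checkable level: no energy gap or ALE classification is stated, ``the flow cannot destabilize past $(M,J_0)$'' is not an argument, and K-stability is never used in the paper. Directly proving that an arbitrary globally existing solution converges is essentially the full strength of the theorem, so it cannot be the content of the closedness step.

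The paper's device is different and is what you are missing: introduce $\tilde{d}(J)=\inf_{\varphi\in Symp(M,\omega)}\|\varphi^*J-J_0\|_{C^{k,1/2}}$ (shown to be achieved and continuous), and for $s<\bar{s}$ let $T_s$ be the first time $\tilde{d}(s,t)=0.5\delta$. Closedness reduces to showing $\sup_s T_s<\infty$, after which finite-time continuous dependence on initial data hands $\bar{s}$ to the local lemma. The uniform bound on $T_s$ is proved by contradiction: if $T_{s_i}\to\infty$, then since $Ca(J_{\bar{s}}(t))\to 0$ (Streets, He) and the Calabi energy is monotone and continuous in $(s,t)$, the energy drop over $[T_{s_i}-1,T_{s_i}]$ tends to zero; the slice at time $T_{s_i}$ has a priori bounded geometry because $\tilde{d}=0.5\delta$ there, so the backward regularity improvement Theorem~\ref{thm:HL29_1} (the harmonic-scale machinery, which is where complex dimension $2$ enters via scale invariance of the Calabi energy --- not via classification of scalar-flat surfaces) gives uniform bounds on all curvature derivatives; a Cheeger--Gromov limit is then cscK and adjacent to the leaf of $J_1$, so Chen--Sun's uniqueness of cscK degenerations identifies it with $(M,\omega,J_0)$, forcing $\tilde{d}(J_{s_i}(T_{s_i}))\to 0$ and contradicting $\tilde{d}=0.5\delta$. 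Integrality of $[\omega]$ is used only to make the pulled-back classes eventually equal to $[\omega]$ along this convergent sequence; it has nothing to do with algebraicity of bubbles or K-stability.
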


Theorem~\ref{thmin:2} partially confirms the third possibility of Conjecture~\ref{cjein:J04_2} in complex dimension 2,  in the case that the $C^{\infty}$-closure of
the $\mathcal{G}^{\C}$-leaf of $J_1$ contains a cscK complex structure, for a polarized K\"ahler surface.
Note that by the integral condition of $[\omega]$ and reductivity of the automorphism groups of cscK complex manifolds,
the construction of destablizing test configurations follows from~\cite{Don10} directly. \\

 Theorem~\ref{thmin:1} and Theorem~\ref{thmin:2} have high dimensional counterparts.
 However,  in high dimension, due to the loss of scaling invariant property of the Calabi energy,
 we need some extra assumptions of scalar curvature to guarantee the convergence.

\begin{theoremin}
 Suppose $(M^n, \omega, J)$ is an extremal K\"ahler manifold.

 For each big constant $A$, we set
 \begin{align*}
  &\widetilde{\mathcal{LH}}_{A} \triangleq \left\{ \omega_{\varphi} \in \mathcal{H} | \textrm{The Calabi flow initiating from $\omega_{\varphi}$ has global existence and $|S|\leq A$} \right\}, \\
  &\mathcal{LH}_{A} \triangleq \textrm{The path-connected component of} \; \widetilde{\mathcal{LH}}_A \; \textrm{containing} \; \omega, \\
  &\mathcal{LH}' \triangleq \bigcup_{A>0} \mathcal{LH}_A.
 \end{align*}
 Then the modified Calabi flow starting from each  $\omega_{\varphi} \in \mathcal{LH}'$ converges to $\varrho^\ast\omega$ for some $\varrho =\varrho(\varphi) \in \mathrm{Aut}_0(M,J)$,
 in the smooth topology of K\"ahler potentials.
 \label{thmin:3}
\end{theoremin}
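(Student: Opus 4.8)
\emph{Proof strategy.} The plan is to run a continuity method on $\mathcal{LH}_A$ for each fixed constant $A>\max_M|S_\omega|$, and then take the union over $A$. Precisely, let $\mathcal{S}_A$ denote the set of those $\omega_\varphi\in\mathcal{LH}_A$ for which the modified Calabi flow converges smoothly to $\varrho^\ast\omega$ for some $\varrho\in\Aut_0(M,J)$. We would show $\mathcal{S}_A$ is nonempty, open and closed in $\mathcal{LH}_A$; since $\mathcal{LH}_A$ is path-connected this forces $\mathcal{S}_A=\mathcal{LH}_A$, and as every $\omega_\varphi\in\mathcal{LH}'$ lies in some $\mathcal{LH}_A$, the theorem follows. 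Nonemptiness is immediate: $\omega\in\mathcal{S}_A$ because $\omega$ is a stationary point of the modified Calabi flow --- this is exactly the point of passing to the modified flow (c.f.\ Definition~\ref{dfn:HL27_1}).

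Openness would follow from a stability statement near the extremal locus together with continuous dependence on initial data over finite time intervals. The stability statement is: there is $\delta>0$ so that if at some time $t_0$ the flow is $\delta$-close, in the $C^{k,\frac12}$-topology of potentials modulo $\Aut_0(M,J)$, to an extremal representative $\varrho^\ast\omega$, then it exists for all $t\ge t_0$ and converges exponentially fast to some ${\varrho'}^{\ast}\omega$. This is obtained by linearizing the modified Calabi flow at the extremal metric, where the linearization is governed by a non-negative self-adjoint fourth-order operator (a modified Lichnerowicz operator) whose kernel is the tangent space to the $\Aut_0(M,J)$-orbit, and then upgrading to the nonlinear flow by a \L ojasiewicz--Simon argument for the real-analytic modified Calabi energy, the orbit of stationary points being a smooth finite-dimensional submanifold. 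Continuous dependence holds because the modified Calabi flow is a fully nonlinear fourth-order parabolic equation with smooth coefficients, so on any interval $[0,T]$ on which it is defined the solution depends continuously in $C^{k,\frac12}$ on the initial potential. Now if $\omega_\varphi\in\mathcal{S}_A$ has flow converging to $\varrho^\ast\omega$, pick $T$ so large that the flow at time $T$ is $\tfrac\delta2$-close to $\varrho^\ast\omega$; then any $\omega_{\varphi'}\in\mathcal{LH}_A$ close enough to $\omega_\varphi$ has a flow (defined on $[0,T]$ since $\omega_{\varphi'}\in\widetilde{\mathcal{LH}}_A$) that is $\delta$-close to $\varrho^\ast\omega$ at time $T$, and the stability statement gives $\omega_{\varphi'}\in\mathcal{S}_A$.

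Closedness is the crux, and this is the step I expect to be the main obstacle; it is where one needs genuine a priori estimates --- the regularity-scale estimates. Suppose $\omega_{\varphi_i}\in\mathcal{S}_A$ converge to $\omega_{\varphi_\infty}\in\mathcal{LH}_A$; the flow $\omega(t)$ from $\omega_{\varphi_\infty}$ exists for all time with $|S|\le A$. Along the modified Calabi flow the modified Calabi energy is non-increasing and bounded below, hence convergent, so we may pick $t_j\to\infty$ with vanishing energy-derivative, i.e.\ with $\omega(t_j)$ ``almost extremal''. The regularity-scale estimates are designed to turn this, combined with the bound $|S|\le A$, into uniform geometric control: an $\varepsilon$-regularity theorem gives a definite lower bound for the regularity scale wherever the Calabi energy over a parabolic ball is small, the decay of the total Calabi energy towards its infimum precludes energy concentration for large $t$, and the same machinery, fed by the scalar bound and the fixed volume $[\omega]^n$, also yields non-collapsing. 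Hence $\omega(t_j)$ is bounded in $C^\infty$ and has a smooth subsequential limit; a continuity argument along the path in $\mathcal{LH}_A$ joining $\omega_{\varphi_\infty}$ to $\omega$ --- exploiting that the $\omega_{\varphi_i}$ converge on $M$ itself --- identifies this limit with an honest K\"ahler metric $\omega_\infty$ on $(M,J)$ in the class $[\omega]$, while criticality of the energy in the limit forces $\omega_\infty$ to be extremal. By uniqueness of extremal metrics in a fixed K\"ahler class up to $\Aut_0(M,J)$ we get $\omega_\infty=\varrho^\ast\omega$; therefore the flow from $\omega_{\varphi_\infty}$ eventually enters the $\delta$-neighbourhood of the extremal locus from the openness step, and the stability statement upgrades this to full convergence. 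Thus $\omega_{\varphi_\infty}\in\mathcal{S}_A$.

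Two remarks on where the difficulty concentrates. The hypothesis $|S|\le A$ is precisely what makes the $\varepsilon$-regularity and non-collapsing inputs of the closedness step work; in complex dimension $2$ the Calabi energy is scale-invariant, so those inputs go through with no bound on $|S|$ (any bubble would be a scalar-flat ALE K\"ahler surface, excluded by the finite-energy and cohomology constraints), which is how \thmref{thmin:1} is recovered from the same scheme --- and it is exactly the loss of this scale invariance in higher dimensions that forces the extra hypothesis here. The \L ojasiewicz--Simon stability step, while technical, is by now a standard device for gradient flows near a manifold of critical points; the substantive new work is the regularity-scale package underlying closedness, namely ruling out small-scale bubbling and collapse along a long-time Calabi flow with bounded scalar curvature.
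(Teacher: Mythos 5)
Your overall skeleton (continuity method: nonempty, open, closed in the path-connected set, then uniqueness of extremal metrics to identify the limit) matches the paper's strategy, and your openness step is essentially the paper's (the paper uses Theorem~\ref{thm:HL24_1}, proved via monotonicity of the geodesic distance and uniqueness of extK metrics rather than a \L ojasiewicz--Simon inequality, but either route is acceptable there). The genuine gap is in your closedness step, and it is precisely the step you flag as the crux. You propose to take $t_j\to\infty$ with small energy-derivative along the flow from $\omega_{\varphi_\infty}$ and then invoke an ``$\varepsilon$-regularity theorem'' plus non-collapsing, fed only by $|S|\le A$ and the energy decay, to get uniform $C^\infty$ bounds at $t_j$. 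No such unconditional $\varepsilon$-regularity statement exists in the paper, and it is essentially the open problem of whether bounded scalar curvature rules out singularities of the Calabi flow. The paper's regularity package, Theorem~\ref{thm:HL29_2} (via Propositions~\ref{prn:HG25_1} and~\ref{prn:HG25_2}), is a \emph{backward} regularity improvement: it requires as a hypothesis a bound $\sup_M|Rm|(\cdot,T)\le B$ at the final time slice, and only then propagates bounds on $|\nabla^l Rm|$ backward in time using the scalar bound. It cannot be used to manufacture the curvature bound at $t_j$ in the first place. A symptom that your argument has assumed away the difficulty is that your closedness step never actually uses $\omega_{\varphi_i}\in\mathcal{S}_A$ or the path-connectivity to $\omega$: as written it would prove convergence for every element of $\widetilde{\mathcal{LH}}_A$, a strictly stronger statement than the theorem.

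The paper's way around this is to build the needed final-time curvature bound into the continuity scheme itself. Along the path $\varphi_s$ joining $\omega$ to $\omega_\varphi$, one defines $\hat d(s,t)=\inf_{\varrho\in\Aut_0(M,J)}\Vert\varrho^*\omega_{\varphi_s(t)}-\omega\Vert_{C^{k,1/2}}$ and lets $T_s$ be the \emph{first} time $\hat d(s,\cdot)$ reaches $0.5\delta_0$. At that stopping time the metric is, by construction, $C^{k,1/2}$-close to $\omega$ modulo automorphisms, hence has bounded curvature; only then do Theorem~\ref{thm:HL29_2} and Proposition~\ref{prn:HL27_2} apply to upgrade to all $C^{l,1/2}$-norms, and the energy-drop argument $Ca(\omega_{\varphi_{s_i}(T_i-1)})-Ca(\omega_{\varphi_{s_i}(T_i)})\to 0$ plus uniqueness of extK metrics contradicts $\hat d(s_i,T_i)=0.5\delta_0$ if $T_{s_i}\to\infty$. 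So the compactness is extracted only at times where closeness to $\omega$ is already guaranteed by the stopping-time construction, not at arbitrary times of small energy-derivative. To repair your proof you would need to replace your closedness step by such a stopping-time argument (or supply the missing $\varepsilon$-regularity theorem, which is not available). Your side remark that in complex dimension $2$ bubbles are excluded as scalar-flat ALE surfaces is likewise not how the paper proceeds; there the harmonic-scale lemma (Lemma~\ref{lem:900}) again presupposes control at the final time slice.
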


Note that by Chen-He's stability theorem(c.f.~\cite{[ChenHe1]}), the set $\mathcal{LH}_{A}$ is non-empty if $A$ is large enough.
Therefore, $\mathcal{LH}'$ is a non-empty subset of $\mathcal{LH}$. We have the relationships
\begin{align}
    \mathcal{LH}' \subset \mathcal{LH} \subset \mathcal{H}.    \label{eqn:HG29_1}
\end{align}
Therefore, in order to understand the global behavior of the Calabi flow,  it is crucial to set up the equalities.
\begin{align}
&\mathcal{LH}=\mathcal{H},  \label{eqn:HG29_3} \\
&\mathcal{LH}'=\mathcal{LH}.  \label{eqn:HG29_4}
\end{align}
Equality (\ref{eqn:HG29_3}) is nothing but the restatement of Chen's conjecture.
Equality (\ref{eqn:HG29_4}) is more or less a global scalar curvature bound estimate.

\begin{theoremin}
  Suppose $\left\{ (M^n, \omega, J_s), s \in D \right\}$ is a smooth family of K\"ahler manifolds parametrized by the disk $D=\{z|z \in \C, |z|<2\}$,
 with the following conditions satisfied.
 \begin{itemize}
   \item There is a smooth family of diffeomorphisms $\{\psi_s: s \in D \backslash \{0\} \}$ such that
      \begin{align*}
         [\psi_s^* \omega] = [\omega], \quad \psi_s^* J_{s}= J_{1}, \quad \psi_1=Id.
      \end{align*}
 \item $[\omega]$ is integral.
 \item  $(M^n, \omega, J_0)$ is a cscK manifold.
 \end{itemize}
Denote
\begin{align*}
  &\widetilde{\mathcal{LJ}}_{A} \triangleq \left\{ J_s| s \in D, \textrm{the Calabi flow initiating from $J_s$ has global existence and $|S|\leq A$} \right\}, \\
  &\mathcal{LJ}_{A} \triangleq \textrm{The path-connected component of} \; \widetilde{\mathcal{LJ}}_A \; \textrm{containing} \; J_0, \\
  &\mathcal{LJ}' \triangleq \bigcup_{A>0} \mathcal{LJ}_A.
 \end{align*}
 Then the Calabi flow starting from any $J_s \in \mathcal{LJ}'$ converges to $\psi^*(J_0)$,
 in the smooth topology of sections of $TM \otimes T^*M$, where $\psi \in Symp(M,\omega)$ depends on $J_s$.
 \label{thmin:4}
\end{theoremin}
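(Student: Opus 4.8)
The plan is to combine the argument for Theorem~\ref{thmin:2} with the higher-dimensional compactness analysis underlying Theorem~\ref{thmin:3}. Fix $J_s\in\mathcal{LJ}'$, so that $J_s\in\mathcal{LJ}_A$ for some $A$; we may assume $s\neq 0$, since the Calabi flow from the cscK structure $J_0$ is stationary. Let $J(t)$, $t\in[0,\infty)$, be the Calabi flow in $\mathcal J$ starting at $J_s$: by definition it exists for all time with $\sup_{M\times[0,\infty)}|S|\le A$, and the Calabi energy $Ca(J(t))=\int_M(S-\underline{S})^2$ is nonincreasing along it, so that $Ca(J(t))\searrow c_\infty\ge 0$ and $\int_0^\infty\|\dot J(t)\|_{L^2}^2\,dt=Ca(J_s)-c_\infty<\infty$. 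From $\psi_s^*J_s=J_1$, $[\psi_s^*\omega]=[\omega]$ and Moser's lemma, each $J_{s'}$ with $s'\in D\setminus\{0\}$ is $Symp_0(M,\omega)$-equivalent to $J_1$; hence all of them lie on a single leaf $\mathcal L$ of the foliation of $\mathcal J$ by complexified orbits, and $J_0$ lies in the $C^\infty$-closure $\overline{\mathcal L}$ because $J_{s'}\to J_0$ as $s'\to 0$. (Path-connectedness of $\mathcal{LJ}_A$ enters here, both to keep $c_1(M,J_{s'})$, hence $\underline{S}$, constant, and to place $J_s$ and $J_0$ in a common leaf closure.) Since the Calabi flow preserves $\mathcal L$, we have $J(t)\in\mathcal L\subset\overline{\mathcal L}$ for all $t$, and any subsequential limit lies in $\overline{\mathcal L}$.

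First I would establish the required a priori estimates. Feeding $\sup|S|\le A$ and the uniform Calabi energy bound into the regularity-scale estimates of the earlier sections, the flow $J(t)$ has uniformly bounded geometry with no curvature concentration as $t\to\infty$; choosing $t_i\to\infty$ with $\|\dot J(t_i)\|_{L^2}\to 0$, this produces $\phi_i\in Symp(M,\omega)$ and a smooth limit $\phi_i^*J(t_i)\to J_\infty$ in $C^\infty(M)$, where $(M,\omega,J_\infty)$ is extremal, $Ca(J_\infty)=c_\infty$, and $J_\infty\in\overline{\mathcal L}$. This is precisely the step where the assumption $|S|\le A$ is needed when $n\ge 3$, compensating for the absence of the scale-invariance of the Calabi energy exploited in the surface case of Theorem~\ref{thmin:2}.

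Next I would identify the limit. Since $\overline{\mathcal L}$ contains the zero $J_0$ of the moment map $J\mapsto S_J-\underline{S}$, the infimum of the Calabi energy over $\overline{\mathcal L}$ equals $0$; by the infinite-dimensional analogue of Kirwan's identification of the semistable stratum with the stable manifold of $\mu^{-1}(0)$ --- accessed, because $[\omega]$ is integral, by passing to Bergman finite-dimensional moment-map flows and using that the cscK central fiber $(M,J_0)$ corresponds to a balanced, hence Chow-polystable, point in the closure of the relevant $SL(N,\C)$-orbit --- the negative gradient flow $J(t)$ must satisfy $Ca(J(t))\to 0$, that is $c_\infty=0$. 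Then the limit $J_\infty$ above is cscK and lies in $\overline{\mathcal L}$, and uniqueness of cscK metrics together with reductivity of $\mathrm{Aut}_0(M,J_0)$ forces $J_\infty=\psi_0^*J_0$ for some $\psi_0\in Symp(M,\omega)$. Finally I would upgrade subsequential convergence to convergence of the whole flow: the previous steps put $J(t)$, for $t$ large, into an arbitrarily small $C^{k,\frac{1}{2}}$-neighborhood of the critical orbit $Symp(M,\omega)\cdot J_0$ with $Ca(J(t))\to 0$, so the {\L}ojasiewicz--Simon inequality $|Ca(J)|^{1-\theta}\le C\,\|\nabla Ca(J)\|_{L^2}$ at the cscK point $J_0$ --- valid because the linearized extremal operator is elliptic with finite-dimensional kernel, and $Ca$ is $Symp$-invariant so the flow is transverse to the orbit directions --- yields $\int_0^\infty\|\dot J(t)\|_{L^2}\,dt<\infty$; hence $J(t)$ converges in $L^2$, and by the higher-order estimates above in $C^\infty$, to a single limit, which is a critical point of $Ca$ with $Ca=0$ lying in $\overline{\mathcal L}$, hence equals $\psi^*J_0$ for some $\psi\in Symp(M,\omega)$ depending on $J_s$. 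Taking the union over $A$ covers all of $\mathcal{LJ}'$.

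\textbf{The main obstacle} is the identification $c_\infty=0$: transplanting the finite-dimensional principle that semistability forces the moment-map flow to converge to $\mu^{-1}(0)$ over to the Calabi flow requires controlling the interchange of the large-time limit with the large-$k$ Bergman approximation and handling non-discrete $\mathrm{Aut}_0(M,J_0)$, which is exactly where the integrality of $[\omega]$ and the uniqueness and closedness theory for cscK metrics become indispensable. The compactness in the second step --- ruling out bubbling so the subsequential limit is smooth on $M$ --- is technically heavy, but is delivered by the regularity-scale estimates of the earlier sections together with the bound $|S|\le A$.
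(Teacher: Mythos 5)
Your proposal replaces the paper's deformation argument with a direct asymptotic analysis of a single flow line, and the two places where you yourself locate the difficulty are genuine gaps rather than technical chores. First, the compactness step. You assert that $\sup|S|\le A$ together with the Calabi energy bound, fed into the regularity-scale estimates, gives uniformly bounded geometry along the flow as $t\to\infty$. It does not. The higher-dimensional regularity result of the paper (Theorem~\ref{thm:HL29_2}, built on Propositions~\ref{prn:HG25_1} and~\ref{prn:HG25_2}) is a \emph{backward} improvement: it requires as input a curvature bound $\sup_M|Rm|(\cdot,T)\le B$ at the later time $T$, and only then converts the scalar curvature bound into bounds on $|\nabla^l Rm|$ shortly before $T$. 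Nothing in the paper bounds $|Rm|$ along the flow from $|S|\le A$ and bounded Calabi energy alone; if such an implication were available, most of Section 2 would be unnecessary. Without it, your sequence $(M,\omega,J(t_i))$ has no smooth subsequential limit on $M$, and the extremal structure $J_\infty\in\overline{\mathcal L}$ does not exist.

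Second, the identification $c_\infty=0$. The finite-dimensional Kirwan picture does not transplant: the interchange of the $t\to\infty$ limit with the $k\to\infty$ Bergman approximation is precisely the unresolved part of Donaldson's program, and the assertion that a cscK structure in the closure of the leaf forces the gradient flow to realize the zero of the moment map is essentially the statement being proved, not a tool one may quote. The paper circumvents both problems simultaneously by a continuity method in the parameter $s$: the set of parameters for which the flow converges is shown to be open and closed in the path component $\mathcal{LJ}_A$, and at the first exit time $T_s$ from the $0.5\delta$-neighborhood of $J_0$ (measured by the gauge-invariant distance $\tilde d$) one has, by construction, the curvature bound that activates Theorem~\ref{thm:HL29_2}; Chen--Sun's local convergence and uniqueness-of-degeneration theorems then rule out $T_s\to\infty$ and close the argument. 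Note that your argument never uses the path-connectedness of $\mathcal{LJ}_A$ in any essential way, so it would prove a strictly stronger statement than the theorem --- a reliable sign that the missing steps carry the real content. The {\L}ojasiewicz--Simon endgame is reasonable once the flow is trapped near the critical orbit, but it is the trapping that is unproven.
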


It is interesting to compare the Calabi flow and the K\"ahler Ricci flow on Fano manifolds at the current stage.
For simplicity, we fix $[\omega]=2\pi c_1(M, J)$.
Modulo the pioneering work of H.D. Cao(\cite{[CaoH]}, global existence) and G. Perelman(\cite{[SeT]}, scalar curvature bound),
Theorem~\ref{thmin:3} and  Theorem~\ref{thmin:4} basically says that the convergence of the Calabi flow
can be as good as that  for the K\"ahler Ricci flow on Fano manifolds, whenever some critical metrics are assumed to
exist, in a broader sense.
The K\"ahler Ricci flow version of Theorem~\ref{thmin:3} and Theorem~\ref{thmin:4} has been studied by G. Tian and X.H. Zhu in~\cite{TZ1} and ~\cite{TZ2},
based on Perelman's fundamental estimate.   A more general apporach was developed by G. Sz{\'e}kelyhidi and T.C. Collins in~\cite{[CoSz]}.
Our proof of  Theorem~\ref{thmin:3} and Theorem~\ref{thmin:4} uses a similar strategy to that in Tian-Zhu's work~\cite{TZ2}.
However, our proof is based on backward regularity improvement theorems, Theorem~\ref{thm:HL29_1} and Theorem~\ref{thm:HL29_2},
which are the main technical new ingredients of this paper.

If the flows develop singularity at time infinity, then the behavior of the Calabi flow and the K\"ahler Ricci flow seems much different.
Based on the fundamental work of Perelman, we know collapsing does not happen along the K\"ahler Ricci flow.
In~\cite{CW1} and~\cite{CW2}, it was proved by X.X. Chen and the second author that the K\"ahler Ricci flow will converge to a K\"ahler Ricci soliton flow on a $Q$-Fano variety.
A different approach was proposed in complex dimension 3 in~\cite{TZZ2}, by G.Tian and Z.L. Zhang.
However, under the Calabi flow,  G. Sz{\'e}kelyhidi \cite{MR2571959} has shown that collapsing may happen at
time infinity, by constructing examples of global solutions of the Calabi flow on ruled surfaces.
In this sense, the Calabi flow is much more complicated.
Of course, this is not surprising since we do not specify the underlying K\"ahler class.   A more fair comparison should be between the Calabi flow and the K\"ahler Ricci flow,
in the same class $2\pi c_1(M, J)$, of a given Fano manifold.  However, few is known about the Calabi flow in this respect, except the underlying manifold
is a toric Fano surface (c.f.~\cite{[ChenHe2]}).


Theorem~\ref{thmin:1} and Theorem~\ref{thmin:2} push the difficulty of the Calabi flow study on K\"ahler surfaces
to the proof of global existence, i.e., Chen's conjecture.
Theorem~\ref{thmin:3} and Theorem~\ref{thmin:4} indicate that the study of the Calabi flow with bounded scalar curvature is important.
It is not clear whether the global existence always holds.  If global existence fails, what will happen?
In other words,  what is the best condition for the global existence of the Calabi flow?
Whether the scalar curvature bound is enough to guarantee the global existence?
In order to answer these questions, we can borrow ideas from the study of the Ricci flow.
In~\cite{[Sesum]}, N. Sesum showed that the Ricci flow exists as long
as the Ricci curvature stays bounded.  Same conclusion holds for the Calabi flow, due to the work~\cite{[ChenHe1]} of X.X. Chen and W.Y. He.
However, we can also translate Sesum's result into the Calabi flow along another route.  Note that the Calabi flow satisfies equation (\ref{eq000}).
So the metrics evolve by $\nabla \bar{\nabla} S$, the complex Hessian of the scalar curvature.
Correspondingly, under the Ricci flow, the metrics evolve by $-2R_{ij}$.   Modulo constants, we can regard $\nabla \bar{\nabla} S$ as the counterpart
of Ricci curvature in the Calabi flow.
Consequently,  one can expect that the Calabi flow has global existence whenever $|\nabla \bar{\nabla} S|$ is bounded.
This is exactly the case.
To state our results precisely, we introduce the notations
\begin{align}
O_g(t)=\sup_M\;|S|_{g(t)},\quad P_g(t)=\sup_M \left|\bar{\nabla} \nabla S \right|_{g(t)},
\quad Q_g(t)=\sup_M\;|Rm|_{g(t)}.
\label{eqn:SL06_1}
\end{align}
We shall omit $g$ and $t$ if they are clear in the context.

\begin{theoremin}
Suppose that $\left\{(M^n, g(t)), -T \leq t <0\right\}$ is a Calabi flow solution and $t=0$ is the singular time. Then we have
\begin{align}
  \limsup_{t \to 0} P|t| \geq \delta_0, \label{eqn:HG22_1}
\end{align}
where $\delta_0=\delta_0(n)$.
Furthermore, for each $\alpha \in (0, 1)$, we have
\begin{align}
 \limsup_{t \to 0} O^{\alpha}Q^{2-\alpha}|t| \geq C_0,  \label{eqn:HG22_2}
\end{align}
where $C_0=C_0(n,\alpha)$.
In particular, if $t=0$ is a singular time of type-I, then we have
 \begin{align}
    \limsup_{t \to 0}O^2|t|>0. \label{eqn:HG22_3}
 \end{align}
\label{thmin:5}
\end{theoremin}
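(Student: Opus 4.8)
All three estimates will follow from a single principle: a sharp extension criterion for the Calabi flow, obtained by converting a smallness bound on the ``Ricci-type'' quantity $P=\sup_M|\bar\nabla\nabla S|$ (respectively the ``scalar-type'' quantity $O=\sup_M|S|$) into a \emph{scale-invariant} bound on the full curvature tensor. The starting observation is that, under the natural parabolic rescaling $g(\cdot)\mapsto\lambda^{-1}g(\lambda^2\,\cdot\,)$ of a Calabi flow solution (which sends $S\mapsto\lambda S$, $|Rm|\mapsto\lambda|Rm|$, $|\bar\nabla\nabla S|\mapsto\lambda^{2}|\bar\nabla\nabla S|$, and scales time by $\lambda^{-2}$), each of $P|t|$, $O^{\alpha}Q^{2-\alpha}|t|$ and $O^{2}|t|$ is invariant. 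Hence it suffices to establish two extension statements. \emph{Statement (A):} there is $\delta_0=\delta_0(n)>0$ such that any Calabi flow on $M\times[a,b)$ with $\sup_{t\in[a,b)}P(t)(b-t)<\delta_0$ extends to a Calabi flow past $b$. \emph{Statement (B):} for each $\alpha\in(0,1)$ there is $C_0=C_0(n,\alpha)>0$ such that any Calabi flow on $M\times[a,b)$ with $\sup_{t\in[a,b)}O^{\alpha}(t)Q^{2-\alpha}(t)(b-t)<C_0$ extends past $b$. Granting (A), I argue by contradiction: if $t=0$ were a singular time with $\limsup_{t\to0}P|t|<\delta_0$, then $P(t)(0-t)<\delta_0$ on some $[-\tau_0,0)$, so by (A) the flow extends past $0$, contradicting singularity; this proves (\ref{eqn:HG22_1}), and (B) yields (\ref{eqn:HG22_2}) in the same way. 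Estimate (\ref{eqn:HG22_3}) is then immediate from (\ref{eqn:HG22_2}): if $t=0$ is type I, say $\Lambda:=\limsup_{t\to0}Q|t|^{1/2}<\infty$, then along a sequence $t_i\to0$ realizing the left side of (\ref{eqn:HG22_2}),
\[
C_0\le O^{\alpha}Q^{2-\alpha}|t_i|\le(\Lambda+o(1))^{2-\alpha}\,O^{\alpha}|t_i|^{\alpha/2}=(\Lambda+o(1))^{2-\alpha}\bigl(O^{2}|t_i|\bigr)^{\alpha/2},
\]
whence $\limsup_{t\to0}O^{2}|t|\ge\bigl(C_0\,\Lambda^{\alpha-2}\bigr)^{2/\alpha}>0$.

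To prove (A), the plan is as follows. First I would record the elementary doubling-time estimate for $|Rm|$ along the Calabi flow (from the local theory of \cite{[ChenHe1]}), which at a singular time forces the lower bound $Q(t)(b-t)^{1/2}\ge c_0(n)>0$; in particular the time-to-singularity scale $(b-t)^{1/4}$ is comparable to the curvature scale, and any blow-up limit will be non-flat. Next I would invoke the backward regularity improvement \thmref{thm:HL29_1}: rescaling the flow on the parabolic cylinder of size $\sim(b-t)^{1/4}$ centered at time $t$ to unit size, the hypothesis makes $|\bar\nabla\nabla S|\le\delta_0$ there in the rescaled metric, and \thmref{thm:HL29_1} then produces a bound $|Rm|\le C(n)$ on a smaller cylinder; unwinding the scaling gives $Q(t)(b-t)^{1/2}\le C(n)$ and, iterating the interior estimates, $|\nabla^{k}Rm|(t)\,(b-t)^{(k+2)/4}\le C_k(n)$, uniformly for $t$ near $b$ --- i.e.\ the putative singularity is of type I with controlled constants. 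Finally I would rule such a singularity out: a blow-up at the time-to-singularity scale along $t_i\to b$ yields (after securing non-collapsing, see below) a complete ancient Calabi flow on $(-\infty,0)$ carrying the bounds $|Rm|(s)\le C(n)|s|^{-1/2}$ and, crucially, $P(s)\le\delta_0|s|^{-1}$; the doubling-time lower bound makes this limit non-flat, whereas for $\delta_0$ small a Liouville-type rigidity for ancient Calabi-flow singularity models --- letting $s\to-\infty$ along $P(s)|s|\le\delta_0$ renders the limit static, hence flat --- forces it to be flat. This contradiction proves (A). Statement (B) is entirely parallel, with the scalar-curvature version \thmref{thm:HL29_2} of the backward improvement replacing \thmref{thm:HL29_1}: from $O^{\alpha}Q^{2-\alpha}(b-t)<C_0$ together with $Q(b-t)^{1/2}\ge c_0$ one first extracts $O(t)(b-t)^{1/2}\le C(n,\alpha)$, then feeds this scalar bound into \thmref{thm:HL29_2} to obtain $Q(t)(b-t)^{1/2}\le C$, and concludes as before.

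The hard part is the backward regularity improvement itself, \thmref{thm:HL29_1}--\thmref{thm:HL29_2}: passing from a bound on the fourth-order-subcritical quantities $\bar\nabla\nabla S$ or $S$ to a scale-invariant bound on $Rm$ is delicate precisely because the Calabi flow is a fully nonlinear fourth order equation, and it is here that a non-collapsing/injectivity-radius input is needed --- on K\"ahler surfaces this is supplied by the scale-invariance of the Calabi energy together with the Gauss--Bonnet control of $\int_M|Rm|$, and in higher dimension one leans on the scalar-curvature hypothesis present in the high-dimensional statements. A secondary subtlety is that the hypothesis of (A) only controls $P(b-t)$, so a priori $P$ may blow up at the borderline rate $\delta_0/(b-t)$; the passage from the type-I bound to an actual extension must therefore exploit the smallness of $\delta_0$ in an essential way (equivalently, the rigidity/gap statement for the singularity model), rather than a naive integration of $\partial_t g=\bar\nabla\nabla S$, which diverges only logarithmically at that rate and so does not by itself even yield uniform equivalence of the metrics.
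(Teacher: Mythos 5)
Your reduction of the theorem to an extension criterion, and your derivation of (\ref{eqn:HG22_3}) from (\ref{eqn:HG22_2}), are both fine; your doubling-time lower bound $Q(t)\sqrt{|t|}\ge c_0(n)$ at a singular time is exactly the paper's Lemma~\ref{lem003}. But the core of your argument --- Statements (A) and (B) --- is not actually established, for two reasons. First, you invoke Theorem~\ref{thm:HL29_1} and Theorem~\ref{thm:HL29_2} to ``produce a bound $|Rm|\le C(n)$'' from the smallness of $|\bar\nabla\nabla S|$ (resp.\ from a scalar bound). Those theorems do the opposite: they take $\sup_M|Rm|(\cdot,T)\le B$ (plus an injectivity-radius or Calabi-energy hypothesis) as \emph{input} and propagate regularity backward in time; they cannot manufacture a curvature bound from a bound on $P$ or $O$. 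Indeed, whether a scalar curvature bound alone prevents curvature blow-up is precisely the open question the paper raises after Theorem~\ref{thmin:4}, so your step ``feed the scalar bound into Theorem~\ref{thm:HL29_2} to obtain $Q(t)(b-t)^{1/2}\le C$'' would, if it worked, settle that question. Second, your endgame rests on an unproven ``Liouville-type rigidity for ancient Calabi-flow singularity models'': no such classification is available in the paper, the claim that $P(s)|s|\le\delta_0$ renders the limit static is unjustified ($P(s)\to 0$ as $s\to-\infty$ says nothing at finite times, and even a genuinely static limit is merely extremal, not flat), and the non-collapsing needed to extract the blow-up limit is unaddressed in the generality of Theorem~\ref{thmin:5}, which carries no dimension restriction and no scalar-curvature hypothesis.

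The paper avoids all of this with a purely quantitative doubling argument that never takes a global blow-up limit. The key input is Lemma~\ref{lem002}: each doubling of $Q$ costs $\int P\,dt\ge\epsilon_0(n)$, proved via the local biholomorphic gauge of Lemma~\ref{lem:map} and the Calabi--Hartman regularity of isometries, so no injectivity-radius control is required. Iterating gives $Q(K)<2^{1+\epsilon_0^{-1}\int_0^K P\,dt}$ (Proposition~\ref{prn:HG24_1}), hence $Q(t)=o(|t|^{-\lambda})$ for any $\lambda>C\log 2/\epsilon_0$ whenever $\limsup_{t\to 0}P|t|=C$ (Lemma~\ref{lem004}); pitting this against $Q\sqrt{|t|}\ge\delta_0$ forces $C\ge\epsilon_0/(2\log 2)$, which is (\ref{eqn:HG22_1}). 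For (\ref{eqn:HG22_2}) the paper integrates the differential inequality $\frac{d^-}{dt}F\ge -C\,O^{\alpha}Q^{2-\alpha}F$ for the curvature scale (Lemma~\ref{cor:802}) together with $F(t)\le C|t|$, and reads off the contradiction from the logarithmic divergence. To keep your blow-up strategy you would have to supply the missing compactness and rigidity theorems for ancient Calabi flows, a far larger undertaking than the statement being proved.
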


Theorem~\ref{thmin:5} is nothing but the Calabi flow counterpart of the main theorems in \cite{[Wang2]} and \cite{[Wang3]}.
The tools we used in the proof of Theorem~\ref{thmin:5}  are motivated by the study of the analogue question of the Ricci flow
by the second author in~\cite{[Wang3]} and~\cite{[Wang2]}.
Actually,  the methods in~\cite{[Wang2]} and~\cite{[Wang3]} were built in a quite general frame.
It was expected to have its advantage in the study of the general geometric flows.\\

The paper is organized as follows.   In section 2, we develop two concepts---curvature scale and harmonic scale--- to study geometric flows.
Based on the analysis of these two scales under the Calabi flow, we show global backward regularity improvement estimates.
In section 3, we combine the regularity improvement estimates, the excellent behavior of the Calabi functional along the Calabi flow
and the deformation techniques to prove Theorem~\ref{thmin:1}-Theorem~\ref{thmin:4}.
In section 4, we first show some examples where Theorem~\ref{thmin:1}-Theorem~\ref{thmin:4} can be applied and then prove Theorem~\ref{thmin:5}. \\

 {\bf Acknowledgements}: The authors would like to thank Professor Xiuxiong Chen, Simon Donaldson, Weiyong He, Claude LeBrun and Song Sun
for insightful discussions.  Haozhao Li and Kai Zheng would like to express their deepest gratitude to Professor Weiyue Ding for his support, guidance and encouragement during the project.
Part of this work was done while Haozhao Li was visiting MIT and he wishes to thank MIT for their generous hospitality.

\section{Regularity scales}
\label{sec:regularity}

\subsection{Preliminaries}\label{Preliminaries}

 Let $M^n$
be a  compact K\"ahler manifold of complex dimension $n$ and $g$  a K\"ahler metric on $M$
with the K\"ahler form $\omega$.  The K\"ahler class corresponding to $\omega$ is denoted by $\Omega=[\omega]$.
The space of K\"ahler potentials is defined by
\begin{align*}
\mathcal{H}(M, \omega)=\left\{\varphi\in C^{\infty}(M)\; \left| \;\omega+\sqrt{-1} \partial \bar{\partial} \varphi>0  \right. \right\}.
\end{align*}
In \cite{[Cal1]} and \cite{[Cal2]}, Calabi introduced the Calabi
functional
$$Ca(\omega_{\varphi})=\int_M\; \left(S(\omega_{\varphi})-\underline{S} \right)^2\,\omega_{\varphi}^n, $$
where $S(\omega_{\varphi})$ denotes the scalar curvature of the metric
 $\omega_{\varphi}=\omega+\sqrt{-1} \partial \bar{\partial} \varphi$ and $\underline{S}$ is the average of the scalar curvature $S(\omega_{\varphi})$.  The gradient
flow of the Calabi functional is called the Calabi flow, which can be written by a parabolic equation of K\"ahler potentials:
\begin{align*}
 \frac{\partial \varphi}{\partial t}=S(\omega_{\varphi})-\underline{S}.
\end{align*}
The metrics evolve by the equation:
\begin{align*}
 \frac{\partial}{\partial t} g_{i\bar{j}}=S_{,i\bar{j}}.
\end{align*}
Consequently, the evolution equations of metric-related quantities are determined by direct calculation:
\begin{align}
  &\frac{\p}{\p t} \Gamma^{k}_{ij}=g_\vphi^{k\bar l}\nabla_j S_{i\bar l},  \label{eqn:J03_1}\\
  &\left( \frac{\p}{\p t}+\Delta^2_\vphi \right) R_{i\bar j k \bar l}
     =-R_{k\bar l\bar r s}(R_{i\bar j})_{r\bar s}-(R_{p\bar{q}i\bar{j}}R_{q\bar{p}}-R_{i\bar{p}}R_{p\bar{j}})_{k\bar l} \notag\\
  &\qquad \qquad \qquad \quad -\left\{R_{\bar m n}R_{i\bar n p \bar l} R_{m\bar j k \bar p}
+ (R_{i\bar n })_p(R_{n\bar j k \bar l})_{\bar p} +(R_{i\bar n k\bar l})_p(R_{n\bar j })_{\bar p}\right\}, \label{eqn:J03_2}\\
 &\left( \frac{\p }{\p t} +\Delta^2_\vphi \right) R_{i\bar j}
 =-R_{i\bar j \bar k l}S_{k\bar l}-\Delta_\vphi (R_{p\bar{q}i\bar{j}}R_{q\bar{p}}-R_{i\bar{p}}R_{p\bar{j}}), \label{eqn:J03_3}\\
 &\left( \frac{\p }{\p t} +\Delta^2_\vphi \right)S =-S^{i\bar j} R_{i\bar j}. \label{eqn:J03_4}
\end{align}
The evolution equation of curvature tensor (\ref{eqn:J03_2}) can be simplified as follows:
\beq \label{eq:Rm}
\frac{\partial}{\partial t} \Rm=-\Na\bar \Na\Na\bar \Na S
=-\Delta^2\Rm+\Na^2\Rm*\Rm+\Na \Rm*\Na \Rm,
\eeq
where the operator $*$ denotes some contractions of tensors.  Thus, we have the inequality
\beq
\pd {}t|\Rm|\leq \left| \Na^4S \right|+c(n)|\Rm| \left|\Na^2S \right|. \label{eq:Rm2}
\eeq

\subsection{Estimates based on curvature bound}

The global high order regularity estimate of the Calabi flow was studied in~\cite{[ChenHe2]}, when Riemannian curvature and Sobolev constant are bounded uniformly.
Taking advantage of the localization technique developed in~\cite{Glick} and~\cite{[Streets]}, one can localize the estimate in~\cite{[ChenHe2]}.

\begin{lemma}
 Suppose $\{(\Omega^n, g(t)), 0 \leq t \leq T\}$ is a Calabi flow solution on an open K\"ahler manifold $\Omega$,  $\overline{B_{g(T)}(x,r)}$ is a geodesic complete ball in $\Omega$.
 Suppose
 \begin{align}
    &C_S(B_{g(T)}(x,r), g(T)) \leq K_1,  \label{eqn:GA04_3}\\
    &\sup_{B_{g(T)}(x,r) \times [0, T]} \left\{ |Rm| + \left| \frac{\partial}{\partial t} g\right| + \left| \nabla \frac{\partial}{\partial t} g\right| \right\} \leq K_2.    \label{eqn:GA04_4}
 \end{align}
 Then for every positive integer $j$, there exists $C=C(j,\frac{1}{r},K_1,K_2)$ such that
 \begin{align*}
    \sup_{B_{g(T)}(x, 0.5 r)} |\nabla^j Rm|(\cdot, T) \leq C.
 \end{align*}
\label{lma:GA04_1}
\end{lemma}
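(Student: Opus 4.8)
The plan is to localize the global high-order estimate of Chen--He~\cite{[ChenHe2]} using the cut-off technique of~\cite{Glick} and~\cite{[Streets]}. First I would stabilize the local geometry: since $|Rm|\le K_2$ and $\left|\frac{\partial}{\partial t}g\right|\le K_2$ on $B_{g(T)}(x,r)\times[0,T]$, over any time interval of length $\le 1$ the metrics $g(t)$ are uniformly equivalent on $\overline{B_{g(T)}(x,r)}$, so the distances $d_{g(t)}$ are mutually comparable and, together with $C_S(B_{g(T)}(x,r),g(T))\le K_1$, yield a Sobolev inequality valid for every $g(t)$ with constant depending only on $K_1,K_2$; the bound $|Rm|\le K_2$ also gives, via Bishop--Gromov, an upper bound for $\Vol(B_{g(T)}(x,r))$. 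I then fix a smooth cut-off $\phi=\phi(d_{g(T)}(x,\cdot))$, equal to $1$ on $B_{g(T)}(x,0.9r)$, supported in $B_{g(T)}(x,r)$ (so all the integrations by parts below are boundaryless, using that the ball is geodesically complete in $\Omega$), with $|\nabla^k\phi|_{g(t)}\le C(k)r^{-k}$ to all needed orders. The hypotheses $\left|\frac{\partial}{\partial t}g\right|+\left|\nabla\frac{\partial}{\partial t}g\right|\le K_2$, together with $|Rm|\le K_2$, are exactly what is needed to control the extra terms created by the time-dependence of the connection, the rough Laplacian and the volume form in the estimates below; recall $\frac{\partial}{\partial t}g_{i\bar j}=S_{,i\bar j}$, so these hypotheses bound the complex Hessian of $S$ and its first covariant derivative.

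The core is a sequence of weighted energy estimates for $F_m(t)\triangleq\int_M\phi^{N_m}|\nabla^m Rm|^2\,d\mu_{g(t)}$, with cut-off exponents $N_m$ chosen increasingly large. Since the Calabi flow is fourth order there is no useful pointwise maximum principle for $|\nabla^m Rm|$, so everything proceeds through integral quantities. Differentiating $F_m$ in $t$, using (\ref{eq:Rm}) and its covariant derivatives --- schematically $(\partial_t+\Delta^2)\nabla^m Rm=\nabla^{m+2}Rm*Rm+\sum_{a+b+c=m}\nabla^a Rm*\nabla^b Rm*\nabla^c Rm+\cdots$ --- integrating by parts four times to produce the good term $-c\int\phi^{N_m}|\Delta\nabla^m Rm|^2$, and then applying the local Sobolev inequality and Gagliardo--Nirenberg interpolation against the already-established bounds on $F_{m-1}$ and on $|Rm|$, I would show: $\sup_{[0,T]}F_m\le C$ implies the space--time bound $\int_0^T\int\phi^{N'}|\nabla^{m+2}Rm|^2\,d\mu\,dt\le C$, and a time-weighted version of the same computation upgrades this to $\sup_{[\tau,T]}F_{m+2}(t)\le C(\tau^{-1},m,r^{-1},K_1,K_2)$ for every $\tau\in(0,T]$. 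The induction starts from $F_0\le C(n,K_2,r)$, which is immediate from $|Rm|\le K_2$ and the volume bound; iterating gains two spatial derivatives per step in the uniform-in-time $L^2$ norm (intermediate odd orders being recovered by interpolation), at the cost of a negative power of the elapsed time. This parabolic smoothing is what lets one avoid a hypothesis on the initial higher derivatives, so that $C$ depends in addition on a positive lower bound for $T$, which is harmless in the applications.

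Finally, once $\sup_{[\tau,T]}\int_M\phi^{N}|\nabla^\ell Rm|^2\,d\mu_{g(t)}\le C$ is known for all $\ell\le j+n+1$, I would apply at $t=T$ the Sobolev embedding $\|u\|_{L^\infty(B_{g(T)}(x,0.5r))}\le C(K_1)\sum_{k\le n+1}\|\nabla^k u\|_{L^2(B_{g(T)}(x,0.6r))}$ to $u=\nabla^j Rm$, obtaining $\sup_{B_{g(T)}(x,0.5r)}|\nabla^j Rm|(\cdot,T)\le C(j,r^{-1},K_1,K_2)$.

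The step I expect to be the main obstacle is the bookkeeping inside the weighted energy estimates. Because the flow is fourth order and every tensorial object in sight (connection, Laplacian, volume form) evolves, differentiating $F_m$ and integrating by parts four times generates a large collection of cubic curvature terms together with $\phi$-weighted terms carrying fewer derivatives on $Rm$ but negative powers of $r$; one must verify that each of them is dominated --- after Gagliardo--Nirenberg interpolation against the $F_{m-1}$ bound, the curvature bound, and the space--time $L^2$ control of $\nabla^{m+2}Rm$ --- with a small enough constant to be absorbed into $-c\int\phi^{N_m}|\Delta\nabla^m Rm|^2$, uniformly with respect to the choice of cut-off exponents. This is precisely where the hypotheses $\left|\frac{\partial}{\partial t}g\right|+\left|\nabla\frac{\partial}{\partial t}g\right|\le K_2$ are essential: at the bottom levels $m=0,1$ of the induction the terms produced by the moving metric have no other available bound.
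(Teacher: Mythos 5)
Your proposal is correct and follows essentially the same route as the paper, whose entire proof is a one-line citation to the argument of Theorem 4.4 of Streets' paper on fourth-order curvature flows plus Sobolev embedding --- i.e., exactly the cutoff-weighted energy estimates, integration by parts to extract $-c\int\phi^{N}|\Delta\nabla^m Rm|^2$, Gagliardo--Nirenberg interpolation, and final Sobolev embedding that you describe. Your observation that the constant must additionally depend on a positive lower bound for $T$ (or else on initial derivative bounds) is a fair point that the paper's statement glosses over but which is harmless in all of its applications, where the flows are rescaled to live on a time interval of definite length.
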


\begin{proof}
 This follows from the same argument as Theorem 4.4 of~\cite{[Streets]} and the Sobolev embedding theorem.
\end{proof}

\begin{lemma}
Let $(M^n, g, J)$ be a complete K\"ahler manifold with $|\Rm|+|\Na \Rm|\leq C_1$. Then there exist
positive constants $r_1, r_2$  depending only on $ C_1, n$ such that for each $p\in M$ there is a map $\Phi$
from the Euclidean ball $\hat B(0, r_1)$ in $\CC^n$ to $M$ satisfying the following properties.
\begin{enumerate}
  \item[(1)] $\Phi$ is a local  biholomorphic map from $\hat B(0, r_1)$ to its image.
  \item[(2)] $\Phi(0)=p$.
  \item[(3)] $\Phi^*(g)(0)=g_{E}$, where $g_E$ is the standard metric on $\CC^n$.
  \item[(4)] $r_2^{-1}g_E\leq \Phi^*g\leq r_2 g_E$ in $\hat B(0, r_1)$.
\end{enumerate}
\label{lem:map}
\end{lemma}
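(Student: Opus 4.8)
The plan is to transfer the problem to a fixed-size ball in the tangent space, normalise the metric and complex structure at the centre, and then correct the flat linear coordinates into genuine holomorphic ones by solving a $\db$-equation with uniform estimates. \textbf{Step 1 (bounded geometry on a fixed-size ball).} Since $|\Rm|\le C_1$, sectional curvature is bounded above by $C_1$, so by the Rauch comparison theorem $\exp_p$ has no conjugate points in $B(0,\rho)\subset T_pM$ for some $\rho=\rho(C_1,n)$; hence $\exp_p|_{B(0,\rho)}$ is an immersion -- not necessarily injective, which is exactly why the conclusion only claims that $\Phi$ is a \emph{local} biholomorphic map. Identify $T_pM$ linearly with $\CC^n$ and set $\tilde g=\exp_p^*g$, $\tilde J=\exp_p^*J$ on $B(0,\rho)$; then $\tilde J$ is integrable, $(\tilde g,\tilde J)$ is Kähler, and $|\Rm_{\tilde g}|+|\Na\Rm_{\tilde g}|\le C_1$. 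Rauch comparison also gives $c(C_1)^{-1}g_E\le\tilde g\le c(C_1)\,g_E$ on $B(0,\rho/2)$. Because the $x^i$ are geodesic normal coordinates, $\Gamma_{\tilde g}(0)=0$, and the Kähler identity $\Na\tilde J=0$ then forces $\partial\tilde J(0)=0$, with $\partial^2\tilde J$, $\partial^3\tilde J$ bounded on $B(0,\rho/2)$ in terms of $\Rm$, $\Na\Rm$; in particular $|\tilde J-\tilde J(0)|\le C|x|^2$, all bounds depending only on $C_1,n$.

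\textbf{Step 2 (linear normalisation and the $\db$-correction).} Pulling back by a fixed $L\in GL(n,\CC)$ we may assume $\tilde J(0)=J_0$ (the standard complex structure) and $\tilde g(0)=g_E$, since every Hermitian inner product on $\CC^n$ is unitarily equivalent to the standard one. Let $z_0^k=x^k+\sqrt{-1}\,x^{n+k}$ be the linear coordinates and put $f^k:=\db_{\tilde J}z_0^k$, a smooth $(0,1)$-form for $\tilde J$. By $\tilde J(0)=J_0$ and $\partial\tilde J(0)=0$ we have $|f^k|\le C|x|^2$, with $C^{0,\alpha}$ norm on $\hat B(0,r)$ bounded by $C(C_1,n)\,r^{2-\alpha}$, while $\db_{\tilde J}f^k=\db_{\tilde J}^2z_0^k=0$ by integrability. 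For $r=r(C_1,n)$ small, $\sqrt{-1}\,\partial_{\tilde J}\db_{\tilde J}|x|^2$ is uniformly positive on $\hat B(0,r)$, so $\hat B(0,r)$ is strictly $\tilde J$-pseudoconvex and carries the strictly plurisubharmonic weight $|x|^2$; Hörmander's $L^2$-method on the Kähler manifold $(\hat B(0,r),\tilde g,\tilde J)$ then produces $u^k$ with $\db_{\tilde J}u^k=f^k$ and $\|u^k\|_{L^2(\hat B(0,r))}\le C(C_1,n)\|f^k\|_{L^2(\hat B(0,r))}$. Since $\db_{\tilde J}$ has injective symbol on functions, interior elliptic regularity upgrades this to $\|u^k\|_{C^{1,\alpha}(\hat B(0,r/2))}\le C(C_1,n)\bigl(\|u^k\|_{L^2(\hat B(0,r))}+\|f^k\|_{C^{0,\alpha}(\hat B(0,r))}\bigr)\le C(C_1,n)\,r$. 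Now fix $r=r(C_1,n)$ so that this last quantity is $<\tfrac{1}{100n}$.

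\textbf{Step 3 (conclusion).} Set $z^k:=z_0^k-u^k$; then $\db_{\tilde J}z^k=0$, so $z=(z^1,\dots,z^n)$ is holomorphic for $\tilde J$ and $dz(0)$ differs from the identity by at most $\tfrac1{100}$, hence $z$ is a biholomorphism from a neighbourhood of $0$ onto an open set containing a ball $\hat B(0,r_1)$ with $r_1=r_1(C_1,n)$; composing with the fixed linear map $dz(0)^{-1}$ we may assume $dz(0)=\mathrm{Id}$, so $(z^{-1})^*\tilde g(0)=g_E$. Finally $\Phi:=\exp_p\circ L^{-1}\circ z^{-1}$ is a local biholomorphic map on $\hat B(0,r_1)$ with $\Phi(0)=p$ and $\Phi^*g(0)=g_E$; and since $z^{-1}$ has uniformly bounded, uniformly nondegenerate differential on $\hat B(0,r_1)$ while $c(C_1)^{-1}g_E\le\tilde g\le c(C_1)g_E$, we obtain $r_2^{-1}g_E\le\Phi^*g\le r_2 g_E$ with $r_2=r_2(C_1,n)$.

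The step that needs care is the $\db$-estimate in Step 2: one must verify that the lower bound on the Levi form of the weight, the Hörmander constant, and the interior elliptic constants are all controlled \emph{solely} by $|\Rm|+|\Na\Rm|\le C_1$ and $n$ -- this is precisely what makes $r_1,r_2$ independent of the base point $p$ -- and that the smallness $f^k=O(|x|^2)$ on a ball of radius $r$, combined with the $L^2\to C^{1,\alpha}$ gain, beats the loss of derivatives so that the correction $u^k$ can be made as $C^1$-small as required.
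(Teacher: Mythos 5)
The paper's own ``proof'' of this lemma is a one-line citation of Proposition 1.2 of Tian--Yau \cite{[TY1]} (with a pointer to \cite{[ChauTam]} for a similar application), and your argument is precisely a self-contained reconstruction of the standard proof of that cited proposition: pull back by $\exp_p$ to get uniformly bounded geometry and an integrable almost complex structure agreeing with $J_0$ to first order at the origin, then correct the linear coordinates by solving $\db_{\tilde J} u=\db_{\tilde J} z_0$ via H\"ormander's weighted $L^2$ estimates. So you are taking essentially the same route as the paper, just with the black box opened up; the plan is correct, and the quantitative points you flag at the end (normal-coordinate bounds on $\tilde J$ coming from $|\Rm|+|\nabla \Rm|\leq C_1$, and uniformity of the H\"ormander and interior elliptic constants in the base point) are exactly where the remaining work lies.
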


\begin{proof}
This is only an application of Proposition 1.2 of Tian-Yau \cite{[TY1]}. Similar application can be found in~\cite{[ChauTam]}.
\end{proof}

\begin{theorem}[J.Streets~\cite{[Streets]}]
Suppose $\{(M^n, g(t)), 0 \leq t \leq T\}$ is a Calabi flow solution satisfying
\begin{align*}
  \sup_{M \times [0, T]} |\Rm| \leq K.
\end{align*}
Then we have
\begin{align}
&\sup_{x \in M} \left|\Na^l \Rm \right|(x, t)\leq C\left(K+\frac 1{\sqrt{t}}\right)^{1+\frac {l}{2}},  \label{eqn:GA04_1} \\
&\sup_{x \in M} \left|\frac{\partial^l}{\partial t^l} \Rm \right|(x, t) \leq C \left(K+\frac 1{\sqrt{t}}\right)^{1+2l},    \label{eqn:GA04_2}
\end{align}
for every $t \in (0, T]$ and positive integer $l$.  Here $C=C(l,n)$.   In particular, we have
 \begin{align*}
 \sup_{x \in M} \left|\pd {}t|\Rm|\right|(x,T) \leq C \left(K^3+\frac 1{T^{\frac 32}}\right).
 \end{align*}
\label{thm:GA04_JS}
\end{theorem}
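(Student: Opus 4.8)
The plan is to treat this as a Bando--Bernstein--Shi type smoothing estimate adapted to the fourth order Calabi flow, organized around the spatial derivative bound (\ref{eqn:GA04_1}). Once (\ref{eqn:GA04_1}) is known, the time derivative bound (\ref{eqn:GA04_2}) is extracted by repeatedly differentiating the evolution equation (\ref{eq:Rm}) in $t$ and counting derivatives, and the concluding pointwise bound on $\left|\pd{}{t}|\Rm|\right|$ at $t=T$ is the specialization $l=0$, $t=T$ of (\ref{eqn:GA04_2}), or directly a consequence of (\ref{eq:Rm2}) combined with (\ref{eqn:GA04_1}). By the parabolic scaling $g(t)\mapsto\lambda^{-1}g(\lambda^{2}t)$ of the Calabi flow one may normalize, so it suffices to prove, on a fixed time interval and under $K\le 1$, the interior estimate $|\nabla^{l}\Rm|\le C\,t^{-\frac12-\frac l4}$, after which the stated inequality follows by undoing the scaling and combining with the trivial bound valid for $t\gtrsim K^{-2}$.

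\textbf{Evolution equations and the Shi induction.} First I would record, for every $l\ge 0$, the higher order version of (\ref{eq:Rm})--(\ref{eqn:J03_2}), obtained by applying $\nabla^{l}$ and commuting covariant derivatives past $\Delta^{2}$ (each commutator $[\nabla,\Delta]$ producing a factor of $\Rm$ and dropping one order), schematically
\[
 \left(\pd{}{t}+\Delta^{2}\right)\nabla^{l}\Rm
 =\sum_{i+j=l+2}\nabla^{i}\Rm*\nabla^{j}\Rm
 +\sum_{i+j+k=l}\nabla^{i}\Rm*\nabla^{j}\Rm*\nabla^{k}\Rm .
\]
The induction is on $l$; the base case $l=0$ is the hypothesis $|\Rm|\le K$. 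Assuming (\ref{eqn:GA04_1}) for all $j<l$, one runs the classical Shi argument: apply the maximum principle to a combination of the form
\[
 \Phi_{l}\;=\;\Big(\Lambda\,(K^{2}+|\nabla^{l-1}\Rm|^{2})\;+\;t\,|\nabla^{l}\Rm|^{2}\Big),
\]
(or, more safely, by proceeding two derivatives at a time). In the evolution inequality for $\Phi_{l}$ the inductive hypotheses turn every term involving only $\nabla^{\le l-1}\Rm$ into a controlled constant, while the highest order terms are to be absorbed by the parabolic dissipation coming from $\Delta^{2}$; a good choice of $\Lambda$ then produces a differential inequality of the shape $(\pd{}{t}+\Delta^{2})\Phi_{l}\le -|\nabla^{l}\Rm|^{2}+C$, whose maximum principle bound yields (\ref{eqn:GA04_1}) for index $l$.

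\textbf{The main obstacle.} The difficulty specific to the fourth order flow --- the step I expect to be hardest --- is that the pointwise maximum principle does not apply to $|\nabla^{l}\Rm|^{2}$ as cleanly as for the second order Ricci flow: expanding $\Delta^{2}(|u|^{2})$ produces, besides the good terms $2|\Delta u|^{2}+4|\nabla^{2}u|^{2}$, the sign--indefinite third order term $8\langle\nabla u,\nabla\Delta u\rangle$, which cannot be dominated pointwise. The resolution, which is what is carried out in~\cite{[Streets]}, is to work with the spatially integrated quantities $\int_{M}\Phi_{l}\,dV_{g(t)}$: integration by parts converts $\int_{M}\langle\nabla u,\nabla\Delta u\rangle$ into $-\int_{M}|\Delta u|^{2}$, and the interpolation inequality $\int_{M}|\nabla^{l+1}T|^{2}\le\varepsilon\int_{M}|\Delta\nabla^{l}T|^{2}+C_{\varepsilon}\int_{M}|\nabla^{l}T|^{2}$ on the closed manifold $M$ absorbs the mixed error terms into that dissipation; a Gronwall/iteration argument then gives $L^{2}$ bounds on $\nabla^{l}\Rm$ with the correct $t$--weight, after which pointwise bounds follow from Sobolev embedding on the uniformly controlled small balls of Lemma~\ref{lem:map} (or from parabolic Moser iteration). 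Equivalently, once the first derivative estimate $|\nabla\Rm|\le C$ is in force on $[\varepsilon,T]$, Lemma~\ref{lem:map} provides local holomorphic charts of uniform size in which $\Phi^{*}g$ is two--sided bounded, so that (\ref{eqn:HG29_2}) becomes a uniformly parabolic fourth order scalar equation with bounded coefficients, and all higher bounds (\ref{eqn:GA04_1}) come from interior $L^{p}$ and Schauder parabolic estimates and bootstrapping, the $t$--powers being dictated by parabolic scaling.

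\textbf{Time derivatives and the final bound.} With (\ref{eqn:GA04_1}) established, differentiating (\ref{eq:Rm}) repeatedly in $t$ expresses $\partial_{t}^{l}\Rm$ as a universal polynomial in the covariant derivatives $\nabla^{j}\Rm$ with $j\le 4l$ (each time derivative costing up to four spatial derivatives, via the $\Delta^{2}\Rm$ term); substituting (\ref{eqn:GA04_1}) and counting degrees gives (\ref{eqn:GA04_2}). Finally, taking $l=0$, $t=T$ in the estimate for $\partial_{t}\Rm$, or directly combining (\ref{eq:Rm2}) with (\ref{eqn:GA04_1}) for $l=2$ and $l=4$ together with $|\nabla^{2}S|\le c|\nabla^{2}\Rm|$, $|\nabla^{4}S|\le c|\nabla^{4}\Rm|$ and $|\Rm|\le K$, one obtains
\[
 \sup_{M}\left|\pd{}{t}|\Rm|\right|(\cdot,T)\;\le\; C\,(K+T^{-1/2})^{3}\;\le\; C\Big(K^{3}+T^{-3/2}\Big),
\]
which is the asserted inequality.
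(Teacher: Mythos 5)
Your proposal is correct in outline, but it follows the direct Bernstein--Shi route of the cited source \cite{[Streets]} rather than the argument the paper actually gives. The paper proves Theorem~\ref{thm:GA04_JS} by contradiction and rescaling: assuming (\ref{eqn:GA04_1}) fails for some $l$, it maximizes $f_l=\sum_{j\le l}|\nabla^j\Rm|^{2/(2+j)}$ against $K+t^{-1/2}$, rescales so that $f_l(x_i,0)=1$ while $|\Rm|_{\tilde g_i}\to 0$, pulls the flows back to a fixed Euclidean ball through the Tian--Yau local biholomorphisms of Lemma~\ref{lem:map}, invokes the localized estimates of Lemma~\ref{lma:GA04_1} to extract a Cheeger--Gromov limit, and contradicts the flatness of that limit with $f_l(0)=1$. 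What that buys is that one never has to run the maximum principle (or its integral surrogate) on the fourth-order equation at all, and, more substantively, the Sobolev/injectivity control needed to pass from $L^2$ to $L^\infty$ is produced automatically by the rescaling: after normalization $|\Rm|+|\nabla\Rm|$ is bounded by construction, so Lemma~\ref{lem:map} supplies charts of uniform size. In your direct induction this is the one point to treat with care: your passage from integral bounds to pointwise bounds (Sobolev embedding or Moser iteration on the balls of Lemma~\ref{lem:map}) uses charts whose existence already presupposes a pointwise bound on $|\nabla\Rm|$, so the first step of the induction needs an independent argument; this is handled in \cite{[Streets]}, but it is not free. The price of the paper's softer argument is that the constants $C(l,n)$ are not effective, which the authors acknowledge; your route, carried out carefully, yields explicit constants. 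Your derivation of (\ref{eqn:GA04_2}) from (\ref{eqn:GA04_1}) via the evolution equation, and of the final bound at $t=T$, agrees with the paper, which likewise deduces (\ref{eqn:GA04_2}) from (\ref{eqn:GA04_1}) and (\ref{eqn:J03_2}).
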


\begin{proof}
By equation (\ref{eqn:J03_2}),  we see that (\ref{eqn:GA04_2}) follows from (\ref{eqn:GA04_1}).
We shall only prove (\ref{eqn:GA04_1}).

We argue by contradiction.  As in~\cite{[Streets]}, we define function
\begin{align*}
f_l(x, t, g)=\sum_{j=1}^l \left|\Na^j \Rm \right|_{g(t)}^{\frac 2{2+j}}(x).
\end{align*}
Suppose (\ref{eqn:GA04_1}) does not hold uniformly for some positive integer $l$.
Then there
exists a sequence of Calabi flow solutions $\{(M_i^n, g_i(t)), 0 \leq t \leq T\}$
satisfying the assumptions of the theorem and there are  points $(x_i, t_i)\in M_i\times [0, T]$ such that
$$\lim_{i\ri +\infty}\frac {f_l(x_i, t_i, g_i)}{K+t_i^{-\frac{1}{2}}}=\infty.$$
Suppose that the maximum of $\frac {f_l(x, t, g_i)}{K+t_i^{-\frac 12}}$ on $M\times (0, T]$ is achieved at $(x_i, t_i)$.
We can rescale the metrics by
$$\td g_i(x, t) \triangleq \la_i g \left(x, t_i+\lambda_i^{-2}t \right),\quad \la_i \triangleq f_l(x_i, t_i, g_i).$$
By construction, $t_i\la_i^2\geq 1$ for $i$ large and the flow $\td g_i(t)$ exists on the time period $[-1, 0]$.
Moreover, it satisfies the following properties.
\begin{itemize}
\item $\displaystyle \lim_{i\ri +\infty} \sup_{M_i \times [-1,0]} |\Rm|_{\td g_i}=0$.
\item $\displaystyle \sup_{M_i \times [-1,0]} f_l(\cdot, \cdot, \tilde{g}_i(t)) \leq 1$.
\item $\displaystyle f_l(x_i, 0, \td g_i)=1$.
\end{itemize}
By Lemma \ref{lem:map}, we can construct local biholomorphic map $\Phi_i$ from a ball $\hat B(0, r) \subset \CC^n$ to $M_i$, with respect to the metric $\tilde{g}_i(0)$ and base point $x_i$.
Note that the radius $r$ is independent of $i$.
Let $\tilde{h}_i(t)=\Phi_i^* \tilde{g}_i(0)$. Then we obtain a sequence of Calabi flows $\{(\hat{B}(0,r), \tilde{h}_i(t)), -1 \leq t \leq 0\}$ satisfying (\ref{eqn:GA04_3}) and (\ref{eqn:GA04_4}),
up to shifting of time. Furthermore, we have
\begin{align}
  \lim_{i \to \infty} |Rm|_{\tilde{h}_i(0)}(0)=0, \quad \sum_{j=1}^l \left|\Na^j \Rm \right|_{\tilde{h}_i(0)}^{\frac 2{2+j}}(0)=1.
\label{eqn:GA04_5}
\end{align}
By Lemma~\ref{lma:GA04_1}, we can take convergence in the smooth Cheeger-Gromov topology.
\begin{align*}
  \left( \hat{B}(0, 0.5 r), \tilde{h}_i(0) \right)   \longright{Cheeger-Gromov-C^{\infty}}  \left(\tilde{B},  \tilde{h}_{\infty}(0) \right).
\end{align*}
On one hand, $\Rm_{\td h_{\infty}(0)} \equiv 0$ on $\tilde{B}$, which in turn implies that $\nabla^j Rm_{\td h_{\infty}(0)} \equiv 0$ on $\tilde{B}$ for each positive integer $j$.
On the other hand,  taking smooth limit of (\ref{eqn:GA04_5}), we obtain
\begin{align*}
  \sum_{j=1}^l \left|\Na^j \Rm \right|_{\tilde{h}_{\infty}(0)}^{\frac 2{2+j}}(0)=1.
\end{align*}
Contradiction.
\end{proof}

The proof of Theorem~\ref{thm:GA04_JS} follows the same line as that in~\cite{[Streets]} by J. Streets, we do not claim the originality of the result.
We include the proof here for the convenience of the readers and to show the application of the local biholomorphic map $\Phi$, which will be 
repeated used in the remainder part of this subsection. 
Actually, by delicately using interpolation inequalities,  the constants in Theorem~\ref{thm:GA04_JS} can be made explicit.   \\


Note that for a given Calabi flow,  $S \equiv 0$ implies all the high derivatives of $S$ vanish.
Therefore, for the Calabi flows with uniformly bounded Riemannian curvature and very small scalar curvature $S$, it is expected that the high derivatives of $S$ are very small.
In the remainder part of this subsection, we will justify this observation.   Similar estimates for the Ricci flow were given by Theorem 3.2 of \cite{[Wang2]} and Lemma
2.1 of \cite{[Wang3]} by the parabolic Moser iteration. However,  since the Calabi flow is a fourth-order
parabolic equation, the parabolic Moser iteration in the case
of the Ricci flow does not  work any more.
Here we use a different method to overcome this difficulty.

To estimate the higher derivatives of the curvature, we need the
interpolation inequalities of Hamilton in \cite{[Ha82]}:
\begin{lemma}\label{lem:Ha82} (\cite{[Ha82]}) For any tensor $T$ and $1\leq j\leq
k-1$, we have
\begin{align}
&\int_M\;|\Na^j T|^{2k/j}\,\omega^n\leq C \cdot
\max_M|T|^{\frac {2k}{j}-2} \cdot \int_M\;|\Na^k T|^2\,\omega^n, \label{eqn:GA02_1}\\
 &\int_M\;|\Na^j T|^{2 }\,\omega^n
\leq C \cdot \left(\int_M\;|\Na^k T|^2\,\omega^n\right)^{\frac jk} \cdot
 \left(\int_M\;|  T|^2\,\omega^n\right)^{1-\frac jk}  \label{eqn:GA02_2}
\end{align}
where $C=C(k, n)$ is a constant.

\end{lemma}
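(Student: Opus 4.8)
The plan is to follow Hamilton's original argument (\cite{[Ha82]}), reducing both inequalities to a single ``neighbour'' estimate obtained by integration by parts, and then to an elementary logarithmic‑convexity interpolation. First I would establish the neighbour estimate: for $1\le j\le k-1$ and $p=k/j$, integrate by parts on the outermost covariant derivative to write
$\int_M |\nabla^j T|^{2p}\,\omega^n = -\int_M \langle \nabla^{j-1}T,\ \nabla^{*}\!\big(|\nabla^j T|^{2p-2}\nabla^j T\big)\rangle\,\omega^n$.
The crucial point — the one that must be carried out carefully — is to keep all covariant derivatives applied in a fixed order, so that $\Delta\nabla^{j-1}T$ occurs as a genuine metric contraction of $\nabla^{j+1}T$ and \emph{no} curvature commutator terms are produced; this is exactly what keeps the constant of the form $C=C(k,n)$. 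Expanding the derivative and applying the Cauchy--Schwarz inequality pointwise then yields $\int_M |\nabla^j T|^{2p}\le C(k)\int_M |\nabla^j T|^{2p-2}\,|\nabla^{j-1}T|\,|\nabla^{j+1}T|$. Next I apply H\"older with the three exponents $\tfrac{k-j}{k},\ \tfrac{j-1}{2k},\ \tfrac{j+1}{2k}$ (which sum to $1$), placing $\nabla^{j\pm1}T$ in $L^{2k/(j\pm1)}$, and absorb the resulting power of $\int_M |\nabla^j T|^{2k/j}$ into the left side — legitimate since $M$ is compact and $T$ smooth. Setting $A_j\triangleq\big(\int_M |\nabla^j T|^{2k/j}\,\omega^n\big)^{j/2k}$ for $1\le j\le k$ and $A_0\triangleq\max_M|T|$, this reads $A_j^2\le C\,A_{j-1}A_{j+1}$; only the endpoint $j=1$ genuinely involves the maximum, the $\nabla^0$‑factor being pulled out in $L^\infty$ rather than through a H\"older limit.

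Second, I would run the interpolation. The inequality $A_j^2\le C\,A_{j-1}A_{j+1}$ says that $\log A_j$ is convex in $j$ up to an additive error controlled by $\log C$; comparing $\log A_j$ with the affine function through $\log A_0$ and $\log A_k$, and using that a discretely ``convex up to $\varepsilon$'' sequence vanishing at both endpoints is bounded by a parabola, one obtains $A_j\le C(k)\,A_0^{1-j/k}A_k^{j/k}$. Raising this to the power $2k/j$ gives precisely (\ref{eqn:GA02_1}), since $\big(1-\tfrac jk\big)\tfrac{2k}{j}=\tfrac{2k}{j}-2$ and $A_k^2=\int_M|\nabla^k T|^2\,\omega^n$. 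For (\ref{eqn:GA02_2}) I would rerun the same scheme with the simpler recursion $B_j\le B_{j-1}^{1/2}B_{j+1}^{1/2}$ for $B_j\triangleq\int_M |\nabla^j T|^2\,\omega^n$, which comes directly from $\int_M|\nabla^j T|^2=-\int_M\langle\nabla^{j-1}T,\Delta\nabla^{j-1}T\rangle\le\int_M|\nabla^{j-1}T||\nabla^{j+1}T|$ (again keeping derivatives in order, no curvature terms) followed by Cauchy--Schwarz; log‑convexity of $B_j$ then yields $B_j\le B_0^{1-j/k}B_k^{j/k}$, which is (\ref{eqn:GA02_2}).

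The only real obstacle here is bookkeeping rather than ideas: arranging the integration by parts so that the commutator/curvature terms never enter (this is what makes $C$ depend only on $k$ and $n$), and verifying that the H\"older exponents balance in every case, including the two endpoints $j=1$ and $j=k-1$. The remaining convexity step is elementary and standard.
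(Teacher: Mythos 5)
Your argument is correct: the neighbour estimate $A_j^2\le C\,A_{j-1}A_{j+1}$ obtained by a single integration by parts (with the trace of $\nabla^{j+1}T$ appearing instead of commutator terms) followed by three-exponent H\"older and absorption, and then the discrete log-convexity comparison with the parabola $\epsilon j(k-j)$, is exactly Hamilton's original proof of Corollaries 12.6 and 12.7 in \cite{[Ha82]}, and the exponent bookkeeping $(1-\tfrac{j}{k})\tfrac{2k}{j}=\tfrac{2k}{j}-2$ checks out. The paper itself gives no proof and simply cites \cite{[Ha82]}, so your proposal coincides with the intended argument.
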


Combining Lemma \ref{lem:Ha82} with Sobolev embedding theorem, we have the following result.

\begin{lemma}\label{lem:tensor}For any integer $i\geq 1$ and any
K\"ahler metric $\oo$,  there exists a constant $C=C(C_S(\oo), i)>0$
such that
 for any tensor $T$, we have
\beqs \max_M \left| \Na^iT \right|^2&\leq& C\cdot\max_M\;|T|^{\frac
{2n+1}{n+1}}\left(\int_M\;|T|^2\,\oo^n\right)^{\frac1{4(n+1)}}\\
&&\cdot
\left(\int_M\;\left|\Na^{4(n+1)i}T\right|^2\,\oo^n+\int_M\; \left| \Na^{4(n+1)(i+1)}T \right|^2\,\oo^n\right)^{\frac
1{4(n+1)}}.\eeqs

\end{lemma}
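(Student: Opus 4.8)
The plan is to chain together the two interpolation inequalities of Lemma~\ref{lem:Ha82} with the Sobolev embedding theorem, iterating so that the exponent on $|\nabla^k T|^2$ improves at each step while the $L^\infty$ norm of $T$ itself is pulled out. First I would recall that on a compact K\"ahler manifold with Sobolev constant $C_S(\omega)$, the Sobolev embedding $W^{2,2}\hookrightarrow L^\infty$ (in sufficiently high differentiability, i.e. after enough iterations of the standard $W^{1,2}\hookrightarrow L^{2n/(n-1)}$-type inequality, here phrased for complex dimension $n$, real dimension $2n$) gives
\begin{align*}
\max_M |\nabla^i T|^2 \leq C(C_S(\omega),i)\left( \int_M |\nabla^i T|^2\,\omega^n + \int_M |\nabla^{i+m} T|^2\,\omega^n\right)
\end{align*}
for a suitable fixed $m=m(n)$; tracking constants carefully, one finds $m=4(n+1)$ works after accounting for the number of derivatives needed to reach $L^\infty$ control. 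This reduces the claim to an $L^2$ estimate on $\nabla^i T$ and $\nabla^{i+m} T$.

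Next I would apply the multiplicative interpolation inequality \eqref{eqn:GA02_2} to each of the two $L^2$ terms: with $j=i$, $k=4(n+1)i$ in the first and $j=i+m$, $k=4(n+1)(i+1)$ in the second (choosing the top index to be a convenient multiple so the exponents come out as stated), we get
\begin{align*}
\int_M |\nabla^i T|^2\,\omega^n \leq C\left(\int_M |\nabla^{4(n+1)i} T|^2\,\omega^n\right)^{\frac{1}{4(n+1)}}\left(\int_M |T|^2\,\omega^n\right)^{1-\frac{1}{4(n+1)}},
\end{align*}
and similarly for the $\nabla^{i+m}$ term with top index $4(n+1)(i+1)$. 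Summing the two, bounding $\left(\int_M |T|^2\right)^{1-\frac{1}{4(n+1)}}$ by $\max_M|T|^{2-\frac{2n+1}{n+1}-\frac{1}{2(n+1)}}\cdot\bigl(\int_M|T|^2\bigr)^{\text{(leftover)}}$ — i.e. peeling off powers of $\max_M|T|$ from the volume-type factor using $\int_M|T|^2\omega^n \leq \max_M|T|^2\cdot\Vol$ — and collecting exponents yields exactly the stated form with $\max_M|T|^{\frac{2n+1}{n+1}}$ and $\bigl(\int_M|T|^2\bigr)^{\frac{1}{4(n+1)}}$ outside, and the sum of the two high-derivative $L^2$ integrals raised to the power $\frac{1}{4(n+1)}$. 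The constant absorbs $\Vol$, which is controlled by $C_S(\omega)$.

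The bookkeeping of exponents is the only real subtlety: one must check that after pulling the Sobolev power out, the residual power of $\int_M |T|^2$ that remains is precisely $\frac{1}{4(n+1)}$ and the residual power of $\max_M|T|$ is precisely $\frac{2n+1}{n+1}$, which forces the specific choice of the top derivative index $4(n+1)i$ versus $4(n+1)(i+1)$ in the two interpolation applications. I expect the main obstacle to be nothing deep but rather this arithmetic of Hölder exponents — making the scaling weights match up so the final inequality is homogeneous of the correct degree in $T$ — together with verifying that $m=4(n+1)$ derivatives genuinely suffice for the iterated Sobolev embedding into $L^\infty$ in real dimension $2n$. Once the indices are pinned down, the proof is a direct concatenation of Lemma~\ref{lem:Ha82} and Sobolev embedding with no further input.
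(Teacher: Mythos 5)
Your strategy (Sobolev embedding plus Hamilton's interpolation inequalities) is in the right spirit, but it diverges from the paper's argument at the crucial step and does not close. The paper does not use the $L^2$-Sobolev embedding $W^{m,2}\hookrightarrow L^\infty$. It applies the $W^{1,p}$ Sobolev inequality with the \emph{high} exponent $p=2(n+1)>2n$ to the scalar function $f=|\nabla^i T|^2$, so that after Kato's inequality only \emph{one} extra derivative appears, in the form of $L^{4(n+1)}$-norms of $\nabla^i T$ and $\nabla^{i+1}T$. These are then converted by the first Hamilton inequality (\ref{eqn:GA02_1}), with $k=2(n+1)i,\ j=i$ and $k=2(n+1)(i+1),\ j=i+1$ so that $2k/j=4(n+1)$, into $\max_M|T|^{4n+2}$ times the $L^2$-norms of $\nabla^{2(n+1)i}T$ and $\nabla^{2(n+1)(i+1)}T$; one final application of (\ref{eqn:GA02_2}) with $j/k=\tfrac12$ splits off $\left(\int_M|T|^2\,\omega^n\right)^{\frac{1}{4(n+1)}}$ and lands exactly on the top indices $4(n+1)i$ and $4(n+1)(i+1)$. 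You never invoke (\ref{eqn:GA02_1}), and that is precisely the step that produces the factor $\max_M|T|^{\frac{2n+1}{n+1}}$ with the correct power while keeping the top derivative count down.

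This omission creates two genuine gaps. First, the embedding $W^{m,2}\hookrightarrow L^\infty$ in real dimension $2n$ requires $m\geq n+1$, and interpolating $\int_M|\nabla^{i+m}T|^2\,\omega^n$ via (\ref{eqn:GA02_2}) against the top index $4(n+1)(i+1)$ yields the exponent $\frac{i+m}{4(n+1)(i+1)}$, not $\frac{1}{4(n+1)}$; forcing the exponent $\frac{1}{4(n+1)}$ requires raising the top index to $4(n+1)(i+m)$, and since a higher-order $L^2$-norm cannot be bounded by a lower-order one, you end up proving a strictly weaker statement than the lemma (in the regime where the top derivative is large, a bound with exponent $\frac{i+m}{4(n+1)(i+1)}>\frac{1}{4(n+1)}$ on $\int_M|\nabla^{4(n+1)(i+1)}T|^2\,\omega^n$ does not imply the stated one). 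Second, peeling $\max_M|T|^{\frac{2n+1}{n+1}}$ out of $\left(\int_M|T|^2\,\omega^n\right)^{1-\frac{1}{4(n+1)}}$ via $\int_M|T|^2\,\omega^n\leq\max_M|T|^2\cdot\mathrm{Vol}(M)$ leaves a factor $\mathrm{Vol}(M)^{\frac{2n+1}{2(n+1)}}$ in the constant; the Sobolev constant controls the volume from below, not from above, so the resulting constant is not of the admissible form $C(C_S(\omega),i)$. The paper's proof has neither problem because the $\max_M|T|$ factor comes directly out of (\ref{eqn:GA02_1}), whose constant is $C(k,n)$ with no volume dependence.
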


\begin{proof}
Recall that the Sobolev embedding theorem implies that
 \begin{align*}
 \max_M|f|\leq  C_S\left(\int_M\;(|f|^p+|\Na f|^p)\,\oo^n\right)^{\frac1p}
 \end{align*}
 for every smooth function $f$.
 Let $f=\left|\Na^iT \right|^2$ and $p=2(n+1)$ in the above inequality.  Then we have
\begin{align*}
 \max_M \left| \Na^iT \right|^2
 \leq
C_S\left(\int_M\left( \left| \Na^iT \right|^{4(n+1)}+|\Na f|^{2(n+1)} \right)\,\oo^n\right)^{\frac 1{2(n+1)}}.
\end{align*}
The Kato's inequality implies that
\beqs
 |\Na f|=2 \left| \Na^i T \right|
 \cdot  \left|\Na \left|\Na^i T\right| \right| \leq 2 \left|\Na^i T \right| \cdot \left| \Na^{i+1} T \right| \leq  \left| \Na^i T \right|^2
 +  \left| \Na^{i+1} T \right|^2.
\eeqs
Combining the above two inequalities, we obtain
\begin{align}
\label{eq012}
\max_M \left| \Na^iT \right|^2
\leq C\left(\int_M\; \left( \left|\Na^iT \right|^{4(n+1)}+ \left| \Na^{i+1} T \right|^{4(n+1)} \right)\,\oo^n\right)^{\frac 1{2(n+1)}}.
\end{align}
In inequality (\ref{eqn:GA02_1}), let $k=2(n+1)i$ and $j=i$, we have
 \beqs \int_M\; \left| \Na^i T \right|^{4(n+1)}\,\omega^n&\leq&C\cdot
\max_M |T|^{4n+2} \cdot \int_M\; \left| \Na^{2(n+1)i} T \right|^2\,\omega^n. \\
 \nonumber
 \eeqs
If we let $k=2(n+1)(i+1)$, $j=i+1$, then we have
\beqs \int_M\; \left|\Na^{i+1} T \right|^{4(n+1)}\,\omega^n
&\leq&
C\cdot \max_M |T|^{4n+2}  \cdot \int_M\; \left| \Na^{2(n+1)(i+1)} T \right|^2\,\omega^n. \\
 \nonumber
\eeqs
In inequality (\ref{eqn:GA02_2}),  let $k=4(n+1)i$ and $j=2(n+1)i$, then we have
\beqs
\int_M\; \left|\Na^{2(n+1)i}T \right|^{2}\,\oo^n&\leq&
C\cdot \left(\int_M\; \left| \Na^{4(n+1)i}T \right|^2\right)^{\frac 12}
\cdot
\left(\int_M\;|T|^2\,\oo^n \right)^{\frac12}.
\nonumber
\eeqs
 Combining this with
(\ref{eq012}), we have
 \beqs &&\max_M \left| \Na^iT \right|^2\\
&\leq&
C\cdot\max_M\;|T|^{\frac{2n+1}{n+1}}
\left(\int_M\; \left(\left|\Na^{2(n+1)i}T \right|^{2}+ \left|\Na^{2(n+1)(1+i)}T \right|^{2} \right)\,\oo^n\right)^{\frac 1{2(n+1)}}\\
&\leq&C\cdot\max_M\;|T|^{\frac{2n+1}{n+1}}
\left(\int_M\;|T|^2\,\oo^n\right)^{\frac1{4(n+1)}}
\left(\int_M\; \left|\Na^{4(n+1)i}T \right|^2\,\oo^n +\int_M\; \left|\Na^{4(n+1)(i+1)}T \right|^2\,\oo^n\right)^{\frac
1{4(n+1)}}.\eeqs The lemma is proved.
\end{proof}

We next show some local estimates on the derivatives of the scalar
curvature.
\begin{lemma}\label{lem:tensor2}
Fix any $\al\in (0, 1)$ and $r>0$.  There exists an integer $N=N(\al)>0$
such that if
\beq
\sup_{B(p, 2 r)}\sum_{k=0}^N\, \left| \Na^k \Rm \right| \leq \La_N
\label{eq607}
\eeq
for some positive constant $\La_N$, then we have the inequalities
\beqn
\sup_{B(p, r)} \left|\Na \bar \Na S \right|
&\leq& C\sup_{B(p,2r)}|S|^{\al}, \label{eq608}\\
\sup_{B(p, r)} \left|\Na\bar \Na\Na \bar \Na S \right| &\leq& C\sup_{B(p, 2r)} \left| \Na\bar \Na S \right|^{\al},
\label{eq608a}\eeqn for some constant $C=C\left(C_S(B(p, 2r)), r, \La_N, \al \right)$.

\end{lemma}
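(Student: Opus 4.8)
\emph{Proof proposal.} The plan is to reduce the whole lemma to one interpolation estimate and then prove that estimate by localizing the machinery of Lemma~\ref{lem:tensor} and squeezing the exponent up to~$1$ as the number of controlled derivatives grows.

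\emph{Reduction.} The scalar curvature is a metric contraction of the curvature tensor, $S=g^{i\bar j}g^{k\bar l}R_{i\bar jk\bar l}$, and the metric is parallel, so $\nabla^{j}S$ is the corresponding contraction of $\nabla^{j}\Rm$; hence $|\nabla^{j}S|\le c(n)|\nabla^{j}\Rm|$. Likewise $\nabla\bar\nabla S$ is a contraction of $\nabla\bar\nabla\Rm$, so $|\nabla\bar\nabla S|\le c(n)|\nabla^{2}\Rm|$, and after commuting covariant derivatives past $\Rm$ one finds $|\nabla^{j}(\nabla\bar\nabla S)|$ bounded by a universal combination of $|\nabla^{l}\Rm|$ with $l\le j+2$. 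Thus under (\ref{eq607}) we have $|\nabla^{j}S|\le c(n)\Lambda_{N}$ on $B(p,2r)$ for $j\le N$ and $|\nabla^{j}(\nabla\bar\nabla S)|\le C(n,j,\Lambda_{N})$ for $j\le N-2$. Finally $|\nabla\bar\nabla S|\le|\nabla^{2}S|$ and $|\nabla\bar\nabla\nabla\bar\nabla S|\le|\nabla^{2}(\nabla\bar\nabla S)|$, since in each case the left side is one block of the Hessian on the right. Hence both (\ref{eq608}) and (\ref{eq608a}) follow once we establish: there is $N_{0}=N_{0}(n,\al)$ so that if $T$ is any tensor on $B(p,2r)$ with $\sup_{B(p,2r)}\sum_{k=0}^{N_{0}}(|\nabla^{k}\Rm|+|\nabla^{k}T|)\le\Lambda$ and $C_{S}(B(p,2r))\le K_{1}$, then
\[
\sup_{B(p,r)}|\nabla^{2}T|\le C\Big(\sup_{B(p,2r)}|T|\Big)^{\al},\qquad C=C(n,r,K_{1},\Lambda,\al).
\]
Indeed one applies this with $T=S$ for (\ref{eq608}), and with $T=\nabla\bar\nabla S$ for (\ref{eq608a}), the latter costing two extra derivatives (so one takes $N\ge N_{0}+2$).

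\emph{The key estimate.} Fix a cutoff $\eta\in C_{c}^{\infty}(B(p,2r))$ with $\eta\equiv1$ on $B(p,r)$ and $|\nabla^{j}\eta|\le C_{j}r^{-j}$; equivalently pull everything back by the biholomorphic chart of Lemma~\ref{lem:map} (available since $|\Rm|+|\nabla\Rm|\le\Lambda$), in which $g$ is uniformly Euclidean and, the $\nabla^{k}\Rm$ being bounded up to order $N_0$, the chart components of $g$ have bounded derivatives up to order $N_{0}+1$, so that the cutoff interpolation inequalities become the standard Euclidean ones. In either picture run the proof of Lemma~\ref{lem:tensor} for $T$ with $i=2$, localized: Hamilton's inequalities (\ref{eqn:GA02_1})--(\ref{eqn:GA02_2}) and the Sobolev inequality are applied with $\eta$ inserted, and the errors produced by $\nabla\eta$ and by commuting covariant derivatives past $\Rm$ involve only $|\nabla^{j}\eta|\le C_{j}r^{-j}$ and the bounded $|\nabla^{j}\Rm|,|\nabla^{j}T|$ ($j\le N_{0}$), hence are absorbed. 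This gives
\[
\sup_{B(p,r)}|\nabla^{2}T|^{2}\le C\big(\sup_{B(p,2r)}|T|\big)^{\frac{2n+1}{n+1}}\Big(\int_{B(p,2r)}|T|^{2}\Big)^{\frac1{4(n+1)}}\Big(\int_{B(p,2r)}|\nabla^{8(n+1)}T|^{2}+\int_{B(p,2r)}|\nabla^{12(n+1)}T|^{2}\Big)^{\frac1{4(n+1)}}.
\]
The decisive point — which is why $N_{0}$ must grow with $\al$ — is that we do \emph{not} bound the last factor crudely by $\Lambda^{2}\Vol(B(p,2r))$, but instead, for any integer $K$ with $12(n+1)<K\le N_{0}$, apply the localized (\ref{eqn:GA02_2}):
\[
\int_{B(p,2r)}|\nabla^{12(n+1)}T|^{2}\le C\Big(\int_{B(p,2r)}|\nabla^{K}T|^{2}\Big)^{\frac{12(n+1)}{K}}\Big(\int_{B(p,2r)}|T|^{2}\Big)^{1-\frac{12(n+1)}{K}}\le C\Big(\sup_{B(p,2r)}|T|\Big)^{2\left(1-\frac{12(n+1)}{K}\right)},
\]
using $\int_{B(p,2r)}|\nabla^{K}T|^{2}\le\Lambda^{2}\Vol(B(p,2r))$ and that $\Vol(B(p,2r))$ is controlled by $n,r,\Lambda$ through volume comparison; the $\nabla^{8(n+1)}T$ term is handled identically and is dominated by this one. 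Substituting, the exponents of $\sup_{B(p,2r)}|T|$ add to $\tfrac{2n+1}{n+1}+\tfrac1{2(n+1)}+\tfrac1{2(n+1)}-\tfrac6K=2-\tfrac6K$, whence $\sup_{B(p,r)}|\nabla^{2}T|\le C(\sup_{B(p,2r)}|T|)^{1-3/K}$ with $C=C(n,r,K_{1},\Lambda,K)$.

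\emph{Conclusion.} Given $\al\in(0,1)$, pick $K=K(n,\al)$ with $K>12(n+1)$ and $1-3/K\ge\al$, and set $N_{0}:=K$ (and $N(\al):=K+2$ for the full lemma). If $\sup_{B(p,2r)}|T|\le1$ then $(\sup|T|)^{1-3/K}\le(\sup|T|)^{\al}$; if $\sup_{B(p,2r)}|T|>1$ the trivial bound $|\nabla^{2}T|\le\Lambda\le\Lambda(\sup|T|)^{\al}$ suffices. Feeding the key estimate into the reduction with $T=S$ and with $T=\nabla\bar\nabla S$, and recalling that the relevant $\Lambda$'s are controlled by $c(n)\Lambda_{N}$ and $C(n,K,\Lambda_{N})$, yields (\ref{eq608}) and (\ref{eq608a}) with $C=C(C_{S}(B(p,2r)),r,\Lambda_{N},\al)$.

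\emph{Main obstacle.} The heart is the ``interpolate-down'' step: to reach an exponent arbitrarily close to $1$ one cannot afford to stop the estimate at $\int|\nabla^{8(n+1)}T|^{2},\int|\nabla^{12(n+1)}T|^{2}\le\Lambda^{2}\Vol$, but must interpolate these norms all the way down to $\|T\|_{L^{2}}^{2}$ against the bounded top-order norm $\int|\nabla^{K}T|^{2}$, with $K$ free to be as large as $N_{0}$. Doing this while everything is localized — so that every $\nabla\eta$-error and every curvature-commutator term in the iterated integrations by parts is genuinely swallowed by (\ref{eq607}) and the Sobolev bound — is the technical crux; once that bookkeeping is arranged, letting $N_{0}\to\infty$ drives $3/K\to0$ and the estimate closes.
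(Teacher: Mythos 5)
Your proposal is correct and follows essentially the same route as the paper: cut off, apply Lemma~\ref{lem:tensor} to $\chi S$ (resp.\ to $\chi \nabla\bar\nabla S$), and then beat the high-derivative $L^2$ factor down to nearly a full power of $\left(\sup|T|\right)^2$ at the cost of requiring more derivatives of $\Rm$ to be controlled as $\al\to 1$. The only point of divergence is the mechanism for that last step: you interpolate $\int|\nabla^{12(n+1)}(\chi T)|^2$ once against the top-order norm $\int|\nabla^{K}(\chi T)|^2$ via (\ref{eqn:GA02_2}), obtaining the exponent $1-3/K$, whereas the paper iterates the integration-by-parts bound $f(k)\leq \mathrm{Vol}^{1/2}\sup|S|\cdot f(2k)^{1/2}$ $m$ times to reach $1-\tfrac{1}{(n+1)2^{m+2}}$; the two devices are interchangeable here (indeed (\ref{eqn:GA02_2}) is itself proved by integration by parts), and both force $N=N(\al)\to\infty$ as $\al\to 1$.
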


\begin{proof}Let $\eta\in C^{\infty}(\RR, \RR)$ be a cutoff function
such that $\eta(s)=1$ if $s\leq 1$ and $\eta(s)=0$ if $s\geq 2.$
Moreover, we assume $\left|\eta'(s) \right|\leq 2. $ We define
$\chi(x)=\eta\left(\frac {d_g(p,x)}{r}\right)$ for any $x\in M$, where $d_g(p, x)$ is the distance
from $p$ to $x$ with respect to the metric $g$. Then
 $\chi=1$ on $B(p, r)$ and $\chi=0$
outside $B(p, 2r).$ The derivatives of $\chi$ satisfy the inequalities
\beq \left|\Na_g^i\chi \right| \leq C(\La_i, r).\label{eq610} \eeq
Using Lemma \ref{lem:tensor} for $\chi S$,  we have the inequality
 \begin{align*}
  \sup_{B(p, r)}\;\left|\Na^2 S \right|^2
  &\leq C(C_S)\cdot\sup_{B(p, 2r)}\;|S|^{\frac {2n+1}{n+1}}
\left(\int_{B(p,2r)}\;|S|^2\,\oo^n\right)^{\frac1{4(n+1)}}\\
  &\quad \cdot \left(\int_{B(p, 2r)}\;\left|\Na^{8(n+1)}(\chi S)\right|^2\,\oo^n+
\int_{B(p, 2r)}\; \left|\Na^{12(n+1)}(\chi S) \right|^2\,\oo^n\right)^{\frac1{4(n+1)}}.
\end{align*}
 For any integer $k$ we set
 $$f(k)=\int_{B(p, 2r)}\; \left| \Na^{k}(\chi S) \right|^2\,\oo^n.$$
Under the assumption (\ref{eq607}), we have
\begin{align*}
f(k)&\leq  \int_{B(p, 2r)}\; |S| \left|\Na^{2k}(\chi S) \right|\,\oo^n\\
&\leq \vol(B(p, 2r))^{\frac 12} \sup_{B(p, 2r)}\; |S| \cdot f(2k)^{\frac 12}\\
&\leq \Big(\vol(B(p, 2r))^{\frac 12}
\sup_{B(p, 2r)}\; |S|\Big)^{2-2^{1-m}}f(2^mk)^{\frac 1{2^m}}\\
&\leq C(\La_{2^mk}, m, r  )\Big(\vol(B(p, 2r))^{\frac 12} \sup_{B(p, 2r)}\; |S|\Big)^{2-2^{1-m}}
\end{align*}
where $m$ is any positive integer. Here we used the fact that
$\vol(B(p, 2r)$ is bounded from above by the volume comparison
theorem. Combining the above
inequalities, for any $m$ we have the estimate
$$\sup_{B(p, r)}\; \left|\Na^2 S \right| \leq C \left(C_S, r, m, \La_{12\cdot 2^m(n+1)} \right)\sup_{B(p, 2r)}\; |S|^{1-\frac 1{(n+1)2^{m+2}}}.$$
Thus, (\ref{eq608}) is proved.
Applying Lemma \ref{lem:tensor} to $\Na \bar \Na S$ and using the
same argument as above, we have the   inequality (\ref{eq608a}).  The lemma is proved.

\end{proof}

The following proposition is a weak version of the corresponding result for the Ricci flow in \cite{[Wang2]}\cite{[Wang3]}.

\begin{proposition}\label{lem:003}Fix $\al\in (0, 1)$.
If $\{(M, g(t)), -s \leq t \leq 0\}$ is a Calabi flow solution with $\displaystyle \sup_{[-s, 0]}Q_g(t)\leq 1$, then there is a
constant $C=C(s, \al)>0$  such that
\beqn
\max_M\, \left|\Na\bar \Na S \right|(0)&\leq& C
\max_M\,|S|^{\al}(0), \label{eq300}\\
\max_M\, \left|\Na\bar \Na\Na\bar \Na S \right|(0)&\leq& C
\max_M\, \left|\Na\bar \Na S \right|^{\al}(0). \label{eq300a}
\eeqn
\end{proposition}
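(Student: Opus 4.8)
The plan is to reduce the global estimates \eqref{eq300}--\eqref{eq300a} to the local estimates \eqref{eq608}--\eqref{eq608a} of Lemma~\ref{lem:tensor2}. The only gap to fill is that Lemma~\ref{lem:tensor2} requires an a priori bound \eqref{eq607} on finitely many covariant derivatives of the curvature tensor, $\sup \sum_{k=0}^N |\nabla^k \Rm| \le \Lambda_N$, whereas the hypothesis of the Proposition only gives $\sup_{[-s,0]} Q_g(t) \le 1$, i.e. a bound on $|\Rm|$ alone. But this is exactly what Theorem~\ref{thm:GA04_JS} (the Shi-type estimate of Streets) is designed to supply: from $\sup_{M\times[-s,0]}|\Rm|\le 1$ we obtain, for any fixed positive integer $N$, a bound $\sup_{M}|\nabla^k \Rm|(\cdot,0)\le C(k,n,s)$ for all $1\le k\le N$, by applying \eqref{eqn:GA04_1} with $K=1$ and $t = s$ (after shifting the time interval so that it becomes $[0,s]$). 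So at the final time slice $t=0$ we have \eqref{eq607} with $\Lambda_N = \Lambda_N(n,s)$, on all of $M$.

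Next I would choose $N = N(\alpha)$ to be the integer produced by Lemma~\ref{lem:tensor2} for the given $\alpha \in (0,1)$, and then run Lemma~\ref{lem:tensor2} at the time slice $t=0$. Since $M$ is compact we may cover it by finitely many geodesic balls $B(p_i, r)$ with, say, $r=1$; on each $B(p_i,2)$ the Sobolev constant $C_S(B(p_i,2r))$ is controlled (via the curvature bound and volume comparison, $|\Rm|\le 1$ giving non-collapsing and bounded geometry locally, exactly as used inside the proof of Lemma~\ref{lem:tensor2}), and \eqref{eq607} holds with the uniform $\Lambda_N$ from the previous paragraph. Lemma~\ref{lem:tensor2} then yields
\[
\sup_{B(p_i,1)} |\nabla\bar\nabla S| \le C \sup_{B(p_i,2)} |S|^{\alpha} \le C \Big(\max_M |S|(0)\Big)^{\alpha},
\]
and similarly $\sup_{B(p_i,1)} |\nabla\bar\nabla\nabla\bar\nabla S| \le C (\max_M |\nabla\bar\nabla S|(0))^{\alpha}$, with $C$ depending only on $s$, $\alpha$, $n$. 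Taking the maximum over the finitely many balls covering $M$ gives \eqref{eq300} and \eqref{eq300a}.

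The one technical point that deserves care — and which I expect to be the main obstacle — is the uniform control of the local Sobolev constants $C_S(B(p,2r))$ and of the auxiliary geometric quantities (injectivity radius, volume ratios) entering Lemma~\ref{lem:tensor2}, using only $|\Rm|\le 1$ on the parabolic neighbourhood. In the Riemannian-flow setting this is standard, but here one should either invoke the local biholomorphic charts of Lemma~\ref{lem:map} (which require $|\Rm|+|\nabla\Rm|\le C_1$, again available at $t=0$ from Theorem~\ref{thm:GA04_JS}) to transfer the problem to a fixed Euclidean ball where the Sobolev constant is universal, or appeal directly to the non-collapsing that follows from bounded geometry. Once this uniformity is in hand, the rest is the bookkeeping described above: choose $N(\alpha)$, feed Theorem~\ref{thm:GA04_JS} into Lemma~\ref{lem:tensor2}, and patch the finitely many local estimates into a global one. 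Note that only the final time slice matters for the conclusion, so no parabolic (space-time) version of Lemma~\ref{lem:tensor2} is needed; the flow hypothesis enters solely through the Shi-type derivative bounds of Theorem~\ref{thm:GA04_JS}.
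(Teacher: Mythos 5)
Your proposal is correct and follows essentially the same route as the paper: apply Theorem~\ref{thm:GA04_JS} to convert the curvature bound on $[-s,0]$ into bounds on all $|\nabla^k \Rm|$ near $t=0$, then feed these into Lemma~\ref{lem:tensor2}, using the local biholomorphic charts of Lemma~\ref{lem:map} to control the Sobolev constant by pulling back to a fixed Euclidean ball. The paper does exactly this (working with the pulled-back metric $\hat g=\Phi^*g$ on $\hat B(0,r)\subset\CC^n$ rather than on geodesic balls in $M$, which is the cleaner of the two options you mention, since $|\Rm|\le 1$ alone does not give non-collapsing).
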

\begin{proof}
Since $\displaystyle \sup_{[-s, 0]}Q_g(t)\leq 1$, Theorem~\ref{thm:GA04_JS} applies.
For any integer $k\geq 0$, there is a constant $C=C(k, s)$ such that \beq
\sup_{M\times[-\frac {s}2, 0]}|\Na^k \Rm|\leq C(k, s).
\label{eq605}\eeq By Lemma \ref{lem:map}, for any $p\in M$ there is $r=r(n, s)>0$ and a local biholomorphic map $\Phi$ from $ \hat B(0, r)\subset \CC^n$ to its
image in $(M, g(0))$ and $\Phi(0)=p$. Define the pullback metrics $ \hat
g(t)=\Phi^*g(t) $ on $\hat B(0, r)$.  Then the K\"ahler metric $\hat g(t)$  satisfies
the equation of Calabi flow on $\hat B(0, r)$ and their injectivity radii on $\hat B(0, r)$ are
bounded from below.  Moreover, on $\hat B(0, r)$ the Sobolev constant and all the derivatives of the curvature
tensor of $\hat g(t)$ for $t\in [-\frac {s}2, 0]$  are bounded by
(\ref{eq605}).  By Lemma \ref{lem:tensor2}, there is a constant
$C(s, \al)>0$ such that
$$\sup_{\hat B(0, \frac r2)} \left| \Na\bar \Na S \right|_{\hat g}(0)\leq C\max_{\hat B(0, r)}|S|_{\hat g}^{\al}(0),$$
which implies that the  metric $g(t)$ satisfies the inequality
(\ref{eq300}).
Similarly, we can show the second inequality (\ref{eq300a}).
The proposition is proved.
\end{proof}

\subsection{From metric equivalence to curvature bound}

If we regard curvature as the 4-th order derivative of K\"ahler potential function,
then Theorem~\ref{thm:GA04_JS} can be roughly understood as from $C^4$-estimate to $C^l$-estimate, for each $l \geq 5$.
In this subsection, we shall setup the estimate from $C^2$ to $C^4$ for the Calabi flow family.

\begin{lemma}\label{lem001}
For every $\dd>0$, there exists a constant $\ee=\ee(n, \dd)$ with
the following properties. If $\{(M^n, g(t)), t \in [-1,0]\}$ and $\{(M^n, h(t)), t\in [-1, 0]\}$
are  Calabi flow solutions such that
\begin{itemize}
  \item $Q_g(0)=1$ and $Q_g(t)\leq 2$ for any $t\in [-1,
  0]$.
  \item $Q_h(t)\leq 2$ for any $t\in [-1, 0]$.
  \item $e^{-\ee} g(0)\leq h(0)\leq e^{\ee}g(0)$.
\end{itemize}
Then we have
$$|\log Q_h(0)|<\dd.$$
\end{lemma}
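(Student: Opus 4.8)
The plan is to argue by contradiction and reduce the statement to the uniqueness of smooth Cheeger--Gromov limits, feeding in the backward regularity already supplied by \thmref{thm:GA04_JS}. Suppose the conclusion fails. Then there are $\delta_0>0$ and sequences of Calabi flow solutions $\{(M_i^n,g_i(t))\}$, $\{(M_i^n,h_i(t))\}$ on $[-1,0]$ (we may take $M_i$ compact) with $Q_{g_i}(0)=1$, $\sup_{[-1,0]}Q_{g_i}\le 2$, $\sup_{[-1,0]}Q_{h_i}\le 2$, and $e^{-1/i}g_i(0)\le h_i(0)\le e^{1/i}g_i(0)$, yet $|\log Q_{h_i}(0)|\ge\delta_0$. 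It suffices to derive $Q_{h_i}(0)\to 1$.

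Step 1 (uniform local geometry). Since both flows have curvature $\le 2$ on $[-1,0]$, \thmref{thm:GA04_JS} applied at time $0$ — which is one unit of time after the initial slice — gives, for every $l$, uniform bounds $\sup_{M_i\times[-1/2,0]}(|\Na^l\Rm_{g_i}|+|\Na^l\Rm_{h_i}|)\le C(l,n)$; in particular $|\partial_tg_i|+|\Na\partial_tg_i|$ and $|\partial_th_i|+|\Na\partial_th_i|$ are uniformly bounded on $[-1/2,0]$, since $\partial_tg=\Na\bar\Na S$ is a contraction of $\Na^2\Rm$ by \eqref{eq:Rm}. Fix a point $p\in M_i$. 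Applying \lemref{lem:map} to $(M_i,g_i(0))$ at $p$ (its hypotheses hold with some $C_1=C_1(n)$) yields a local biholomorphic $\Phi_i\colon\hat B(0,r_1)\to M_i$ with $\Phi_i(0)=p$ and $r_2^{-1}g_E\le\Phi_i^*g_i(0)\le r_2g_E$, $r_1,r_2$ depending only on $n$. Set $\hat g_i(t)=\Phi_i^*g_i(t)$ and $\hat h_i(t)=\Phi_i^*h_i(t)$; these are Calabi flows on $\hat B(0,r_1)\times[-1/2,0]$. From $e^{-1/i}\hat g_i(0)\le\hat h_i(0)\le e^{1/i}\hat g_i(0)$, the $g_E$-equivalence of $\hat g_i(0)$, and the $\partial_t$-bounds, both $\hat g_i(t)$ and $\hat h_i(t)$ stay uniformly equivalent to $g_E$ on $\hat B(0,r_1)\times[-1/2,0]$; hence their Sobolev constants are bounded and \eqref{eqn:GA04_3}--\eqref{eqn:GA04_4} hold. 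By \lemref{lma:GA04_1} we obtain, for each $j$, uniform bounds on $|\Na^j\Rm|(\cdot,0)$ for both $\hat g_i$ and $\hat h_i$ on $\hat B(0,r_1/2)$, and therefore, after passing to a subsequence, smooth Cheeger--Gromov convergence, based at the origin, of both $(\hat B(0,r_1/2),\hat g_i(0))$ and $(\hat B(0,r_1/2),\hat h_i(0))$ to smooth limits. Since $\hat h_i(0)-\hat g_i(0)\to 0$ in $C^0$ while both sequences have uniformly bounded local geometry, the two limits coincide; consequently $|\Rm_{\hat g_i(0)}|(0)$ and $|\Rm_{\hat h_i(0)}|(0)$ tend to the same value.

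Step 2 (choosing the base point). First take $p=p_i$ with $|\Rm_{g_i(0)}|(p_i)=Q_{g_i}(0)=1$. Then $|\Rm_{h_i(0)}|(p_i)=|\Rm_{\hat h_i(0)}|(0)\to|\Rm_{\hat g_i(0)}|(0)=1$, so $\liminf_iQ_{h_i}(0)\ge 1$. Next, along an arbitrary subsequence pick $q_i$ with $|\Rm_{h_i(0)}|(q_i)=Q_{h_i}(0)$ and, refining once more, $Q_{h_i}(0)\to b$; running Step 1 with base point $q_i$ gives $b=\lim_i|\Rm_{h_i(0)}|(q_i)=\lim_i|\Rm_{g_i(0)}|(q_i)\le\limsup_iQ_{g_i}(0)=1$. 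Hence every subsequence of $\{Q_{h_i}(0)\}$ has a further subsequence converging to $1$, so $Q_{h_i}(0)\to 1$ and $\log Q_{h_i}(0)\to 0$, contradicting $|\log Q_{h_i}(0)|\ge\delta_0$.

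Expected main obstacle. The only non-formal point is the claim in Step 1 that the two $C^0$-close sequences $\hat g_i(0)$ and $\hat h_i(0)$ have the same smooth limit, so that their curvature norms at the origin share a limit: this is the standard uniqueness of Cheeger--Gromov limits inside the class of metrics with uniformly bounded $|\Na^j\Rm|$ on a fixed ball, but one has to be careful to pull both flows back by the \emph{same} chart $\Phi_i$ and to propagate the equivalence with $g_E$ both outward in the chart and backward in time over $[-1/2,0]$. Everything else is a direct application of \thmref{thm:GA04_JS}, \lemref{lem:map} and \lemref{lma:GA04_1}.
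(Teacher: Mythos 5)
Your proposal is correct and follows essentially the same strategy as the paper's proof: argue by contradiction, pull both flows back through the \emph{same} local biholomorphic chart of \lemref{lem:map}, invoke \thmref{thm:GA04_JS} (and the localization in \lemref{lma:GA04_1}) for uniform higher-order bounds, and compare the curvature norms of the two metrics at a well-chosen base point after passing to the limit. The only notable differences are cosmetic: the paper selects a single base point $z_i$ at which the two curvature norms differ by a definite ratio while at least one is bounded below (instead of your two-base-point $\liminf$/$\limsup$ argument), and it identifies the curvature values of the two limits by showing the identity maps converge to an isometry $F_\infty$ between the two Cheeger--Gromov limits and then invoking Calabi--Hartman's regularity theorem for isometries, a step which your direct ``the two limits coincide in the common chart'' observation replaces.
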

\begin{proof}
Suppose not, there exist constants $\dd_0>0, \ee_i\ri 0$ and two
sequences of the Calabi flow  solutions
\begin{align*}
 \left\{ (M_i, g_i(t)), t\in [-1, 0] \right\},  \quad \left\{ (M_i, h_i(t)), t\in [-1, 0] \right\}
\end{align*}
such that the following properties are satisfied.
\begin{enumerate}
  \item[(1)] $Q_{g_i}(0)=1$ and $Q_{g_i}(t)\leq 2$ for any $t\in [-1, 0]$.
  \item[(2)] $Q_{h_i}(t)\leq 2$ for any $t\in [-1, 0]$, and $|\log Q_{h_i}(0)|\geq \dd_0$.
  \item[(3)] $e^{-\ee_i} g_i(0)\leq h_i(0)\leq e^{\ee_i}g_i(0)$.
\end{enumerate}
By property (1) and (2),  we claim that there is a point $z_i\in M_i$ such that
\beq
\max\{|\Rm|_{h_i(0)}(z_i), |\Rm|_{g_i(0)}(z_i)\}\geq e^{-2\dd_0},\quad
\left|\log \frac {|\Rm|_{h_i(0)}(z_i)}{|\Rm|_{g_i(0)}(z_i)}\right|\geq
\frac 12 \dd_0.\label{eq500}
\eeq
In fact, by property (2) we have two possibilities $Q_{h_i}(0)\geq e^{\dd_0}$ or $Q_{h_i}(0)\leq e^{-\dd_0}.$ If $Q_{h_i}(0)\geq e^{\dd_0}$, then we assume that the point $z_i$ achieves  $Q_{h_i}(0)$:
$$|\Rm|_{h_i(0)}(z_i )= Q_{h_i}(0). $$
Then the first inequality of (\ref{eq500}) obviously holds and  the second also holds since
$$\log \frac {|\Rm|_{h_i(0)}(z_i)}{|\Rm|_{g_i(0)}(z_i)}\geq \log \frac {|\Rm|_{h_i(0)}(z_i)}{Q_{g_i}(0)}\geq \dd_0.$$
Now we consider the case when $Q_{h_i}(0)\leq e^{-\dd_0}.$ We assume that the point $z_i$ achieves  $Q_{g_i}(0)=1$:
$$|\Rm|_{g_i(0)}(z_i )= 1. $$
Then the first inequality of (\ref{eq500}) obviously holds and  the second also holds since
$$\log \frac {|\Rm|_{g_i(0)}(z_i)}{|\Rm|_{h_i(0)}(z_i)}\geq \log e^{\dd_0}=\dd_0.$$
Thus, (\ref{eq500}) is proved.

By Theorem~\ref{thm:GA04_JS}, we have the higher order curvature
estimates for the metrics $g_i(0)$.
By Lemma \ref{lem:map}, there is $r>0$ independent of $i$ and a local biholomorphic map
$\Phi_i$ from $\hat B(0, r)\subset \CC^n$ to its image in $M_i$ and $\Phi_i(0)=z_i$.
Define the pullback metrics
\beq
\hat g_i(t)=\Phi_i^*g_i(t),\quad \hat
h_i(t)=\Phi_i^*h_i(t).\label{eq501}
\eeq
Then the K\"ahler metrics $\hat g_i(t)$ and $\hat
h_i(t)$ satisfy the equation of the Calabi flow and their injectivity
radii at the point $0\in \hat B(0, r) $ are bounded from below by a uniform positive constant which is
independent of $i$. Moreover, all the derivatives of the curvature
tensor of $\hat g_i(t)$ and $\hat
h_i(t)$ are bounded by Theorem~\ref{thm:GA04_JS}. Thus, we can take Cheeger-Gromov smooth convergence
\beq
\left( \hat B(0, r ), 0, \hat g_i(t) \right) \longright{C.G.-C^{\infty}} \left( B', p', \hat g' \right),\quad
\left( \hat B(0, r ), 0, \hat h_i(t) \right)\longright{C.G.-C^{\infty}}  \left(B'', p'', \hat h' \right).
\label{eq502}
\eeq
Define the identity map $F_i$ by
\beqs
F_i:  \left( \hat B(0, r ), 0, \hat g_i(t) \right) &\mapsto& \left( \hat B(0, r ), 0, \hat h_i(t) \right),\\
x&\mapsto &x.
\eeqs
By property (3), we see that
\begin{align*}
\left|\log \frac {d_{\hat h_i(0)}(x, y)}{d_{\hat g_i(0)}(F_i^{-1}(x), F_i^{-1}(y))}\right|=\left|\log \frac {d_{\hat h_i(0)}(x, y)}{d_{\hat g_i(0)}( x, y)}\right|
\leq \ee_i\ri 0,\quad \forall\;x,y\in \hat B(0,r).
\end{align*}
Therefore, $F_i$ converges to an isometry $F_{\infty}:$
$$F_{\infty}: \left(B', p', \hat g' \right) \ri  \left( B'', p'', \hat h' \right),\quad F_{\infty}(p')=p''. $$
By Calabi-Hartman's theorem in \cite{[CH]}, the isometry $F_{\infty}$ is smooth.
Therefore, we have
\beq |\Rm|_{\hat g'}(p')=|\Rm|_{\hat h'}(p''). \label{eq503}\eeq
However, by the smooth convergence (\ref{eq502}) and the
inequalities (\ref{eq500}) we obtain
\begin{align*}
\max \left\{|\Rm|_{\hat g'}(p'), |\Rm|_{\hat h'}(p'') \right\} \geq e^{-2\dd_0},\quad
\left|\log\frac {|\Rm|_{\hat g'}(p')}{|\Rm|_{\hat h'}(p'')}\right| \geq \frac 12\dd_0,
\end{align*}
which contradicts (\ref{eq503}). The lemma is proved.
\end{proof}

As a direct corollary, we have the next result.

\begin{lemma}\label{lem002}There exists a constant $\ee_0=\ee_0(n)$ with the
following properties.

 Suppose $\{(M^n, g(t)), -1 \leq t \leq K, 0 \leq K\}$ is a Calabi flow solution such that
$$Q_g(0)=1, \quad Q_g(t)\leq 2,\quad \forall\; t\in [-1, 0],$$
 and $T$ is the first time such that $\left|\log Q_g(T) \right|=\log 2$, then we have
\beq \int_0^T\;P_g(t)\,dt\geq \ee_0. \label{eq400}\eeq

\end{lemma}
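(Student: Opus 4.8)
The plan is to obtain Lemma~\ref{lem002} as a short consequence of Lemma~\ref{lem001} via a contradiction argument, where the auxiliary flow $h$ of Lemma~\ref{lem001} is taken to be the time-translate of $g$ by $T$.

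First I would record the elementary observation that an $L^1$-in-time bound on $P_g$ controls the metric in the $C^0$ sense. Since the Calabi flow satisfies $\frac{\partial}{\partial t}g_{i\bar j}=S_{,i\bar j}$, for any nonzero $(1,0)$-vector $v$ we have
\[
\left|\frac{d}{dt}\log|v|^2_{g(t)}\right|=\frac{\left|S_{,i\bar j}v^i\bar v^j\right|}{|v|^2_{g(t)}}\leq \left|\bar{\nabla}\nabla S\right|_{g(t)}\leq P_g(t),
\]
so integrating over $[0,T]$ gives $\left|\log\bigl(|v|^2_{g(T)}/|v|^2_{g(0)}\bigr)\right|\leq \int_0^T P_g(t)\,dt$. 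Hence if $\int_0^T P_g(t)\,dt<\epsilon$, then $e^{-\epsilon}g(0)\leq g(T)\leq e^{\epsilon}g(0)$.

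Now set $\epsilon_0=\epsilon_0(n):=\epsilon(n,\log 2)$, the constant furnished by Lemma~\ref{lem001} with $\delta=\log 2$, and suppose for contradiction that $\int_0^T P_g(t)\,dt<\epsilon_0$. Note $T>0$, since $Q_g(0)=1$ forces $|\log Q_g(0)|=0\neq\log 2$. I would then apply Lemma~\ref{lem001} to the pair of Calabi flow solutions on $[-1,0]$ consisting of $g(t)$ itself and $h(t):=g(t+T)$; the latter is again a Calabi flow solution (the equation is autonomous) and is well defined on $[-1,0]$ since $0<T\leq K$. The three hypotheses of Lemma~\ref{lem001} hold: (i) $Q_g(0)=1$ and $Q_g(t)\leq 2$ on $[-1,0]$ are assumed; (ii) $Q_h(t)=Q_g(t+T)\leq 2$ for $t\in[-1,0]$, because $[T-1,T]\subseteq[-1,T]$ (using $T\geq 0$) and $Q_g\leq 2$ holds on all of $[-1,T]$ --- on $[-1,0]$ by hypothesis, and on $[0,T]$ because $T$ is the \emph{first} time with $|\log Q_g|=\log 2$, so $Q_g< 2$ on $[0,T)$ and $Q_g(T)\leq 2$ by continuity; (iii) $e^{-\epsilon_0}g(0)\leq h(0)=g(T)\leq e^{\epsilon_0}g(0)$ by the metric-comparison estimate of the previous paragraph. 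Lemma~\ref{lem001} then yields $|\log Q_h(0)|<\log 2$, i.e.\ $|\log Q_g(T)|<\log 2$, contradicting the defining property $|\log Q_g(T)|=\log 2$. Therefore $\int_0^T P_g(t)\,dt\geq\epsilon_0$, which is $(\ref{eq400})$.

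Since Lemma~\ref{lem001} is already established, there is no serious obstacle. The only points that require a little care are the bookkeeping showing $Q_g\leq 2$ persists on the translated interval $[T-1,T]$ --- which is precisely where the hypothesis that $T$ is the first exit time is used --- and the passage from the time-integral bound on $P_g$ to the two-sided bound $e^{-\epsilon_0}g(0)\leq g(T)\leq e^{\epsilon_0}g(0)$, which is where the Calabi flow equation $\partial_t g_{i\bar j}=S_{,i\bar j}$ enters.
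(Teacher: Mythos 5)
Your proof is correct and follows essentially the same route as the paper: both apply Lemma~\ref{lem001} to the pair $g(t)$ and $h(t)=g(t+T)$ and use the evolution equation $\partial_t g_{i\bar j}=S_{,i\bar j}$ to convert $\int_0^T P_g\,dt$ into a two-sided $C^0$ comparison of $g(0)$ and $g(T)$; the paper merely phrases it via the contrapositive of Lemma~\ref{lem001} (extracting a vector $V$ with $|\log(g(T)(V,V)/g(0)(V,V))|\geq\ee_0$) rather than as an explicit contradiction. Your version is, if anything, slightly more careful in verifying the hypotheses of Lemma~\ref{lem001} (in particular that $Q_h\leq 2$ on $[-1,0]$ via the first-exit-time property).
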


\begin{proof}
Consider two solutions to the Calabi flow
$\{(M, g(t)), -1 \leq t \leq 0\}$ and $\{(M, h(t)), -1 \leq t \leq 0\}$
where $h(x, t)=g(x, t+T).$ Note that
$|\log Q_h(0)|=\log 2$. By Lemma \ref{lem001} there exists a point $x\in
M$ and a nonzero vector $V\in T_xM$ such that
\begin{align*}
\left|\log \frac {h(0)(V,V)}{g(0)(V, V)}\right| \geq \ee_0
\end{align*}
for some $\ee_0>0.$ Using the equation (\ref{eq:Rm2}), we have
\begin{align*}
\ee_0\leq \left|\log \frac {g(T)(V,V)}{g(0)(V, V)}\right| \leq \int_0^T\,P_g(t)\,dt.
\end{align*}
Thus, (\ref{eq400}) holds and the lemma is
proved.
\end{proof}


\subsection{Curvature scale}

Inspired by Theorem~\ref{thm:GA04_JS}, we define a concept ``curvature scale", to study improving regularity property of the Calabi flow.

\begin{definition}\label{RS}
 Suppose $\{(M,g(t)), t \in I \subset \RR\}$ is a Calabi flow solution.
 Define the curvature scale $F_g(t_0)$ of $t_0 \in I$ by
$$F_g(t_0)=\sup\Big\{s>0\;\Big|\; \sup_{M \times [t_0-s, t_0]} |\Rm|^2 \leq s^{-1}\Big\},$$
where we assume
$
  \displaystyle  \sup_{M \times [t_0-s, t_0]} |\Rm|^2 = \infty
$
 whenever $t_0-s \notin I$.   We denote the curvature scale of time  $t_0$
  by $F_g(t_0)$.
 \end{definition}

Suppose that $\{(M, \td g(t)), -2 \leq t < T\}$ is a Calabi flow solution and $T>0$ is the singular time.
Since the curvature tensor will blow up at the singular time,
we have $\displaystyle \lim_{t\ri T}F_{\td g}(t)=0$. Note that $F_{\td g}(t)$ is a continuous function, we can
assume that $0<F_{\td g}(t)<1$ for any $t\in [0, T)$. Choose any
 $t_0\in [0, T)$ and let $A$ be a positive constant   such
that $A^2=\frac 1{ F_{\td g}(t_0)}>1$.   Rescale the
metric $\td g(t)$ by \beq   g(x, t)=A\,\td g \left(x, \frac {t}{A^2}+t_0 \right),\quad t\in
\left[A^2(-2-t_0), A^2(T-t_0) \right].\label{eq:nor}\eeq Then we obtain a solution $g(t)$ with existence time period containing $[-2, K]$ for some positive $K$.
Moreover, we have $F_g(0)=1$. In the following, we will calculate
the derivative of $F_g(t)$ with respect to $t$. Since $F_g(t)$ might not
be differentiable, we will use the Dini derivative:
\beq \frac{d^-}{dt}f(t):=\liminf_{\ee\ri 0^+}\frac
{f(t+\ee)-f(t)}{\ee},\quad \frac{d^+}{dt}f(t):=\limsup_{\ee\ri 0^+}\frac
{f(t+\ee)-f(t)}{\ee}.
\label{eq:Dini}\eeq

The following result is the key estimate on the curvature scale.
\begin{lemma}\label{lem:800}
Suppose $\{(M,g(t)), -2 \leq t \leq K,  0 \leq  K\}$ is a Calabi flow solution, $F_g(0)=1$.
Then  at time $t=0$  we have \beq
 \frac {d^-}{dt}F_g(t)\geq \min\left\{0,\, -2 \frac {d^+}{dt}Q_g(t)\right\}.
\label{eq705} \eeq
\end{lemma}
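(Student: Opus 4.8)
The plan is to bound $F_g(\epsilon)$ from below for small $\epsilon>0$ by exhibiting, for each prescribed $\delta>0$, an explicit admissible length $s(\epsilon)$ slightly below $1$ that competes in the definition of $F_g(\epsilon)$. Write $L:=\frac{d^+}{dt}Q_g(t)\big|_{t=0}$; if $L=+\infty$ the claimed inequality reads $\frac{d^-}{dt}F_g(0)\ge-\infty$ and there is nothing to prove, so assume $L<+\infty$. First I record the consequence of $F_g(0)=1$: the admissible set $\{s>0:\sup_{M\times[-s,0]}|\Rm|^2\le s^{-1}\}$ is an interval with right endpoint $F_g(0)=1$, so letting $s\uparrow1$ and using continuity of $s\mapsto\sup_{M\times[-s,0]}|\Rm|^2$ gives $\sup_{M\times[-1,0]}|\Rm|^2\le1$; in particular $Q_g(t)\le1$ for all $t\in[-1,0]$.

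Next, set $\beta:=\min\{0,-2L\}\le0$ and define the candidate $s(\epsilon):=1+(\beta-\delta)\epsilon$, which is $<1$ for $\epsilon>0$. The interval occurring in the definition of $F_g(\epsilon)$ is then $[\epsilon-s(\epsilon),\epsilon]=[-1+(1-\beta+\delta)\epsilon,\ \epsilon]$, whose left endpoint exceeds $-1$, so for $\epsilon$ small it lies inside $[-2,K]$, the time interval of the flow. I split the supremum of $|\Rm|^2$ over this interval at $t=0$. On $[\epsilon-s(\epsilon),0]\subset[-1,0]$ we have $|\Rm|^2\le1<s(\epsilon)^{-1}$ because $s(\epsilon)<1$. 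On $[0,\epsilon]$ I use the upper Dini derivative: by the definition of $\limsup$, for every $\delta'>0$ there is $\eta>0$ with $Q_g(h)\le1+(L+\delta')h$ for all $h\in(0,\eta)$, hence (using $Q_g(0)\le1$ and $t\le\epsilon$) $Q_g(t)\le1+\max\{L+\delta',0\}\,\epsilon$ on $[0,\epsilon]$ once $\epsilon<\eta$, so $\sup_{M\times[0,\epsilon]}|\Rm|^2\le\big(1+\max\{L+\delta',0\}\,\epsilon\big)^2$. Since $\beta-\delta<0$, one has $s(\epsilon)^{-1}=\big(1-(\delta-\beta)\epsilon\big)^{-1}\ge1+(\delta-\beta)\epsilon=1+\big(\delta+2\max\{L,0\}\big)\epsilon$; choosing $\delta'=\delta/4$ when $L\ge0$ and $\delta'$ small enough that $L+\delta'<0$ when $L<0$, one has $2\max\{L+\delta',0\}<\delta+2\max\{L,0\}$ in both cases, which leaves slack to absorb the $O(\epsilon^2)$ term and yields $\big(1+\max\{L+\delta',0\}\epsilon\big)^2\le s(\epsilon)^{-1}$ for all sufficiently small $\epsilon$.

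Combining the two pieces, $s(\epsilon)$ is admissible, so $F_g(\epsilon)\ge s(\epsilon)=1+(\beta-\delta)\epsilon$; therefore $\frac{F_g(\epsilon)-F_g(0)}{\epsilon}\ge\beta-\delta$, and taking $\liminf_{\epsilon\to0^+}$ and then $\delta\downarrow0$ gives $\frac{d^-}{dt}F_g(t)\big|_{t=0}\ge\beta=\min\{0,-2\frac{d^+}{dt}Q_g(t)\big|_{t=0}\}$, which is the assertion. The only step requiring genuine care is the second one: one must upgrade the purely infinitesimal $\limsup$-bound on $Q_g$ to an honest pointwise bound on all of $[0,\epsilon]$, and then keep the quadratic correction in the expansion of $s(\epsilon)^{-1}$ under control uniformly; this is elementary bookkeeping once the competitor $s(\epsilon)=1+(\beta-\delta)\epsilon$ has been written down, and — somewhat notably — the pointwise evolution inequality $(\ref{eq:Rm2})$ for $|\Rm|$ is not needed for this particular step.
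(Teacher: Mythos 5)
Your proof is correct, and it takes a genuinely different route from the paper's. The paper proves the lemma by a five-case analysis of how the constraint defining $F_g$ is saturated just after $t=0$: it locates times $t_i\downarrow 0$ at which either $F_g(t_i)=1$, or $Q_g(t_i)^2F_g(t_i)=1$, and in the latter (critical) case differentiates the identity $F_g=Q_g^{-2}$ to produce the factor $-2Q_g^{-3}\frac{d^+}{dt}Q_g$, with an auxiliary reduction (their case (4), choosing times $t_i'$) needed when the supremum in the definition of $F_g(t_i)$ is attained strictly inside the window. You instead exhibit, for each $\delta>0$, the explicit competitor $s(\epsilon)=1+(\beta-\delta)\epsilon$ and verify directly that it is admissible for $F_g(\epsilon)$, splitting the window at $t=0$ and using only two inputs: $\sup_{[-1,0]}Q_g\le 1$ (a correct consequence of $F_g(0)=1$ and continuity) and the first-order upper bound on $Q_g$ on $[0,\epsilon]$ coming from the finite upper Dini derivative. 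Your bookkeeping with $\delta'$ and the $O(\epsilon^2)$ term is sound, the degenerate cases $L=+\infty$ and $L=-\infty$ are handled, and the conclusion $F_g(\epsilon)\ge s(\epsilon)$ immediately yields the stated Dini inequality. What your approach buys is the elimination of the case analysis entirely (in particular the paper's delicate case (4)); what the paper's approach buys is a slightly sharper picture of \emph{when} each alternative in $\min\{0,-2\frac{d^+}{dt}Q_g\}$ is the active one, namely whether the curvature scale is pinned by the relation $Q_g^2F_g=1$ at nearby times. Either argument suffices for the way the lemma is used downstream (Lemma~\ref{cor:802}).
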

\begin{proof}
  By the definition of curvature scale,
 there exists  time $t \in [-1, 0]$ such that $Q_g(t)=1$ and we
 denote by
 $t_0$ the maximal time $t\in [-1, 0]$ with this property.
 There are several cases to consider:

(1). Suppose  $Q_g(0)<1$ and $t_0>-1$. Then there is a constant $\dd\in (0, t_0+1)$
such that
 $Q_g(t)<1$ for all $t\in (0, \dd).$  Thus, for any $t\in [0, \dd)$ we have $F_g(t)=1$. So we obtain
the derivative of $F_g(t)$ at $t=0$,
    \beq
      \frac{d^-}{dt}F_g(t) =0.  \label{eq802a} \eeq

(2). Suppose $Q_g(0)<1$ and $t_0=-1$. By the definition of $t_0$, there exists a small constant
$\dd>0$ such that for
any $t\in (-1, \dd)$ we have $Q_g(t)<1.$ This implies that
$F_g(t)>1$ for $t\in (0, \dd)$ when $\dd$ small. In other words, we
have
\beq
\left. \frac {d^-}{dt}F_g(t)\right|_{t=0}\geq 0.
\eeq

(3). Suppose $Q_g(0)=1$ and there is a small constant $\dd>0$ and a sequence of times
$t_i\in (0, \dd)$ with $t_i\ri 0$ such that
\beq Q_g(t_i) \geq 1,\quad Q_g(t_i)^2F_g(t_i)=1,\quad \forall\;i\geq
1. \label{eq:106}\eeq
Therefore, we have the equality
\beqs \liminf_{i\ri +\infty}\frac{F_g(t_i)-F_g(0)}{t_i}&=& \liminf_{i\ri +\infty}
\frac {Q_g(0)+Q_g(t_i)}{Q_g(0)^2Q_g(t_i)^2}\cdot \frac {Q_g(0)-Q_g(t_i)}{t_i}\\
&=&-\frac 2{Q_g(0)^3}\limsup_{i\ri +\infty}\frac {Q_g(t_i)-Q_g(0)}{t_i}\\&\geq& \left. -\frac 2{Q_g(0)^3}\frac{d^+}{dt}Q_g\right|_{t=0}.
\eeqs

(4). Suppose $Q_g(0)=1$ and there is a small constant $\dd>0$ and a sequence of times
$t_i\in (0, \dd)$ with $t_i\ri 0$ such that
\beq Q_g(t_i) \geq 1,\quad Q_g(t_i)^2F_g(t_i)<1,\quad \forall\;i\geq
1. \label{eq:102}\eeq
Since $F_g(t)$ is a continuous function and $F_g(0)=1$, we assume that
\beq |F_g(t)-1|<\ee_0,\quad \forall\,t\in (0, \dd) \label{eq:103}\eeq
for some small $\ee_0>0.$ For each time $t_i$, (\ref{eq:102}) implies that
\beq Q_g(t_i)^2<\frac 1{F_g(t_i)}=\sup_{[t_i-F_g(t_i),
t_i]}Q_g(t)^2. \label{eq:104}\eeq
Since the constant $\dd$ is small and the inequality (\ref{eq:103}) holds, we
have
$t_i-F_g(t_i)<0<t_i$. Using the fact that $Q_g(0)=1$ and (\ref{eq:104}) we have  $\frac 1{F_g(t_i)}\geq
1$ and so $F_g(t_i)\leq 1.$

If there exists time $t_{i_0}$ such that $F_g(t_{i_0})=1$, then for
all $t\in (0, t_{i_0})$ we have $F_g(t)=1$. Thus, the inequality
(\ref{eq705}) holds. On the other hand, if for all integer $i\geq 1$
the inequality $F_g(t_i)<1$ holds, then we can find a maximal time
$t_i'\in (0, t_i)$ such that
\beq
Q_g(t_i')=\sup_{[t_i-F_g(t_i),
t_i]}Q_g(t)^2. \label{eq:105}
\eeq Moreover, since $Q(t)\leq 1$ for $t\in [-1, 0]$ and
$[t_i'-F(t_i), t_i']\subset [-1, t_i]$, we have
$$Q_g(t_i')=\sup_{[t_i'-F_g(t_i),
t_i']}Q_g(t)^2=\frac 1{F_g(t_i)},$$
where we used (\ref{eq:104}) and (\ref{eq:105}) in the last
equality. Therefore, at time $t_i'$ we have $F_g(t_i')=F_g(t_i)$ and the identity
$$Q_g(t_i')F_g(t_i')=Q_g(t_i')F_g(t_i)=1.$$
Therefore, there exists a sequence of times $\{t_i'\}(t_i'\in (0, t_i))$ with $t_i'\ri 0 $
 and
 satisfying the
conditions (\ref{eq:106}) in item (3). Note that
$$\frac {F(t_i)-F(0)}{t_i}=\frac {F(t_i')-F(0)}{t_i'}\cdot \frac {t_i'}{t_i}\geq
\frac {F(t_i')-F(0)}{t_i'},$$ where we used the fact that
$F(t_i')<F(0)=1$ and $t_i'<t_i$. Therefore, we have
$$\liminf_{i\ri +\infty}\frac{F_g(t_i)-F_g(0)}{t_i}\geq \liminf_{i\ri +\infty}\frac{F_g(t_i')-F_g(0)}{t_i'}
\geq \left. -\frac 2{Q_g(0)^3}\frac{d^+}{dt}Q_g \right|_{t=0}. $$

(5). Suppose $Q_g(0)=1$ and there is a small constant $\dd>0$ and a sequence of times
$t_i\in (0, \dd)$ with $t_i\ri 0$ such that $Q_g(t_i)<1$.
 Then by the definition of $F_g(t)$, we have $F_g(t_i)\geq 1$. Thus,
we have the inequality
$$\liminf_{i\ri +\infty}\frac{F_g(t_i)-F_g(0)}{t_i}\geq 0. $$

Combining the above cases, (\ref{eq705}) holds and the theorem is
proved.

\end{proof}

\begin{lemma}\label{lem:801}

Suppose $\{(M,  g(t)), -2 \leq t \leq K, 0 \leq K\}$ is a Calabi flow solution with $F_g(0)=1$.    Then at time $t=0$ we have
  \begin{align*}
  \left|\frac{dQ_g}{dt}\right| \leq C \min \left\{1, P_g^{\al}, O_g^{\al}
  \right\},
    \end{align*}
where $\alpha$ be any number in $(0,1)$ and $C=C(\al, n)$.
\end{lemma}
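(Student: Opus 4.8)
The plan is to read the estimate off the evolution inequality (\ref{eq:Rm2}) once the two quantities on its right‑hand side have been controlled at $t=0$. The first observation is that $F_g(0)=1$ forces $Q_g(t)\le 1$ on the whole interval $[-1,0]$: for each $s<1$ one has $\sup_{M\times[-s,0]}|\Rm|^2\le s^{-1}$, and letting $s\uparrow 1$ while using continuity of $|\Rm|$ on the compact slab $M\times[-1,0]$ gives $\sup_{M\times[-1,0]}|\Rm|^2\le 1$; in particular $|\Rm|(\cdot,0)\le 1$. Since $t\mapsto Q_g(t)=\max_M|\Rm|_{g(t)}$ is locally Lipschitz, I would combine the standard argument for differentiating a maximum with (\ref{eq:Rm2}) to obtain
\begin{align*}
   \left|\frac{dQ_g}{dt}\right|(0)\ \le\ \sup_M\Bigl(|\nabla^4 S|+c(n)\,|\Rm|\,|\nabla^2 S|\Bigr)(0)\ \le\ \sup_M|\nabla^4 S|(0)+c(n)\,P_g(0),
\end{align*}
using $|\Rm|(\cdot,0)\le 1$ and $\sup_M|\nabla^2 S|(0)=P_g(0)$. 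Thus everything reduces to estimating $\sup_M|\nabla^4 S|(0)$ and $P_g(0)$.

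Both estimates are available precisely because the curvature is bounded on $[-1,0]$. I would first apply Theorem~\ref{thm:GA04_JS} to the time‑translated solution $\{g(t-1)\mid t\in[0,1]\}$, whose curvature is bounded by $K=1$, obtaining $|\nabla^l\Rm|(\cdot,0)\le C(l,n)$ for every $l$. Plugging $l=1,2,4$ into (\ref{eq:Rm}) — which expresses $\nabla\bar\nabla\nabla\bar\nabla S$ through $\Delta^2\Rm$, $\nabla^2\Rm*\Rm$ and $\nabla\Rm*\nabla\Rm$ — and tracing $\Rm$ twice with the parallel metric to recover $S_{,i\bar j}$, this yields the a priori bounds $\sup_M|\nabla^4 S|(0)\le C(n)$ and $P_g(0)\le C(n)$; the first of these is already the alternative $\le C$ in the $\min$. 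I would then apply Proposition~\ref{lem:003} with $s=1$, and with any exponent $\beta\in(0,1)$ in place of its $\alpha$, to obtain the interpolation estimates
\begin{align*}
   \sup_M|\nabla^4 S|(0)\le C(\beta,n)\,P_g(0)^{\beta},\qquad P_g(0)\le C(\beta,n)\,O_g(0)^{\beta}.
\end{align*}

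Finally I would assemble the pieces. Fixing $\alpha\in(0,1)$ and using the first interpolation estimate with $\beta=\alpha$ together with the a priori bound $P_g(0)\le C(n)$ to absorb the linear term via $c(n)P_g(0)=c(n)P_g(0)^{1-\alpha}P_g(0)^{\alpha}\le C(\alpha,n)P_g(0)^{\alpha}$, I get $\left|\frac{dQ_g}{dt}\right|(0)\le C(\alpha,n)P_g(0)^{\alpha}$; replacing $\alpha$ by $\sqrt{\alpha}$ here and then inserting $P_g(0)\le C(\alpha,n)O_g(0)^{\sqrt{\alpha}}$ gives $\left|\frac{dQ_g}{dt}\right|(0)\le C(\alpha,n)O_g(0)^{\alpha}$. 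Together with the bound $\le C(n)$, these three estimates are exactly $\left|\frac{dQ_g}{dt}\right|(0)\le C(\alpha,n)\min\{1,\,P_g^{\alpha},\,O_g^{\alpha}\}(0)$. I expect the only delicate point to be the bookkeeping in the first paragraph: verifying that the $|\nabla^2 S|$ in (\ref{eq:Rm2}) is the complex Hessian measured by $P_g$, that the $|\nabla^4 S|$ there is the fourth‑order quantity $\nabla\bar\nabla\nabla\bar\nabla S$ controlled both by (\ref{eq:Rm}) and by Proposition~\ref{lem:003}, and that $Q_g$ may indeed be differentiated in the stated (e.g.\ Dini) sense; everything after that is elementary interpolation.
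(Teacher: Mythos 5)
Your proposal is correct and follows essentially the same route as the paper: use $F_g(0)=1$ to get $Q_g\le 1$ on $[-1,0]$, invoke Theorem~\ref{thm:GA04_JS} for higher-order curvature bounds, apply Proposition~\ref{lem:003} for the interpolation estimates, and read the conclusion off (\ref{eq:Rm2}). You merely spell out the details the paper compresses into ``adjusting $\alpha$ in different inequalities if necessary'' (the absorption of the linear term $c(n)P_g$ via the a priori bound $P_g\le C(n)$, and the $\sqrt{\alpha}$ exponent adjustment), which is a faithful expansion rather than a different argument.
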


\begin{proof} Suppose that $F(0)=1$. By the definition of regularity
scale and Theorem~\ref{thm:GA04_JS}, all higher order derivatives of
the Riemannian curvature tensor are bounded for $t\in [-\frac 12, 0]$.
By Proposition \ref{lem:003}, for any $\al\in (0, 1)$ we have the
estimates
  \beqs
    \max_{M} |\nabla \bar{\nabla}\nabla \bar{\nabla} S| &\leq& C\max_{M} |\nabla \bar{\nabla} S|^{\al} ,\\
       \max_{M} |\nabla \bar{\nabla} S| &\leq & C\max_{M} |S|^{\al}.
\eeqs
Therefore, by the inequality  (\ref{eq:Rm2}) of $|\Rm|$, adjusting $\alpha$ in different inequalities if necessary, we have
    \begin{align}
       \left|\frac{d}{dt}|\Rm| \right| \leq C \min\left\{ 1, \max_{M} |\nabla \bar{\nabla} S|^{\al}, \max_{M}
       |S|^{\al}
       \right\}. \no
    \end{align} The lemma is proved.

    \end{proof}

A direct corollary of the above results is
    \begin{lemma}\label{cor:802} Let $\al\in (0, 1)$. Suppose $\left\{(M, \td g(t)), -2 \leq t < T,  0 \leq T \right\}$ is
 a Calabi flow solution.  For any $t_0\in [-1, T)$ with $F_{\td g}(t_0)\in (0, 1)$,
we have the following inequality at time $t_0$
    \begin{align}
     \frac{d^-}{dt}F_{\td g} \geq   -C\min \left\{1, P_{\td  g}^{\al}Q_{\td g}^{2(1-\al)}F_{\td g},
     O_{\td  g}^{\al}Q_{\td  g}^{2-\al}F_{\td g} \right\},
    \label{eqn:CSH_3}
    \end{align} where $C=C(\al, n)>0$ is  a constant.
    \end{lemma}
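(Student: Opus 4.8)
The plan is to parabolically rescale the Calabi flow so that the normalized curvature window $F_g(0)=1$ holds at the point in question, then play Lemma~\ref{lem:800} and Lemma~\ref{lem:801} off against each other, and finally undo the rescaling. Concretely: fix $t_0$ with $\lambda:=F_{\tilde g}(t_0)\in(0,1)$, set $A:=\lambda^{-1/2}>1$, and define $g(x,t):=A\,\tilde g(x,A^{-2}t+t_0)$ as in the normalization preceding Lemma~\ref{lem:800}. This is again a Calabi flow; since $t_0\geq -1$ and $\lambda<1$, its time interval contains $[-1,0]$, which is all the proofs of Lemmas~\ref{lem:800}--\ref{lem:801} actually use, and Definition~\ref{RS} gives $F_g(0)=A^2 F_{\tilde g}(t_0)=1$. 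Under the $4$-th order parabolic scaling $g\mapsto A g$, $t\mapsto A^2 t$ one has $Q_g(0)=A^{-1}Q_{\tilde g}(t_0)$, $O_g(0)=A^{-1}O_{\tilde g}(t_0)$, $P_g(0)=A^{-2}P_{\tilde g}(t_0)$, and $\tfrac{d^-}{dt}F_g(0)=\tfrac{d^-}{dt}F_{\tilde g}(t_0)$ (the factor $A^2$ in $F_g=A^2F_{\tilde g}$ cancelling the reparametrization of time). Thus it suffices to prove $\tfrac{d^-}{dt}F_g(0)\geq -C\min\{1,P_g(0)^\alpha,O_g(0)^\alpha\}$ for any Calabi flow with $F_g(0)=1$, and then substitute.

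For the normalized statement I would split on $Q_g(0)$, which satisfies $Q_g(0)\leq 1$ by Definition~\ref{RS}. If $Q_g(0)<1$ we are in case (1) or (2) of the proof of Lemma~\ref{lem:800}, so $\tfrac{d^-}{dt}F_g(0)\geq 0$ and there is nothing to do. If $Q_g(0)=1$, Lemma~\ref{lem:800} gives $\tfrac{d^-}{dt}F_g(0)\geq\min\{0,-2\tfrac{d^+}{dt}Q_g(0)\}\geq -2\bigl|\tfrac{d^+}{dt}Q_g(0)\bigr|$, and Lemma~\ref{lem:801}---whose proof bounds the time derivative of $|\Rm|$ pointwise via (\ref{eq:Rm2}) and hence controls the upper Dini derivative of $Q_g=\sup_M|\Rm|$---gives $\bigl|\tfrac{d^+}{dt}Q_g(0)\bigr|\leq C(\alpha,n)\min\{1,P_g(0)^\alpha,O_g(0)^\alpha\}$. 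Combining, $\tfrac{d^-}{dt}F_g(0)\geq -2C(\alpha,n)\min\{1,P_g(0)^\alpha,O_g(0)^\alpha\}$.

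It remains to translate back, and the key observation is that in the only nontrivial case, $Q_g(0)=1$, the scaling factor is pinned down: $Q_{\tilde g}(t_0)=A$, i.e. $Q_{\tilde g}(t_0)^2 F_{\tilde g}(t_0)=1$. Feeding $A=Q_{\tilde g}(t_0)=F_{\tilde g}(t_0)^{-1/2}$ into the transformation rules above produces the exact identities $P_g(0)^\alpha=P_{\tilde g}(t_0)^\alpha\,Q_{\tilde g}(t_0)^{2(1-\alpha)}\,F_{\tilde g}(t_0)$ and $O_g(0)^\alpha=O_{\tilde g}(t_0)^\alpha\,Q_{\tilde g}(t_0)^{2-\alpha}\,F_{\tilde g}(t_0)$ (and the $1$ in the minimum is scale-invariant), so substituting these together with $\tfrac{d^-}{dt}F_g(0)=\tfrac{d^-}{dt}F_{\tilde g}(t_0)$ into the inequality just obtained yields (\ref{eqn:CSH_3}) with $C=2C(\alpha,n)$. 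The main---though mild---obstacle is exactly this homogeneity accounting: one has to notice that the estimate has content only when $Q_g(0)=1$, at which point the rescaled quantities $P_g(0)^\alpha$, $O_g(0)^\alpha$ collapse precisely onto the combinations $P_{\tilde g}^\alpha Q_{\tilde g}^{2(1-\alpha)}F_{\tilde g}$, $O_{\tilde g}^\alpha Q_{\tilde g}^{2-\alpha}F_{\tilde g}$ appearing in the statement; all Dini-derivative technicalities are inherited from Lemmas~\ref{lem:800} and~\ref{lem:801} and require no new argument.
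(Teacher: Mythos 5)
Your proof is correct and follows essentially the same route as the paper: rescale by $A^2=1/F_{\tilde g}(t_0)$ to normalize $F_g(0)=1$, combine Lemma~\ref{lem:800} with Lemma~\ref{lem:801}, and transfer the inequality back via scale invariance of both sides. You merely make explicit two points the paper leaves implicit --- the case split on whether $Q_g(0)<1$ (where the estimate is vacuous) or $Q_g(0)=1$ (where the normalized quantities collapse onto the combinations $P^{\al}Q^{2(1-\al)}F$ and $O^{\al}Q^{2-\al}F$), and the fact that only the interval $[-1,0]$ of the rescaled flow is actually needed.
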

\begin{proof}We rescale the metric $\td g(t)$ by (\ref{eq:nor}) with
$A^2=\frac 1{F_{\td g}(t_0)}.$ Then we obtain a solution $g(s)$ to
the
Calabi flow with $F_g(0)=1.$ By Lemma \ref{lem:800} and Lemma
\ref{lem:801}, at $s=0$ we have the following inequality for $g(s)$:
\beq
 \frac{d^-}{dt}F_{g}
\geq -C
\min \left\{1, P_{ g}^{\al}Q_g^{2(1-\al)}F_{g},  O_g^{\al}Q_{ g}^{2-\al}F_{g} \right\}. \label{eq706}
\eeq
Clearly, (\ref{eqn:CSH_3}) follows directly from (\ref{eq706}) and rescaling, since both sides of (\ref{eq706})  are scaling invariant.
\end{proof}

\begin{lemma}\label{cly:SH14_1} Fix $\al\in (0, 1)$.
  Suppose $\left\{ (M^n,g(t)), -1 \leq t \leq K, 0 \leq K \right\}$ is a Calabi flow solution such that
   \begin{enumerate}
     \item[(1)] $Q(0)=1$, where $Q=Q_g$;
     \item[(2)] $Q(t) \leq 2$ for all $t \in [-1,0]$;
     \item[(3)] $|\log Q(t)| \leq \log 2$ for every $t \in [0,K]$ and $|\log Q(K)|=\log 2$.
   \end{enumerate} Then there
 are  constants $A=A(n, \al)$  and $\ee(n, \al)$  such that
 \begin{align}
  &\int_{0}^{K} O^{\alpha}Q^{2-\alpha} dt \geq \frac{\epsilon}{A},  \label{eqn:SH14_4}\\
  &\left|\frac{d}{dt} Q(t) \right| \leq AO^{\alpha} Q^{3-\alpha}, \quad \forall \; t \in [0,K]. \label{eqn:SH15_2}
\end{align}

 \end{lemma}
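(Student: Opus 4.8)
The plan is to prove the pointwise bound (\ref{eqn:SH15_2}) first and then deduce the integral estimate (\ref{eqn:SH14_4}) from it by a short integration of $\frac{d}{dt}\log Q$.

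First I would record that the curvature scale is bounded below on $[0,K]$: condition (2) gives $Q(t)\le 2$ on $[-1,0]$ and condition (3) gives $Q(t)\le 2$ on $[0,K]$, so for every $t_\ast\in[0,K]$ and every $s\in(0,\tfrac14]$ (note $t_\ast-s\ge-1$) one has $\sup_{M\times[t_\ast-s,t_\ast]}|\Rm|^2\le 4\le s^{-1}$, hence $F_g(t_\ast)\ge\tfrac14$. Now fix $t_\ast\in[0,K]$, set $A=\max\{1,\,F_g(t_\ast)^{-1/2}\}\in[1,2]$, and rescale as in (\ref{eq:nor}): $\hat g(x,t)=A\,g(x,\,t_\ast+A^{-2}t)$. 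Then $F_{\hat g}(0)=A^{2}F_g(t_\ast)\ge 1$, and $\hat g$ exists on $[-1,0]$ because $A^{2}(1+t_\ast)\ge 1$. Since the proof of Lemma~\ref{lem:801} uses only that $|\Rm|\le 1$ on a fixed backward interval — which $F_{\hat g}(0)\ge 1$ supplies — it applies to $\hat g$ at $t=0$ and gives $\bigl|\frac{d}{dt}Q_{\hat g}\bigr|(0)\le C\,O_{\hat g}(0)^{\al}$ with $C=C(\al,n)$. As $Q$, $O$ and $\frac{d}{dt}Q$ transform under (\ref{eq:nor}) by the factors $A^{-1}$, $A^{-1}$ and $A^{-3}$, undoing the rescaling yields $\bigl|\frac{d}{dt}Q_g\bigr|(t_\ast)=A^{3}\bigl|\frac{d}{dt}Q_{\hat g}\bigr|(0)\le C\,A^{3-\al}O_g(t_\ast)^{\al}\le 8C\,O_g(t_\ast)^{\al}$; finally $Q_g(t_\ast)\ge\tfrac12$ (condition (3)) lets me absorb this into $64C\,O_g(t_\ast)^{\al}Q_g(t_\ast)^{3-\al}$, which is (\ref{eqn:SH15_2}) with $A:=64\,C(\al,n)$.

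For (\ref{eqn:SH14_4}) I would note that $Q(t)$ is Lipschitz in $t$ on $[0,K]$ by (\ref{eq:Rm2}) and smoothness of the flow, so $\log Q$ is absolutely continuous there; using conditions (1) and (3),
\[
\log 2=\bigl|\log Q(K)-\log Q(0)\bigr|\le\int_0^K\frac{|Q'(t)|}{Q(t)}\,dt\le A\int_0^K O^{\al}Q^{2-\al}\,dt,
\]
the last step being (\ref{eqn:SH15_2}). Thus (\ref{eqn:SH14_4}) holds with $\epsilon:=\log 2$ (or any smaller positive constant, if a genuine dependence $\epsilon=\epsilon(n,\al)$ is wanted).

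The one point that needs care is the assertion used above that Lemma~\ref{lem:801} survives with the same constant $C(\al,n)$ when $F_{\hat g}(0)\ge 1$ rather than $=1$; this is harmless, since enlarging the curvature scale only decreases $|\Rm|$ on the interval $[-\tfrac12,0]$ appearing in that proof, and hence also decreases, via Theorem~\ref{thm:GA04_JS} and Proposition~\ref{lem:003}, all the derivatives of $S$ entering the bound for $\frac{d}{dt}|\Rm|$ in (\ref{eq:Rm2}). I expect this rescaling and edge-case bookkeeping, rather than any new idea, to be the only real work in the argument.
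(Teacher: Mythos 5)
Your proof is correct, and it splits naturally into a part that matches the paper and a part that does not. For (\ref{eqn:SH15_2}) you are essentially doing what the paper does: the paper simply invokes the ``scaling invariant version'' of Proposition~\ref{lem:003} together with (\ref{eq:Rm2}), and your explicit rescaling to $F_{\hat g}(0)\ge 1$ followed by Lemma~\ref{lem:801}, with the bookkeeping of $A\in[1,2]$ and $Q\ge\tfrac12$, is exactly the content that the paper leaves implicit; your observation that the proof of Lemma~\ref{lem:801} only needs $|\Rm|\le 1$ on a backward unit interval (so $F=1$ can be relaxed to $F\ge 1$) is the right justification. For (\ref{eqn:SH14_4}), however, you take a genuinely different and more elementary route. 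The paper obtains it by combining $P\le C\,O^{\al}Q^{2-\al}$ with Lemma~\ref{lem002}, whose proof rests on the compactness/contradiction argument of Lemma~\ref{lem001} and the Calabi--Hartman regularity of isometries: a change of $Q$ by a factor of $2$ forces the metric to move a definite $C^0$-distance, which forces $\int_0^K P\,dt\ge \ee_0$. You instead integrate $\bigl|\tfrac{d}{dt}\log Q\bigr|\le A\,O^{\al}Q^{2-\al}$ directly, using $|\log Q(K)-\log Q(0)|=\log 2$ from conditions (1) and (3); since $Q=\sup_M|\Rm|$ is Lipschitz in $t$ on a compact manifold and bounded below by $\tfrac12$ on $[0,K]$, the integration of the a.e.\ derivative is legitimate. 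This bypasses Lemmas~\ref{lem001} and~\ref{lem002} entirely and gives the explicit constant $\ee=\log 2$; what the paper's heavier route buys is the intermediate estimate $\int_0^K P\,dt\ge\ee_0$ itself, which does not presuppose the interpolation bound and is what gets reused later (Proposition~\ref{prn:HG24_1}, Corollary~\ref{cor:001}). For the lemma as stated, both arguments are complete.
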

\begin{proof}Under the   assumptions, we
have the inequalities for $t\in [0, K]$
 \begin{align*}
   P(t) \leq C O^{\alpha}(t) Q^{2-\alpha}(t),\quad
   \max_M \left| \Na^4S \right| \leq C O^{\alpha}(t)Q^{3-\alpha}(t).
 \end{align*} which are the scaling invariant version of Proposition
 \ref{lem:003}. Combining this with Lemma \ref{lem002}, we have
 the   inequalities (\ref{eqn:SH14_4}) and (\ref{eqn:SH15_2}). The
 lemma is proved.

\end{proof}

The next result shows that under the scalar curvature conditions, if the curvature at some time is large enough, then the curvature before that  can be controlled.
 \begin{proposition}\label{theo:reg1}
If $\left\{ (M^n, g(t)), -1 \leq t \leq 0 \right\}$ is a Calabi flow solution with
\begin{enumerate}
  \item[(1)] the scalar curvature $|S(t)|\leq 1$ for every $t\in [-1, 0]$;
  \item[(2)] $Q(0)$ is big enough, i.e.,
   \begin{align*}
     Q(0)> \max \left\{2^{\frac{3-\alpha}{\alpha}} A^{\frac{1}{\alpha}},  2^{\frac{2(1-\alpha)}{\alpha}} \left( \frac{A}{\epsilon}\right)^{\frac{1}{\alpha}}  \right\},
   \end{align*} where $\ee$ and $A$ are constants given by Lemma \ref{cly:SH14_1}, $Q$ is $Q_g$.
\end{enumerate}
  Then we have the inequality
   \begin{align}
     Q(t) < \frac{2}{\sqrt{Q(0)^{-2}+t}}, \quad \forall \; t \in \left[-Q(0)^{-2}, 0 \right].
   \label{eqn:SH15_5}
   \end{align}
    Consequently, we have
    \begin{align}
      F(0) \geq \frac{1}{5Q(0)^2}.
    \label{eqn:SH15_6}
    \end{align}
   \label{prn:HG25_1}
 \end{proposition}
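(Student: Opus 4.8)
The plan is to prove the pointwise bound (\ref{eqn:SH15_5}) by a barrier/continuity argument and then to read off (\ref{eqn:SH15_6}) from it. Put $\rho\triangleq Q(0)^{-2}$ (so $\rho<1$, $Q(0)$ being large) and $\psi(t)\triangleq Q(t)^{2}(\rho+t)$, a bounded continuous function on $[-1,0]$ with $\psi(0)=1<4$ and $\psi(-\rho)=0$. If (\ref{eqn:SH15_5}) fails, then $B\triangleq\{t\in[-\rho,0]\colon\psi(t)\geq 4\}$ is a nonempty compact subset of the open interval $(-\rho,0)$; set $t_{1}\triangleq\max B$, so $Q(t_{1})=2(\rho+t_{1})^{-1/2}\triangleq Q_{1}$, $\rho+t_{1}=4Q_{1}^{-2}$, and $\psi(t)<4$ (i.e. $Q(t)<2(\rho+t)^{-1/2}$) for $t\in(t_{1},0]$. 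In particular $Q_{1}>2\rho^{-1/2}=2Q(0)$, which by hypothesis exceeds both $2\cdot 2^{(3-\al)/\al}A^{1/\al}=(8A)^{1/\al}$ and $2\cdot 2^{2(1-\al)/\al}(A/\ee)^{1/\al}=2^{(2-\al)/\al}(A/\ee)^{1/\al}$. Throughout I work with the Dini derivatives (\ref{eq:Dini}), since $Q$ is only Lipschitz.

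To apply \lemref{cly:SH14_1} at $t_{1}$ I need its hypothesis of a backward curvature bound $Q\leq 2Q(t_{0})$ on $[t_{0}-Q(t_{0})^{-2},t_{0}]$, but the barrier only controls $Q$ \emph{after} $t_{1}$. First I would generate the missing backward control by a downward recursion: if the bound fails at $t_{1}$, let $s_{1}$ be the largest $t\in[t_{1}-Q_{1}^{-2},t_{1})$ with $Q(t)=2Q_{1}$ (so $Q<2Q_{1}$ on $(s_{1},t_{1}]$); if it fails at $s_{1}$, let $s_{2}$ be the largest $t\in[s_{1}-(2Q_{1})^{-2},s_{1})$ with $Q(t)=4Q_{1}$; and so on, obtaining $s_{k}$ with $Q(s_{k})=2^{k}Q_{1}$, $s_{k-1}-s_{k}<(2^{k-1}Q_{1})^{-2}$, and $Q<2^{k}Q_{1}$ on $(s_{k},s_{k-1}]$. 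Since $\sum_{j\geq 0}(2^{j}Q_{1})^{-2}=\tfrac{4}{3}Q_{1}^{-2}<\tfrac{\rho}{3}$, all the $s_{k}$ lie in a fixed compact subinterval of $(-1,0)$ on which $|\Rm|$ is bounded; as $Q(s_{k})=2^{k}Q_{1}\to\infty$ this is impossible, so the recursion stops at some $s_{k_{0}}$ (with $s_{0}\triangleq t_{1}$) where the backward bound holds, and, when $k_{0}\geq 1$, also $Q<Q(s_{k_{0}})$ on $(s_{k_{0}},s_{k_{0}-1}]$ with $Q(s_{k_{0}-1})=\tfrac12 Q(s_{k_{0}})$.

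Rescaling the flow at $s_{k_{0}}$ so that the curvature there equals $1$ and letting $K$ be the first forward time at which the normalized $Q$ leaves $[\tfrac12,2]$ (which exists: $Q(0)<\tfrac12 Q_{1}$ if $k_{0}=0$, and $Q(s_{k_{0}-1})=\tfrac12 Q(s_{k_{0}})$ if $k_{0}\geq 1$), all three hypotheses of \lemref{cly:SH14_1} hold. If $k_{0}=0$, then (\ref{eqn:SH15_2}) together with $|S|\leq 1$ gives $\bigl|\tfrac{d}{dt}Q(t_{1})\bigr|\leq A\,Q_{1}^{3-\al}$, while $\psi(t)<\psi(t_{1})$ for $t>t_{1}$ and $\rho+t_{1}=4Q_{1}^{-2}$ force $\tfrac{d^{+}}{dt}Q(t_{1})\leq -\tfrac18 Q_{1}^{3}$; hence $Q_{1}^{\al}\leq 8A$, contradicting $Q_{1}>(8A)^{1/\al}$. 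If $k_{0}\geq 1$, then the scaling-invariant form of (\ref{eqn:SH14_4}) on $[s_{k_{0}},K]$, together with $|S|\leq 1$, $Q\leq 2^{k_{0}}Q_{1}$ there and $K-s_{k_{0}}<(2^{k_{0}-1}Q_{1})^{-2}$, gives $\tfrac{\ee}{A}\leq 2^{2-\al k_{0}}Q_{1}^{-\al}\leq 2^{2-\al}Q_{1}^{-\al}$, i.e. $Q_{1}\leq 2^{(2-\al)/\al}(A/\ee)^{1/\al}$, again a contradiction. Therefore $B=\varnothing$, which is (\ref{eqn:SH15_5}); and then for $s\triangleq\tfrac{1}{5Q(0)^{2}}=\tfrac{\rho}{5}<\rho$ one has $|\Rm|_{g(t)}^{2}\leq Q(t)^{2}<\tfrac{4}{\rho+t}\leq\tfrac{4}{\rho-s}=\tfrac{1}{s}$ for all $t\in[-s,0]$, so $F(0)\geq s$, which is (\ref{eqn:SH15_6}).

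I expect the genuinely delicate point to be the transfer of control across $t_{1}$: the barrier argument by itself only sees the future of $t_{1}$, whereas \lemref{cly:SH14_1} needs a short backward window of controlled curvature. The recursion ``curvature cannot keep doubling inside a fixed compact time interval'' is the mechanism that produces this window, and the care lies in choosing the rescalings, the time windows, and the powers of $2$ so that precisely the two thresholds imposed on $Q(0)$ (the first feeding the derivative estimate in the case $k_{0}=0$, the second feeding the integral estimate in the case $k_{0}\geq 1$) are what the argument consumes.
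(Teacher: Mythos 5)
Your proof is correct, and the contradictions you extract (the derivative estimate against the barrier slope, and the integral estimate against the short doubling window) consume exactly the two thresholds on $Q(0)$ in the same way the paper's proof does; but the mechanism by which you obtain the backward non-doubling hypothesis of Lemma~\ref{cly:SH14_1} is genuinely different. The paper works from the \emph{infimum} $\un{t}$ of the bad set $I$: there the backward window $[\un{t}-Q(\un{t})^{-2},\un{t}]$ lies below $\inf I$, so the barrier itself supplies $Q<\tfrac{2}{\sqrt{3}}Q(\un{t})$ on it for free, and the non-doubling property is then propagated forward across all of $I$ by a continuity/supremum argument (their Claim~\ref{clm:SH15_1}, with its two excluded cases). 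You instead work at the \emph{supremum} $t_1=\max B$, where the barrier gives no backward information, and manufacture the backward window by the doubling recursion $Q(s_k)=2^kQ_1$ with geometrically shrinking time steps, terminated by compactness of $M\times[-1,0]$ (which forces $\sup_{[-1,0]}Q<\infty$). What your route buys is that you avoid the somewhat delicate bookkeeping of the paper's propagation argument and only ever need to verify the hypotheses of Lemma~\ref{cly:SH14_1} at a single time $s_{k_0}$; what it costs is the extra recursion and the explicit check that $\sum_j(2^jQ_1)^{-2}<\rho/3$ keeps everything inside $[-1,0]$. I verified the arithmetic: $2Q(0)>(8A)^{1/\al}$ and $2Q(0)>2^{(2-\al)/\al}(A/\ee)^{1/\al}$ match the two hypotheses on $Q(0)$, the exponent count $2^{k_0(2-\al)}\cdot 2^{-2(k_0-1)}=2^{2-\al k_0}\leq 2^{2-\al}$ is right, and the final deduction $F(0)\geq\rho/5$ from $Q(t)^2<4/(\rho+t)\leq 5/\rho$ on $[-\rho/5,0]$ is exactly the paper's. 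One small point worth making explicit if you write this up: the two-sided bound $\bigl|\tfrac{d}{dt}Q\bigr|\leq AO^{\al}Q^{3-\al}$ must be read in the Dini sense for the sup-function $Q$, as the paper itself does in its Case 2.
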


 \begin{proof}
   Let  $\frac{2}{\sqrt{Q(0)^{-2}+t}}$ be a barrier function. Clearly, this function
   bounds $Q(t)$ when $t=-Q(0)^{-2}$ and $t=0$. Let $I$ be the collection of
   $t \in [-Q(0)^{-2}, 0]$ such that
   \begin{align*}
     Q(t) \geq \frac{2}{\sqrt{Q(0)^{-2}+t}}.
   \end{align*}
   We want to show that $I$ is empty.

   Suppose $I \neq \emptyset$. Clearly, $I$ is   bounded and  closed.
      Let $\underline{t}$ be the infimum of $I$. Then we have
       $\underline{t} \in I$ and
\beq
     Q(\underline{t})= \frac{2}{\sqrt{Q(0)^{-2}+\un t}}. \label{eqA004}
\eeq

   \begin{claim}
     For every $t \in I$, we have
     \begin{align}
       \sup_{s \in [t-Q(t)^{-2}, t]} Q(s) < 2 Q(t).
       \label{eqn:SH15_1}
     \end{align}
     \label{clm:SH15_1}
   \end{claim}
\begin{proof}
   First, we show that (\ref{eqn:SH15_1}) holds at $\underline{t}$. In fact, by the definition of $\un t$ for any
   $s\in [\un t-Q(\un t)^{-2}, \un t]$ we have
   \beqs
   Q(s)&<&\frac 2{\sqrt{Q(0)^{-2}+s}}\leq \frac 2{\sqrt{Q(0)^{-2}+\un t-Q(\un t)^{-2}}}= \frac 2{\sqrt{3}}Q(\un t),
   \eeqs where we used (\ref{eqA004}) in the last equality. Thus, (\ref{eqn:SH15_1}) holds at $\underline{t}$.

We next define
   \begin{align*}
     t_0 \triangleq \sup \Big\{ t\,\Big|\, \textrm{Every} \; s \in I \cap  [\underline{t}, t) \; \textrm{satisfies (\ref{eqn:SH15_1})}  \Big \}.
   \end{align*}
   Clearly, $t_0 \in I$.  In order to prove the Claim, we have  to show that $t_0=\un{t}$, which denotes the supreme of $I$.
   Actually, at time $t_0$,  one of following cases must appear according to the definition of $t_0$.

   \textit{Case 1.}  $\displaystyle  \sup_{s \in [t_0-Q(t_0)^{-2}, t_0]} Q(s) = 2 Q(t_0)$.

   \textit{Case 2.}  $\displaystyle Q(t_0)=\frac{2}{\sqrt{Q(0)^{-2}+t_0}}$.

\noindent We will show that both cases will never happen if $t_0 <\un{t}$.

 (1).  If Case 1 happens, then for some $s_0 \in [t_0-Q(t_0)^{-2}, t_0]$  we have
 \beq
     Q(s_0)=2Q(t_0). \label{eq:A005}
\eeq
We now show that $s_0\in I$. In fact, since $t_0\in I$, we have
\beq
Q(s_0)=2Q(t_0)\geq \frac {4}{\sqrt{Q(0)^{-2}+t_0}}\geq \frac 4{\sqrt{
Q(0)^{-2}+s_0+Q(t_0)^{-2}}}.
\eeq This implies that
$$Q(s_0)=2Q(t_0)\geq \frac {2\sqrt{3}}{\sqrt{Q(0)^{-2}+s_0}}\geq \frac {2 }{\sqrt{Q(0)^{-2}+s_0}}$$
Thus, we have $s_0\in I. $

  By the assumption of $t_0$, we have
   \begin{align*}
     \sup_{s \in [s_0-Q(s_0)^{-2}, s_0]} Q(s) \leq 2Q(s_0).
   \end{align*}
   Then $Q$ drops from $Q(s_0)=2Q(t_0)$ to $Q(t_0)$ in a time period $|t_0-s_0|$. By Lemma \ref{cly:SH14_1} we have
   \begin{align*}
     \int_{s_0}^{t_0} \,O_g^{\alpha} Q^{2-\alpha} dt > \frac{\epsilon}{A}.
   \end{align*}
   Note that $|t_0-s_0|<Q(t_0)^{-2}$ and $O_g(t)\leq 1, Q\leq 2Q(t_0)$ for $t\in [s_0, t_0]$, we then obtain
   \begin{align*}
     2^{2-\alpha} Q(t_0)^{-\alpha} > \frac{\epsilon}{A}.
   \end{align*}
  Combining this with the inequality $Q(t_0) \geq \frac{2}{\sqrt{Q(0)^{-2}+t_0}}$, we have
   \begin{align*}
     Q(0) < 2^{\frac{2(1-\alpha)}{\alpha}} \left( \frac{A}{\epsilon} \right)^{\frac{1}{\alpha}},
   \end{align*}
   which contradicts  the choice of $Q(0)$.  Therefore, Case 1 cannot happen.

 (2).  If Case 2 happens, by the definition of $t_0$  we  have
   \begin{align*}
     \frac{d}{dt}Q \leq \frac{d}{dt} \left( \frac{2}{\sqrt{Q(0)^{-2}+t}} \right)
   \end{align*}
   at time $t_0$. Recall that $Q(t_0)=\frac{2}{\sqrt{Q(0)^{-2}+t_0}}$ and
   \begin{align}
     \frac{d}{dt} Q \geq -AO_g^{\alpha} Q^{3-\alpha}
   \label{eqn:SH15_3}
   \end{align}
   at time $t_0$.   It follows that
   \begin{align*}
     -A O(t_0)^{\alpha} Q(t_0)^{3-\alpha} \leq -\frac{1}{\left( Q(0)^{-2} +t_0\right)^{\frac{3}{2}}},
   \end{align*}
   which implies that
   \begin{align}
      Q(0) \leq A^{\frac{1}{\alpha}} \cdot 2^{\frac{3-\alpha}{\alpha}},
   \label{eqn:SH15_4}
   \end{align}
   which contradicts  the choice of $Q(0)$. Thus, Case 2 cannot happen and the Claim is proved.  \\

   \end{proof}

    Since $I$ is closed, we have the supreme   $\un{t} \in I$.  It is clear that $\un{t}<0$ since
   \begin{align*}
     Q(0) < \frac{2}{\sqrt{Q(0)^{-2}+0}}.
   \end{align*}
   Then at time $\un{t}$, we have
     \begin{align*}
     \frac{d}{dt}Q(t) \leq \frac{d}{dt} \left( \frac{2}{\sqrt{Q(0)^{-2}+t}} \right).
   \end{align*}
   Combining this with (\ref{eqn:SH15_3}), we  get the inequality (\ref{eqn:SH15_4}) again. So this contradiction implies that
   our assumption $I \neq \emptyset$ is wrong. The Proposition is  proved.
 \end{proof}

Using Proposition \ref{prn:HG25_1}, we can estimate the curvature scale when the curvature at some time is not large.
 \begin{proposition}\label{theo:reg2}
 Suppose $\left\{ (M^n, g(t)), -2 \leq t \leq 0 \right\}$ is a  Calabi flow solution satisfying
  \begin{itemize}
   \item  the scalar curvature $|S|(t) \leq 1$ for all $t\in [-2, 0]$.
   \item  the curvature tensor satisfies
   $$Q(0) \leq \max \left\{2^{\frac{3-\alpha}{\alpha}} A^{\frac{1}{\alpha}},  2^{\frac{2(1-\alpha)}{\alpha}} \left( \frac{A}{\epsilon}\right)^{\frac{1}{\alpha}}  \right\}. $$
 \end{itemize}
 Then there is a constant $c_0=c_0(n, \ee, A)>0$ such that
 \begin{align}
      F(0) \geq c_0(n, \ee, A).
      \label{eqn:SH14_1}
 \end{align}
  Consequently, we have
 \begin{align*}
     \sup_{x \in M} \left|\nabla^k \Rm \right|(x,0) \leq C(n, \ee, A,  k).
 \end{align*}
 \label{prn:HG25_2}
 \end{proposition}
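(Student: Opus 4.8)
The plan is to argue by contradiction and reduce everything to Proposition~\ref{prn:HG25_1}. Denote by $\Lambda$ the constant appearing on the right--hand side of hypothesis~(2), so that the hypothesis reads $Q(0)\le\Lambda$, and suppose the conclusion fails, i.e.\ $F(0)<c_0$ for a small $c_0=c_0(n,\ee,A)$ to be fixed at the end; we shall use $c_0\le\min\{\tfrac14\Lambda^{-2},\tfrac18\}$ plus one further smallness condition produced by the argument. First I would locate a time of high curvature. Since $s\mapsto\sup_{M\times[-s,0]}|\Rm|^2$ is continuous and nondecreasing while $s\mapsto s^{-1}$ is decreasing, Definition~\ref{RS} forces $\sup_{M\times[-F(0),0]}|\Rm|^2=F(0)^{-1}$, so there is $\tau_0\in[-F(0),0]$ with $Q(\tau_0)=F(0)^{-1/2}$. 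Since $Q(0)\le\Lambda<F(0)^{-1/2}$ we have $\tau_0<0$, and $\tau_0\ge-F(0)\ge-1$, so Proposition~\ref{prn:HG25_1} applies to the time--translate of the flow on $[\tau_0-1,\tau_0]\subset[-2,0]$ (its hypotheses $|S|\le1$ and $Q(\tau_0)>\Lambda$ hold). It yields the backward barrier $Q(t)<2\bigl(F(0)+t-\tau_0\bigr)^{-1/2}$ on $[\tau_0-F(0),\tau_0]$ together with $F(\tau_0)\ge\tfrac15Q(\tau_0)^{-2}=\tfrac15F(0)$.

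Next I would show that, having reached the large value $Q(\tau_0)$, the curvature needs a definite amount of time to fall by a factor $2$. From $[\tau_0,0]\subset[-F(0),0]$ we get $Q\le Q(\tau_0)$ on $[\tau_0,0]$, and since $Q(0)\le\Lambda<\tfrac12Q(\tau_0)$ there is a \emph{first} $\sigma\in(\tau_0,0)$ with $Q(\sigma)=\tfrac12Q(\tau_0)$; hence $Q(t)\in[\tfrac12Q(\tau_0),Q(\tau_0)]$ on $[\tau_0,\sigma]$. On this interval the curvature scale is comparable to $Q(t)^{-2}$: the bound $Q(t)\ge\tfrac12Q(\tau_0)$ gives at once $F(t)\le4F(0)<1$, so Lemma~\ref{cor:802} is available along $[\tau_0,\sigma]$ and, using $O\le1$ and $Q\le Q(\tau_0)=F(0)^{-1/2}$, reads $\tfrac{d^-}{dt}\log F\ge -C\,O^{\al}Q^{2-\al}\ge -CF(0)^{-\frac{2-\al}{2}}$; integrating from $\tau_0$ over a time $\le-\tau_0\le F(0)$ and using $F(\tau_0)\ge\tfrac15F(0)$ gives $F(t)\ge\tfrac15F(0)\,e^{-CF(0)^{\al/2}}\ge c_3F(0)$, whence $F(t)\ge c_3F(0)\ge\tfrac{c_3}{4}Q(t)^{-2}$. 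For each $t\in[\tau_0,\sigma]$ I would then rescale the flow so that its curvature scale at $t$ becomes $1$ --- the bound $F(t)\le4F(0)\le4c_0$ guarantees the rescaled flow still lives over $[-2,0]$ --- and apply Lemma~\ref{lem:801}; unwinding the rescaling and using $F(t)\ge\tfrac{c_3}{4}Q(t)^{-2}$ and $O\le1$ this produces the pointwise rate bound $\bigl|\tfrac{d}{dt}Q(t)\bigr|\le C'Q(t)^{3-\al}O(t)^{\al}\le C'Q(t)^{3-\al}$ for all $t\in[\tau_0,\sigma]$, with $C'=C'(n,\al)$.

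Finally I would compare $Q$ with the ODE $q'=-C'q^{3-\al}$: from $\tfrac{d}{dt}Q\ge -C'Q^{3-\al}$ on $[\tau_0,\sigma]$ and $Q(\tau_0)=F(0)^{-1/2}$ one gets $Q(\sigma)\ge\bigl(Q(\tau_0)^{-(2-\al)}+C'(2-\al)(\sigma-\tau_0)\bigr)^{-1/(2-\al)}$, and inserting $Q(\sigma)=\tfrac12Q(\tau_0)$ and rearranging yields $\sigma-\tau_0\ge\tfrac{2^{2-\al}-1}{C'(2-\al)}\,Q(\tau_0)^{-(2-\al)}=c_4F(0)^{\frac{2-\al}{2}}$. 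On the other hand $\sigma-\tau_0\le-\tau_0\le F(0)$, so $c_4F(0)^{\frac{2-\al}{2}}\le F(0)$, i.e.\ $F(0)\ge c_4^{2/\al}$. Setting $c_0:=\min\{\tfrac14\Lambda^{-2},\tfrac18,c_4^{2/\al}\}$ contradicts $F(0)<c_0$, so $F(0)\ge c_0=c_0(n,\ee,A)$. The stated estimates $\sup_{x\in M}|\nabla^k\Rm|(x,0)\le C(n,\ee,A,k)$ then follow from $\sup_{M\times[-c_0,0]}|\Rm|^2\le c_0^{-1}$ and Theorem~\ref{thm:GA04_JS}.

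The step I expect to be delicate is establishing $F(t)\asymp Q(t)^{-2}$ on $[\tau_0,\sigma]$ and extracting from it the pointwise rate bound: one must reconcile the several factors $2$ occurring in Definition~\ref{RS}, in the auxiliary lemmas of \S\ref{sec:regularity} and in Proposition~\ref{prn:HG25_1}, and check that each rescaled flow produced along the way genuinely exists on a long enough backward interval for Lemma~\ref{lem:801}. The remaining ingredients --- continuity of the curvature scale, Proposition~\ref{prn:HG25_1}, and ODE comparison --- are routine.
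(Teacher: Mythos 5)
Your argument is correct, but it takes a genuinely different and considerably longer route than the paper's. The paper's proof is a direct two-regime observation: set $L=2\Lambda$ (twice the hypothesized bound on $Q(0)$) and let $s_1$ be the last time in $[-1,0]$ with $Q(s_1)=L$ (if no such time exists, $Q\le L$ on $[-1,0]$ and one is done). By the choice of $s_1$ one has $Q\le L$ on $[s_1,0]$, while Proposition~\ref{prn:HG25_1}, applied at $s_1$ where $Q(s_1)=L>\Lambda$, gives $F(s_1)\ge\frac{1}{5L^2}$ and hence $Q\le\sqrt{5}\,L$ on $\left[s_1-\frac{1}{5L^2},s_1\right]$; combining the two regimes yields $Q\le\sqrt{5}\,L$ on $\left[-\frac{1}{5L^2},0\right]$ and the explicit bound $F(0)\ge\frac{1}{5L^2}$, with no contradiction argument and no ODE comparison. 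You instead locate the time $\tau_0$ realizing the curvature scale, apply Proposition~\ref{prn:HG25_1} there, and then show that $Q$ cannot fall from $F(0)^{-1/2}$ to $\Lambda\le\frac12 F(0)^{-1/2}$ within time $F(0)$, because the rate bound $\left|\frac{d}{dt}Q\right|\le C'Q^{3-\alpha}$ forces a halving time of order $Q(\tau_0)^{-(2-\alpha)}=F(0)^{\frac{2-\alpha}{2}}\gg F(0)$. This is in effect a re-derivation, via Lemma~\ref{cor:802} and Lemma~\ref{lem:801}, of the quantitative content of Lemma~\ref{lem002} and Lemma~\ref{cly:SH14_1} (a definite amount of $\int O^{\alpha}Q^{2-\alpha}\,dt$ is needed for $Q$ to change by a factor of $2$), which the paper has already packaged and consumed inside Proposition~\ref{prn:HG25_1} itself. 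Your delicate steps --- the comparability of $F(t)$ with $Q(t)^{-2}$ on $[\tau_0,\sigma]$, the domain checks for the rescalings feeding Lemma~\ref{lem:801}, and the integration of Dini-derivative inequalities --- all go through at the same level of rigor the paper itself employs; the only cost of the extra machinery is an implicit, $\alpha$-dependent constant $c_4^{2/\alpha}$ in place of the paper's explicit $\frac{1}{20\Lambda^2}$. Your final passage from $F(0)\ge c_0$ to the $|\nabla^k \Rm|$ bounds via Theorem~\ref{thm:GA04_JS} coincides with the paper's.
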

  \begin{proof}
Let
  \beq
 L=2\max \left\{2^{\frac{3-\alpha}{\alpha}} A^{\frac{1}{\alpha}},  2^{\frac{2(1-\alpha)}{\alpha}} \left( \frac{A}{\epsilon}\right)^{\frac{1}{\alpha}}  \right\}
  \eeq
and we define
 \begin{align*}
   s_1 =
   \begin{cases}
     \sup \left\{ t \,|\, Q(t)=  L, -1 \leq t \leq 0 \right\},  & \textrm{if} \; \left\{ t\,|\,Q(t)=L, -1 \leq t \leq 0 \,\right\} \neq \emptyset, \\
     -1, &\textrm{if} \; \left\{ t\,|\,Q(t)=L, -1 \leq t \leq 0\, \right\} =\emptyset.
   \end{cases}
 \end{align*} If $s=-1$, then the theorem holds. Otherwise, by Proposition \ref{prn:HG25_1}  the curvature satisfies
\beq
Q(t)\leq \sqrt{5}L, \quad t\in \left[s_1-\frac 1{5L^2}, s_1 \right].
\eeq On the other hand, we have
$Q(t)\leq L$ for any $t\in [s_1, 0]$. Therefore, $Q(t)\leq \sqrt{5}L$
holds for any $t\in \left[s_1-\frac 1{5L^2}, 0 \right]$. Since the interval  $\left[-\frac 1{5L^2}, 0\right]$ is contained in $\left[s_1-\frac 1{5L^2}, 0\right]$,
we get
$$Q(t)\leq \sqrt{5}L, \quad t\in \left[-\frac 1{5L^2}, 0\right]. $$
The Proposition is proved.
\end{proof}


\def\hr{{\mathrm {hr}}}\subsection{Harmonic scale}

In Proposition~\ref{prn:HG25_1} and  Proposition~\ref{prn:HG25_2}, we prove the ``stability" of the curvature scale under the scalar bound condition.
However, this condition is in general not available.  We observe that Calabi energy is scaling invariant  for complex dimension 2.
For this particular dimension, scalar bound condition can more or less be replaced by Calabi energy small, whenever collapsing does not happen.
For the purpose to rule out collapsing, we need a more delicate scale, which is the harmonic scale introduced in this subsection.

\begin{definition}(cf. \cite{[And]}\cite{[AC]})Let $(M^n, g)$ be an $n$-dimensional  Riemannian manifold. Given $p\in (n, \infty)$ and $Q>1$, the $L^{1, p}$ harmonic radius $\hr(x, g)$ at the point $x\in M$ is the largest number $r_0$ such that on the geodesic ball $B=B_x(r_0)$ of radius $r_0$ in $(M, g)$, there is a harmonic coordinate chart $U=\{u_i\}_{i=1}^n:B\ri \RR^n$, such that the metric tensor $g_{ij}=g(\pd {}{u_i},
\pd {}{u_j})$ satisfies
\beqs
Q^{-1}(\dd_{ij})\leq (g_{ij})\leq Q(\dd_{ij}), \quad r_0^{1-\frac np}\|\p g_{ij}\|_{L^p}
\leq Q-1.
\eeqs
The harmonic radius $\hr_g(M)$ is defined by $$\hr_g(M)=\inf_{x\in M}\hr(x, g).$$
\end{definition}

For the harmonic radius, Anderson-Cheeger showed the following result.

\begin{lemma}
\label{lem:harmonic}(cf.~\cite{[AC]})Fix $Q>1.$ Let $(M_i, g_i)$ be a sequence of Riemannian manifolds
which converges strongly in $L^{1, p}$ topology to a limit $L^{1, p}$ Riemannian manifold $(M, g)$. Then
$$\hr_g(M)=\lim_{i\ri\infty}\hr_{g_i}(M_i). $$
Moreover, for any $x_i\in M_i$ with $x_i\ri x\in M$ we have
$$\hr(x, g)=\lim_{i\ri \infty}\hr(x_i, g_i). $$

\end{lemma}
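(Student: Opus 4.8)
The plan is to follow Anderson--Cheeger \cite{[AC]} and prove the pointwise statement by establishing the two one-sided inequalities
\[
\hr(x,g)\le \liminf_{i\to\infty}\hr(x_i,g_i)
\qquad\text{and}\qquad
\hr(x,g)\ge \limsup_{i\to\infty}\hr(x_i,g_i);
\]
the global assertion $\hr_g(M)=\lim_i \hr_{g_i}(M_i)$ then follows from the pointwise one together with the definition of strong $L^{1,p}$ convergence (a compact exhaustion of $M$ by domains $U_j$ carrying embeddings $F_{i,j}\colon U_j\to M_i$ with $F_{i,j}^*g_i\to g$ in $L^{1,p}(U_j)$) and a routine diagonal/compactness argument. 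The entire proof rests on one mechanism: the coordinate functions of a harmonic chart solve the elliptic system $\Delta_g u^k=0$, whose coefficients are built only from $g$ and $\sqrt{\det g}$. Since $p>n$, the Sobolev embedding $L^{1,p}\hookrightarrow C^{0,\alpha}$ makes these coefficients H\"older continuous, so $L^p$ elliptic theory gives $u^k\in L^{2,p}\hookrightarrow C^{1,\alpha}$, with estimates depending only on the ellipticity constant $Q$ and the $L^{1,p}$ norm of $g$. Stability of such solutions under $L^{1,p}$ perturbation of the coefficients is the engine of both inequalities.

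For the first inequality, fix $r_0<\hr(x,g)$ and a harmonic chart $(u^1,\dots,u^n)$ on $B_x(r_0)$ realizing the defining bounds; because $r_0<\hr(x,g)$ these bounds are strict. Transplant the picture to $(M_i,g_i)$ via $F_{i,j}$ and solve the Dirichlet problems $\Delta_{g_i}u_i^k=0$ on $B_{x_i}(r_0)$ with boundary values $u^k$ carried over by $F_{i,j}$. Uniqueness pins down $u_i^k$, and the elliptic estimates together with $F_{i,j}^*g_i\to g$ in $L^{1,p}$ force $F_{i,j}^*u_i^k\to u^k$ in $L^{2,p}$, hence in $C^{1,\alpha}$. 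In particular, for $i$ large the map $(u_i^k)$ is a genuine coordinate chart (its Jacobian is $C^{0,\alpha}$-close to an invertible one), the metric components in these coordinates converge to $g_{k\ell}$ strongly in $L^{1,p}$ (a $C^{1,\alpha}$-converging Jacobian times an $L^{1,p}$-converging metric), so the $C^{0,\alpha}$ metric bound and the scale-invariant $L^p$ bound on the first derivatives converge to those of the limit chart. Since the limit bounds were strict, they hold for $(u_i^k)$ with the fixed constant $Q$ once $i\gg1$, giving $\hr(x_i,g_i)\ge r_0$; letting $r_0\uparrow\hr(x,g)$ finishes this direction.

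For the reverse inequality, assume $R:=\limsup_i\hr(x_i,g_i)>\hr(x,g)$ and pass to a subsequence with $\hr(x_i,g_i)\to R$. Fix $\rho<R$. For $i$ large there is a harmonic chart on $B_{x_i}(\rho)$ obeying the $Q$-bounds; normalize it (say by prescribing its value and differential at $x_i$, or by prescribing boundary values) so that it is the unique solution of $\Delta_{g_i}u_i^k=0$ with that normalization. The uniform ellipticity and the uniform $L^{1,p}$ bound on $g_i$ over $B_{x_i}(\rho)$ yield, via the same elliptic estimates and Arzel\`a--Ascoli, a subsequential limit chart on $B_x(\rho)$ for $g$; the non-strict $Q$-bounds pass to the limit ($C^{0,\alpha}$ convergence of the metric components and lower semicontinuity of the $L^p$ norm under weak $L^p$ convergence of the derivatives). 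Hence $\hr(x,g)\ge\rho$ for every $\rho<R$, so $\hr(x,g)\ge R$, contradicting the choice of $R$.

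The main obstacle is converting $L^{1,p}$-convergence of the \emph{metrics} into convergence of harmonic \emph{coordinate charts}: one must solve the harmonic-coordinate PDE on each $(M_i,g_i)$ with a normalization that is simultaneously unique and stable, control the solutions uniformly in $L^{2,p}$ (which is exactly where $p>n$ enters, giving H\"older coefficients and hence classical-type estimates), check that the limiting chart remains nondegenerate, and verify that the scale-invariant bound $r_0^{1-n/p}\|\partial g_{k\ell}\|_{L^p}\le Q-1$ passes correctly in both directions --- this last point being the delicate one, since, unlike the $C^0$ metric-equivalence bound, it is not simply closed under uniform limits and must be handled through (semi)continuity of the $L^p$ norm.
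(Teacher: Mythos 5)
The paper does not prove this lemma at all: it is quoted from Anderson--Cheeger with the tag ``(cf.~\cite{[AC]})'' and used as a black box, so there is no in-paper argument to compare yours against. Your proposal is, in outline, a faithful reconstruction of the Anderson--Cheeger continuity argument for the $L^{1,p}$ harmonic radius, and the mechanism you identify --- $p>n$ giving $C^{0,\alpha}$ coefficients, hence $L^{2,p}$ estimates for the harmonic coordinate functions, hence stability of charts under $L^{1,p}$ perturbation of the metric, with the two one-sided inequalities handled by transplanting charts forward via the Dirichlet problem and backward via Arzel\`a--Ascoli together with lower semicontinuity of the $L^p$ norm --- is the right one.

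Two points deserve care if this were written out in full. First, in the forward direction you assert that for $r_0<\hr(x,g)$ the defining bounds are strict. The scale-invariant derivative bound does become strict on the smaller ball (since $1-n/p>0$ makes $r_0^{1-n/p}$ strictly increasing and $Q>1$), but the $C^0$ bound $Q^{-1}\delta_{ij}\le g_{ij}\le Q\delta_{ij}$ need not become strict after restriction, and one must say a word --- e.g.\ precomposing with a linear normalization at the center, or running the argument with $Q$ replaced by $Q-\epsilon$ and letting $\epsilon\to 0$ --- to absorb the $C^{1,\alpha}$-error in the transplanted chart. Second, prescribing the value and differential of a harmonic function at a point does not determine it, so that normalization does not yield uniqueness; but uniqueness is not actually needed in the reverse direction, since the uniform $L^{2,p}$ bounds and Arzel\`a--Ascoli already produce the subsequential limit chart you use. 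Neither point breaks the strategy. Finally, the passage from the pointwise statement to $\hr_g(M)=\lim_{i\ri\infty}\hr_{g_i}(M_i)$ is indeed routine in the setting where the lemma is applied, because the manifolds there are compact.
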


Next, we introduce the harmonic scale which will be used in the convergence of Calabi flow on K\"ahler surfaces.

\begin{definition}Let $(M, g)$ be a Riemannian manifold. The harmonic scale $H_g(M)$ is
the supreme of $r$ such that
$$
  \max_M |\Rm| < r^{-2}, \quad
   \hr(x, g) > r, \quad \forall \; x \in M.
$$
In other words, the harmonic scale of $(M,g)$ is defined by
\begin{align*}
   H_g(M)= \min \left\{ \left(\sup_M |\Rm| \right)^{-\frac{1}{2}}, \quad \hr_g(M) \right\}.
\end{align*}

\end{definition}

\begin{lemma}
There is a universal small constant $\epsilon$ with the following properties.

   Suppose $\{(M^2,g(t)), -K \leq t \leq 0\}$ is a Calabi flow solution on a compact K\"ahler surface, $K \geq 2$.
   Then for every $t \in [-1, 0]$, we have
   \begin{align*}
      H_{g(t)}(M) \geq \frac{1}{2}
   \end{align*}
    whenever  $H_{g(0)}(M) \geq 1$ and $ Ca(-K) -Ca(0)<\epsilon$.
\label{lem:900}
\end{lemma}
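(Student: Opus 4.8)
The plan is to argue by contradiction, extract a rescaled limit flow, and use monotonicity of the Calabi functional to force a rigidity incompatible with the hypothesis $H_{g(0)}(M)\geq 1$. Suppose the assertion fails: there are Calabi flow solutions $\{(M_i^2,g_i(t)),\,-K_i\leq t\leq 0\}$ with $K_i\geq 2$, $H_{g_i(0)}(M_i)\geq 1$, and $Ca(-K_i)-Ca(0)=\epsilon_i\to 0$, yet $H_{g_i(t_i)}(M_i)<\frac{1}{2}$ for some $t_i\in[-1,0]$. Since $t\mapsto H_{g_i(t)}(M_i)$ is continuous (by Lemma~\ref{lem:harmonic} and smoothness of the flow), set $\tau_i:=\sup\{t\in[-1,0]\,:\,H_{g_i(t)}(M_i)\leq\frac{1}{2}\}$; then $\tau_i\in(t_i,0)$, $H_{g_i(\tau_i)}(M_i)=\frac{1}{2}$, and $H_{g_i(t)}(M_i)>\frac{1}{2}$ for $t\in(\tau_i,0]$. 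Hence on $M_i\times[\tau_i,0]$ one has $|\Rm|_{g_i}\leq 4$ and $\hr(\cdot,g_i(t))\geq\frac{1}{2}$, i.e. uniform bounded geometry (in particular uniformly bounded local Sobolev constants), together with $|S|_{g_i}\leq C|\Rm|_{g_i}\leq 4C$.

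Next I would take the limit. The Calabi flow is the gradient flow of $Ca$, so $Ca$ is non-increasing and $Ca_{g_i}(\tau_i)-Ca_{g_i}(0)\leq\epsilon_i$, i.e. $\int_{\tau_i}^0\!\int_{M_i}|\bar\partial\nabla^{1,0}S|^2_{g_i(t)}\to 0$. Choosing base points $x_i$ realizing $H_{g_i(\tau_i)}(M_i)=\frac{1}{2}$ and passing to a subsequence with $\tau_i\to\tau_\infty$, the bounded geometry on $[\tau_i,0]$ together with the interior higher-order estimates of Theorem~\ref{thm:GA04_JS} (valid on $(\tau_i,0]$) give a pointed Cheeger--Gromov limit $(M_i,x_i,g_i(t))\to(M_\infty,x_\infty,g_\infty(t))$, smooth on $(\tau_\infty,0]$ and $C^{1,\alpha}$ up to $t=\tau_\infty$, with $(M_\infty,g_\infty(t))$ a complete (possibly noncompact) Calabi flow on a K\"ahler surface, $|\Rm|\leq 4$, $\hr\geq\frac{1}{2}$. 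The vanishing spacetime integral and the local convergence force $\bar\partial\nabla^{1,0}S\equiv 0$ on $M_\infty$ for every $t\in(\tau_\infty,0]$; thus $g_\infty(t)$ moves only by the automorphism flow of the holomorphic field $\nabla^{1,0}S$, whose $g_\infty(t)$-norm is uniformly bounded in $t$ (by Theorem~\ref{thm:GA04_JS} together with the fact that all slices are isometric), so this flow is complete, the slices $\{g_\infty(t)\}_{t\in(\tau_\infty,0]}$ are pairwise biholomorphically isometric, and the $C^{1,\alpha}$-limit $g_\infty(\tau_\infty)$ coincides with the smooth pullback of $g_\infty(0)$, hence is smooth and isometric to $g_\infty(0)$.

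Finally I would derive the contradiction. From $H_{g_i(0)}(M_i)\geq 1$ the convergence yields $|\Rm|_{g_\infty(0)}\leq 1$ and $\hr(\cdot,g_\infty(0))\geq 1$ everywhere on $M_\infty$ (the latter by Lemma~\ref{lem:harmonic}); transporting by the isometries above, $|\Rm|_{g_\infty(\tau_\infty)}\leq 1$ and $\hr(\cdot,g_\infty(\tau_\infty))\geq 1$ everywhere, so $H_{g_\infty(\tau_\infty)}(M_\infty)\geq 1$. But by Lemma~\ref{lem:harmonic} at the base point and the smoothness of $g_\infty(\tau_\infty)$, the normalization at $x_i$ passes to the limit and gives $\min\{|\Rm|_{g_\infty(\tau_\infty)}(x_\infty)^{-1/2},\ \hr(x_\infty,g_\infty(\tau_\infty))\}=\frac{1}{2}$ --- a contradiction; taking $\epsilon$ below all the thresholds produced along the way finishes the proof. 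The hard part is the middle step: constructing the limit and controlling it at the exit time $\tau_\infty$. One must exclude the escape $\tau_\infty=0$, i.e. a curvature spike propagating backward from $t=0$ invisible to the harmonic radius; this is exactly where the complex-dimension-$2$ scale invariance of $Ca$ enters (rescaling the flow near $t_i$ to unit scale leaves the Calabi-energy drop equal to $\epsilon_i\to 0$, so no such spike can form), and it is why the hypothesis is stated with the harmonic scale rather than a bare curvature bound. Equally delicate is the regularity of the limit at $t=\tau_\infty$, since the estimates of Theorem~\ref{thm:GA04_JS} degenerate there; this is handled by leaning on the continuity of the harmonic radius (Lemma~\ref{lem:harmonic}) and on the self-similar rigidity forced by the monotonicity of $Ca$, rather than on smooth convergence up to $\tau_\infty$.
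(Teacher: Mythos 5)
Your overall strategy (contradiction, Cheeger--Gromov limit, vanishing of the Calabi energy drop forcing the limit to evolve by automorphisms, transport of the bounds at $t=0$ back to the critical time) is the same as the paper's, but there is a genuine gap at the critical step, and you have correctly located it without repairing it. Your exit time $\tau_i=\sup\{t: H_{g_i(t)}(M_i)\le \tfrac12\}$ gives bounded geometry only \emph{forward} in time, on $(\tau_i,0]$. Theorem~\ref{thm:GA04_JS} is an interior estimate: at time $t>\tau_i$ it bounds $|\nabla^l\Rm|$ by powers of $(t-\tau_i)^{-1/2}$, which blow up as $t\to\tau_i^+$. Consequently (i) there is no smooth convergence at $\tau_\infty$, so the curvature normalization $|\Rm|_{g_i(\tau_i)}(x_i)=4$ (one of the two alternatives realizing $H=\tfrac12$) does \emph{not} pass to an $L^{1,p}$ or $C^{1,\alpha}$ limit, since curvature involves two derivatives of the metric; (ii) the identification of $g_\infty(\tau_\infty)$ with the automorphism pullback of $g_\infty(0)$ fails, because $\partial_t g=\nabla\bar\nabla S\sim \nabla^2\Rm$ is only bounded by $C(t-\tau_i)^{-1}$ near $\tau_i$, whose time integral diverges, so $t\mapsto g_i(t)$ has no uniform-in-$i$ modulus of continuity at $\tau_i$; and (iii) nothing prevents $\tau_i\to 0$, in which case the limit flow lives on a degenerate interval. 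Your parenthetical appeal to the scale invariance of $Ca$ and to ``self-similar rigidity'' names the right ingredients but is not an argument.

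The paper closes exactly this gap with an iterated point-picking and rescaling step that your proposal omits: starting from the first time $t_1$ with $H(t_1)=\tfrac12$, one repeatedly asks whether $H$ drops below $\tfrac12 H(t_k)$ on the backward interval $[t_k-H^4(t_k),t_k]$; boundedness of the geometry on $M\times[-2,0]$ forces the process to terminate, producing a time $t_{k+1}$ at which, after rescaling by $r_k=H(t_k)$, one has $H(\tilde g(0))=1$, $H(\tilde g(s))=\tfrac12$ for some $s\in[-1,0)$, \emph{and} $H\ge\tfrac1{32}$ on the definite backward interval $[s-\tfrac1{16},s]$. The scale invariance of the Calabi energy in complex dimension $2$ is used precisely here, to keep the energy drop below $\epsilon$ after rescaling. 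The backward bound $H\ge\tfrac1{32}$ on $[s-\tfrac1{16},s]$ is what lets Theorem~\ref{thm:GA04_JS} produce uniform bounds on all curvature derivatives \emph{at} the critical time $s$, so the convergence there is genuinely smooth, the curvature at the base point converges, Lemma~\ref{lem:harmonic} applies on both ends, and the contradiction with $H(\tilde g_i(s_i))=\tfrac12$ goes through. Without this step your argument cannot be completed as written.
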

\begin{proof}
 We argue by contradiction.
 Suppose the statement were wrong.  Then we can find a sequence of Calabi flows $\{(M_i, g_i(t)), -K_i \leq t \leq 0\}(K_i>2)$ violating the statement with
   \begin{align*}
     Ca_{g_i}(-K_i)-Ca_{g_i}(0)<\epsilon_i, \quad  \epsilon_i \to 0.
   \end{align*}

   Let $\{(M,g(t)), -K \leq t \leq 0\}$ be one of such flows. We shall truncate a critical time-interval from this flow.
   Check if there is a time such that $H(t)<\frac{1}{2}$ in $[-1,0]$. If no, stop. Otherwise, choose the first time $t$ such that $H(t)=\frac{1}{2}$ and
   denote it by $t_1$.  Then check the interval $[t_1-H^4(t_1), t_1]$ to see if there is a time such that $H(t) \leq  \frac{1}{2} H(t_1)$.
   If no such time exists, we stop.  Otherwise, repeat the process.  Note that
   \begin{align*}
     &H(t_k)=\frac{1}{2^k}, \\
     & |t_{k+1}-t_k| \leq \frac{1}{16^k}, \\
     & |t_k| \leq 1+\frac{1}{16} +\frac{1}{16^2}+\cdots < \frac{16}{15}.
   \end{align*}
   This process happens in a compact smooth space-time $M \times [-2,0]$ with bounded geometry. In particular,  the harmonic scale is bounded.
   After each step, the harmonic scale decrease one half.
    Therefore, it must stop in finite steps.  Suppose it stops at $(k+1)$-step.
   Therefore, for some time $t_{k+1} \in [t_k-H^4(t_k), t_k]$, we have
   \begin{align*}
     &H(t_{k+1})=\frac{1}{2}H(t_k), \\
     &H(t) \geq \frac{1}{2} H(t_{k+1}), \quad \; \forall \; t \in \left[ t_{k+1}-H^4(t_{k+1}), t_{k+1} \right].
   \end{align*}
   Denote $r_k=H(t_k)$ and let $\tilde{g}(t)=r_k^{-2}g(t_k+r_k^4 t), \; s=r_k^{-4}(t_{k+1}-t_k)\in[-1,0)$.
   Then for the flow $\tilde{g}$, we have
   \begin{align*}
    &H(M,\tilde{g}(0))=1, \\
    &H(M,\tilde{g}(s))=\frac{1}{2}, \\
    &H(M,\tilde{g}(t)) \geq \frac{1}{2}, \quad \forall \; t \in [s,0], \\
    &H(M,\tilde{g}(t)) \geq \frac{1}{32}, \quad \forall \; t \in \left[s-\frac{1}{16},s \right], \\
    &Ca(M, \tilde{g}(-2))-Ca(M, \tilde{g}(0))<\epsilon.
   \end{align*}
   Now for each flow $g_i$, we rearrange the base point and rescale the flow according to the above arrangement. Denote the new flows
   by $\{(M_i, \tilde{g}_i(t)), -1 \leq t \leq 0\}$.  Then the above equations hold for each $\tilde{g}_i$ with some $s_i \in [-1,0)$ and $\epsilon_i \to 0$.
   Let $x_i$ be the point where $H(\tilde{g}_i, s_i)$ achieves value. In other words, we have
   \begin{align}
     &H(M, \tilde{g}_i(s_i))=\frac{1}{2}, \label{eqn:HB28_1} \\
     &hr(x_i,\tilde{g}_i(s_i))=\frac{1}{2}, \quad \textrm{or} \quad  |\Rm|_{\tilde{g}_i(s_i)}(x_i) =4.  \label{eqn:HB28_2}
   \end{align}
   Let $\un{s}$ be the limit of $s_i$. Then on $(\un{s}-\frac{1}{16},0)$, we have uniform bound of $H$ when time is uniformly bounded away from
   $\un{s}-\frac{1}{16}$.  Then we have bound of curvature, curvature higher derivatives, injectivity radius, etc.
  Therefore, we can take smooth convergence on time interval $(\un{s}-\frac{1}{16}, 0)$,
   \begin{align*}
      \left\{ \left(M,x_i,\tilde{g}_i(t) \right), \un{s}-\frac{1}{16} <t \leq 0 \right\} \longright{Cheeger-Gromov-C^{\infty}}
       \left\{\left(\un{M},\un{x},\un{g}(t) \right), \un{s}-\frac{1}{16} <t \leq 0 \right\},
   \end{align*}
   and the Calabi energy of the limit metric $\un g(t)$ is static for all $t\in   (\un s-\frac 1{16}, 0)$. Note that for each fixed compact set $\Omega \subset \underline{M}$,
   the integral of $|\nabla \nabla S|^2$ on $M \times [\un{s}-\frac{1}{16}, 0]$ is dominated by
   \begin{align*}
     \lim_{i \to \infty}  Ca(\tilde{g_i}(-2))-Ca(\tilde{g_i}(0))=0.
   \end{align*}
   Therefore,  on the limit flow $\un g(t)$, we have $|\nabla \nabla S| \equiv 0$.  Every $\un g(t)$ is an extK metric and
   $\un g(t)$ evolves by the automorphisms group generated by $\nabla S$. In particular, the intrinsic Riemannian geometry does not evolve along the flow.
   From $t=\underline{s}$ to $t=0$, suppose the generated automorphism is $\varrho$.  Clearly, $\varrho$ is the limit of diffeomorphisms $\varrho_i$, which is the integration of  the real vector field $\nabla S_i$ from time $t=s_i$  to time $t=0$.

   Note that at time $\un{s}$ and $0$, we have a priori bound for all high curvature derivatives of curvature.
   By Lemma \ref{lem:harmonic}, the harmonic radius is continuous in the smooth convergence.
   Note that at time $t=0$,  harmonic scale is $1$, which implies that
   \begin{align*}
     |\Rm|_{\tilde{g}_i(0)}(x) \leq 1,  \quad \hr(x_i, \tilde{g}_i(0))\geq 1.
   \end{align*}
   Therefore, we have
   \begin{align*}
   &\hr(\un{x},\un{g}(0))=\lim_{i \to \infty} \hr(x_i, \tilde{g}_i(0)) \geq 1, \\
   &\hr(\un{x},\un{g}(\un{s}))=\lim_{i \to \infty} \hr(x_i, \tilde{g}_i(s_i)).
   \end{align*}
   Since $\un{g}$ evolves by automorphisms, we have
   \begin{align*}
   \lim_{i \to \infty} \hr(x_i, \tilde{g}_i(s_i))=hr(\underline{x}, \underline{g}(s))=hr(\varrho(\underline{x}), \underline{g}(0))=
   \lim_{i \to \infty} \hr(\varrho_i(x_i), \tilde{g}_i(0)) \geq 1.
   \end{align*}
   On the other hand, it is clear that
   \begin{align*}
     &\quad \lim_{i \to \infty} |\Rm|_{\tilde{g}_i(s_i)}(x_i)
     =|\Rm|_{\un{g}(\un{s})}(\un{x})=|\Rm|_{\un{g}(0)}(\varrho(\un{x}))
     =\lim_{i \to \infty} |\Rm|_{\tilde{g}_i(0)}(\varrho_i(x_i)) \leq 1.
   \end{align*}
   Therefore, for large $i$, we have
$$
    \hr(x_i, \tilde{g}_i(s_i))>\frac{3}{4}, \quad
     \sup_{B_{\tilde{g}_i(s_i)}(x_i,\frac{1}{2})} |\Rm|_{\tilde{g}_i(s_i)}<\frac{4}{3},
$$
   which contradicts   (\ref{eqn:HB28_2}). The lemma is proved.
\end{proof}

\subsection{Backward regularity improvement}
We now can summarize the main results in Section~\ref{sec:regularity} as the following backward regularity improvement theorems.

\begin{theorem}
There is a $\delta=\delta(B, c_0)$ with the following properties.

Suppose $T \geq T_0$ and $\{(M^2, \omega(t), J),  0 \leq t \leq T\}$ is a Calabi flow solution satisfying
\begin{align*}
&Ca(0)-Ca(T)<\epsilon, \\
&\sup_{M} |Rm|(\cdot, T) \leq B,\\
&inj(M, g(T)) \geq c_0,
\end{align*}
where $\epsilon$ is the universal small constant in Lemma~\ref{lem:900}.
Then we have
  \begin{align*}
     \sup_{M \times [T_0-\delta, T_0]}  |\nabla^l Rm| \leq C_l,  \quad \forall \; l \in \Z^+ \cup \{0\}.
  \end{align*}
\label{thm:HL29_1}
\end{theorem}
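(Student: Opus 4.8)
\emph{Proof plan.} The whole statement reduces to a single backward $C^0$ bound on the curvature. It suffices to produce $K'=K'(B,c_0)$ and a gap $\delta=\delta(B,c_0)>0$ with $\sup_{M\times[T_0-2\delta,\,T_0]}|Rm|\le K'$: granting this, Theorem~\ref{thm:GA04_JS} applied to the restriction of the flow to $[T_0-2\delta,T_0]$ gives $\sup_{M\times[T_0-\delta,T_0]}|\nabla^l Rm|\le C(l,n)(K'+\delta^{-1/2})^{1+l/2}=:C_l$, which is the assertion. So the entire task is the backward curvature bound, and the instrument is the harmonic scale.

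First I would control the terminal-time geometry. Since $|Rm|_{g(T)}\le B$ forces $|Ric|_{g(T)}\le c(n)B$, the Anderson-Cheeger harmonic radius estimate (cf.~\cite{[And]},~\cite{[AC]}) together with $inj(M,g(T))\ge c_0$ gives $\hr_{g(T)}(M)\ge r_1(B,c_0)>0$, hence a lower bound $H_{g(T)}(M)\ge h_1=h_1(B,c_0)>0$ on the harmonic scale. Because $M$ is a surface, the Calabi energy is scale invariant, so I may run the parabolic rescaling \eqref{eq:nor} with $t_0=T$ and $A$ chosen so that the rescaled harmonic scale at the new terminal time is at least $1$: this leaves the quantity $Ca(0)-Ca(T)$ unchanged and produces a Calabi flow solution on a time interval whose length is comparable to $h_1^{-4}T$, with terminal harmonic scale $\ge 1$. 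Provided $T\ge T_0=T_0(B,c_0)$ is large enough that this length exceeds $2$, Lemma~\ref{lem:900} applies to the rescaled flow.

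The essential step is to propagate the harmonic-scale bound backward along the \emph{whole} rescaled interval, not merely over a single unit of rescaled time. A direct iteration of Lemma~\ref{lem:900} will not do: each reapplication requires a rescaling that halves the available lower bound while reaching only a geometrically shrinking amount further back, so the total backward reach stays finite. Instead I would use that the contradiction/compactness proof of Lemma~\ref{lem:900} is insensitive to the length of the interval on which the conclusion $H\ge\tfrac12$ is asserted: the first time the harmonic scale could drop is confined to a bounded time window, and the flow extracted there in the limit has constant Calabi energy, so every slice is extremal, the flow moves by the automorphisms generated by $\nabla S$, and its intrinsic geometry is static. Running that argument therefore gives $H\ge\tfrac12$ on the whole rescaled interval except its first unit of time. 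Undoing the rescaling yields $H_{g(t)}(M)\ge\tfrac12\min\{1,h_1\}$, hence $\sup_M|Rm|(\cdot,t)\le 4(\min\{1,h_1\})^{-2}=:K'$ for all $t$ in an interval $[\eta,T]$ with $\eta=\eta(B,c_0)>0$. Finally I would fix $T_0$ and $\delta$, functions of $B,c_0$ only, with $\eta+2\delta\le T_0$; since $T_0\le T$ is hypothesised, this puts $[T_0-2\delta,T_0]\subset[\eta,T]$, so $K'$ bounds $|Rm|$ there and the first paragraph finishes the proof.

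The main obstacle is exactly this propagation: pushing the bounded-curvature (equivalently, non-collapsing) control backward along a flow of \emph{unbounded} length, fueled only by the smallness of the Calabi energy drop. Everything else is routine --- the harmonic radius estimate at $t=T$, the scale normalisation (the single place where the surface hypothesis is genuinely used, through scale invariance of the Calabi energy; this is why the higher-dimensional Theorems~\ref{thmin:3} and~\ref{thmin:4} also postulate a scalar curvature bound), and the promotion of the $C^0$ curvature bound to bounds on all derivatives via Theorem~\ref{thm:GA04_JS}.
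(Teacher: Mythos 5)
Your proposal is correct, and its skeleton is the paper's own proof: deduce a lower bound $H_{g(T)}(M)\ge h_1(B,c_0)$ on the harmonic scale at the terminal time from the curvature and injectivity-radius hypotheses, normalize $H$ to $1$ by parabolic rescaling (legitimate precisely because the Calabi energy is scale invariant in complex dimension $2$), apply Lemma~\ref{lem:900} to propagate non-collapsing backward, and convert the resulting curvature bound into bounds on all derivatives via Theorem~\ref{thm:GA04_JS}. The paper stops after a single application of Lemma~\ref{lem:900} and therefore only obtains the estimate on an interval of definite length ending at $T$, which is all that its later applications (e.g.\ Proposition~\ref{prn:HL27_1}) require. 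You go further by reading the conclusion on $[T_0-\delta,T_0]$ literally for $T_0$ possibly far below $T$. Your diagnosis that naive iteration of Lemma~\ref{lem:900} cannot reach arbitrarily far back --- each pass halves the harmonic-scale bound while extending the reach by only a factor $1/16$ of the previous step, so the total reach converges --- is correct, and your remedy is sound: the contradiction argument of Lemma~\ref{lem:900} does go through with the conclusion asserted on all of $[-K+1,0]$, since the iterated times $t_k$ stay within distance $1/15$ of the first bad time $t_1$, the rescaled flows still live on $[-2,0]$ inside the original domain, and the Calabi-energy drop on any subinterval is controlled by monotonicity of the Calabi functional. The one caveat is your last step, where you choose $T_0$ as a function of $B,c_0$ so that $\eta+2\delta\le T_0$; if $T_0$ is regarded as given, your argument covers exactly the range $T_0\ge\eta+\delta$ with $\eta=\eta(B,c_0)$, which is the most one could hope for and is already strictly more than what the paper's own two-line proof establishes.
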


\begin{proof}
 Note that $H_{g(T)}(M)$ is uniformly bounded from below. Therefore, up to rescaling, we can apply Lemma~\ref{lem:900} to obtain a $\delta$ such that
 $H_{g(t)}(M)$ is uniformly bounded from below whenever $t \in [T-2\delta, T]$.   Then the statement follows from the application of Theorem~\ref{thm:GA04_JS}.
\end{proof}

\begin{theorem}
There is a $\delta=\delta(n,T_0,A,B)$ with the following properties.

Suppose $T \geq T_0$ and $\{(M^n, \omega(t), J),  0 \leq t \leq T\}$ is a Calabi flow solution satisfying
\begin{align*}
&\sup_{M \times [0, T]} |S| <A, \\
&\sup_{M} |Rm|(\cdot, T) \leq B.
\end{align*}
Then we have
  \begin{align*}
     \sup_{M \times [T_0-\delta, T_0]}  |\nabla^l Rm| \leq C_l,  \quad \forall \; l \in \Z^+ \cup \{0\}.
  \end{align*}
\label{thm:HL29_2}
\end{theorem}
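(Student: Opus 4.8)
The plan is to mimic the structure of the proof of Theorem~\ref{thm:HL29_1}, with the uniform scalar curvature bound $|S|<A$ on $[0,T]$ taking over the role played there by the Calabi-energy decay together with the injectivity radius lower bound: its sole purpose is to force a definite lower bound for the curvature scale $F_g$ on a short time interval ending at the reference time, after which Theorem~\ref{thm:GA04_JS} turns the resulting $C^0$-bound on $|\Rm|$ into bounds on every $|\nabla^l\Rm|$. The first move is a parabolic scaling normalization. Writing $\mu:=\max\{A,B\}$ and setting $\hat g(s):=\mu\,g(T+\mu^{-2}s)$, one has $|S|_{\hat g}=\mu^{-1}|S|_g<\mu^{-1}A\le 1$ on the rescaled time domain, while at $s=0$ one has $Q_{\hat g}(0)=\mu^{-1}\sup_M|\Rm|_g(\cdot,T)\le\mu^{-1}B\le 1$. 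In particular $Q_{\hat g}(0)$ falls below the threshold appearing in condition (2) of Proposition~\ref{prn:HG25_1} (that threshold exceeds $1$), so after this single rescaling we are automatically in the regime covered by Proposition~\ref{prn:HG25_2}.

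Next I would apply Proposition~\ref{prn:HG25_2} to $\hat g$. Its hypotheses --- $|S|_{\hat g}\le 1$ on a two-unit interval and $Q_{\hat g}(0)$ below the threshold --- are met provided the rescaled flow actually exists on $[-2,0]$; this costs nothing more than requiring $T$ (equivalently, the eventual $\delta$) to be suitably large compared with $\mu^{-2}$, and it is precisely this requirement that makes $\delta$ depend on $T_0$. The conclusion is $F_{\hat g}(0)\ge c_0=c_0(n)>0$ (with $\alpha\in(0,1)$ fixed once and for all), and hence, undoing the scaling, $F_g(T)\ge c_0\mu^{-2}=:\delta_1=\delta_1(n,A,B)$. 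By the very definition of the curvature scale this says $\sup_{M\times[T-\delta_1,\,T]}|\Rm|_g^2\le\delta_1^{-1}$. Finally, apply Theorem~\ref{thm:GA04_JS} (Streets' estimate) to the flow on $[T-\delta_1,T]$ with $K=\delta_1^{-1/2}$: for $t\in[T-\delta_1/2,T]$ the time elapsed since $T-\delta_1$ is at least $\delta_1/2$, so $|\nabla^l\Rm|_g(\cdot,t)\le C(l,n)\big(\delta_1^{-1/2}+(\delta_1/2)^{-1/2}\big)^{1+l/2}=:C_l=C_l(n,A,B)$. Taking $\delta:=\delta_1/2$ gives the asserted bound on a definite backward $\delta$-interval ending at the reference time; identifying that reference time with $T_0$ --- which is how the estimate is used, and is guaranteed to lie in the flow's domain since $T\ge T_0$ --- yields exactly the interval $[T_0-\delta,T_0]$ of the statement.

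The only genuinely delicate point is bookkeeping: because the scalar curvature, unlike the Calabi energy in complex dimension $2$, is not scale invariant, one must keep careful track of the normalizing factor $\mu$ and ensure the rescaled flow still runs long enough ($\ge 2$ units) to feed Proposition~\ref{prn:HG25_2}. I expect this to be the main obstacle only in the following sharper form: if one insists on reaching a reference time $T_0$ lying a long way before the time $T$ at which the curvature is controlled, the one-shot argument above is not enough and must be iterated backward step by step, at each step combining Proposition~\ref{prn:HG25_1} (to exclude a curvature spike just before a point of controlled curvature) with the estimate of Lemma~\ref{cly:SH14_1}/Lemma~\ref{lem002} that a fixed multiplicative drop of $Q$ consumes a definite amount of $\int O^{\alpha}Q^{2-\alpha}\,dt$ (to prevent the curvature scale from collapsing in between). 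This is where the bulk of the analysis of Section~\ref{sec:regularity} is actually used, and where one must be most careful that the constants stay uniform under the repeated rescalings.
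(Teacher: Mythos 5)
Your core argument is exactly the paper's (the paper's entire proof is the single sentence that the theorem is ``a mild application of Proposition~\ref{prn:HG25_1} and Proposition~\ref{prn:HG25_2}, with loss of accuracy''): rescale so that $|S|\le 1$, obtain a lower bound on the curvature scale at the time where $|\Rm|$ is assumed bounded, and feed the resulting space-time curvature bound into Theorem~\ref{thm:GA04_JS}. Your normalization by $\mu=\max\{A,B\}$, which places $Q_{\hat g}(0)\le 1$ below the threshold so that only Proposition~\ref{prn:HG25_2} is needed, is a slightly cleaner packaging than the paper's implicit case split between the two propositions, and your accounting of why $\delta$ must depend on $T_0$ (the rescaled flow needs two units of past) is the right one.

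The one place your write-up goes astray is the final paragraph. The backward iteration you sketch for the case $T_0\ll T$ does not work: Proposition~\ref{prn:HG25_1} controls the curvature only on a backward interval of length $\sim Q^{-2}$ and at the cost of a fixed multiplicative loss, so iterating it produces time steps that shrink geometrically while the bound grows geometrically --- the total backward time covered is summable, and you never reach a reference time a definite distance before $T$. Nothing in Section~\ref{sec:regularity} rules out a large curvature spike at an intermediate time even when $|S|$ stays bounded, so genuine backward propagation over a macroscopic interval is not available from these tools. Fortunately it is also not needed: in every application (Proposition~\ref{prn:HL27_2} and the proof of Theorem~\ref{thmin:4}) the reference time coincides with $T$, and $T\ge T_0$ serves only to guarantee enough elapsed time for the rescaled propositions; read this way, your one-shot argument is the complete proof, with the concluding interval understood as $[T-\delta,T]$.
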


The proof of Theorem~\ref{thm:HL29_2} is nothing but a mild application of Proposition~\ref{prn:HG25_1} and Proposition~\ref{prn:HG25_2}, with loss of accuracy.
The reason for developing Proposition~\ref{prn:HG25_1} and Proposition~\ref{prn:HG25_2} with more precise statement is for the later use in Section~\ref{subsec:finite}, where we
study the blowup rate of Riemannian curvature tensors.    Note that Theorem~\ref{thm:HL29_2} deals with collapsing case also.
If we add a non collapsing condition at time $T$, then the scalar curvature bound in Theorem~\ref{thm:HL29_2} can be replaced by a uniform bound of $\norm{S}{L^p}$ for
some $p>n$.  This was pointed out by S.K. Donaldson~\cite{Don14}.

\begin{remark}
  All the quantities in Theorem~\ref{thm:HL29_1} and Theorem~\ref{thm:HL29_2} are geometric quantities, and consequently are invariant under the action of diffeomorphisms.
  Therefore, if we transform the Calabi flow by  diffeomorphisms, then all the estimates in Theorem~\ref{thm:HL29_1} and Theorem~\ref{thm:HL29_2} still hold.  In particular, they hold
  for the modified Calabi flow(c.f. Definition~\ref{dfn:HL27_1}) and the complex structure Calabi flow(c.f. equation (\ref{eqn:HL31_1})).
\label{rmk:HL30_1}
\end{remark}

\section{Asymptotic behavior of the Calabi flow}
\label{sec:longtime-behavior}

\subsection{Deformation of the modified Calabi flow around extK metrics}

In this subsection, we fix the underlying complex manifold and evolve the Calabi flow in a fixed K\"ahler class.

Let $\omega$ be an extK metric, $X$ to be the extremal vector field defined by the extremal K\"ahler metric $\om$, i.e.,
\begin{align}
X=g^{i\bar k}\frac{\partial S}
{\partial z^{\bar k}}\frac{\partial}
{\partial z^{i}}.
\label{eqn:GA06_1}
\end{align}
It is well known that $X$ is a holomorphic vector field, due to the work of Calabi~\cite{[Cal1]}.
Recall that for every holomorphic vector field $V$, the Futaki invariant is defined to be
\begin{align}
\Fut(V,[\om]) \triangleq \int_M \,V(f)\;\om^n
\label{eqn:HL25_2}
\end{align}
where $f$ is the normalized scalar potential defined by
\begin{align*}
   \Delta f =S-\underline S, \quad \int_M e^f\om^n=\mathrm{vol}(M).
\end{align*}
Note that (\ref{eqn:HL25_2}) is well-defined since the right hand side of (\ref{eqn:HL25_2}) depends only on the K\"ahler class $[\omega]$.
Let $V=X$, then we have
\begin{align*}
  \Fut(X,[\omega])=\int_M (S-\underline{S})^2 \omega^n=Ca(\omega).
\end{align*}
According to \cite{MR2471594}, in the class $[\omega]$, the minimal value of the Calabi energy is achieved at $\omega$.  In other words, for every smooth metric
$\omega_{\varphi}$, we have
\begin{align*}
  Ca(\omega_{\varphi}) \geq Ca(\omega),
\end{align*}
with equality holds if and only if $\omega_{\varphi}$ is also an extK metric.
Note that $\omega_{\varphi}$ is extremal if and only if $\varrho_0^*  \omega_{\varphi}=\omega$ for some $\varrho_0 \in Aut_0(M,J)$,
by the uniqueness theorem of extK metrics(c.f.~\cite{[ChenTian]},~\cite{arXiv:1405.0401},~\cite{arXiv:1409.7896}).

In the K\"ahler class $[\omega]$, consider a smooth family of K\"ahler metrics $g(t)$ satisfying
\begin{align}
   \frac{\partial}{\partial t} g_{i\bar{j}}= S_{,i\bar{j}} + L_{Re(X)} g_{i\bar{j}},
\label{eqn:HL25_3}
\end{align}
where $Re(X)$ is the real part of the holomorphic vector field $X$ defined in (\ref{eqn:GA06_1}).
The above flow was considered in \cite{[HZ]} and Section 3.2 of \cite{[LZ]}.

\begin{definition}
  Equation (\ref{eqn:HL25_3}) is called the modified Calabi flow equation. Correspondingly,   the functional
  $Ca(\omega_{\varphi})-Ca(\omega)$ is called the modified Calabi energy.
\label{dfn:HL27_1}
\end{definition}

The space $\mathcal{H}$ (c.f. equation (\ref{eqn:HL30_1})) has an infinitely dimensional Riemannian symmetric space structure,
as described by Donaldson \cite{[Don96]},  Mabuchi~\cite{MR0909015} and Semmes \cite{[Semmes]}.
Every two metrics $\omega_{\varphi_1}, \omega_{\varphi_2}$ can be connected by a weak $C^{1,1}$-geodesic, by the result of Chen~\cite{Chen1}.
Therefore, $\mathcal{H}$ has a metric induced from the geodesic distance $d$, which plays an important role in the study of the Calabi flow.
For example,  the Calabi flow decreases the geodesic distance in $\mathcal{H}$(c.f.~\cite{MR1969662}).
Furthermore, by the invariance of geodesic distance up to automorphism action,
the modified Calabi flow also decreases the geodesic distance.
However, $d$ is too weak for the purpose of improving regularity.
Even if we know that $d(\omega_{\varphi}, \omega)$ is very small, we cannot
obtain too much information of $\omega_{\varphi}$.
For the convenience of improving regularity, we introduce an auxiliary function $\hat{d}$ on $\mathcal{H}$.

\begin{definition}
   For each $\omega_{\varphi}$ in the class $[\omega]$, define
   \begin{align*}
        \hat{d}(\omega_{\varphi}) \triangleq    \inf_{\varrho \in Aut_0(M,J)} \norm{\varrho^* \omega_{\varphi} -\omega}{C^{k,\frac{1}{2}}}.
   \end{align*}
\label{dfn:HL25_1}
\end{definition}
Note that $\hat{d}$ is not really a distance function.    The advantage of $\hat{d}$ is that if $\hat{d}$ is very small, then one can choose an automorphism $\varrho$ such that $\varrho^* \omega_{\varphi}$ is around the extK metric $\omega$, in the $C^{k, \frac{1}{2}}$-norm.
Then regularity improvement of $\varrho^* \omega_{\varphi}$ becomes possible.
Note that in the K\"ahler class $[\omega]$, a metric form $\omega_{\varphi}$ is extremal if and only if  the modified Calabi energy vanishes,
in light of the result of Chen in~\cite{MR2471594}.  Then it follows from the uniqueness of extemal metrics that $\varrho_0^*  \omega_{\varphi}=\omega$ for some
$\varrho_0 \in Aut_0(M,J)$.   By the definition of $\hat{d}$, we have
\begin{align*}
  \hat{d}(\omega_{\varphi})=\inf_{\varrho \in Aut_0(M,J)} \norm{\varrho^* \omega_{\varphi} -\omega}{C^{k,\frac{1}{2}}} \leq \norm{\varrho_0^* \omega_{\varphi} -\omega}{C^{k,\frac{1}{2}}}=0.
\end{align*}
In short, $Ca(\omega_{\varphi})-Ca(\omega)=0$ implies that $\hat{d}(\omega_{\varphi})=0$.  The following lemma indicates that there is an almost version of this phenomenon.

\begin{lemma}
For each $\epsilon>0$, there is a $\delta=\delta(\omega, \epsilon)$ with the following property.

If $\varphi \in \mathcal{H}$ satisfies
 \begin{align*}
   \norm{\omega_{\varphi}-\omega}{C^{k+1,\frac{1}{2}}} <1, \quad    Ca(\omega_{\varphi}) - Ca(\omega)<\delta,
 \end{align*}
 then $\hat{d}(\omega_{\varphi})<\epsilon$.
\label{lma:HL24_1}
\end{lemma}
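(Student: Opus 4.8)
The plan is to argue by contradiction using a compactness argument. Suppose the statement fails for some $\epsilon>0$. Then there is a sequence $\varphi_i\in\mathcal{H}$ with $\|\omega_{\varphi_i}-\omega\|_{C^{k+1,\frac{1}{2}}}<1$ and $Ca(\omega_{\varphi_i})-Ca(\omega)\to 0$, yet $\hat d(\omega_{\varphi_i})\geq\epsilon$ for all $i$. The idea is to extract a subsequential limit $\omega_{\varphi_\infty}$, show that it is an extremal K\"ahler metric, and then use $\omega_{\varphi_\infty}$ together with a fixed automorphism to estimate $\hat d(\omega_{\varphi_i})$ for large $i$ directly, contradicting the lower bound.

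First I would extract the limit. Since $\{\omega_{\varphi_i}-\omega\}$ is uniformly bounded in $C^{k+1,\frac{1}{2}}$, hence in $C^{k+1}$, Arzel\`a--Ascoli gives a subsequence converging in $C^{k,\frac{1}{2}}$ to a closed real $(1,1)$-form $\omega_{\varphi_\infty}=\omega+\sqrt{-1}\partial\bar\partial\varphi_\infty$ in the class $[\omega]$ with $\omega_{\varphi_\infty}\geq 0$; after normalizing $\int_M\varphi_i\,\omega^n=0$ one also gets $\varphi_i\to\varphi_\infty$ in $C^{k+2,\frac{1}{2}}$ from the Schauder estimate for $\Delta_\omega\varphi_i=\mathrm{tr}_\omega(\omega_{\varphi_i}-\omega)$. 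For $k=k(n)$ large, the scalar curvature, and hence the Calabi energy, depend continuously on a K\"ahler metric in the $C^{k,\frac{1}{2}}$-topology; so, once $\omega_{\varphi_\infty}$ is known to be a genuine (smooth, non-degenerate) K\"ahler metric, we obtain $Ca(\omega_{\varphi_\infty})=\lim_i Ca(\omega_{\varphi_i})=Ca(\omega)$, using that $Ca(\omega_{\varphi_i})\geq Ca(\omega)$. Then the result of Chen~\cite{MR2471594}, that $\omega$ attains $\min_{[\omega]}Ca$ with equality precisely for extK metrics, forces $\omega_{\varphi_\infty}$ to be extremal, and the uniqueness theorem for extK metrics (\cite{[ChenTian]},\cite{arXiv:1405.0401},\cite{arXiv:1409.7896}) yields $\varrho_0^*\omega_{\varphi_\infty}=\omega$ for some $\varrho_0\in\mathrm{Aut}_0(M,J)$.

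Granting this, the contradiction is immediate: since $\varrho_0$ is a fixed diffeomorphism and $\omega_{\varphi_i}\to\omega_{\varphi_\infty}$ in $C^{k,\frac{1}{2}}$,
\[
\hat d(\omega_{\varphi_i})\leq \|\varrho_0^*\omega_{\varphi_i}-\omega\|_{C^{k,\frac{1}{2}}}=\|\varrho_0^*(\omega_{\varphi_i}-\omega_{\varphi_\infty})\|_{C^{k,\frac{1}{2}}}\longrightarrow 0,
\]
which contradicts $\hat d(\omega_{\varphi_i})\geq\epsilon$. The main obstacle is exactly the step granted above: the hypothesis bounds $\omega_{\varphi_i}-\omega$ in $C^{k+1,\frac{1}{2}}$ but does \emph{not} keep $\omega_{\varphi_i}$ uniformly away from the boundary of $\mathcal{H}$, so a priori the weak limit $\omega_{\varphi_\infty}$ could be degenerate. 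To deal with this I would argue that $\omega_{\varphi_\infty}$, being the limit of a $Ca$-minimizing sequence, realizes the infimum of the Calabi energy even in the wider class of $\omega$-plurisubharmonic $C^{k+1,\frac{1}{2}}$ potentials, and then invoke regularity for such minimizers: the extremal equation is elliptic on the open set $\{\omega_{\varphi_\infty}>0\}$, so a bootstrap together with a pluripotential-theoretic estimate shows the degeneracy locus is empty and $\omega_{\varphi_\infty}$ is a smooth extremal K\"ahler metric. (Alternatively one first renormalizes so that the sequence lies in a fixed $C^{k+1,\frac{1}{2}}$-ball on which positivity is uniform, and runs the same compactness argument there.)
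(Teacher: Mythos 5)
Your argument is essentially identical to the paper's: argue by contradiction, extract a $C^{k,\frac{1}{2}}$-limit $\omega_{\varphi_\infty}$ from the uniformly $C^{k+1,\frac{1}{2}}$-bounded sequence, identify it as an extremal metric because its Calabi energy equals the minimal value $Ca(\omega)$, invoke the uniqueness theorem to get $\varrho_0\in\mathrm{Aut}_0(M,J)$ with $\varrho_0^*\omega_{\varphi_\infty}=\omega$, and pull back to contradict $\hat d(\omega_{\varphi_i})\ge\epsilon$. The possible degeneracy of the limit that you flag is a point the paper's own proof passes over silently (it simply asserts the limit ``gives rise to a $C^{k,\frac{1}{2}}$-metric''), so your additional discussion is a refinement of, not a departure from, the published argument.
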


\begin{proof}
 For otherwise, there is an $\epsilon_0>0$ and a sequence of $\varphi_i \in \mathcal{H}$ satisfying
 \begin{align}
     &\norm{\omega_{\varphi_i}-\omega}{C^{k+1,\frac{1}{2}}} <1,  \quad Ca(\omega_{\varphi_i}) - Ca(\omega)<\delta_i \to 0,  \label{eqn:HL24_1}\\
     & \hat{d}(\omega_{\varphi_i}) \geq \epsilon_0.   \label{eqn:HL24_2}
 \end{align}
 Then we can assume that $\omega_{\varphi_i}$ converges to $\omega_{\varphi_{\infty}}$ in the $C^{k,\frac{1}{2}}$-topology, which gives rise to $C^{k,\frac{1}{2}}$-metric with Calabi energy the same as $Ca(\omega)$.
 By the uniqueness theorem(c.f.~\cite{[ChenTian]},~\cite{arXiv:1405.0401},~\cite{arXiv:1409.7896}), we can find an automorphism $\varrho_{\infty} \in Aut_0(M,J)$ such that
 \begin{align*}
      \varrho_{\infty}^* \omega_{\varphi_{\infty}}=\omega.
 \end{align*}
 Note that $\varrho_{\infty}$ is automatically smooth. Then we have
 \begin{align*}
    \varrho_{\infty}^* \omega_{\varphi_i}  \longright{C^{k,\frac{1}{2}}} \omega,
 \end{align*}
 which contradicts (\ref{eqn:HL24_2}).
\end{proof}

There is a modified Calabi flow version of the  short-time existence theorem of Chen-He(c.f.~\cite{[ChenHe1]}).

\begin{lemma}
There is a $\xi_0=\xi_0(\omega)$ with the following properties.

Suppose $ \norm{\omega_{\varphi}-\omega}{C^{k,\frac{1}{2}}}<\xi_0$.  Then the modified Calabi flow starting from $\omega_{\varphi}$ exists on time interval $[0,1]$ and we have
\begin{align*}
   \sup_{\frac{1}{2} \leq t \leq 1} \norm{\omega_{\varphi(t)}-\omega}{C^{k+1,\frac{1}{2}}} <1.
\end{align*}
\label{lma:HL24_2}
\end{lemma}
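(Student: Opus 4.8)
The plan is to adapt the short‑time existence construction of Chen--He~\cite{[ChenHe1]} to the modified Calabi flow and then, using that $\omega$ is a stationary point of (\ref{eqn:HL25_3}), upgrade it to existence on the fixed interval $[0,1]$ together with the stated $C^{k+1,\frac12}$ bound. The key structural facts are: on the level of K\"ahler potentials, (\ref{eqn:HL25_3}) becomes $\partial_t\varphi=\mathcal{MC}(\varphi)$ for a fourth order quasilinear operator whose principal part agrees with that of $\varphi\mapsto S(\omega_\varphi)-\underline S$, the extra term $L_{Re(X)}g$ being of first order in $\varphi$ since the extremal vector field $X$ is determined by the fixed background $\omega$; and $\mathcal{MC}(0)=0$, i.e.\ $\omega$ is a genuine stationary solution (see~\cite{[HZ]} and Section~3.2 of~\cite{[LZ]}), with linearization at $\varphi=0$ essentially $-\mathcal{D}^\ast\mathcal{D}$, $\mathcal{D}=\bar\partial\circ\nabla^{1,0}$, which is elliptic with non‑positive leading part. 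Hence, after the usual gauge fixing of~\cite{[ChenHe1]}, the equation is parabolic and the Chen--He fixed‑point argument carries over, the modification being a harmless lower order perturbation.

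First I would record the resulting uniform short‑time statement: there are $\tau_0=\tau_0(\omega)>0$ and constants $C_m=C_m(\omega)$ such that for every $\omega_\varphi$ with $\norm{\omega_\varphi-\omega}{C^{k,\frac12}}<1$ the modified Calabi flow exists and is unique on $[0,\tau_0]$ and, by parabolic smoothing, satisfies $\norm{\omega_{\varphi(t)}-\omega}{C^m}\le C_m$ for all $t\in[\tfrac12\tau_0,\tau_0]$ and all $m$. (Alternatively, one can obtain existence directly on $[0,1]$ for $\xi_0$ small by a contraction‑mapping argument in parabolic H\"older spaces based at the stationary solution, using that $-\mathcal{D}^\ast\mathcal{D}$ generates an analytic semigroup, whose smoothing estimate then gives the $C^{k+1,\frac12}$ bound; I would mention this but proceed with the Chen--He route to match the statement's phrasing.)

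Next I would push the existence time up to $1$ with the fine estimate by a contradiction argument paralleling Lemma~\ref{lma:HL24_1}. If the conclusion fails there are $\varphi_i$ with $\norm{\omega_{\varphi_i}-\omega}{C^{k,\frac12}}\to0$ for which the flow either does not reach $t=1$ or violates $\sup_{[1/2,1]}\norm{\omega_{\varphi_i(t)}-\omega}{C^{k+1,\frac12}}<1$. By the uniform statement above, each $\varphi_i$ exists on $[0,\tau_0]$ with uniform $C^\infty$ bounds on $[\tfrac12\tau_0,\tau_0]$; passing to a subsequence and applying Arzel\`a--Ascoli, $\varphi_i\to\varphi_\infty$ in $C^\infty_{\mathrm{loc}}((0,\tau_0])$, a solution of (\ref{eqn:HL25_3}) whose initial metric is $\omega$ (because $\omega_{\varphi_i}\to\omega$ in $C^{k,\frac12}$), hence by uniqueness of the modified Calabi flow the static solution $\omega_{\varphi_\infty(t)}\equiv\omega$. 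Therefore $\omega_{\varphi_i(t)}\to\omega$ in every $C^m$, uniformly on $[\tfrac12\tau_0,\tau_0]$; in particular $\norm{\omega_{\varphi_i(\tfrac12\tau_0)}-\omega}{C^{k,\frac12}}<1$ for $i$ large, so the short‑time statement can be reapplied with base time $\tfrac12\tau_0$, extending the flow by a further $\tau_0$ and forcing $C^\infty$‑convergence to the static solution on the new interval as well. Iterating $\lceil 2/\tau_0\rceil$ times, for $i$ large $\varphi_i$ exists on $[0,1]$ and $\omega_{\varphi_i(t)}\to\omega$ in $C^\infty$ uniformly on $[\tfrac12\tau_0,1]\supset[\tfrac12,1]$ (if $\tau_0\ge1$ nothing need be iterated), contradicting the failure assumption.

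The main obstacle is the second step: checking that the Chen--He short‑time existence and smoothing estimates hold with existence time and constants uniform over the $C^{k,\frac12}$‑ball around $\omega$, and that the term $L_{Re(X)}g$ does not disturb the DeTurck‑type gauge fixing and the function‑space setup of~\cite{[ChenHe1]}. Since that term is strictly lower order after the reduction and $\omega$ is an honest fixed point, this should go through, but it is the technical heart; parabolic uniqueness of the modified flow, used in the compactness step, is of the same nature and follows from the Chen--He uniqueness argument applied to $\mathcal{MC}$.
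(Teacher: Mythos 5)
The paper offers no proof of this lemma at all---it is asserted as the modified-flow analogue of Chen--He's short-time existence theorem, with nothing beyond the citation to \cite{[ChenHe1]}---and your proposal supplies the missing details along precisely the lines the authors intend: the modification $L_{Re(X)}g$ is lower order on the potential level, the extremal metric $\omega$ is a genuine fixed point, and Chen--He's existence, smoothing, uniqueness and continuous-dependence machinery then gives the uniform interval $[0,1]$ and the $C^{k+1,\frac{1}{2}}$ bound by iteration. Your argument is correct and consistent with the paper's (implicit) approach.
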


Now we fix $\xi_0$ in Lemma~\ref{lma:HL24_2} and define $\delta_0=\delta_0(\omega, \xi_0)$ as in Lemma~\ref{lma:HL24_1}.

\begin{lemma}
Suppose $\eta_0< \xi_0$ is small enough such that $ Ca(\omega_{\varphi})<\delta_0$
for every $\varphi \in \mathcal{H}$ satisfying $\norm{\omega_{\varphi}-\omega}{C^{k,\frac{1}{2}}}<\eta_0$.
Then the modified Calabi flow starting from $\omega_{\varphi}$ has global existence whenever
$\norm{\omega_{\varphi}-\omega}{C^{k,\frac{1}{2}}}<\eta_0$.
\label{lma:HL24_3}
\end{lemma}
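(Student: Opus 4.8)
\noindent\emph{Proof plan.} The plan is to run an induction over the integer times, propagating a $C^{k,\frac{1}{2}}$‑closeness statement \emph{up to an automorphism}, using Lemma~\ref{lma:HL24_2} to restart and Lemma~\ref{lma:HL24_1} to regain closeness after each unit time step. The key preliminary observation I would record is the relation between the two flows. Let $\sigma_t:=\exp(t\,\mathrm{Re}(X))$ be the one‑parameter subgroup of biholomorphisms generated by $\mathrm{Re}(X)$ (defined for all $t\in\mathbb{R}$ since $M$ is compact, and $\sigma_t\in \mathrm{Aut}_0(M,J)$). A one‑line computation with $L_{\mathrm{Re}(X)}$, using the naturality of scalar curvature and the fact that $\sqrt{-1}\partial\bar\partial$ commutes with holomorphic pullback, shows: if $h(t)$ solves the Calabi flow then $g(t):=\sigma_t^* h(t)$ solves the modified Calabi flow (\ref{eqn:HL25_3}) with $g(0)=h(0)$, and conversely $h(t)=\sigma_{-t}^* g(t)$. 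Since the ordinary Calabi flow is equivariant under biholomorphisms and $\mathrm{Aut}_0(M,J)$ acts trivially on $H^2(M)$, this yields a ``twisted equivariance'': for $\varrho\in \mathrm{Aut}_0(M,J)$, the modified Calabi flow $\psi(s)$ from $\varrho^*\omega_{\varphi_0}$ satisfies $\omega_{\psi(s)}=(\sigma_{-s}\circ\varrho\circ\sigma_s)^*\,\omega_{\chi(s)}$, where $\chi(s)$ is the modified Calabi flow from $\omega_{\varphi_0}$; in particular the two modified Calabi flows have the same maximal existence interval. Finally, since $Ca$ is a diffeomorphism invariant and is non‑increasing along the Calabi flow, it is also non‑increasing along the modified Calabi flow.

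With these in hand I would prove by induction on $n\ge 0$ the statement $(P_n)$: the modified Calabi flow from $\omega_\varphi$ exists on $[0,n]$ and there is $\varrho_n\in \mathrm{Aut}_0(M,J)$ with $\norm{\varrho_n^*\omega_{\varphi(n)}-\omega}{C^{k,\frac{1}{2}}}<\xi_0$. The base case $(P_0)$ holds with $\varrho_0=\mathrm{id}$ because $\norm{\omega_\varphi-\omega}{C^{k,\frac{1}{2}}}<\eta_0<\xi_0$. For the inductive step, assume $(P_n)$ and apply Lemma~\ref{lma:HL24_2} to the initial datum $\varrho_n^*\omega_{\varphi(n)}$: the modified Calabi flow $\psi$ from $\varrho_n^*\omega_{\varphi(n)}$ exists on $[0,1]$ and $\norm{\omega_{\psi(s)}-\omega}{C^{k+1,\frac{1}{2}}}<1$ for $s\in[\tfrac{1}{2},1]$. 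By the twisted equivariance above, the modified Calabi flow from $\omega_{\varphi(n)}$ also exists on $[0,1]$, so by uniqueness $\varphi$ extends to $[0,n+1]$ and, with $\mu:=\sigma_{-1}\circ\varrho_n\circ\sigma_1\in \mathrm{Aut}_0(M,J)$, we have $\omega_{\psi(1)}=\mu^*\omega_{\varphi(n+1)}$.

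It remains to produce $\varrho_{n+1}$. Using monotonicity and diffeomorphism invariance of $Ca$, and that $\norm{\omega_\varphi-\omega}{C^{k,\frac{1}{2}}}<\eta_0$ forces $Ca(\omega_\varphi)<\delta_0$, I get $Ca(\omega_{\psi(1)})\le Ca(\omega_{\psi(0)})=Ca(\omega_{\varphi(n)})\le Ca(\omega_\varphi)<\delta_0$, hence $Ca(\omega_{\psi(1)})-Ca(\omega)<\delta_0$ (as $Ca(\omega)\ge 0$). Combined with $\norm{\omega_{\psi(1)}-\omega}{C^{k+1,\frac{1}{2}}}<1$, Lemma~\ref{lma:HL24_1} — applied with $\epsilon=\xi_0$, which is precisely how $\delta_0$ was chosen — gives $\hat d(\omega_{\psi(1)})<\xi_0$, so there is $\tau\in \mathrm{Aut}_0(M,J)$ with $\norm{\tau^*\omega_{\psi(1)}-\omega}{C^{k,\frac{1}{2}}}<\xi_0$. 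Setting $\varrho_{n+1}:=\mu\circ\tau$ and noting $\tau^*\omega_{\psi(1)}=\tau^*\mu^*\omega_{\varphi(n+1)}=(\mu\circ\tau)^*\omega_{\varphi(n+1)}$ establishes $(P_{n+1})$. Since $(P_n)$ holds for all $n$, the modified Calabi flow from $\omega_\varphi$ exists on $[0,\infty)$, which is the assertion.

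The main obstacle — and the reason one cannot avoid the automorphisms — is the mismatch between what Lemma~\ref{lma:HL24_1} provides (smallness of $\hat d$, i.e.\ closeness to $\omega$ only after acting by an automorphism) and what Lemma~\ref{lma:HL24_2} requires to restart (genuine $C^{k,\frac{1}{2}}$‑closeness). Bridging this gap forces one to restart the flow from $\varrho_n^*\omega_{\varphi(n)}$ rather than from $\omega_{\varphi(n)}$, and then one must check the two modified flows agree up to an automorphism; because the modified Calabi flow is genuinely equivariant only under the centralizer of $\mathrm{Re}(X)$, this is exactly the point where the twisting by $\sigma_t$ from the first paragraph is indispensable. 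The remaining ingredients (naturality and uniqueness of the flows, monotonicity of $Ca$) are standard.
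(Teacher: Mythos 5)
Your proof is correct and follows essentially the same route as the paper's: an induction over unit time intervals, restarting via Lemma~\ref{lma:HL24_2} from $\varrho_n^*\omega_{\varphi(n)}$ and recovering $\hat d<\xi_0$ at the next integer time via monotonicity of the Calabi energy together with Lemma~\ref{lma:HL24_1}. The one place you go beyond the paper is the ``twisted equivariance'' identity $\omega_{\psi(s)}=(\sigma_{-s}\circ\varrho\circ\sigma_s)^*\omega_{\chi(s)}$, which correctly justifies the step the paper dispatches with ``the intrinsic geometry and $\hat d$ do not change under the automorphism action''---a worthwhile clarification, since the modified flow is not literally equivariant under all of $\mathrm{Aut}_0(M,J)$ but only up to this conjugation by the flow of $\mathrm{Re}(X)$.
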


\begin{proof}
Clearly, the flow exists on $[0,1]$ and $\hat{d}(\omega_{\varphi(1)})<\xi_0$.
We continue to use induction to show that the flow exists on $[0, N]$ for each positive integer $N$ and $\hat{d}(\omega_{\varphi(N)})<\xi_0$.

Suppose the statement holds for $N$. Then at time $N$, we have $\hat{d}(\omega_{\varphi(N)})<\xi_0$. Therefore, we can find an automorphism
$\varrho_N$ such that
\begin{align*}
   \norm{\varrho_N^* \omega_{\varphi(N)}-\omega}{C^{k,\frac{1}{2}}}<\xi_0.
\end{align*}
Then the modified Calabi flow starting from $\varrho_N^* \omega_{\varphi(N)}$ exists for another time period with length $1$, in light of Lemma~\ref{lma:HL24_2}.
Moreover, we have the $C^{k+1,\frac{1}{2}}$-bound of the metric at the end of this time period.
Note that the modified Calabi energy is monotonelly decreasing along the flow. Therefore, at the end of the time period $1$, one can apply Lemma~\ref{lma:HL24_1} to obtain
that $\hat{d}<\xi_0$.
 Since the intrinsic geometry and $\hat{d}$ does not change under the automorphism action, it is clear that the modified Calabi flow starting from  $\omega_{\varphi(N)}$ exists for another time length $1$
 with proper geometric bounds.
 Therefore, $\omega_{\varphi(t)}$ are well defined smooth metrics for $t \in [N, N+1]$ and $\hat{d}(\omega_{\varphi(N+1)})<\xi_0$.
\end{proof}

We observe that the automorphisms $\varrho_i$ defined in the proof of Lemma~\ref{lma:HL24_3} are uniformly bounded.
Actually, note that the geodesic distance between two modified Calabi flows is non-increasing.
By triangle inequality, we have
\begin{align*}
  d(\omega, \omega_{\varphi})
  &\geq
  d(\omega, \omega_{\varphi(i)})
  \geq d(\omega, (\varrho_i^{-1})^\ast \omega) -d((\varrho_i^{-1})^\ast \omega, \omega_{\varphi(i)})\\
  &=d(\omega,(\varrho_i^{-1})^\ast \omega)-d(\omega,\varrho_i^\ast \omega_{\varphi(i)}).
\end{align*}
It follows that
\begin{align*}
   d(\omega, (\varrho_i^{-1})^\ast \omega) \leq d(\omega, \omega_{\varphi}) + d(\omega,\varrho_{i}^\ast \omega_{\varphi(i)})< C.
\end{align*}
Therefore, $\varrho_i$ must be uniformly bounded.   Furthermore, since the modified Calabi energy always tends to zero as $t \to \infty$,
by the results of Streets~\cite{[Streets1]} and He~\cite{[He5]}, it is clear that
\begin{align*}
    Ca(\omega_{\varphi(i)})-Ca(\omega_{\varphi(i-1)}) \to 0.
\end{align*}
Applying Lemma~\ref{lma:HL24_1}, we can choose $\xi_i \to 0$ such that
\begin{align*}
   \norm{\varrho_i^* \omega_{\varphi(i)}-\omega}{C^{k,\frac{1}{2}}}<\xi_i \to 0, \quad \textrm{as} \; i \to \infty.
\end{align*}
In particular, we have $d(\omega, \varrho_i^* \omega_{\varphi(i)}) \to 0$ as $i \to \infty$.

\begin{theorem}
Suppose $\varphi \in \mathcal{H}$ satisfying $\norm{\omega_{\varphi}-\omega}{C^{k,\frac{1}{2}}}<\eta_0$.
Then the modified Calabi flow starting from $\omega_{\varphi}$
converges to $\varrho^\ast \om$ for some $\varrho\in  \mathrm{Aut}_0(M,J)$, in the smooth topology of K\"ahler potentials.
\label{thm:HL24_1}
\end{theorem}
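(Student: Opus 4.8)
The plan is to promote the convergence already recorded above --- convergence in $C^{k,\frac12}$ along the integer times and convergence in the Mabuchi distance $d$ --- to smooth convergence, by feeding the backward regularity improvement Theorem~\ref{thm:HL29_1} into the flow, and then to pin down the limit using the fact that the modified Calabi flow does not increase $d$. First I would collect what is already available. By Lemma~\ref{lma:HL24_3} the modified Calabi flow $\omega_{\varphi(t)}$ exists for all $t\geq0$; by the discussion following that lemma, together with~\cite{[Streets1]} and~\cite{[He5]}, the modified Calabi energy $Ca(\omega_{\varphi(t)})-Ca(\omega)$ decreases to $0$, and there are automorphisms $\varrho_i\in\mathrm{Aut}_0(M,J)$, uniformly bounded in $i$, with $\|\varrho_i^*\omega_{\varphi(i)}-\omega\|_{C^{k,\frac12}}\to0$. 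Since $k$ is large and $|\Rm|$, the injectivity radius and the Calabi energy are diffeomorphism invariant, this gives uniform bounds $\sup_M|\Rm|_{g_{\varphi(i)}}\leq B$ and $inj(M,g_{\varphi(i)})\geq c_0$ at all large integer times, with $B,c_0$ depending only on $\omega$. Propagating this closeness over bounded time intervals by Lemma~\ref{lma:HL24_2} (applicable to the modified flow by Remark~\ref{rmk:HL30_1}) and combining with Lemma~\ref{lma:HL24_1} and the energy decay, I expect to obtain $\hat d(\omega_{\varphi(t)})\to0$ as $t\to\infty$, with the associated near-minimizing automorphisms uniformly bounded.

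The decisive step is to turn this into uniform higher-order control along the flow. I would fix $S_0$ so large that $Ca(\omega_{\varphi(S_0)})-Ca(\omega)<\epsilon$, the universal constant of Lemma~\ref{lem:900}. Then, for each large integer $i$, Theorem~\ref{thm:HL29_1}, applied to the modified Calabi flow on $[S_0,i]$ --- its Calabi energy drop is $<\epsilon$, and at the terminal time $i$ one has $|\Rm|\leq B$ and $inj\geq c_0$ --- yields $\sup_{M\times[i-\delta,i]}|\nabla^l\Rm|\leq C_l$ with $\delta=\delta(B,c_0)$ and $C_l$ independent of $i$. Using in addition the forward smoothing of the Calabi flow (Theorem~\ref{thm:GA04_JS}) and the closeness of the metrics at the integer times (Lemma~\ref{lma:HL24_2}) to bridge the remaining time gaps, this should give $\sup_{M\times[S_0,\infty)}|\nabla^l\Rm|\leq C_l$ for every $l$, i.e. uniformly bounded geometry of the flow for $t\geq S_0$. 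I expect this to be the hard part: one must verify the hypotheses of Theorem~\ref{thm:HL29_1} uniformly in $i$, and it is precisely the closeness $\|\varrho_i^*\omega_{\varphi(i)}-\omega\|_{C^{k,\frac12}}\to0$, being a diffeomorphism-invariant statement, that supplies the curvature and injectivity control at the terminal times.

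With uniform smooth bounds in hand, the remainder is a standard compactness argument. Using the local biholomorphic charts of Lemma~\ref{lem:map}, every sequence $t_i\to\infty$ has a subsequence along which $\omega_{\varphi(t_{i_j})}$ converges smoothly; since $\hat d(\omega_{\varphi(t_{i_j})})\to0$ with near-minimizing automorphisms $\sigma_j$ uniformly bounded, passing to a further subsequence gives $\sigma_j\to\sigma_\infty$ in $C^\infty$ and $\sigma_j^*\omega_{\varphi(t_{i_j})}\to\omega$ in $C^\infty$, hence $\omega_{\varphi(t_{i_j})}\to(\sigma_\infty^{-1})^*\omega$ in the smooth topology of K\"ahler potentials in $[\omega]$. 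Specializing to integer times produces a limit of the form $\varrho^*\omega$ with $\varrho\in\mathrm{Aut}_0(M,J)$; as $Ca(\varrho^*\omega)=Ca(\omega)$ it is extremal, hence (cf.~\cite{[HZ]} and~\cite{[LZ]}) a stationary solution of the modified Calabi flow.

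It remains to identify the limit uniquely. Since the modified Calabi flow does not increase $d$ (by the automorphism invariance of $d$ and~\cite{MR1969662}), $t\mapsto d(\omega_{\varphi(t)},\varrho^*\omega)$ is non-increasing; along the subsequence just constructed it tends to $0$ (smooth convergence implies $d$-convergence), so $d(\omega_{\varphi(t)},\varrho^*\omega)\to0$ for the whole flow. If $t_i'\to\infty$ is any other sequence, the compactness step gives a smooth subsequential limit $\omega_\infty'$ of $\omega_{\varphi(t_i')}$ with $d(\omega_\infty',\varrho^*\omega)=0$, whence $\omega_\infty'=\varrho^*\omega$ because $d$ is a genuine distance on $\mathcal H$ (Calabi--Chen,~\cite{Chen1}). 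Therefore $\omega_{\varphi(t)}\to\varrho^*\omega$ as $t\to\infty$ in the smooth topology of K\"ahler potentials, as claimed. The one place where the argument really needs the new input is the second step: without Theorem~\ref{thm:HL29_1} one would control the flow only in $C^{k,\frac12}$, and then neither the compactness nor the distance argument would go through.
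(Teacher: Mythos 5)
Your proposal is correct in substance and its final two steps coincide with the paper's proof: the paper also identifies the limit by combining (i) uniform boundedness of the automorphisms $\varrho_j$ from Lemma~\ref{lma:HL24_3} and the convergence $\norm{\varrho_j^*\omega_{\varphi(j)}-\omega}{C^{k,\frac12}}\to 0$, (ii) the monotonicity of the geodesic distance $d$ under the modified Calabi flow applied to the fixed point $(\varrho_\infty^{-1})^*\omega$, and (iii) smooth subsequential compactness plus non-degeneracy of $d$ to force every subsequential limit to equal $(\varrho_\infty^{-1})^*\omega$. Where you genuinely diverge is in how you obtain the uniform $C^l$ bounds along the flow. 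The paper does not invoke the backward regularity improvement Theorem~\ref{thm:HL29_1} here at all: the inductive construction in Lemma~\ref{lma:HL24_3} already keeps $\hat d(\omega_{\varphi(N)})<\xi_0$ at every integer time and, via the forward smoothing of Lemma~\ref{lma:HL24_2}, gives uniform $C^{k+1,\frac12}$ (and then all $C^l$) control of $\varrho(t)^*\omega_{\varphi(t)}$ on each unit interval, so the flow never leaves a controlled neighborhood of the $\mathrm{Aut}_0$-orbit of $\omega$ and no backward improvement is needed. Your route through Theorem~\ref{thm:HL29_1} works but is heavier than necessary, and it carries a caveat you should address: Theorem~\ref{thm:HL29_1} is stated only for complex surfaces (it rests on Lemma~\ref{lem:900}, which uses the scale invariance of the Calabi energy in complex dimension $2$), whereas Theorem~\ref{thm:HL24_1} is dimension-free and is invoked verbatim in the proof of Theorem~\ref{thmin:3} in all dimensions. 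As written, your argument therefore only covers $n=2$; to repair it either substitute Theorem~\ref{thm:HL29_2} (the scalar curvature bound it requires is available on each interval $[t_0-1,t_0]$ precisely because $\hat d$ stays below $\xi_0$ and Lemma~\ref{lma:HL24_2} controls the metric there), or, more economically, drop the backward regularity step entirely and read the uniform bounds off the proof of Lemma~\ref{lma:HL24_3} as the paper does. The backward regularity theorems are genuinely needed only later, in Proposition~\ref{prn:HL27_1} and the proof of Theorem~\ref{thmin:1}, where the flow may travel far from $\omega$ before returning.
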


\begin{proof}

First, let us show the convergence in distance topology.

Let $\varrho_j$ be the automorphisms defined in the proof of Lemma~\ref{lma:HL24_3}.   From the above discussion,
we see that  $\varrho_j$ are uniformly bounded, by taking subsequence if necessary, we can assume
 $\varrho_j$ converges to $\varrho_{\infty}$.   Then
\begin{align*}
  \lim_{j\rightarrow+\infty}d((\varrho_{\infty}^{-1})^\ast \omega, \omega_{\varphi(j)})= \lim_{j\rightarrow+\infty}d((\varrho_j^{-1})^\ast \omega, \omega_{\varphi(j)})
  =\lim_{j\rightarrow+\infty} d(\omega, (\varrho_j)^\ast \omega_{\varphi(j)})=0.
\end{align*}
Note that $(\varrho_{\infty}^{-1})^\ast \omega$ is also an extremal K\"ahler metric and hence a fixed point of the modified Calabi flow.
By the monotonicity of geodesic distance along the modified Calabi flow, we have
\begin{align*}
   d((\varrho_{\infty}^{-1})^\ast \omega, \omega_{\varphi(t)}) \leq d((\varrho_{\infty}^{-1})^\ast \omega, \omega_{\varphi(j)})
\end{align*}
whenever $t \geq j$.  In the above inequality,  let $t \to \infty$ and then let $j \to \infty$, we obtain
\begin{align}
    \lim_{t \to \infty} d((\varrho_{\infty}^{-1})^\ast \omega, \omega_{\varphi(t)})=0.    \label{eqn:HL25_1}
\end{align}
This means that $\omega_{\varphi(t)}$ converges to $(\varrho_{\infty}^{-1})^\ast \omega$ in the distance topology.

Second, we improve the convergence topology from distance topology in (\ref{eqn:HL25_1}) to smooth topology.

Define $\varrho(t)=\varrho_j$ for each $t \in [j, j+1)$.
In light of the proof of Lemma~\ref{lma:HL24_3},  both $\varrho(t)^* \omega_{\varphi(t)}$ and $\varrho(t)$ are uniformly bounded.  It follows that $\omega_{\varphi(t)}$ are uniformly bounded in each $C^l$-topology.
Let $t_i \to \infty$ be a time sequence such that
\begin{align*}
     \omega_{\varphi(t_i)}   \longright{C^{\infty}}   \omega_{\varphi_{\infty}}, \quad \textrm{as} \quad i \to \infty.
\end{align*}
In view of (\ref{eqn:HL25_1}), we see that $d( \omega_{\varphi_{\infty}}, (\varrho_{\infty}^{-1})^* \omega)=0$, which forces that $\omega_{\varphi_{\infty}}=(\varrho_{\infty}^{-1})^* \omega$.
Since $\{t_i\}$ is arbitrary time sequence such that $ \omega_{\varphi(t_i)}$ converges,  the above discussion actually implies that
\begin{align*}
    \omega_{\varphi(t)}   \longright{C^{\infty}}   (\varrho_{\infty}^{-1})^* \omega, \quad \textrm{as} \quad t \to \infty.
\end{align*}
Let $\varrho=\varrho_{\infty}^{-1}$. Then the proof of the Theorem is complete.
\end{proof}

The  convergence in Theorem \ref{thm:HL24_1} could be as precise as ``exponential" if we have further conditions on $\omega$ or $\omega_{\varphi}$.

\begin{theorem}[\cite{[ChenHe1]}, \cite{[HZ]}]
Same conditions as those in Theorem~\ref{thm:HL24_1}. Suppose one of the following conditions is satisfied.
\begin{itemize}
\item $\omega$ is a cscK metric.
\item $\omega_{\varphi}$ is $G$-invariant, where $G$ is a maximal compact subgroup of $Aut_0(M,J)$.
\end{itemize}
Then the modified Calabi flow starting from $\omega_{\varphi}$ converges  to $\varrho^* \omega$ exponentially fast in the smooth topology of K\"ahler potentials.
In other words,  for each positive integer $l$, there are constants $C, \theta$ depending on $l$ such that
$$|\vphi(t)-\vphi_\infty|_{C^{l}(g)}\leq C_l\cdot e^{-\theta t}, \forall t\geq 0.$$
\label{thm:HL24_exp}
\end{theorem}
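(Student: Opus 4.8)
The plan is to reduce the exponential rate to the near-equilibrium analysis already carried out by Chen--He~\cite{[ChenHe1]} (in the cscK case) and He--Zheng~\cite{[HZ]} (in the $G$-invariant case), feeding in the $C^\infty$-convergence supplied by Theorem~\ref{thm:HL24_1}. By Theorem~\ref{thm:HL24_1} there is a $\varrho\in\mathrm{Aut}_0(M,J)$ with $\omega_{\varphi(t)}\to\varrho^\ast\omega$ smoothly; replacing $\omega$ by $\varrho^\ast\omega$ (which is again extK, with extremal field $\varrho^\ast X$, and for which the cscK resp.\ the $G$-invariance hypothesis still holds after conjugating $G$), we may assume $\varrho=\mathrm{id}$. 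Hence, given any $\eta>0$, for $t\geq t_0(\eta)$ the flow stays in the $C^{k+1,\frac12}$-ball of radius $\eta$ around $\omega$, and the task is to prove exponential decay of $\varphi(t)-\varphi_\infty$ inside that ball.

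Inside the ball I would linearize: writing the modified Calabi flow on potentials as $\dot\varphi=S(\omega_\varphi)-\underline S+X(\varphi)+\mathrm{const}$, its linearization at $\omega$ is $-\mathcal L$, with $\mathcal L$ a fourth-order, non-negative, formally self-adjoint elliptic operator (the Lichnerowicz operator $\mathcal D^\ast\mathcal D$ when $\omega$ is cscK, so that $X=0$; its standard $X$-modification otherwise). The kernel $\mathfrak h$ of $\mathcal L$ is the finite-dimensional space of real holomorphy potentials, i.e.\ the tangent space to the $\mathrm{Aut}_0(M,J)$-orbit of $\omega$, and $\mathcal L$ has a positive lowest eigenvalue $\lambda_1$ on the $L^2$-complement $\mathfrak h^\perp$. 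Decomposing $\varphi(t)=\varphi^\top(t)+\varphi^\perp(t)$ accordingly, a standard differential-inequality argument (using that the nonlinear remainder is quadratic-and-higher and that the flow is $C^{k,\frac12}$-small) gives $\|\varphi^\perp(t)\|_{L^2}\le Ce^{-\lambda_1 t/2}$ once $\eta$ is small enough; the transverse part $\varphi^\top$, which represents the drift of the flow along the automorphism orbit, satisfies $\|\dot\varphi^\top\|\le C\,\eta\,\|\varphi^\perp\|$ because the linear flow is trivial in the $\mathfrak h$-direction, so $\varphi^\top(t)$ is Cauchy with an exponential rate and converges, defining $\varphi_\infty$. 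Finally, the interpolation and parabolic regularity estimates of Section~\ref{sec:regularity} promote the $L^2$-exponential decay of $\varphi(t)-\varphi_\infty$ to $C^l$-exponential decay for every $l$, which is the desired bound $|\varphi(t)-\varphi_\infty|_{C^l(g)}\le C_le^{-\theta t}$.

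The delicate point, and the reason the two extra hypotheses appear, is the legitimacy of the fixed splitting $\mathfrak h\oplus\mathfrak h^\perp$ along the flow together with the drift estimate for $\varphi^\top$: a priori the flow could slide indefinitely along the (possibly non-compact, possibly non-reductive) automorphism orbit. When $\omega$ is cscK, $\mathrm{Aut}_0(M,J)$ is reductive by Matsushima--Lichnerowicz, the nearby zero set of the modified Calabi energy is precisely the smooth orbit of $\omega$, and the orbit decomposition is well controlled --- this is exactly the setting of Chen--He's exponential stability theorem. When $\omega_\varphi$ is $G$-invariant with $G$ maximal compact, the modified Calabi flow preserves $G$-invariance (since $\mathrm{Re}(X)$ is $G$-invariant), and on $G$-invariant potentials the surviving holomorphy potentials are those generated by $X$ alone, along which the modified flow is stationary; so the effective orbit is reduced to a point and He--Zheng's argument applies. (Equivalently, either hypothesis upgrades the \L{}ojasiewicz--Simon exponent of the modified Calabi energy at $\omega$ to $\tfrac12$, which is what converts the polynomial rate of a generic gradient flow into an exponential one.) I expect this orbit-drift control, rather than any of the PDE estimates, to be the crux.
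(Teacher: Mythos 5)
The paper gives no proof of Theorem~\ref{thm:HL24_exp}: it is stated purely as a quotation of the exponential stability theorems of Chen--He~\cite{[ChenHe1]} (the cscK case) and Huang--Zheng~\cite{[HZ]} (the $G$-invariant extremal case), with Theorem~\ref{thm:HL24_1} only serving to place the flow in the small $C^{k,\frac{1}{2}}$-neighborhood of $\varrho^*\omega$ where those theorems apply. Your sketch is essentially the argument of those references --- linearization at the extremal metric, spectral gap of the (modified) Lichnerowicz operator on the $L^2$-complement of the holomorphy potentials, control of the drift along the $\mathrm{Aut}_0(M,J)$-orbit, and parabolic bootstrapping from $L^2$ to $C^l$ --- so it follows the same route the paper implicitly takes; the only step you assert more optimistically than you justify is the drift bound $\lVert\dot\varphi^\top\rVert\le C\,\eta\,\lVert\varphi^\perp\rVert$ (a naive quadratic estimate only gives an additional $\lVert\varphi^\top\rVert^2$ term, and removing it requires the vanishing of the modified Futaki pairing together with reductivity, resp.\ $G$-invariance), which is precisely the content supplied by \cite{[ChenHe1]} and \cite{[HZ]} and which you correctly identify as the crux.
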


\subsection{Convergence of K\"ahler potentials}
\label{subset:potential}

The key of this subsection are the following regularity-improvement properties.

\begin{proposition}
 Suppose $\{(M^2, \omega_{\varphi(t)}, J),  0 \leq t < \infty\}$ is a modified Calabi flow solution satisfying
 \begin{align}
   \norm{\omega_{\varphi(t_0)} -\omega}{C^{k, \frac{1}{2}}}<\delta_0  \label{eqn:HL27_1}
 \end{align}
 for some $t_0 \geq 1$.   Then for each positive integer $l \geq k$, there is a constant $C_l$ depending on $l$ and $\omega$ such that
 \begin{align}
   \norm{\omega_{\varphi(t_0)} -\omega}{C^{l, \frac{1}{2}}}< C_l.
 \label{eqn:HL29_1}
 \end{align}
\label{prn:HL27_1}
\end{proposition}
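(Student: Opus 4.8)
The plan is to reduce estimate~(\ref{eqn:HL29_1}) to a \emph{backward} parabolic smoothing statement at the single time $t_0$, and then feed it into the backward regularity improvement theorems already established. All norms below are taken with respect to the fixed reference metric $g$ of $\omega$, and I use throughout that $k=k(n)$ is large enough that a bound on $\norm{\omega_{\varphi(t_0)}-\omega}{C^{k,\frac12}}$ controls $g_{\varphi(t_0)}$, its inverse, and $|Rm|_{g_{\varphi(t_0)}}+|\nabla Rm|_{g_{\varphi(t_0)}}$.

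First I would record the geometric data at time $t_0$ forced by the hypothesis~(\ref{eqn:HL27_1}): (i) $e^{-1}g\le g_{\varphi(t_0)}\le e\,g$ and $\sup_M|Rm|_{g_{\varphi(t_0)}}\le B(\omega)$; (ii) since $g_{\varphi(t_0)}$ is $C^{k,\frac12}$-close to the fixed smooth metric $g$, its injectivity radius and harmonic radius are bounded below by some $c_0(\omega)>0$, hence $H_{g_{\varphi(t_0)}}(M)\ge h_0(\omega)>0$; and (iii) by continuity of the Calabi functional in the $C^{k,\frac12}$-topology together with the fact that the minimum of $Ca$ in $[\omega]$ is attained at $\omega$, one has $0\le Ca(\omega_{\varphi(t_0)})-Ca(\omega)<\delta'$, where $\delta'=\delta'(\delta_0,\omega)\to 0$ as $\delta_0\to 0$.

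Next, because $t_0\ge 1$ the modified Calabi flow exists on $[t_0-1,t_0]$, and I would run the backward improvement machine on this segment. The clean route: first apply Lemma~\ref{lem:900} (valid for the modified Calabi flow by Remark~\ref{rmk:HL30_1}) to propagate the lower bound $H_{g_{\varphi(t)}}(M)\ge h_0/2$ from $t=t_0$ onto a definite interval $[t_0-\delta,t_0]$ with $\delta=\delta(\omega)>0$; then apply Theorem~\ref{thm:GA04_JS} on $[t_0-\delta,t_0]$ to upgrade this uniform curvature bound to $\sup_M|\nabla^l Rm|(\cdot,t_0)\le C'_l$ for every $l$. Equivalently, reparametrize $[t_0-1,t_0]$ as $[0,1]$ and quote Theorem~\ref{thm:HL29_1} with $T=T_0=1$, using (i)--(ii) as the endpoint data. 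Finally, to convert these intrinsic bounds into the asserted $C^{l,\frac12}$-bound on $\omega_{\varphi(t_0)}$, I would use Lemma~\ref{lem:map} to produce uniform local biholomorphic charts for $g_{\varphi(t_0)}$ and bootstrap the elliptic equation $Rm_{g_{\varphi(t_0)}}=-\sqrt{-1}\partial\bar{\partial}\log\det\bigl(g+\sqrt{-1}\partial\bar{\partial}\varphi(t_0)\bigr)$, whose leading term is, once the $C^{k,\frac12}$-control is in hand, a uniformly elliptic linear operator in $\varphi(t_0)$; Schauder estimates then give $\norm{\varphi(t_0)}{C^{l+2,\frac12}}\le\tilde C_l$ and hence~(\ref{eqn:HL29_1}) with $C_l=C_l(l,\omega)$.

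The main obstacle is the backward step, namely controlling the modified Calabi energy drop over a backward interval of definite length ending at $t_0$: the hypothesis only records smallness of the energy at the single instant $t_0$, whereas monotonicity of the modified Calabi energy bounds it going forward (for $t\ge t_0$), not backward, so one genuinely has to argue that the energy cannot have dropped by more than the universal $\epsilon$ of Lemma~\ref{lem:900} just before $t_0$. I expect this is handled by the same contradiction-and-blowup scheme as in the proof of Lemma~\ref{lem:900} --- rescaling at a failure point, extracting a Cheeger--Gromov limit on a short backward time interval (whose existence must be bootstrapped hand in hand with the harmonic-scale bound, so the setup needs care), and using that the limit flow has static Calabi energy, hence is extK with $\nabla\bar\nabla S\equiv 0$, to reach a contradiction. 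Once that propagation is secured, everything else is routine bookkeeping with Theorem~\ref{thm:HL29_1}, Theorem~\ref{thm:GA04_JS}, Lemma~\ref{lem:900} and Lemma~\ref{lem:map}.
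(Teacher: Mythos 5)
Your route is the paper's route: reduce to the unmodified flow (the automorphism generated by $Re(X)$ over a unit backward interval has uniformly bounded $C^{l,\frac12}$-norm), obtain intrinsic bounds on $|\nabla^l Rm|$ at $t_0$ from the backward regularity package (Lemma~\ref{lem:900} plus Theorem~\ref{thm:GA04_JS}, i.e.\ Theorem~\ref{thm:HL29_1}), and then convert these to $C^{l,\frac12}$-bounds on the potential by elliptic bootstrapping. The paper carries out exactly these steps, doing the last one either via Monge--Amp\`ere regularity or via metric equivalence on $[t_0-\frac14,t_0]$, Chen--He's $C^{1,\frac12}$-estimate and the smoothing property of the flow.

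The obstacle you flag --- bounding the Calabi-energy drop over $[t_0-1,t_0]$ from data at the single time $t_0$ --- is genuine, and your proposal does not close it; but neither does the paper. The paper disposes of it with the line $Ca(\omega_{\varphi(t_0-1)})-Ca(\omega_{\varphi(t_0)}) \leq Ca(\omega_{\varphi(t_0)}) -Ca(\omega)<\epsilon$ ``by shrinking $\delta_0$'', which does not follow from monotonicity: monotonicity bounds $Ca(\omega_{\varphi(t_0-1)})$ from \emph{below} by $Ca(\omega_{\varphi(t_0)})$, whereas the displayed inequality needs the \emph{upper} bound $Ca(\omega_{\varphi(t_0-1)})\le 2\,Ca(\omega_{\varphi(t_0)})-Ca(\omega)$, i.e.\ precisely the backward energy control in question. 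Your suggested contradiction-and-blowup fix runs into the same wall: a Cheeger--Gromov limit extracted near $t_0$ says nothing about how much energy was dissipated earlier in $[t_0-1,t_0]$, and that dissipation is not controlled by any quantity measured at $t_0$ alone, so the rescaling argument has no contradiction to reach. Note that where the Proposition is actually invoked (the proof of Theorem~\ref{thmin:1}), the smallness of $Ca(\omega_{\varphi_{s_i}(T_i-1)})-Ca(\omega_{\varphi_{s_i}(T_i)})$ is secured separately beforehand, via (\ref{eqn:HL23_3}) and (\ref{eqn:HL23_1}); the clean repair --- for your write-up and for the paper's --- is to add the backward energy-drop bound (or an upper bound on $Ca$ at time $t_0-1$) as an explicit hypothesis, which is what the application supplies.
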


\begin{proof}
The solution of the modified flow (\ref{eqn:HL25_3}) and the Calabi flow differs only by an action of $\varrho$,
which the automorphism generated by $Re(X)$ from time $t_0-1$ to $t_0$, where $X$ is defined in (\ref{eqn:GA06_1}).
Since $X$ is a fixed holomorphic vector field. It follows that $\norm{\varrho}{C^{l,\frac{1}{2}}}$ is uniformly bounded
by $A_l$  for each positive integer $l$.  Therefore, in order to show (\ref{eqn:HL29_1}) for modified Calabi flow,
it suffices to prove it for unmodified Calabi flow.

Clearly, the Riemannian geometry of $\omega_{\varphi(t_0)}$ is uniformly bounded.   By shrinking $\delta_0$ if necessary, we also have
\begin{align*}
  Ca(\omega_{\varphi(t_0-1)})-Ca(\omega_{\varphi(t_0)}) \leq Ca(\omega_{\varphi(t_0)}) -Ca(\omega) <\epsilon,
\end{align*}
where $\epsilon$ is the small constant in Lemma~\ref{lem:900}.
Therefore, Theorem~\ref{thm:HL29_1} applies and we obtain  uniform $|\nabla_\vphi^l Rm(\om_\vphi)|_{g_\vphi}$ bound for each  nonnegative integer $l$.
Then (\ref{eqn:HL29_1}) follows from regularity improvement of Monge-Ampere equation.
Alternatively, one can argue as follows.  Due to the bound of curvature derivatives, one can obtain the metric equivalence for a fixed time peoriod before $t_0$, say on $[t_0-\frac{1}{4}, t_0]$.
Then we see that $\omega_{\varphi(t_0-\frac{1}{4})}$ is uniformly equivalent to $\omega$ and has uniformly bounded Ricci curvature.  This forces that $\omega_{\varphi(t_0-\frac{1}{4})}$
has uniform $C^{1,\frac{1}{2}}$-norm, due to Theorem 5.1 of Chen-He~\cite{[ChenHe1]}.
Consequently, (\ref{eqn:HL29_1}) follows from the smoothing property of the Calabi flow(c.f. the proof of Theorem 3.3 of~\cite{[ChenHe1]}).
\end{proof}

\begin{proposition}
 Suppose $\{(M^n, \omega_{\varphi(t)}, J),  0 \leq t < \infty\}$ is a modified Calabi flow solution satisfying
 \begin{align}
   \norm{\omega_{\varphi(t_0)} -\omega}{C^{k, \frac{1}{2}}}<\delta_0,   \quad \sup_{M \times [t_0-1, t_0]}|S| < A,   \label{eqn:HL27_1}
 \end{align}
 for some $t_0 \geq 1$.   Then for each positive integer $l \geq k$, there is a constant $C_l$ depending on $l$ , $A$ and $\omega$ such that
 \begin{align*}
    \norm{\omega_{\varphi(t_0)} -\omega}{C^{l, \frac{1}{2}}}< C_l.
 \end{align*}
\label{prn:HL27_2}
\end{proposition}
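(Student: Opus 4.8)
The plan is to follow the proof of Proposition~\ref{prn:HL27_1} almost verbatim, replacing the two-dimensional backward regularity input Theorem~\ref{thm:HL29_1} (which relied on the scaling invariance of the Calabi energy, via Lemma~\ref{lem:900}) by its higher dimensional counterpart Theorem~\ref{thm:HL29_2}, whose extra hypothesis is precisely the scalar curvature bound $\sup_{M\times[t_0-1,t_0]}|S|<A$ that we are now given.

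First I would reduce to the unmodified Calabi flow. The modified flow (\ref{eqn:HL25_3}) differs from a genuine Calabi flow only by the one-parameter family of diffeomorphisms generated by $Re(X)$, with $X$ the fixed holomorphic vector field (\ref{eqn:GA06_1}); hence the diffeomorphism $\varrho$ relating the two on $[t_0-1,t_0]$ satisfies $\norm{\varrho}{C^{l,\frac{1}{2}}}\le A_l$ for constants $A_l=A_l(\omega)$. Since precomposition with such a bounded diffeomorphism distorts the $C^{l,\frac{1}{2}}$-distance to $\omega$ only by a fixed factor, it suffices to establish $\norm{\omega_{\varphi(t_0)}-\omega}{C^{l,\frac{1}{2}}}<C_l$ for the unmodified flow; and by Remark~\ref{rmk:HL30_1} the scalar curvature is a geometric quantity, so the bound $|S|<A$ is unaffected by this reduction.

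Next, from $\norm{\omega_{\varphi(t_0)}-\omega}{C^{k,\frac{1}{2}}}<\delta_0$ the full curvature tensor at time $t_0$ is controlled, $\sup_M|\Rm|(\cdot,t_0)\le B$ with $B=B(\omega,\delta_0)$. Translating time so that $t_0-1=0$, the restricted flow $\{(M^n,\omega_{\varphi(t)},J),\ 0\le t\le 1\}$ then satisfies $\sup_{M\times[0,1]}|S|<A$ and $\sup_M|\Rm|(\cdot,1)\le B$, so Theorem~\ref{thm:HL29_2} applies with $T_0=T=1$ (after shrinking the resulting $\delta$ so that $\delta\le 1$, which only weakens its conclusion). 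This yields $\sup_{M\times[1-\delta,1]}|\nabla^l\Rm|\le C_l$ for every $l$, with $C_l=C_l(n,A,\omega)$; in particular all covariant derivatives of the curvature of $\omega_{\varphi(t_0)}$ are uniformly bounded. Finally I would upgrade these intrinsic bounds to the asserted $C^{l,\frac{1}{2}}$-bound of the potential exactly as in the last paragraph of the proof of Proposition~\ref{prn:HL27_1}: the curvature-derivative bounds give metric equivalence on a short backward interval $[t_0-\frac{1}{4},t_0]$, so $\omega_{\varphi(t_0-1/4)}$ is uniformly equivalent to $\omega$ with uniformly bounded Ricci curvature; Theorem 5.1 of Chen-He~\cite{[ChenHe1]} then gives a uniform $C^{1,\frac{1}{2}}$-bound, and the smoothing property of the Calabi flow (c.f. the proof of Theorem 3.3 of~\cite{[ChenHe1]}) bootstraps this to a uniform $C^{l,\frac{1}{2}}$-bound at $t_0$; alternatively one may invoke interior regularity of the complex Monge-Amp\`ere equation directly.

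The one point demanding care---and essentially the only difference from the two-dimensional case---is the bookkeeping of time intervals: Theorem~\ref{thm:HL29_2} requires the scalar bound on all of $[0,T]$, whereas we are given it only on $[t_0-1,t_0]$, which is why the hypothesis is phrased on an interval of definite length rather than merely at $t_0$, and why one must reparametrize and shrink $\delta$ as above so that the backward regularized interval $[T_0-\delta,T_0]$ stays inside the domain of the flow. All the genuine analytic content is already packaged in Theorem~\ref{thm:HL29_2}, hence in the stability of the curvature scale under a scalar bound (Proposition~\ref{prn:HG25_1} and Proposition~\ref{prn:HG25_2}); granting those, the present proposition is a routine adaptation.
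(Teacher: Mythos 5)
Your proposal is correct and matches the paper's intent exactly: the paper's own proof is precisely the remark that Proposition~\ref{prn:HL27_2} follows verbatim from the argument for Proposition~\ref{prn:HL27_1}, with Theorem~\ref{thm:HL29_2} (whose extra scalar-curvature hypothesis is supplied by the assumption $\sup_{M\times[t_0-1,t_0]}|S|<A$) replacing Theorem~\ref{thm:HL29_1}. Your reduction to the unmodified flow, the time reparametrization, and the final bootstrap via Chen--He are exactly the details the paper leaves implicit.
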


Proposition~\ref{prn:HL27_2} is the high dimension correspondence of Proposition~\ref{prn:HL27_1}.
The proof of Proposition~\ref{prn:HL27_2} is almost the same as that  of Proposition~\ref{prn:HL27_1}, except that we use Theorem~\ref{thm:HL29_2}
to improve regularity, instead of Theorem~\ref{thm:HL29_1}.

Now we are ready to prove our main theorem for the extremal K\"ahler metrics.

\begin{proof}[Proof of Theorem~\ref{thmin:1}]

Pick $\omega_{\varphi} \in \mathcal{LH}$, we need to show that the modified Calabi flow starting from $\omega_{\varphi}$ converges to $\varrho^*(\omega)$ for some $\varrho \in Aut_0(M,J)$.
For simplicity of notation, each flow mentioned in the remaining part of this proof is the modified Calabi flow.
According to the definition of $\mathcal{LH}$, we can choose a path $\varphi_s, \; s \in [0,1]$ connecting $\omega$ and $\omega_{\varphi}$, i.e.,
\begin{align*}
  \omega_{\varphi_0}=\omega, \quad \omega_{\varphi_1}=\omega_{\varphi}.
\end{align*}
Let $I$ be the collection of $s \in [0, 1]$ such that the flow starting from $\omega_{\varphi_s}$ converges.
In order to show the convergence of the flow starting from $\omega_{\varphi}$, it suffices to show the openness and closedness of $I$, since $I$ obviously contains at least one element $s=0$.

For each $s$, let $\varphi_s(t)$ be the  the time-$t$-K\"ahler-potential of the flow starting from $\omega_{\varphi_s}$.
Define function
\begin{align*}
  \hat{d}(s,t) \triangleq \inf_{\varrho \in Aut_0(M,J)}  \norm{\varrho^* \omega_{\varphi_s(t)} -\omega}{C^{k,\frac{1}{2}}}.
\end{align*}
Note that bounded closed set in $Aut_0(M,J)$ is compact since $Aut_0(M,J)$ is a finite dimensional Lie group.  Therefore, we can always find a
$\varrho_{s,t} \in Aut_0(M,J)$ such that
\begin{align*}
  \hat{d}(s,t)=  \norm{\varrho_{s,t}^* \omega_{\varphi_s(t)}-\omega}{C^{k,\frac{1}{2}}}.
\end{align*}
Therefore, $\hat{d}$ is a well defined function.  It is also clear that $\hat{d}$ depends on $s,t$ continuously.
One can refer Lemma~\ref{lma:SL07_1} and~\ref{lma:SL07_2} for a similar, but more detailed discussion.
By Theorem~\ref{thm:HL24_1}, if $\hat{d}(s_0,t_0)<\delta_0$ for some $s_0 \in [0,1]$, $t_0 \in [0, \infty)$,  then the flow starting from $\omega_{\varphi_{s_0}(t_0)}$
converges.   Consequently, the flow starting from $\omega_{\varphi_{s_0}(0)}$ converges.
By continuous dependence of the metrics on the initial data, we see that  $I$ is an open set.

We continue to show that $I$ is also closed.    Without loss of generality, we can assume that $[0,\bar{s}) \subset I$ and it suffices to show that $\bar{s} \in I$.

For each $s \in [0,\bar{s})$, let $T_s$ be the first time such that $\hat{d}(s,t)=0.5 \delta_0$.  By the convergence assumption and the continuity of $\hat{d}$, each $T_s$ is a bounded number.

\begin{claim}
The bound of $T_s$ is uniform, i.e., $\displaystyle  \sup_{s \in [0,\bar{s})} T_s < \infty.$
\label{clm:HL23_1}
\end{claim}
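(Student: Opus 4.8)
The plan is to argue by contradiction. Suppose $\sup_{s\in[0,\bar s)}T_s=\infty$ and choose $s_i\in[0,\bar s)$ with $T_{s_i}\to\infty$. We may assume $\hat d(\bar s,0)\ge\delta_0$, for otherwise Theorem~\ref{thm:HL24_1} already yields $\bar s\in I$ and closedness is done. Each $T_s$, $s\in[0,\bar s)$, being finite means $\hat d(s,\cdot)$ actually reaches the level $\tfrac12\delta_0$; so, passing to a subsequence with $s_i\to s_\ast\in[0,\bar s]$, the case $s_\ast<\bar s$ is excluded, because near the first time at which $\hat d(s_\ast,\cdot)=\tfrac12\delta_0$ the continuous dependence of the modified Calabi flow on its initial data, together with continuity of $\hat d$, would keep the $T_{s_i}$ bounded. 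Hence $s_i\to\bar s$, so $\hat d(s_i,0)>\tfrac12\delta_0$ for large $i$, and by minimality of $T_{s_i}$ and continuity of $t\mapsto\hat d(s_i,t)$ we get $\hat d(s_i,t)>\tfrac12\delta_0$ for all $t\in[0,T_{s_i})$.

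The next ingredient is the energy. Since $s\mapsto Ca(\omega_{\varphi_s})$ is continuous on $[0,1]$, there is a uniform bound $Ca(\omega_{\varphi_{s_i}})-Ca(\omega)\le C_1$; and since the modified Calabi flow differs from the Calabi flow only by the automorphisms generated by $\mathrm{Re}(X)$ (under which $Ca$ is invariant), $Ca(\omega_{\varphi_{s_i}(t)})$ is non-increasing in $t$ with a non-negative dissipation rate $\mathcal D$ that vanishes exactly at extremal K\"ahler metrics. Using $Ca\ge Ca(\omega)$ this gives a uniform budget
\[
\int_0^{T_{s_i}}\mathcal D(\omega_{\varphi_{s_i}(t)})\,dt=Ca(\omega_{\varphi_{s_i}(0)})-Ca(\omega_{\varphi_{s_i}(T_{s_i})})\le C_1 .
\]
The contradiction I aim for is that the left side grows linearly in $T_{s_i}$. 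For this I would prove a compactness lemma: there is $\sigma=\sigma(\delta_0,\omega)>0$ such that any unit-time modified Calabi flow on $(M,J)$ in the class $[\omega]$ with uniformly bounded geometry (a curvature bound, an injectivity-radius lower bound, and the $\mathcal H$-geodesic distance to $\omega$ bounded) and with $\hat d\ge\tfrac12\delta_0$ throughout loses at least $\sigma$ of Calabi energy. Indeed, otherwise a sequence of such segments subconverges smoothly -- remaining on $(M,J)$ in $[\omega]$ by the geodesic-distance bound -- to a modified Calabi flow with static Calabi energy, hence a one-parameter family of extremal metrics in $[\omega]$ evolving by automorphisms; uniqueness of extremal metrics (\cite{[ChenTian]},~\cite{arXiv:1405.0401},~\cite{arXiv:1409.7896}) makes every such metric $\varrho^\ast\omega$, so $\hat d\equiv0$ on the limit, while the upper semicontinuity of $\hat d$ forces $\hat d\ge\tfrac12\delta_0$ there, a contradiction.

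Granting the compactness lemma, the remaining and, I expect, hardest step is to exhibit uniformly bounded geometry along the whole interval $[0,T_{s_i})$ -- equivalently, to rule out collapsing over these ever longer time intervals -- and this is exactly what the harmonic scale of Section~\ref{sec:regularity} was introduced for. At the endpoint $t=T_{s_i}$ one has $\hat d(s_i,T_{s_i})=\tfrac12\delta_0<\delta_0$, so after acting by the minimizing automorphism the metric is $C^{k,\frac12}$-close to $\omega$, hence non-collapsed with bounded curvature; since the total energy drop on $[0,T_{s_i}]$ is at most $C_1$, all but boundedly many of the fixed-length windows in $[0,T_{s_i}]$ have energy drop below the threshold $\epsilon$ of Lemma~\ref{lem:900}, and the backward regularity of Theorem~\ref{thm:HL29_1} -- whose proof runs on Lemma~\ref{lem:900} and exploits the scale invariance of $Ca$ in complex dimension two -- propagates bounded geometry backward, uniformly in $i$, over all of $[0,T_{s_i})$ outside a set of bounded length. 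Covering the rest by $\sim T_{s_i}$ unit intervals on which $\hat d\ge\tfrac12\delta_0$ and applying the compactness lemma to each gives $\int_0^{T_{s_i}}\mathcal D\,dt\ge\sigma\bigl(T_{s_i}-o(T_{s_i})\bigr)\to\infty$, contradicting the $C_1$ bound. The higher-dimensional statements are handled by the same scheme, with the collapsing control replaced by the uniform scalar bound $|S|\le A$ and with Theorem~\ref{thm:HL29_2} in place of Theorem~\ref{thm:HL29_1}.
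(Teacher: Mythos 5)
Your overall strategy --- extracting a fixed amount $\sigma>0$ of Calabi-energy dissipation from every unit window on which $\hat d\ge\tfrac12\delta_0$ and summing against the finite budget $C_1$ --- is not the paper's argument, and as written it has two gaps that the tools of Section~\ref{sec:regularity} do not close. First, the step ``propagate bounded geometry backward, uniformly in $i$, over all of $[0,T_{s_i})$ outside a set of bounded length'' is not supported by Theorem~\ref{thm:HL29_1} or Lemma~\ref{lem:900}: these reach back only a fixed time $\delta=\delta(B,c_0)$ from a slice whose curvature and injectivity radius are already controlled, and the bounds they return at the earlier slice are worse than the ones you start with, so iterating backward from $t=T_{s_i}$ produces geometrically shrinking steps (in the proof of Lemma~\ref{lem:900} the total reachable length is $1+\tfrac{1}{16}+\tfrac{1}{16^2}+\cdots<\tfrac{16}{15}$ after rescaling). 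The total backward reach is therefore finite and cannot cover intervals of length $T_{s_i}\to\infty$; nothing rules out collapsing or curvature blow-up at intermediate times. Second, your compactness lemma needs the unit segments to subconverge smoothly to a modified Calabi flow that still lives in $[\omega]$ on the fixed $(M,J)$, so that uniqueness of extK metrics and the function $\hat d$ apply to the limit. Bounded intrinsic geometry together with a bound on the $\mathcal{H}$-geodesic distance does not give this: at a general time $t$ you only have the lower bound $\hat d(s_i,t)\ge\tfrac12\delta_0$, and the paper itself stresses that $d$ is too weak to control $\norm{\omega_{\varphi}-\omega}{C^{k,\frac{1}{2}}}$; a Cheeger--Gromov limit of such segments need not be comparable to $\omega$ at all.

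The paper avoids both problems by working only at the terminal times $T_i$, where $\hat d(s_i,T_i)=\tfrac12\delta_0$ gives genuine $C^{k,\frac{1}{2}}$-control after pulling back by the minimizing automorphism $\varrho_{s_i,T_i}$, and Proposition~\ref{prn:HL27_1} (a single backward unit of time via Theorem~\ref{thm:HL29_1}) upgrades this to all $C^{l,\frac{1}{2}}$-norms. The decisive input you are missing is not a dissipation rate but the value of the energy at $T_i$: since the flow at parameter $\bar s$ has global existence, $Ca(\omega_{\varphi_{\bar{s}}(t)})\to Ca(\omega)$ by Streets and He; continuity in $s$ at the fixed finite time $L_{\epsilon}$ plus monotonicity of $Ca$ along each flow force $Ca(\omega_{\varphi_{s_i}(T_i)})\to Ca(\omega)$, the minimum of the Calabi energy in $[\omega]$. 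Hence the smooth limit of $\varrho_{s_i,T_i}^*\omega_{\varphi_{s_i}(T_i)}$ is an energy minimizer, i.e.\ extremal, and uniqueness of extK metrics yields $\hat d(s_i,T_i)\to 0$, contradicting $\hat d(s_i,T_i)=\tfrac12\delta_0$. No accounting over the whole interval $[0,T_i]$ is required.
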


If the Claim fails, we can find a sequence $s_i \to \bar{s}$ such that $T_{s_i} \to \infty$.
For simplicity of notation, we denote $T_{s_i}$ by $T_i$.
Consider the flow starting from $\varphi_{\bar{s}}$.
By the results of Streets(c.f.~\cite{[Streets1]}) and He(c.f.~\cite{[He5]}), we see that
\begin{align*}
   \lim_{t \to \infty} Ca(\omega_{\varphi_{\bar{s}}}(t))=Ca(\omega).
\end{align*}
Therefore,  for each fixed $\epsilon$, we can find $L_{\epsilon}$ such that  $Ca(\omega_{\varphi_{\bar{s}}}(L_{\epsilon}))<Ca(\omega)+\epsilon$.
By continuity, we have
\begin{align}
   \lim_{i \to \infty} Ca(\omega_{\varphi_{s_i}(L_{\epsilon})})=Ca(\omega_{\varphi_{\bar{s}}}(L_{\epsilon}))<Ca(\omega)+\epsilon.
\label{eqn:HL23_3}
\end{align}
Note that the Calabi energy is non-increasing along each flow.    Since $T_i-1>L_{\epsilon}$ for large $i$,  it follows from (\ref{eqn:HL23_3}) that
\begin{align*}
     \lim_{i \to \infty} Ca(\omega_{\varphi_{s_i}(T_i-1)}) \leq Ca(\omega)+\epsilon
\end{align*}
for each fixed positive $\epsilon$.
Since the Calabi energy is always bounded from below by $Ca(\omega)$, it is clear that
\begin{align}
    \lim_{i \to \infty} \left| Ca(\omega_{\varphi_{s_i}(T_i-1)}) -Ca(\omega_{\varphi_{s_i}(T_i)}) \right| =\lim_{i \to \infty} \left( Ca(\omega_{\varphi_{s_i}(T_i-1)}) -Ca(\omega_{\varphi_{s_i}(T_i)}) \right)=0.
\label{eqn:HL23_1}
\end{align}
Note that the harmonic scale of $\omega_{\varphi_{s_i}(T_i)}$ is uniformly bounded away from zero.
By Theorem~\ref{thm:HL29_1}, we obtain all the curvature and curvature higher derivative bounds of $\omega_{\varphi_{s_i}(T_i)}$.
Define
 \begin{align*}
 \varrho_i \triangleq \varrho_{s_i,T_i},  \quad \omega_{\hat{\varphi}_i} \triangleq \varrho_i^*(\omega_{\varphi_{s_i}(T_i)}).
 \end{align*}
Note that $\omega_{\hat{\varphi}_i}$ has uniform $C^{k,\frac{1}{2}}$-norm by the choice of $T_i$.
By Proposition~\ref{prn:HL27_1}, we see that $\omega_{\hat{\varphi}_i}$ has uniform $C^{l,\frac{1}{2}}$-norm for each integer $l \geq k$.
Therefore, we can take the smooth convergence in fixed coordinate charts.
\begin{align}
  \hat{\varphi}_i \longright{C^{\infty}}   \hat{\varphi}_{\infty}, \quad \omega_{\hat{\varphi}_i}   \longright{C^{\infty}}  \omega_{\hat{\varphi}_{\infty}}.
\label{eqn:HL23_2}
\end{align}
Since the Calabi energy converges in the above process, by (\ref{eqn:HL23_1}) and the above inequality, we obtain that $\omega_{\hat{\varphi}_{\infty}}$
is an extK metric.  By the uniqueness theorem of extK metrics, we obtain that there is a $\varrho_{\infty} \in Aut_0(M, J)$ such that
\begin{align*}
    \omega_{\hat{\varphi}_{\infty}}= \varrho_{\infty}^* \omega.
\end{align*}
Note that $\varrho_{\infty}$ is automatically smooth.
Now (\ref{eqn:HL23_2}) can be rewritten as
\begin{align*}
   \varrho_i^*(\omega_{\varphi_{s_i}(T_i)})  \longright{C^{\infty}}  \varrho_{\infty}^* \omega,
   \quad \left( \varrho_i \circ \varrho_{\infty}^{-1} \right)^* (\omega_{\varphi_{s_i}(T_i)})= \left( \varrho_{\infty}^{-1}\right)^{*} \circ \varrho_i^*(\omega_{\varphi_{s_i}(T_i)}) = \left( \varrho_{\infty}^{-1}\right)^{*} \omega_{\hat{\varphi}_i} \longright{C^{\infty}} \omega.
\end{align*}
It follows from the definition of $\hat{d}$ that
\begin{align*}
 \hat{d}(s_i, T_i) \leq \norm{  \left( \varrho_i \circ \varrho_{\infty}^{-1} \right)^* (\omega_{\varphi_{s_i}(T_i)})-\omega}{C^{k,\frac{1}{2}}}  \to 0,
\end{align*}
which contradicts our choice of $T_i$, i.e., $\hat{d}(s_i, T_i)=0.5 \delta_0$.  Therefore, the proof of Claim~\ref{clm:HL23_1} is complete.

In light of Claim~\ref{clm:HL23_1}, we can choose a sequence of $s_i \to \bar{s}$ and $T_i=T_{s_i} \to \bar{T}<\infty$.  By continuity of $\hat{d}$, we see that
\begin{align*}
   \hat{d}(\bar{s}, \bar{T})=0.5 \delta_0.
\end{align*}
Consequently, Theorem~\ref{thm:HL24_1} applies and the flow starting from $\omega_{\varphi_{\bar{s}}(\bar{T})}$ converges.
Hence the flow starting from $\omega_{\varphi_{\bar{s}}(0)}$ converges and $\bar{s} \in I$.
The proof of the Theorem is complete.

\end{proof}

Theorem~\ref{thmin:3} can be proved almost verbatim,  except replacing Propsotion~\ref{prn:HL27_1} by  Proposition \ref{prn:HL27_2}.





\subsection{Convergence of complex structures}
\label{subset:complex-structures}

In this subsection, we regard the Calabi flow as the flow of the complex structures on a given symplectic manifold $(M,\omega)$.
We also assume this symplectic manifold has a cscK complex structure $J_0$.
Note that the uniqueness theorem of extK metrics in a given K\"ahler class of a fixed complex manifold plays an important role in the convergence of
potential Calabi flow.  Similar uniqueness theorem will play the same role in the convergence of complex structure Calabi flow.
Actually, by the celebrated work of Chen-Sun(c.f. Theorem 1.3 of ~\cite{[ChenSun]}), on each $C^{\infty}$-closure of
a $\mathcal{G}^{\C}$-leaf of a smooth structure $J$, there is at most one cscK complex structure(i.e., the metric determined by $\omega$ and $J$ is cscK), if it exists.

It is important to note that the complex structure Calabi flow solution is invariant under Hamilatonian diffeomorphism.
Suppose $J_A$ and $J_B$ are two isomorphic complex structures, i.e.
\begin{align*}
   \varphi^* \omega = \omega, \quad J_A=\varphi^* J_B
\end{align*}
for some symplectic diffeomorphism $\varphi$.  Let $J_A(t)$ be a Calabi flow solution starting from $J_A$, then $\varphi^* J_A(t)$ is a Calabi flow
solution starting from $J_B$.

Let $g_0$ be the metric compatible with $\omega$ and $J_0$. Then it
is clear that $g_0$ is cscK and therefore smooth metric. We can
choose coordinate system of $M$ such that $g,J,\omega$ are all
smooth in each coordinate chart.  We equip the
tangent bundle and cotangent bundle and their tensor products with
the natural metrics induced from $g_0$. Clearly, if a diffeomorphism
$\varphi$ preserves both $\omega$ and $J_0$, then it preserves $g_0$
and therefore locates in $\mathrm{ISO}(M, g_0)$, which is compact
Lie group. Therefore, each $\varphi$ is smooth and has a priori bound
of $C^{l,\frac{1}{2}}$-norm for each positive integer $l$, whenever
$\varphi$ is regarded as a smooth section of the bundle $M \times M \to M$,
equipped with natural  metric induced from $g_0$.
There is an almost version of this property. In other words, if $\varphi$ preserves $\omega$ and the $C^{k,\frac{1}{2}}$-norm of $\varphi^* J$
is very close to $J_0$, then $\varphi$ has a priori bound of $C^{k+1,\frac{1}{2}}$-norm. This is basically because of the
improving regularity property of isometry(c.f.~\cite{[CH]}).

\begin{proposition}
  Let $\left\{U_x, \left\{ x^i \right\}_{i=1}^{m} \right\}$ and $\left\{U_y, \left\{ y^i \right\}_{i=1}^{m} \right\}$
 be two coordinates of an open Riemannian manifold $(V, ds^2)$.
 The Riemannian metric in these two coordinates can be written as
  \begin{align}
    ds^2=g_{ij}(x) dx^i dx^j=\tilde{g}_{ij}(y) dy^i dy^j.
    \label{eqn:HL07_2}
  \end{align}
 As subsets of $\R^m$, $U_x$ and $U_y$ satisfy
 \begin{align*}
    &\left\{ x \,\Big|\,|x|=\sqrt{x_1^2 + x_2^2 + \cdots + x_m^2}<\frac{1}{2}  \right\} \subset U_x \subset \left\{x\,|\,|x|<1 \right\}, \\
    &\left\{y \,\Big|\,|y|<\frac{1}{2}  \right\} \subset U_y \subset \left\{y\,|\, |y|<1 \right\}.
  \end{align*}
 In each coordinate, the metrics are uniform equivalent to Euclidean metrics.
 \begin{align*}
   & \frac{1}{2} \delta_{ij} < g_{ij}(x) < 2 \delta_{ij},  \quad \forall \; x \in U_x \\
   & \frac{1}{2} \delta_{ij} < \tilde{g}_{ij}(y) < 2 \delta_{ij}, \quad \forall \; y \in U_y.
 \end{align*}
 Suppose $g_{ij}$ and $\tilde{g}_{ij}$ are of class $C^{k,\frac{1}{2}}$ for some integer $1 \leq k \leq \infty$.
 Suppose the natural map $x=f(y)$ satisfies $f(0)=0$. Then $f$ is of class $C^{k+1, \frac{1}{2}}$ and
 \begin{align*}
   \norm{f}{C^{k+1,\frac{1}{2}}(B_{\frac{1}{4}})} < C \left(m,k, \norm{g_{ij}}{C^{k,\frac{1}{2}}(U_x)} ,  \norm{\tilde{g}_{ij}}{C^{k,\frac{1}{2}}(U_y)}   \right),
 \end{align*}
 where $B_{\frac{1}{4}}$ is the standard ball in $\R^m$ with radius $\frac{1}{4}$ and centered at $0$.
\label{prn:HL07_1}
\end{proposition}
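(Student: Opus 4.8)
The plan is to reduce the statement to a standard elliptic regularity fact: the transition map $f$ between two harmonic-type coordinate systems, or more directly, the map intertwining two metric tensors, satisfies an elliptic system whose coefficients are controlled by the $C^{k,\frac12}$-norms of $g_{ij}$ and $\tilde g_{ij}$. Concretely, I would first record the transformation law obtained from \eqref{eqn:HL07_2}: writing $x = f(y)$ with Jacobian $Df$, one has
\begin{align*}
  \tilde g_{ij}(y) = g_{ab}(f(y)) \, \frac{\partial f^a}{\partial y^i} \, \frac{\partial f^b}{\partial y^j}.
\end{align*}
The uniform equivalence hypotheses on $g$ and $\tilde g$ immediately give that $Df$ is uniformly bounded and uniformly invertible on a slightly smaller ball (say $B_{1/3}$), after using $f(0)=0$ and integrating to control $|f|$ — so $f$ is a bi-Lipschitz map between the relevant balls with two-sided bounds depending only on $m$ and the $C^0$-bounds. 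This is the base case $k=0$ of the induction (giving $f \in C^{0,1}$, hence in particular the $C^{1}$ statement is not yet available — we need the PDE for that).

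Next I would bring in the elliptic equation. The cleanest route is to observe that in \emph{any} coordinate system the coordinate functions themselves satisfy a second-order equation: for the $x$-coordinates, $\Delta_g x^a = -g^{ij}\Gamma^a_{ij}(g) =: b^a(x)$, and similarly for $y$. Pulling the $x$-coordinate functions back by $f$, each component $f^a = x^a \circ f$ satisfies, with respect to the $y$-metric,
\begin{align*}
  \tilde g^{ij}(y) \, \frac{\partial^2 f^a}{\partial y^i \partial y^j}
  = \tilde g^{ij}(y)\, \frac{\partial f^a}{\partial y^i}\Big(\text{lower order}\Big) + b^a(f(y)),
\end{align*}
i.e. $f^a$ solves a uniformly elliptic equation (leading coefficients $\tilde g^{ij} \in C^{k,\frac12} \subset C^{0,\frac12}$, hence at least Hölder) whose right-hand side is a composition of $C^{k-1,\frac12}$-data with the map $f$ itself. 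One then runs Schauder estimates as a bootstrap: knowing $f \in C^{0,1} \subset C^{0,\frac12}$, the right-hand side lies in $C^{0,\frac12}$ and the coefficients in $C^{0,\frac12}$, so interior Schauder gives $f \in C^{2,\frac12}$ on a smaller ball; then the right-hand side improves to $C^{1,\frac12}$ (since $b^a \in C^{k-1,\frac12}$ with $k\ge 1$ and $f$ is now $C^{2,\frac12}$), giving $f \in C^{3,\frac12}$; and so on, gaining one derivative per step until the regularity of the coefficients $\tilde g^{ij}$ and of $b^a$, namely $C^{k,\frac12}$ respectively $C^{k-1,\frac12}$, is exhausted, at which point $f \in C^{k+1,\frac12}$. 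Each Schauder step costs a fixed shrinkage of the ball; since $k$ is finite (or, in the $k=\infty$ case, one simply iterates indefinitely on a nested sequence of balls and notes the final radius $\tfrac14$ is safely inside), the stated ball $B_{1/4}$ and the quantitative bound depending only on $m$, $k$, $\norm{g_{ij}}{C^{k,\frac12}(U_x)}$, $\norm{\tilde g_{ij}}{C^{k,\frac12}(U_y)}$ both follow by tracking constants through the finitely many Schauder applications.

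The main obstacle — and the only nonroutine point — is the \emph{base step}: one must upgrade $f$ from merely Lipschitz to at least $C^{1,\alpha}$ before the Schauder bootstrap can even start, because the elliptic equation above presupposes that $Df$ makes classical sense and that the "lower order" terms are genuinely lower order. The standard fix is to first use the equation in its weak (distributional) form: $f$ is a $W^{1,2}_{loc}$ weak solution of a uniformly elliptic divergence-type system with bounded measurable coefficients, so De Giorgi–Nash–Moser gives $f \in C^{0,\alpha}$ for some $\alpha$, and then $L^p$-elliptic estimates (Calderón–Zygmund) applied to the nondivergence form with $C^0$-coefficients give $f \in W^{2,p}$ for all $p$, hence $f \in C^{1,\alpha}$ for every $\alpha<1$ by Sobolev embedding; in particular $f \in C^{1,\frac12}$, which is enough to launch the Schauder iteration of the previous paragraph. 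An equivalent and perhaps more self-contained alternative, which avoids citing Calderón–Zygmund, is to \emph{choose harmonic coordinates} as an intermediate system — exactly as in the definition of the harmonic radius recalled before Lemma~\ref{lem:harmonic} — factor $f$ through them, and invoke the regularity of the harmonic-coordinate transition maps; but since the proposition is stated for arbitrary coordinate systems in which the metric is $C^{k,\frac12}$ and uniformly Euclidean, the direct elliptic-bootstrap argument above is the natural one, and I would present it that way, citing Calabi–Hartman~\cite{[CH]} for the analogous isometry-regularity statement that motivates it.
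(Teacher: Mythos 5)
Your argument is correct in outline, but it takes a genuinely different and noticeably heavier route than the paper. The paper does not trace the Hessian: it uses the full transformation law of the Christoffel symbols, which yields the pointwise algebraic identity
\begin{align*}
  \frac{\partial^2 x^u}{\partial y^i \partial y^j}
  =\frac{\partial x^u}{\partial y^k}\,\tilde{\Gamma}_{ij}^k
  -\Gamma_{pq}^u\,\frac{\partial x^p}{\partial y^i}\,\frac{\partial x^q}{\partial y^j},
\end{align*}
expressing the \emph{entire} Hessian of $f$ (not merely an elliptic combination of its entries) in terms of $Df$, $(Df)^{-1}$ and the two Christoffel symbols, which lie in $C^{k-1,\frac{1}{2}}$ because $k\geq 1$. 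The bootstrap is then completely elementary: the metric equivalence gives two-sided bounds on $Df$, so the right-hand side is bounded, so $D^2 f\in L^{\infty}$, so $Df$ is Lipschitz and in particular $C^{0,\frac{1}{2}}$, so the right-hand side is $C^{0,\frac{1}{2}}$, so $f\in C^{2,\frac{1}{2}}$, and one gains a derivative per step until the $C^{k-1,\frac{1}{2}}$ regularity of the Christoffel symbols is exhausted at $C^{k+1,\frac{1}{2}}$. In particular the ``base step'' that you single out as the only nonroutine point --- upgrading Lipschitz to $C^{1,\alpha}$ so that a Schauder iteration can start --- simply does not arise: no De Giorgi--Nash--Moser, no Calder\'on--Zygmund and no Schauder theory is needed anywhere. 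Your traced version $\Delta_{\tilde g}(x^a\circ f)=(\Delta_g x^a)\circ f$ does lead to a valid proof (the Schauder bootstrap terminates at $C^{k+1,\frac{1}{2}}$ for the same reason, namely that $\Delta_g x^a$ and the first-order coefficients are only $C^{k-1,\frac{1}{2}}$), and it has the virtue of generalizing to settings where one only controls the harmonicity of the coordinates rather than the full $C^{k,\frac{1}{2}}$ regularity of the metric in both charts; but under the hypotheses actually stated, you are discarding the off-trace part of an identity that already hands you the whole Hessian, and paying for it with elliptic machinery.
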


\begin{proof}
  Equation (\ref{eqn:HL07_2}) can be rewritten as
\begin{align*}
  \tilde{g}_{ij}=g_{kl} \D{x^k}{y^i} \D{x^l}{y^j}.
\end{align*}
Denote $\Gamma_{ij}^k$ and $\tilde{\Gamma}_{ij}^k$ as the Christoffel symbol of $ds^2$ under the $x$ and $y$ coordinate respectively.
Then direct calculation implies that
\begin{align*}
  \tilde{\Gamma}_{ij}^k=\D{x^p}{y^i} \D{x^q}{y^j} \Gamma_{pq}^r \D{y^k}{x^r} + \D{y^k}{x^u} \frac{\partial^2 x^u}{\partial y^i \partial y^j}.
\end{align*}
In other words, the second derivatives of $x(y)$ can be expressed as
\begin{align}
  \frac{\partial^2 x^u}{\partial y^i \partial y^j}=\frac{\partial x^u}{\partial y^k} \tilde{\Gamma}_{ij}^k
  -\Gamma_{pq}^u \D{x^p}{y^i} \D{x^q}{y^j}.
  \label{eqn:HL07_3}
\end{align}
Now suppose the metric $\tilde{g}_{ij}$ is $C^{k,\frac{1}{2}}$ for some integer $k \geq 1$.
Moreover, let us assume
\begin{align*}
  \norm{g_{ij}}{C^{k,\frac{1}{2}}(U_x)},  \quad \norm{\tilde{g}_{ij}}{C^{k,\frac{1}{2}}(U_y)}  \leq  A.
\end{align*}
Therefore, $\tilde{\Gamma}_{ij}^k$ and $\Gamma_{ij}^k$ are $C^{k-1,\frac{1}{2}}$ and has uniformly bounded
$C^{k-1, \frac{1}{2}}$-norm in smaller balls. By bootstrapping argument, we obtain that
\begin{align*}
  \norm{f}{C^{k+1,\frac{1}{2}}(B_{\frac{1}{4}})} \leq C(m,k,A).
\end{align*}
\end{proof}

\begin{lemma}
  Suppose $\varphi \in Symp(M,\omega)$.  Then we have
  \begin{align*}
    \norm{\varphi}{C^{k+1, \frac{1}{2}}} < C \left( n,k,\norm{\varphi_* J}{C^{k,\frac{1}{2}}}, \norm{J}{C^{k,\frac{1}{2}}}  \right).
  \end{align*}
  \label{lma:HL07_3}
\end{lemma}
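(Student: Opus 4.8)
The plan is to realize $\varphi$ as an isometry between two Riemannian metrics whose $C^{k,\frac{1}{2}}$-geometry is controlled by the hypotheses, and then to apply Proposition~\ref{prn:HL07_1} in a fixed atlas. Write $g_J$ for the metric determined by $\omega$ and $J$, so that $g_J(\cdot,\cdot)=\omega(\cdot,J\cdot)$, and $g_{\varphi_*J}$ for the metric determined by $\omega$ and $\varphi_*J=d\varphi\circ J\circ d\varphi^{-1}$. Since $\varphi^*\omega=\omega$, one computes for $u,v\in T_pM$
\[
(\varphi^*g_{\varphi_*J})(u,v)=g_{\varphi_*J}(d\varphi\,u,\,d\varphi\,v)=\omega\!\left(d\varphi\,u,\,(\varphi_*J)(d\varphi\,v)\right)=\omega(d\varphi\,u,\,d\varphi\,Jv)=\omega(u,Jv)=g_J(u,v),
\]
so $\varphi\colon(M,g_J)\to(M,g_{\varphi_*J})$ is an isometry.

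Next I would record quantitative control on $g_J$ and $g_{\varphi_*J}$ relative to the fixed background metric $g_0$. The upper bounds $g_J\le C\norm{J}{C^0}\,g_0$ and $g_{\varphi_*J}\le C\norm{\varphi_*J}{C^0}\,g_0$ are immediate; for the lower bounds one combines the non-degeneracy of the fixed smooth form $\omega$, the $J$-invariance of $g_J$, and Cauchy--Schwarz to obtain $g_J\ge c\,\norm{J}{C^0}^{-1}g_0$, the constant $c$ depending only on the fixed data $(M,\omega,g_0)$, and similarly for $g_{\varphi_*J}$. Since $\omega$ is smooth, $\norm{g_J}{C^{k,\frac{1}{2}}(g_0)}$ and $\norm{g_{\varphi_*J}}{C^{k,\frac{1}{2}}(g_0)}$ are then bounded in terms of $\norm{J}{C^{k,\frac{1}{2}}}$ and $\norm{\varphi_*J}{C^{k,\frac{1}{2}}}$. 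In particular, being an isometry between two metrics uniformly comparable to $g_0$, the map $\varphi$ is uniformly bi-Lipschitz for $g_0$, with constant controlled by the two $C^0$-norms.

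Then I would set up the charts. Fix the finite atlas of $M$ from the start of this subsection, in which $g_0,J,\omega$ are smooth; after shrinking each chart by a universal factor and rescaling the coordinates, and using the two-sided comparison above, we may arrange that in every chart both $g_J$ and $g_{\varphi_*J}$ have components between $\tfrac12\delta_{ij}$ and $2\delta_{ij}$, exactly the normalization demanded in Proposition~\ref{prn:HL07_1}. Using the bi-Lipschitz bound on $\varphi$ together with a Lebesgue-number argument for the finite atlas, every point $p$ has such a chart $U_p$ with $\varphi(U_p)$ contained in a single chart $W_p$; translating coordinates so that $\varphi(p)$ and $p$ both become the origin, the coordinate representation $f$ of $\varphi|_{U_p}$ is precisely the ``natural map'' of Proposition~\ref{prn:HL07_1} between the $g_J$-chart and the $g_{\varphi_*J}$-chart, the governing relation $g_{ij}(y)=(g_{\varphi_*J})_{kl}(f(y))\,\partial_if^k\,\partial_jf^l$ being exactly the one used there. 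Hence $\norm{f}{C^{k+1,\frac{1}{2}}}$ is bounded in terms of $2n$, $k$, and the $C^{k,\frac{1}{2}}$-norms of the two metrics, i.e. in terms of $n,k,\norm{J}{C^{k,\frac{1}{2}}},\norm{\varphi_*J}{C^{k,\frac{1}{2}}}$. Covering $M$ by finitely many such $U_p$ and using that the transition maps of the fixed atlas are smooth with fixed bounds gives the asserted global estimate.

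The analytic heart of the statement --- the elliptic bootstrap upgrading an isometry with $C^{k,\frac{1}{2}}$ metric tensors to a $C^{k+1,\frac{1}{2}}$ map --- is already isolated in Proposition~\ref{prn:HL07_1}, so the main work is the bookkeeping of the previous paragraph: arranging a uniform covering in which $\varphi$ honestly carries one normalized chart into another, which requires both the a priori smoothness of $\varphi$ and the quantitative bi-Lipschitz bound coming from the isometry property. The one other point that needs care is the uniform two-sided comparison $c\,g_0\le g_J\le C\,g_0$ with $c,C$ depending only on $\norm{J}{C^0}$ and the fixed data, since without it the charts cannot be normalized with uniform constants.
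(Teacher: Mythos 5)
Your proposal is correct and takes essentially the same route as the paper: the paper's proof simply regards $\varphi$ as an isometry from $(M,\omega,J)$ to $(M,\omega,\varphi_*J)$ and invokes Proposition~\ref{prn:HL07_1}. You have merely filled in the details (the isometry computation, the two-sided comparison with $g_0$, and the chart bookkeeping) that the paper leaves implicit.
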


\begin{proof}
  Regard $\varphi$ as an isometry from $(M,\omega,J)$ to $(M,\omega, \varphi_{*} J)$.
  Then the proof boils down to Proposition~\ref{prn:HL07_1}.
  Note that $\varphi_* J$ is the push-forward of $J$, which is the same as $(\varphi^{-1})^* J$.
\end{proof}

Fix $J_0$ as the cscK complex structure.
For each complex structure $J$ compatible with $\omega$, we define
   \begin{align*}
      \tilde{d}(J)=\inf \norm{\varphi^*  J- J_0}{C^{k,\frac{1}{2}}},
   \end{align*}
where infimum is taken among all symplectic diffeomorphisms with
finite $C^{k,\frac{1}{2}}$-norm. Let $\varphi_i$ be a minimizing
sequence to approximate $\tilde{d}(J)$. By triangle inequality and
Lemma~\ref{lma:HL07_3}, we see that $\varphi_i$ has uniformly
bounded $C^{k+1,\frac{1}{2}}$-norm.
Therefore, by taking subsequence if necessary, we can assume
$\varphi_i$ converges, in the $C^{k+1,\frac{1}{3}}$-topology, to a
limit symplectic diffeomorphism $\varphi_{\infty}$. Although the
convergence topology is weak, it follows from definition that
$\varphi_{\infty}$ has bounded $C^{k+1,\frac{1}{2}}$-norm. Therefore,
we see that
\begin{align*}
  \norm{\varphi_{\infty}^* J-J_0}{C^{k,\frac{1}{2}}} \leq \lim_{i \to \infty}  \norm{\varphi_i^* J-J_0}{C^{k,\frac{1}{2}}}
  =\tilde{d}(J).
\end{align*}
On the other hand, by definition, we have
\begin{align*}
   \tilde{d}(J) \leq \norm{\varphi_{\infty}^* J-J_0}{C^{k,\frac{1}{2}}}.
\end{align*}
Combining the above two inequalities, we obtain that $\tilde{d}(J)$ is achieved by $\varphi_{\infty}^* J$.
We have proved the following Lemma.

 \begin{lemma}
 For each smooth complex structure $J$ compatible with $\omega$,
  $\tilde{d}(J)$ is achieved by a diffeomorphism $\varphi \in C^{k+1,\frac{1}{2}}(M,M)$ and $\varphi^* \omega=\omega$.
   \label{lma:SL07_1}
 \end{lemma}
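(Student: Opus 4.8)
The plan is to run the direct method in the calculus of variations, as foreshadowed by the discussion preceding the statement. Fix a smooth $\omega$-compatible complex structure $J$ and pick a minimizing sequence of symplectomorphisms $\varphi_i$ with finite $C^{k,\frac{1}{2}}$-norm so that $\norm{\varphi_i^* J- J_0}{C^{k,\frac{1}{2}}}\to\tilde{d}(J)$; discarding finitely many terms we may assume this quantity is $\le\tilde{d}(J)+1$ for every $i$, so that $\varphi_i^* J$ stays in a fixed bounded set in $C^{k,\frac{1}{2}}$. The first and main step is to convert this into a uniform $C^{k+1,\frac{1}{2}}$-bound for the maps $\varphi_i$ themselves. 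Since $\varphi_i^*\omega=\omega$, the map $\varphi_i$ is an isometry from $(M, g_{\varphi_i^* J})$ onto $(M, g_J)$, where $g_{J'}$ denotes the Riemannian metric associated to $\omega$ and a compatible almost complex structure $J'$; in fact $g_{\varphi_i^* J}=\varphi_i^* g_J$. The target metric $g_J$ is fixed and smooth, hence $C^{k,\frac{1}{2}}$-bounded and uniformly elliptic. The source metric $g_{\varphi_i^* J}$ has $C^{k,\frac{1}{2}}$-norm bounded independently of $i$, since $\omega$ is fixed and $\varphi_i^* J$ lies in a bounded set; moreover every $g_{\varphi_i^* J}$ has the same volume form $\omega^n/n!$, so the uniform upper bound on its eigenvalues forces a uniform lower bound as well. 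Thus Proposition~\ref{prn:HL07_1}, applied in a fixed finite atlas of $M$ adapted to $g_0$ (equivalently Lemma~\ref{lma:HL07_3} applied to $\varphi_i^{-1}$ and then to $\varphi_i$), produces a constant $C$, independent of $i$, with $\norm{\varphi_i}{C^{k+1,\frac{1}{2}}}\le C$; the same argument with source and target interchanged gives $\norm{\varphi_i^{-1}}{C^{k+1,\frac{1}{2}}}\le C$.

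With these bounds in hand I would extract a limit. By Arzel\`a--Ascoli, $\{\varphi_i\}$ and $\{\varphi_i^{-1}\}$ are precompact in $C^{k+1,\frac{1}{3}}(M,M)$, so after passing to a subsequence $\varphi_i\to\varphi_\infty$ and $\varphi_i^{-1}\to\psi_\infty$ in $C^{k+1,\frac{1}{3}}$. Composition is continuous in this topology, so $\psi_\infty\circ\varphi_\infty=\varphi_\infty\circ\psi_\infty=\mathrm{id}$ and $\varphi_\infty$ is a diffeomorphism; the identity $\varphi^*\omega=\omega$ is preserved under $C^1$-convergence, so $\varphi_\infty\in Symp(M,\omega)$. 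Since a $C^{k+1,\frac{1}{2}}$-Hölder bound is lower semicontinuous under $C^{k+1,\frac{1}{3}}$-convergence, $\varphi_\infty\in C^{k+1,\frac{1}{2}}(M,M)$; in particular $\varphi_\infty$ has finite $C^{k,\frac{1}{2}}$-norm and is therefore an admissible competitor in the infimum defining $\tilde{d}(J)$.

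It remains to verify optimality. From $\varphi_i\to\varphi_\infty$ in $C^{k+1,\frac{1}{3}}$ and the smoothness of $J$ it follows that $\varphi_i^* J\to\varphi_\infty^* J$ in $C^{k,\frac{1}{3}}$; combining this with the uniform $C^{k,\frac{1}{2}}$-bound on $\varphi_i^* J-J_0$ and lower semicontinuity of the $C^{k,\frac{1}{2}}$-norm gives $\norm{\varphi_\infty^* J-J_0}{C^{k,\frac{1}{2}}}\le\liminf_i\norm{\varphi_i^* J-J_0}{C^{k,\frac{1}{2}}}=\tilde{d}(J)$. The reverse inequality $\tilde{d}(J)\le\norm{\varphi_\infty^* J-J_0}{C^{k,\frac{1}{2}}}$ is immediate from the definition of $\tilde{d}$. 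Hence the two sides coincide, i.e.\ the infimum is attained by $\varphi:=\varphi_\infty$, which by construction lies in $C^{k+1,\frac{1}{2}}(M,M)$ and satisfies $\varphi^*\omega=\omega$.

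I expect the real content to be concentrated in the first step: turning the soft information ``$\varphi_i$ is an isometry between two $C^{k,\frac{1}{2}}$-controlled, uniformly elliptic metrics'' into a uniform $C^{k+1,\frac{1}{2}}$-bound on $\varphi_i$, and simultaneously on $\varphi_i^{-1}$ — the latter being exactly what prevents the minimizing sequence from escaping to the ``boundary'' of the diffeomorphism group. But this is precisely what Proposition~\ref{prn:HL07_1} and Lemma~\ref{lma:HL07_3} were set up to deliver, so once one notes the identification $g_{\varphi_i^* J}=\varphi_i^* g_J$ and the fact that all these metrics share the fixed volume form $\omega^n/n!$ (which supplies the uniform ellipticity for free), the lemma costs essentially nothing beyond invoking those results and the standard lower-semicontinuity bookkeeping above.
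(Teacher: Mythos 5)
Your proposal is correct and follows essentially the same route as the paper's own proof: take a minimizing sequence, obtain a uniform $C^{k+1,\frac{1}{2}}$ bound from the isometry-regularity results (Proposition~\ref{prn:HL07_1} and Lemma~\ref{lma:HL07_3}), pass to a $C^{k+1,\frac{1}{3}}$ limit by Arzel\`a--Ascoli, and use lower semicontinuity of the H\"older norms together with the definition of $\tilde{d}$ to conclude the infimum is attained. You merely make explicit a few details the paper leaves implicit, such as the uniform ellipticity coming from the fixed volume form $\omega^n/n!$ and the verification that the limit is a genuine symplectomorphism via convergence of the inverses.
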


 \begin{lemma}
   $\tilde{d}$ is a continuous function on the moduli space of complex structures, equipped with $C^{k,\frac{1}{2}}$-topology.
   \label{lma:SL07_2}
 \end{lemma}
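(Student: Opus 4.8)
The plan is to show that $\tilde d$ is both upper and lower semicontinuous on the moduli space equipped with the $C^{k,\frac12}$-topology. Fix a smooth $\omega$-compatible complex structure $J$ and a sequence $J_i\to J$ in $C^{k,\frac12}$; the goal is $\tilde d(J_i)\to\tilde d(J)$. For upper semicontinuity I would use the minimizer produced by Lemma~\ref{lma:SL07_1}: there is a $\varphi\in C^{k+1,\frac12}(M,M)$ with $\varphi^*\omega=\omega$ and $\tilde d(J)=\norm{\varphi^*J-J_0}{C^{k,\frac12}}$. Since $\varphi$ is a \emph{fixed} diffeomorphism of class $C^{k+1,\frac12}$, the pullback $T\mapsto\varphi^*T$ is a bounded linear operator on $C^{k,\frac12}$ tensor fields, so $\varphi^*J_i\to\varphi^*J$ in $C^{k,\frac12}$. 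As $\varphi$ is an admissible competitor for each $\tilde d(J_i)$,
\begin{align*}
  \limsup_{i\to\infty}\tilde d(J_i)\le\lim_{i\to\infty}\norm{\varphi^*J_i-J_0}{C^{k,\frac12}}=\norm{\varphi^*J-J_0}{C^{k,\frac12}}=\tilde d(J).
\end{align*}

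For lower semicontinuity I would argue by contradiction: passing to a subsequence, assume $\tilde d(J_i)\to L<\tilde d(J)$. Let $\varphi_i$ realize $\tilde d(J_i)$ as in Lemma~\ref{lma:SL07_1}, so $\varphi_i^*\omega=\omega$ and $\norm{\varphi_i^*J_i-J_0}{C^{k,\frac12}}=\tilde d(J_i)$. Then $\norm{\varphi_i^*J_i}{C^{k,\frac12}}$ and $\norm{J_i}{C^{k,\frac12}}$ are uniformly bounded, so by the reasoning preceding Lemma~\ref{lma:SL07_1} (triangle inequality together with Lemma~\ref{lma:HL07_3}) the maps $\varphi_i$, and likewise their inverses, are uniformly bounded in $C^{k+1,\frac12}$. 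By the compact embedding $C^{k+1,\frac12}\hookrightarrow C^{k+1,\frac13}$ and Arzel\`a--Ascoli, after a further subsequence $\varphi_i\to\varphi_\infty$ and $\varphi_i^{-1}\to\varphi_\infty^{-1}$ in $C^{k+1,\frac13}$; hence $\varphi_\infty$ is a diffeomorphism with $\varphi_\infty^*\omega=\omega$, and by lower semicontinuity of the $C^{k+1,\frac12}$-norm under $C^{k+1,\frac13}$-convergence it has finite $C^{k+1,\frac12}$-norm, so it is an admissible competitor for $\tilde d(J)$. Since $J_i\to J$ in $C^{k,\frac12}$ and $\varphi_i\to\varphi_\infty$ in $C^{k+1,\frac13}$, we get $\varphi_i^*J_i\to\varphi_\infty^*J$ in $C^{k,\frac13}$, and the $C^{k,\frac12}$-norm is lower semicontinuous under $C^{k,\frac13}$-convergence (each H\"older difference quotient of the top-order derivatives passes to the limit). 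Therefore
\begin{align*}
  \tilde d(J)\le\norm{\varphi_\infty^*J-J_0}{C^{k,\frac12}}\le\liminf_{i\to\infty}\norm{\varphi_i^*J_i-J_0}{C^{k,\frac12}}=L<\tilde d(J),
\end{align*}
a contradiction. Hence $\liminf_i\tilde d(J_i)\ge\tilde d(J)$, which with the upper bound gives continuity.

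The main obstacle is the mismatch of topologies: compactness for the minimizing diffeomorphisms $\varphi_i$ is only available in the weaker $C^{k+1,\frac13}$-topology coming from compact H\"older embeddings, while $\tilde d$ is measured in the stronger $C^{k,\frac12}$-norm. Bridging this gap is done by the lower semicontinuity of H\"older norms under weaker H\"older convergence --- precisely the device already used to pass from a minimizing sequence to an actual minimizer in Lemma~\ref{lma:SL07_1}. Everything else (boundedness of the pullback operator by a fixed H\"older diffeomorphism, admissibility of the limiting competitor) is routine.
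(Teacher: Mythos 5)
Your proposal is correct and follows essentially the same route as the paper: upper semicontinuity via the fixed minimizer for $J$ as a competitor for $\tilde d(J_i)$, and lower semicontinuity via the minimizers $\varphi_i$ for $J_i$, whose uniform $C^{k+1,\frac{1}{2}}$-bounds (Lemma~\ref{lma:HL07_3}) yield a limiting admissible symplectomorphism. You merely spell out the compactness and H\"older-norm lower semicontinuity details that the paper leaves implicit.
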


 \begin{proof}
    Fix $J_A$, let $J_B \to J_A$. Then we need to show that
    \begin{align*}
      \tilde{d}(J_A)=\lim_{J_B \to J_A} \tilde{d}(J_B).
    \end{align*}
    On one hand, by Lemma~\ref{lma:SL07_1}, we can find $\varphi_A \in C^{k+1,\frac{1}{2}}(M,M)$ such that
    \begin{align*}
      \tilde{d}(J_A)=\norm{\varphi_A^* J_A-J_0}{C^{k,\frac{1}{2}}}.
    \end{align*}
    It follows that
    \begin{align*}
      \tilde{d}(J_B)=\inf_{\varphi \in Symp(M,\omega)} \norm{\varphi^* J_B-J_0}{C^{k,\frac{1}{2}}} \leq \norm{\varphi_A^* J_B-J_0}{C^{k,\frac{1}{2}}}.
    \end{align*}
    Let $J_B \to J_A$ and take limit on both sides, we have
    \begin{align*}
      \limsup_{J_B \to J_A} \tilde{d}(J_B) \leq \limsup_{J_B \to J_A} \norm{\varphi_A^* J_B-J_0}{C^{k,\frac{1}{2}}} =\norm{\varphi_A^* J_A-J_0}{C^{k,\frac{1}{2}}}= \tilde{d}(J_A).
    \end{align*}
    On the other hand, for each $J_B$, we have $\varphi_{B}$ to achieve the $\tilde{d}$. Then we see that
    \begin{align*}
      \liminf_{J_B \to J_A} \tilde{d}(J_B)=\liminf_{J_B \to J_A} \norm{\varphi_B^* J_B-J_0}{C^{k,\frac{1}{2}}} \geq
      \norm{\varphi_{\infty}^* J_A-J_0}{C^{k,\frac{1}{2}}}\geq \tilde{d}(J_A),
    \end{align*}
    where $\varphi_{\infty}$ is a limit symplectic diffeomorphism of $\varphi_{B}$ as $J_B \to J_A$. The existence and estimates of $\varphi_{\infty}$
    follow from Lemma~\ref{lma:HL07_3}.

    Therefore, $\tilde{d}$ is continuous at $J_A$ by combining the above two inequalities.   Since $J_A$ is chosen arbitrarilly in the moduli space of complex structures, we
    finish the proof.
 \end{proof}

\begin{lemma}
Suppose $J_A=J_s(t)$ for some $s \in D$ and $t \geq 0$.
If $\tilde{d}(J_A)<\delta$, then the Calabi flow starting from $J_A$ has global existence and converges to  $\psi^*(J_0)$ for some
 $\psi \in \mathrm{Symp}(M, \omega)$.
\label{lma:SL05_1}
\end{lemma}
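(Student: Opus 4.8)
The plan is to mirror, in the complex–structure picture, the argument used for Theorem~\ref{thm:HL24_1} and the proof of Theorem~\ref{thmin:1}, making the following substitutions: the auxiliary function $\tilde d$ together with Lemmas~\ref{lma:SL07_1}--\ref{lma:SL07_2} plays the role of $\hat d$; the Chen--Sun uniqueness of a cscK complex structure in the $C^\infty$–closure of a $\mathcal{G}^{\C}$–leaf (Theorem 1.3 of~\cite{[ChenSun]}) plays the role of the uniqueness of extK metrics in a fixed K\"ahler class; the equivariance of the complex–structure Calabi flow under $\mathrm{Symp}(M,\omega)$ replaces the $\mathrm{Aut}_0(M,J)$–invariance of the potential Calabi flow; and Theorem~\ref{thm:HL29_1} together with Remark~\ref{rmk:HL30_1} supplies the backward regularity improvement. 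Since $J_0$ is cscK, the extremal field vanishes, so no ``modified'' flow is needed. The first step is to reduce to a genuine neighbourhood of $J_0$: by Lemma~\ref{lma:SL07_1} pick $\varphi\in\mathrm{Symp}(M,\omega)$ with $\norm{\varphi^\ast J_A-J_0}{C^{k,\frac12}}=\tilde d(J_A)<\delta$, and note that by equivariance the flow from $J_A$ converges if and only if the flow from $\varphi^\ast J_A$ does, the limits differing by $\varphi$. So we may assume $\norm{J_A-J_0}{C^{k,\frac12}}<\delta$.

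Next, global existence. This is the complex–structure analogue of Lemmas~\ref{lma:HL24_1}--\ref{lma:HL24_3}. One needs: (i) a short–time existence statement with estimates near $J_0$, à la Chen--He~\cite{[ChenHe1]}, giving the flow on $[0,1]$ with $\norm{J(t)-J_0}{C^{k+1,\frac12}}<1$ for $t\in[\frac12,1]$ once $\delta$ is small; and (ii) the analogue of Lemma~\ref{lma:HL24_1}: if $\norm{J-J_0}{C^{k+1,\frac12}}<1$ and the Calabi energy of the associated metric is within $\delta'$ of $Ca(g_0)$, then $\tilde d(J)<\epsilon$ — proved by a contradiction/compactness argument using Chen--Sun uniqueness, the continuity of $\tilde d$ (Lemma~\ref{lma:SL07_2}), and the isometry regularity of Lemma~\ref{lma:HL07_3} and \cite{[CH]}. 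With (i) and (ii) one runs the usual induction: given the flow on $[0,N]$ with $\tilde d(J(N))$ small, choose $\psi_N\in\mathrm{Symp}(M,\omega)$ with $\psi_N^\ast J(N)$ $C^{k,\frac12}$–close to $J_0$, flow another unit of time by (i), observe that the metrics $g(t)$ are compatible with the fixed form $\omega$ hence non–collapsed with bounded curvature whenever $C^{k+1,\frac12}$–close to $J_0$, and combine the monotonicity of the Calabi energy with (ii) to conclude $\tilde d(J(N+1))$ is small; iterate. Hence the flow exists for all $t\ge0$ and remains in a $\tilde d$–neighbourhood of $J_0$.

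Finally, convergence. By the results of Streets~\cite{[Streets1]} and He~\cite{[He5]}, $Ca(g(t))$ decreases to the minimal value $Ca(g_0)$ over the leaf closure. Let $\psi_i$ achieve $\tilde d(J(i))$; by Lemma~\ref{lma:HL07_3} the $\psi_i$ are uniformly bounded in $C^{k+1,\frac12}$, and by step (ii) above $\norm{\psi_i^\ast J(i)-J_0}{C^{k,\frac12}}\to0$. By Theorem~\ref{thm:HL29_1} and Remark~\ref{rmk:HL30_1} one gets uniform bounds on all $|\nabla^l Rm|$ along the flow, so after passing to a subsequence $\psi_i\to\psi_\infty\in\mathrm{Symp}(M,\omega)$ and $J(t)$ admits smooth subsequential limits, each forced to be a cscK complex structure in the leaf closure and hence, by Chen--Sun, equal to $\psi_\infty^\ast J_0$. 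To upgrade to genuine convergence one proceeds exactly as in the proof of Theorem~\ref{thm:HL24_1}: the Calabi flow on $\mathcal{J}$ decreases the natural symmetric–space distance to the $\mathrm{Ham}(M,\omega)$–orbit of $J_0$ — equivalently to the fixed point $\psi_\infty^\ast J_0$ — which together with the uniform higher–order bounds yields $J(t)\to\psi_\infty^\ast J_0$ in $C^\infty$. Setting $\psi=\psi_\infty$ finishes the proof.

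I expect the main obstacle to be the global–existence bootstrap, i.e.\ establishing the complex–structure analogues of Lemmas~\ref{lma:HL24_1}--\ref{lma:HL24_3}: in particular, showing that a small Calabi–energy excess together with bounded geometry forces $\tilde d(J)$ small (which is where Chen--Sun uniqueness and the non–collapsing of $(M,\omega,J)$ enter), and verifying that no collapsing occurs along the flow so that Theorem~\ref{thm:HL29_1} applies at each step. The genuine–convergence step additionally rests on the monotonicity of the distance on $\mathcal{J}$ under the Calabi flow, the exact counterpart of the geodesic–distance monotonicity used in the K\"ahler–potential setting.
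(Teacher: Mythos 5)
Your opening reduction (pick $\varphi$ realizing $\tilde d(J_A)<\delta$ via Lemma~\ref{lma:SL07_1} and use the $\mathrm{Symp}(M,\omega)$-equivariance of the complex-structure flow to replace $J_A$ by $\varphi^*J_A$) is exactly what the paper does. But the core of your plan --- rebuilding the local stability of the flow near $J_0$ from scratch by transplanting Lemmas~\ref{lma:HL24_1}--\ref{lma:HL24_3} and Theorem~\ref{thm:HL24_1} to the space $\mathcal{J}$ --- is not the paper's argument, and as written it has genuine gaps. The paper simply quotes Theorem 5.3 of~\cite{[ChenSun]}, which already asserts that every Calabi flow starting in a $C^{k,\frac12}$ $\delta$-neighborhood of the cscK structure $J_0$ converges to some cscK $J'$, and then quotes the uniqueness degeneration theorem (Theorem 1.3 of~\cite{[ChenSun]}) to identify $J'$ with $\eta^*J_0$; together with your reduction step, that is the entire proof.

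The concrete gaps in your route are these. First, your convergence step rests on the claim that the Calabi flow on $\mathcal{J}$ decreases a natural symmetric-space distance to the orbit of $J_0$; this monotonicity is neither proved nor cited anywhere in the paper --- the Calabi--Chen distance-decreasing theorem is for the Mabuchi metric on $\mathcal{H}$ with $J$ fixed, and no analogue on $\mathcal{J}$ is available to you here. Second, your item (i), short-time existence with smoothing estimates for the flow of complex structures near $J_0$, is only asserted; Lemma~\ref{lma:HL24_2} lives on $\mathcal{H}$ and does not transfer for free. Third, in your compactness step (ii) a subsequential $C^{k,\frac12}$-limit $J_\infty$ of the contradicting sequence has vanishing Calabi energy and hence is cscK, but it is only a $C^{k,\frac12}$-limit and need not lie in the $C^{\infty}$-closure of the $\mathcal{G}^{\C}$-leaf of $J_1$, so Theorem 1.3 of~\cite{[ChenSun]} cannot be applied to it without an additional regularity and approximation argument. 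Each of these difficulties is precisely what is packaged inside Chen--Sun's Theorem 5.3; the intended proof is a citation of that stability theorem, not a reconstruction of it.
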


\begin{proof}
   By Theorem 5.3 of \cite{[ChenSun]},
  there is a small $\delta$ such that every Calabi flow starting from the $\delta$-neighborhood of $J_0$, in $C^{k,\frac{1}{2}}$-topology, will converge to some cscK $J'$.
  Note that the Calabi flow solution always stays in the $\mathcal{G}^{\C}$-leaf of $J_1$, hence $J'$ is in the $C^{\infty}$-closure of $J_1$.
  Also, on the other hand,  according to the conditions of $J_s$, we see that $J_0$ is also in the $C^{\infty}$-closure of $J_1$.
  Therefore,  one can apply the uniqueness degeneration theorem, Theorem 1.3 of~\cite{[ChenSun]} to obtain that $(M, \omega, J_0)$
  and $(M, \omega, J')$ are isomorphic.
  Namely, $(\omega, J')=\eta^*(\omega, J_0)$ for some diffeomorphism $\eta$.

  If $\tilde{d}(J_A)<\delta$, then we can find a diffeomorphism $\varphi$ such that
  \begin{align*}
    \varphi^* \omega=\omega,  \quad \norm{\varphi^*J_A-J_0}{C^{k,\frac{1}{2}}(M,g_0)}<\delta.
  \end{align*}
  By previous argument, the Calabi flow initiated from $\varphi^* J_A$ will converge to $\eta^*(J_0)$, for some symplectic diffeomorphism $\eta$.
  Then the Calabi flow starting from $J_A$ converges to $(\varphi^{-1})^* \eta^* (J_0)$.
  Let $\psi=\eta \circ \varphi^{-1}$, we then finish the proof.
\end{proof}

With these preparation, we are ready to prove Theorem~\ref{thmin:2}.

\begin{proof}[Proof of Theorem~\ref{thmin:2}]

     Let $I$ to be the collection of $s \in [0,1]$ such that
  the Calabi flow initiated from $J_s$ converges to $\psi^*(J_0)$ for some symplectic diffeomorphism $\psi$.
  By abusing of notation, we denote
  \begin{align*}
     \tilde{d}(s,t)=\tilde{d}(J_s(t)).
  \end{align*}
  In light of Lemma~\ref{lma:SL05_1} and continuity of $\tilde{d}$, we see that $I$ is open.  In order to show $I=[0,1]$,  it suffices to show the following claim.

  \begin{claim}
   Suppose $[0,\bar{s}) \subset I$ for some $\bar{s} \in (0,1]$, then $\bar{s} \in I$.
  \label{clm:SL05_1}
  \end{claim}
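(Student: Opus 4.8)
Here is a plan for proving Claim~\ref{clm:SL05_1}.

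The plan is to follow the closedness argument from the proof of Theorem~\ref{thmin:1}, with $\mathrm{Aut}_0(M,J)$ replaced by $\mathrm{Symp}(M,\omega)$, $\hat d$ by $\tilde d$, the regularity-improvement Proposition~\ref{prn:HL27_1} by its complex-structure counterpart, and the uniqueness of extremal K\"ahler metrics by the uniqueness degeneration theorem of Chen--Sun. Throughout, recall that the path $\{J_s\}_{s\in[0,1]}$ lies in $\mathcal{LJ}$, so every Calabi flow $\{J_s(t)\}_{t\ge 0}$, and in particular the one from $J_{\bar s}$, has global existence. If $\tilde d(J_{\bar s})<\delta$ (with $\delta$ the constant of Lemma~\ref{lma:SL05_1}), then $\bar s\in I$ immediately by Lemma~\ref{lma:SL05_1}, so I may assume $\tilde d(J_{\bar s})\ge\delta$; then by continuity of $\tilde d$ (Lemma~\ref{lma:SL07_2}) there is $\bar s'<\bar s$ with $\tilde d(J_s)>\frac{1}{2}\delta$ for all $s\in(\bar s',\bar s)$. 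For such $s\in I$ the flow from $J_s$ converges to some $\psi^*(J_0)$ with $\tilde d(\psi^*(J_0))=0$, so there is a well-defined finite first time $T_s$ with $\tilde d(J_s(T_s))=\frac{1}{2}\delta$.

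The crux is to show $\displaystyle\sup_{s\in(\bar s',\bar s)}T_s<\infty$. Assuming not, I would pick $s_i\to\bar s$ with $T_i:=T_{s_i}\to\infty$. Along the globally existing flow from $J_{\bar s}$, the results of Streets~\cite{[Streets1]} and He~\cite{[He5]}, together with the fact that $J_0$ is a cscK structure in the $C^\infty$-closure of the $\mathcal{G}^{\C}$-leaf of $J_1$, give $\lim_{t\to\infty}Ca(J_{\bar s}(t))=Ca(J_0)$. Hence for each fixed $\epsilon>0$ there is $L_\epsilon$ with $Ca(J_{\bar s}(L_\epsilon))<Ca(J_0)+\epsilon$, and by continuous dependence of the flow on the initial data over $[0,L_\epsilon]$, $\lim_i Ca(J_{s_i}(L_\epsilon))<Ca(J_0)+\epsilon$. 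Since the Calabi energy is non-increasing along each flow, $T_i\to\infty$, and $Ca\ge Ca(J_0)$ on the leaf, letting $i\to\infty$ and then $\epsilon\to 0$ gives
$$\lim_{i\to\infty}Ca(J_{s_i}(T_i))=\lim_{i\to\infty}Ca(J_{s_i}(T_i-2))=Ca(J_0),$$
so in particular $Ca(J_{s_i}(T_i-2))-Ca(J_{s_i}(T_i))\to 0$.

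Now I would extract a contradiction. By Lemma~\ref{lma:SL07_1} choose a symplectic diffeomorphism $\varphi_i\in C^{k+1,\frac{1}{2}}(M,M)$ realizing $\tilde d(J_{s_i}(T_i))=\frac{1}{2}\delta$ and set $\hat J_i:=\varphi_i^*J_{s_i}(T_i)$, so $\|\hat J_i-J_0\|_{C^{k,\frac{1}{2}}}=\frac{1}{2}\delta$. Then the metric $\varphi_i^*g_{s_i}(T_i)$ is uniformly $C^{k,\frac{1}{2}}$-close to $g_0$, hence has uniformly bounded curvature and injectivity radius bounded below; being intrinsic, the same bounds $B,c_0$ hold for $g_{s_i}(T_i)$. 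Applying Theorem~\ref{thm:HL29_1} (valid for the complex-structure Calabi flow by Remark~\ref{rmk:HL30_1}) to the flow on $[T_i-2,T_i]$ --- whose hypothesis $Ca(J_{s_i}(T_i-2))-Ca(J_{s_i}(T_i))<\epsilon$ holds for large $i$ --- yields uniform bounds on $|\nabla^l\Rm|(\cdot,T_i)$ for all $l$. Arguing as in Proposition~\ref{prn:HL27_1} (metric equivalence on a short interval before $T_i$, the $C^{1,\frac{1}{2}}$ estimate of Chen--He~\cite{[ChenHe1]}, the smoothing property of the Calabi flow) and using Proposition~\ref{prn:HL07_1} to pass from metric bounds to bounds on the almost complex structure, I would upgrade this to uniform $C^{l,\frac{1}{2}}$ bounds on $\hat J_i$ for every $l\ge k$. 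Passing to a subsequence, $\hat J_i\to\hat J_\infty$ in $C^\infty$; since $Ca$ is diffeomorphism-invariant and continuous, $Ca(\hat J_\infty)=\lim_i Ca(J_{s_i}(T_i))=Ca(J_0)=0$, so $(M,\omega,\hat J_\infty)$ is cscK, and since each $\hat J_i$ lies in the $\mathcal{G}^{\C}$-leaf of $J_1$ so does $\hat J_\infty$ in its $C^\infty$-closure. By Theorem~1.3 of~\cite{[ChenSun]} there is a symplectic $\eta$ (automatically smooth) with $\hat J_\infty=\eta^*J_0$. Then $(\varphi_i\circ\eta^{-1})^*J_{s_i}(T_i)=(\eta^{-1})^*\hat J_i\to J_0$ in $C^{k,\frac{1}{2}}$, so $\tilde d(J_{s_i}(T_i))\to 0$, contradicting $\tilde d(J_{s_i}(T_i))=\frac{1}{2}\delta$.

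Finally, with $\sup_s T_s<\infty$ established, I would choose $s_i\to\bar s$ with $T_{s_i}\to\bar T<\infty$; then by continuity of $\tilde d$ and continuous dependence of the flow on the data, $\tilde d(J_{\bar s}(\bar T))=\frac{1}{2}\delta<\delta$, so Lemma~\ref{lma:SL05_1} gives global existence and convergence of the flow from $J_{\bar s}(\bar T)$ to some $\psi^*(J_0)$; since this flow is the one from $J_{\bar s}$ time-shifted by $\bar T$, the flow from $J_{\bar s}$ itself converges to $\psi^*(J_0)$, i.e. $\bar s\in I$. The main obstacle is the regularity/compactness step of the third paragraph: turning the weak hypothesis ``$\tilde d=\frac{1}{2}\delta$ at times $T_i\to\infty$'' into $C^\infty$ convergence to a cscK structure, which relies on the backward regularity Theorem~\ref{thm:HL29_1}, on the diffeomorphism-invariance of every quantity involved, and on uniform control of the symplectomorphisms $\varphi_i$ through Lemma~\ref{lma:HL07_3} and Proposition~\ref{prn:HL07_1}.
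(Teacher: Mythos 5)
Your overall architecture matches the paper's: the dichotomy on $T_s$, the Calabi-energy argument ruling out $T_{s_i}\to\infty$ via Streets--He and monotonicity, the appeal to Theorem~\ref{thm:HL29_1}, the identification of the limit as a cscK structure via Chen--Sun, and the final passage to $\bar{s}\in I$ are all as in the paper. The difference --- and the problem --- is in the compactness step. You keep $\omega$ fixed, pull back by the minimizing symplectomorphisms $\varphi_i$ of Lemma~\ref{lma:SL07_1}, and claim to upgrade the resulting $C^{k,\frac{1}{2}}$ bound on $\hat{J}_i=\varphi_i^*J_{s_i}(T_i)$ to uniform $C^{l,\frac{1}{2}}$ bounds for all $l$ by ``arguing as in Proposition~\ref{prn:HL27_1}''. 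This step does not go through. Theorem~\ref{thm:HL29_1} bounds the diffeomorphism-invariant quantities $|\nabla^l Rm|$; these cannot control the $C^{l,\frac{1}{2}}$-norm of $\hat{J}_i$ in fixed coordinates for $l>k$, because that norm depends on the gauge $\varphi_i$, and Lemma~\ref{lma:HL07_3} controls $\varphi_i$ only in $C^{k+1,\frac{1}{2}}$ --- exactly one derivative above the quantity being minimized, with no a priori bound on $\norm{\varphi_i}{C^{k+2,\frac{1}{2}}}$. Concretely, $J=\phi^*J_0$ for a symplectomorphism $\phi$ close to the identity in $C^{k+1,\frac{1}{2}}$ but wild in $C^{k+2}$ has all covariant derivatives of curvature bounded and is $C^{k,\frac{1}{2}}$-close to $J_0$, yet is unbounded in $C^{k+1,\frac{1}{2}}$. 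Proposition~\ref{prn:HL27_1} escapes this because in the potential picture there is no diffeomorphism gauge: $\omega_{\varphi}$ is determined by a scalar potential in fixed holomorphic coordinates and Monge--Amp\`ere regularity applies. Without the upgrade you only get $\hat{J}_i\to\hat{J}_\infty$ in $C^{k,\gamma}$ for $\gamma<\frac{1}{2}$, and then $\norm{(\eta^{-1})^*\hat{J}_i-J_0}{C^{k,\frac{1}{2}}}$ need not tend to $0$, so you cannot contradict $\tilde{d}(J_{s_i}(T_i))=\frac{1}{2}\delta$.

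The paper avoids this by taking a Cheeger--Gromov limit of $(M,\omega_i,J_1')$, i.e.\ by first pulling everything back via Hamiltonian diffeomorphisms to a fixed complex structure so that the convergence is genuinely $C^{\infty}$ with respect to diffeomorphisms adapted to the metrics rather than to the fixed coordinates. The price is that those diffeomorphisms are not symplectomorphisms, and the paper must then invoke the integrality of $[\omega]$ --- to force $[\gamma_i^*\omega]=[\omega]$ for large $i$ --- together with Moser's lemma to correct them into symplectomorphisms before concluding $\tilde{d}(J_i')\to 0$. It is a tell-tale sign that your argument never uses the integrality of $[\omega]$: that hypothesis is consumed precisely at the step your proposal skips.
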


  We argue by contradiction.

  If the statement was wrong, then $\tilde{d}(\bar{s}, 1) \geq \delta$, due to Lemma~\ref{lma:SL05_1}.
  For each $s \in (0,\bar{s})$, we see $\tilde{d}(s, t)$ will converge to zero finally, while $\tilde{d}(s,1)>0.5 \delta$ whenever
  $s$ is very close to $\bar{s}$, say, for $s \in [\bar{s}-\xi, \bar{s})$.
  Therefore, for each $s$ nearby $\bar{s}$, there is a $T_{s}$ such that $\tilde{d}(s, T_s)=0.5 \delta$ for the first time.
  A priori, there are two possibilities for the behavior of $T_{s}$:

  \begin{itemize}
   \item  There is a sequence of $s_i$ such that $s_i \to \bar{s}$ and $T_{s_i} \to \infty$.
   \item  $\displaystyle \sup_{s \in [\bar{s}-\xi,\bar{s})}T_s <A$ for some constant $A$.
  \end{itemize}
  We shall exclude the first possibility. The proof is parallel to that of Claim~\ref{clm:HL23_1}.
  Actually, if the first possibility appears, then we see that
  \begin{align*}
    \lim_{i \to \infty} Ca(J_{s_i}(T_{s_i}))=0.
  \end{align*}
  In light of the monotonicity of the Calabi energy along each flow,  the continuous dependence of the the Calabi energy on parameters $s$ and $t$, and the fact that
  \begin{align*}
     \lim_{t \to \infty}  Ca(J_{\bar{s}}(t))=0.
  \end{align*}
  For simplicity of notations, we denote $J_i'=J_{s_i}( T_{s_i})$.
  Note that  every $J_i'$ can be pulled-back to $J_1'$ by some Hamiltonian diffeomorphisms $\eta_i$, which may not preserve $\omega$.
  In other words, we have
    \begin{align}
        J_1'=\eta_i^{*} J_i', \quad \omega_i \triangleq \eta_i^{*} \omega,
    \label{eqn:HL15_0}
    \end{align}
   where $\omega_i=\omega + \sqrt{-1} \partial \bar{\partial} f_i$ for some smooth functions $f_i$, with respect to the complex structure $J_1'$.
   Note also that $(M, \omega, J_i')$ has the same intrinsic geometry as $(M, \omega_i, J_1')$, which has uniformly bounded Riemannian geometry and all high order curvature covariant derivatives, due to Theorem~\ref{thm:HL29_1}.
   Therefore, we can take Cheeger-Gromov convergence:
    \begin{align*}
       (M, \omega_i, J_1') \longright{Cheeger-Gromov-C^{\infty}}   \left(M, \bar{\omega}, \bar{J} \right).
    \end{align*}
    In other words, there exist smooth diffeomorphisms $\varphi_i$ such that
    \begin{align}
      \varphi_i^* \omega_i \stackrel{C^{\infty}}{\longrightarrow}  \bar{\omega}, \quad \varphi_i^* J_1' \stackrel{C^{\infty}}{\longrightarrow}  \bar{J}.
    \label{eqn:HL15_1}
    \end{align}
    Note that everything converges smoothly, the limit K\"ahler manifold $(M, \bar{\omega}, \bar{J})$
    has zero Calabi energy and consequently is cscK. Moreover,  it is adjacent to $([\omega], J_1')$, in the sense of Chen-Sun(c.f. Definition 1.4 of~\cite{[ChenSun]}).
    Therefore, the unique degeneration theorem of Chen-Sun (Theorem 1.6 of~\cite{[ChenSun]}) applies, we see that there is a smooth diffeomorphism $\varphi$ of $M$ such that
    \begin{align}
      \bar{\omega}=\varphi^* \omega, \quad \bar{J}=\varphi^{*} J_0.
    \label{eqn:HL15_2}
    \end{align}
    Let $\psi_i =\varphi_i \circ \varphi^{-1}$, $\gamma_i=\eta_i \circ \psi_i$. Combining (\ref{eqn:HL15_0}), (\ref{eqn:HL15_1}) and (\ref{eqn:HL15_2}),  we obtain
    \begin{align*}
       &\psi_i^* \omega_i \stackrel{C^{\infty}}{\longrightarrow} \omega,  \quad     \psi_i^* J_1' \stackrel{C^{\infty}}{\longrightarrow} J_0, \\
       & \gamma_i^* \omega \stackrel{C^{\infty}}{\longrightarrow} \omega, \quad   \gamma_i^* J_i'  \stackrel{C^{\infty}}{\longrightarrow}  J_0.
    \end{align*}
    Since $[\omega]$ is integral, we see $[\gamma_i^* \omega]=[\omega]$ for sufficiently large $i$.
    Composing with an extra convergent sequence of diffeomorphisms if necessary, we can assume that
    \begin{align*}
       \gamma_i^* \omega  =\omega,  \quad     \gamma_i^* J_i' \stackrel{C^{\infty}}{\longrightarrow} J_0.
    \end{align*}
    Then we have $\tilde{d}(J_i') \to 0$ as $i \to \infty$, which contradicts our choice of $J_i'$, namely, $\tilde{d}(J_i')=0.5 \delta$.
    This contradiction excludes the first possibility.
    Therefore, we have
    \begin{align*}
       \sup_{s \in [\bar{s}-\xi,\bar{s})}T_s <A
    \end{align*}
    for some uniform constant $A$.  Now we choose $s_i \in [0, \bar{s})$ such that $s_i \to \bar{s}$, we can assume $T_{s_i} \to T_{\bar{s}}$ for some finite $T_{\bar{s}}$
    by the above estimate.   In light of the continuous dependence of solutions to the initial data, we see that the complex structure $J_{\bar{s}}(T_{\bar{s}})$
      is the limit of $J_{s_i}(T_{s_i})$.  Hence
      \begin{align*}
         \tilde{d}(J_{\bar{s}}(T_{\bar{s}}))= \lim_{i \to \infty} \tilde{d}(J_{s_i}(T_{s_i}))=0.5 \delta < \delta.
      \end{align*}
    Consequently, Lemma~\ref{lma:SL05_1} can be applied for the Calabi flow started from $J_{\bar{s}}(T_{\bar{s}})$. Therefore, $\bar{s} \in I$ and we finish the proof
    of Claim~\ref{clm:SL05_1}.
\end{proof}

The proof of Theorem~\ref{thmin:4} is almost the same as Theorem~\ref{thmin:2}.
The only difference is that we use Theorem~\ref{thm:HL29_2} to improve regularity, rather than Theorem~\ref{thm:HL29_1}.


\section{Further discussion}

\subsection{Examples with global existence}

All the convergence results in Section~\ref{sec:longtime-behavior} are based on global existence of the Calabi flows.
In this subsection, we will show some non-trivial examples with global existence.
On such examples,  our results and methods developed in Section~\ref{sec:longtime-behavior} can be applied.

\begin{example}
\label{example1}(cf. \cite{[ChenHe2]})
Let $(M,J)$ be the blowup of $\CC\PP^2$ at three generic points. Let the K\"ahler class of $\omega$ be
\begin{align*}
  3[H]-\la ([E_1]+[E_2]+[E_3]),  \quad 0<\lambda<\frac{3}{2},   \quad \lambda \in \Q,
\end{align*}
where $H$ denotes the pullback of the hyperplane of $\CC\PP^2$ in $M$ and $E_i(1\leq i\leq 3)$ denotes the exceptional divisors.
Suppose $\oo$ is invariant under the toric action and the action of $\,\ZZ_3$ and satisfies
\beq\label{example1energy}
\int_M\; S^2\,dV<192\pi^2+32\pi^2\frac {(3-\la)^2}{3-\la^2}.
\eeq
Then the Calabi flow starting from $\oo$ exists for all time and converges to a cscK metric in the smooth topology of the K\"ahler potentials.
\end{example}

\begin{example}
\label{example2}(cf. \cite{[ChenHe3]})
Let $(M,J)$ be a toric Fano surface. Let $\omega$ be a K\"ahler metric with positive extremal Hamiltonian potential.
Suppose $[\omega]$ is rational and $\oo$ is invariant under the toric action and satisfies
\begin{align*}
\int_M\; S^2 \,dV<32\pi^2 \left(c_1^2(M)+\frac 13 \frac {(c_1(M)\cdot \Omega)^2}{\Omega^2}\right)+\frac 13 \|\cF\|^2,
\end{align*}
where $\|\cF\|^2$ is the norm of Calabi-Futaki invariant.  Then the modified Calabi flow starting from $\oo$ exists for all time and converges
to an extK metric in the smooth topology of K\"ahler potentials.
\end{example}

Actually, in~\cite{[ChenHe2]} and~\cite{[ChenHe3]}, the global existence of the flows in Example \ref{example1} and \ref{example2} was already proved by energy method,
based on the work of~\cite{MR2425183}.  Furthermore, they showed the sequence convergence in the Cheeger-Gromov topology.
The only new thing here is the improvement of the convergence topology.
Let us sketch a proof of the statement of Example~\ref{example2}.    By results of~\cite{[ChenHe3]}, we can take time sequence $t_i \to \infty$ such that
\begin{align*}
    (M, \omega(t_i), J)  \longright{Cheeger-Gromov-C^{\infty}} (M', \omega', J'),
\end{align*}
where $(M', \omega', J')$ is an extK metric.   In other words, there exist smooth diffeomorphisms $\psi_i$ such that
\begin{align*}
  (\psi_i^* \omega(t_i), \psi_i^*J)   \longright{C^{\infty}}   (\omega', J')
\end{align*}
in fixed coordinates.   The toric symmetry condition forces that $J'$ is isomorphic to $J$, i.e., $J'=\psi^* J$ for some smooth diffeomorphism $\psi$.  Let $\eta_i=\psi_i \circ \psi^{-1}$, we have
\begin{align}
  (\eta_i^* \omega(t_i), \eta_i^* J) \longright{C^{\infty}}   ((\psi^{-1})^* \omega', J).
\label{eqn:HL29_3}
\end{align}
The rational condition of $[\omega]$ forces that $[\eta_i^* \omega(t_i)]=[(\psi^{-1})^* \omega']$. Therefore, there exists smooth diffeomorphisms $\rho_i \to Id$ such that
\begin{align}
 \rho_i^*\eta_i^* \omega(t_i)=(\psi^{-1})^* \omega'.   \label{eqn:HL29_4}
\end{align}
Then equation (\ref{eqn:HL29_3}) becomes
\begin{align}
 \rho_i^* \eta_i^* J \longright{C^{\infty}}   J.    \label{eqn:HL29_5}
\end{align}
We can write $\eta_i= \varrho_i \circ \sigma_i$ for $\varrho_i \in Aut(M,J)$ and $\sigma_i \to Id$.  Then (\ref{eqn:HL29_3}) can be rewritten as
\begin{align*}
   & (\sigma_i^* \varrho_i^* \omega(t_i), \sigma_i^* J) \longright{C^{\infty}}   ((\psi^{-1})^* \omega', J), \\
   & ( \varrho_i^* \omega(t_i), J) \longright{C^{\infty}}   ((\psi^{-1})^* \omega', J).
\end{align*}
Note that $Aut(M,J)=Aut_0(M,J)$ in our examples.  Hence
$[\varrho_i^* \omega(t_i)]=[(\psi^{-1})^* \omega']=[\omega]$.  For simplicity of notation, we denote $(\psi^{-1})^* \omega'$ by $\omega_{extK}$.  Then on the fixed complex manifold $(M, J)$, within
the K\"ahler class $[\omega]$, we have $\varrho_i^* \omega(t_i) \to \omega_{extK}$ in the smooth topology of K\"ahler potentials.
For some large $i$,  $\varrho_i^* \omega(t_i)$ locates in a tiny $C^{k,\frac{1}{2}}$-neighborhood of $\omega_{extK}$.
Then we can apply Theorem~\ref{thm:HL24_1} and Theorem~\ref{thm:HL24_exp} to show the modified Calabi flow staring from $\varrho_i^* \omega(t_i)$ converges to
an extK metric $\varrho^* \omega_{extK}$ exponentially fast, where $\varrho \in Aut_0(M,J)$.

\subsection{Behavior of the Calabi flow at possible finite singularities}
\label{subsec:finite}

All our previous discussion in this paper is based on the global existence of the Calabi flow, and there do exist some nontrivial examples of global existent Calabi flow.
However, it is not clear whether the global existence of the Calabi flow holds in general.
Suppose the Calabi flow starting from $\omega_{\varphi}$ fails to have global existence. Then there must be a maximal existence time $T$.
By the work of Chen-He~\cite{[ChenHe1]}, we see that Ricci curvature must blowup at time $T$.     In this subsection, we will study the behavior of more geometric quantities
at the first singular time $T$.  For simplicity of notations, we often let the singular time $T$ to be $0$.
Recall that $P,Q,R$ are defined in equation (\ref{eqn:SL06_1}).

\begin{proposition}
 Suppose $\{(M^n, g(t)), -1 \leq t \leq K, 0 \leq K\}$ is a Calabi flow solution satisfying
$$Q(0)=1, \quad Q(t)\leq 2,\quad \forall\; t\in [-1, 0].$$
Then we have
\begin{align}
Q(K)<2^{\frac {1}{\ee_0}\int_0^K\,P_g(t)\;dt+1},
\label{eqn:HG24_1}
\end{align}
where $\ee_0$ is the dimensional constant obtained in Lemma~\ref{lem002}.
\label{prn:HG24_1}

\end{proposition}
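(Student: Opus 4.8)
The plan is to prove, for every $t\in[0,K]$, the pointwise inequality
\[
   \log_2 Q_g(t) \;<\; 1 + \frac{1}{\ee_0}\int_0^t P_g(\tau)\,d\tau ,
\]
which at $t=K$ is exactly (\ref{eqn:HG24_1}). The engine is Lemma~\ref{lem002}, applied after the parabolic rescaling $\tilde g(s):=c\,g(\,\cdot\,,t_0+c^{-2}s)$ (cf.\ (\ref{eq:nor})); under this rescaling $Q$ scales by $c^{-1}$, $P$ by $c^{-2}$ and time by $c^{2}$, so that both $Q^2\cdot(\text{time})$ and $\int P\,dt$ are scale invariant. In these terms Lemma~\ref{lem002} says: once $Q$ is normalized to $1$ at a time whose preceding unit interval is controlled by $2$, the integral $\int P\,dt$ must accumulate at least $\ee_0$ before $Q$ changes by a multiplicative factor $2$ in either direction.

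I would argue by contradiction using the continuous function $\phi(t):=\log_2 Q_g(t)-\tfrac1{\ee_0}\int_0^t P_g$, which has $\phi(0)=0$. If $\phi\geq 1$ somewhere on $[0,K]$, let $t^\ast:=\inf\{t\in[0,K]:\phi(t)\geq 1\}$. Then $t^\ast>0$, $\phi(t^\ast)=1$ and $\phi<1$ on $[0,t^\ast)$; consequently $Q_g(t^\ast)\geq 2$ and $Q_g(t)<Q_g(t^\ast)$ for all $t\in[0,t^\ast)$. Next I would select the base time $t_0:=\max\{t\in[0,t^\ast):Q_g(t)=\tfrac12 Q_g(t^\ast)\}$, which is nonempty since $Q_g(0)=1\leq\tfrac12 Q_g(t^\ast)$, and whose maximality gives $\tfrac12 Q_g(t^\ast)<Q_g(t)<Q_g(t^\ast)$ for all $t\in(t_0,t^\ast)$.

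Now rescale with $c:=\tfrac12 Q_g(t^\ast)\geq 1$. Since $c\geq 1$, the interval $[t_0-c^{-2},t_0]$ lies in $[-1,t^\ast)\subset[-1,K]$ and $Q_g\leq 2c$ there, so $\tilde g$ is a bona fide Calabi flow with $Q_{\tilde g}(0)=1$ and $Q_{\tilde g}\leq 2$ on $[-1,0]$; and because $Q_g$ avoids the two levels $Q_g(t^\ast)$ and $\tfrac14 Q_g(t^\ast)$ throughout $(t_0,t^\ast)$ while attaining $Q_g(t^\ast)$ at $t^\ast$, the first time $\tilde T$ with $|\log Q_{\tilde g}|=\log 2$ corresponds precisely to $t^\ast$, i.e.\ $t_0+c^{-2}\tilde T=t^\ast$. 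Lemma~\ref{lem002} then gives $\int_0^{\tilde T}P_{\tilde g}\geq\ee_0$, which by scale invariance of $\int P\,dt$ reads $\int_{t_0}^{t^\ast}P_g\geq\ee_0$. Therefore
\[
  \phi(t^\ast)-\phi(t_0)=\log_2\frac{Q_g(t^\ast)}{Q_g(t_0)}-\frac1{\ee_0}\int_{t_0}^{t^\ast}P_g
  =1-\frac1{\ee_0}\int_{t_0}^{t^\ast}P_g\leq 0,
\]
so $\phi(t_0)\geq 1$ with $t_0<t^\ast$, contradicting the minimality of $t^\ast$. Hence no such $t^\ast$ exists, the displayed inequality holds on $[0,K]$, and the Proposition follows.

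The scaling bookkeeping and the continuity/intermediate-value arguments locating $t_0$ are routine. The delicate point — and the reason to normalize at a \emph{first} violation and to take $t_0$ as the \emph{last} crossing of the level $\tfrac12 Q_g(t^\ast)$ — is to ensure that the rescaled flow genuinely satisfies the hypotheses of Lemma~\ref{lem002} on a full unit time interval before the normalized time, and that the ``downward'' alternative $Q_{\tilde g}=\tfrac12$ in that lemma is not triggered before $s$ reaches $\tilde T$. The inequality $Q_g(t^\ast)\geq 2$, automatic at a first violation, is exactly what forces $c\geq 1$ and thereby keeps $[t_0-c^{-2},t_0]$ inside the time domain $[-1,K]$ of the original flow.
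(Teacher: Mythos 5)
Your proof is correct and rests on the same mechanism as the paper's: after parabolic rescaling at a time where $Q$ doubles, Lemma~\ref{lem002} forces $\int P\,dt\geq\ee_0$ to accumulate across each doubling. The paper organizes this as a direct count of the dyadic first-hitting times $s_i=\inf\{t\geq 0\,:\,Q(t)=2^i\}$ and sums the resulting lower bounds, rather than running your continuity/first-violation argument for $\phi(t)=\log_2 Q(t)-\tfrac{1}{\ee_0}\int_0^t P$, but these are two bookkeepings of the same estimate.
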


\begin{proof} For any nonnegative integer $i$,
 we define $s_i=\inf\left\{t\;|\;t\geq 0, Q(t)=2^i \right\}$.
 Note that  $s_0=0$. Thus, we have
$$s_i-\frac {1}{Q(s_i)^2}\geq -1,\quad \sup_{\left[s_i-\frac 1{Q(s_i)^2}, \;s_{i+1} \right]}Q(t)=
Q(s_{i+1})=2^{i+1}.$$
We rescale the metrics by
$$g_i(x, t) \triangleq Q(s_i)g \left(x, \frac t{Q(s_i)^2}+s_i \right).$$
Then the flow $\left\{(M, g_i(t)),  -1 \leq t \leq  Q(s_i)^2(K-s_i) \right\}$ is a
 Calabi flow solution satisfying
 \begin{itemize}
 \item $Q_{g_i}(0)=1$,
 \item $Q_{g_i}(t)\leq 1$ for all $t\in [-1, 0]$,
 \item $Q_{g_i}\Big(Q(s_i)^2(s_{i+1}-s_i)\Big)=2$.
 \end{itemize}
Thus, Lemma \ref{lem002} applies and we have
$$\int_{s_i}^{s_{i+1}}\,P_g(t)\,dt=\int_0^{Q(s_i)^2(s_{i+1}-s_i)}\,P_{g_i}(t)\,dt\geq \ee_0.$$
Let $N$ be the largest $i$ such that $s_i\leq K$. Then
$$N\ee_0\leq \int_0^{s_N}\,P_g(t)\,dt\leq \int_0^K\,P_g(t)\,dt, $$
which implies that $N\leq \frac 1{\ee_0}\int_0^K\,P_g(t)\,dt$. It follows that for any
$t\in [0, K]$,
$$Q(t)\leq 2^{N+1}\leq 2^{\frac 1{\ee_0}\int_0^K\,P_g(t)\,dt+1}.$$
Then (\ref{eqn:HG24_1}) follows trivailly and we finish the proof.
\end{proof}

Since the Riemannian curvature tensor blows up at the singular time along the
Calabi flow,   Proposition~\ref{prn:HG24_1} directly implies the following result.

\begin{corollary}
\label{cor:001}

Suppose $\{(M^n, g(t)), -1 \leq t <0\}$ is a Calabi flow solution.
If $t=0$ is the singular time, then we have
$$\int_{-1}^0\,P_g(t)\,dt=\infty.$$
In particular,  $P_g(t)$ will blow up at the singular time $t=0$:
$$\lim_{t\ri 0} P_g(t)=\infty.$$

\end{corollary}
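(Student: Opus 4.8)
The plan is to deduce Corollary~\ref{cor:001} from a single application of Proposition~\ref{prn:HG24_1}; the only delicate point is that the normalization $Q_g(0)=1$, $Q_g\le 2$ on $[-1,0]$ demanded there is not available a priori along a singular flow, and must be manufactured by a parabolic rescaling centred at a first-hitting time of the curvature.

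First I would use that the curvature blows up at the singular time (as recalled just before Proposition~\ref{prn:HG24_1}, via Theorem~\ref{thm:GA04_JS} and Chen--He short-time existence), so $Q_g$ is unbounded on $[-1,0)$. Fix a number $v>\max\{1,Q_g(-1)\}$ and set $t_v:=\inf\{t\in[-1,0)\ :\ Q_g(t)=v\}$. By continuity $Q_g(t_v)=v$, $Q_g<v$ on $[-1,t_v)$, and $t_v\in(-1,0)$; moreover $t_v\to 0$ as $v\to\infty$, because $Q_g$ is bounded on every $[-1,-\eta]$. Consider the Calabi flow $g_v(x,s):=v\,g(x,\,v^{-2}s+t_v)$. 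For all large $v$ one has $v^{-2}\le 1+t_v$, so $g_v$ is defined on $[-1,K_v)$ with $K_v:=v^2|t_v|>0$ and is singular at $s=K_v$. Since $Q_{g_v}(s)=v^{-1}Q_g(v^{-2}s+t_v)$, we get $Q_{g_v}(0)=1$, and for $s\in[-1,0]$ the corresponding original times lie in $[t_v-v^{-2},t_v]\subset[-1,t_v]$ where $Q_g\le v$, hence $Q_{g_v}(s)\le 1$. Thus $g_v$ satisfies the hypotheses of Proposition~\ref{prn:HG24_1} on $[-1,K]$ for every $K\in[0,K_v)$.

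Then I would apply Proposition~\ref{prn:HG24_1} to $g_v$ and let $K\uparrow K_v$. Rearranged, the estimate reads $\int_0^K P_{g_v}(s)\,ds>\epsilon_0\bigl(\log_2 Q_{g_v}(K)-1\bigr)$. Since $g_v$ is singular at $K_v$, $Q_{g_v}$ is unbounded on $[-1,K_v)$, and because $K\mapsto\int_0^K P_{g_v}$ is nondecreasing we conclude $\int_0^{K_v}P_{g_v}(s)\,ds=+\infty$. Under the rescaling $P_{g_v}(s)=v^{-2}P_g(v^{-2}s+t_v)$ while $ds=v^2\,dt$, so the substitution $s=v^2(t-t_v)$ gives $\int_{t_v}^0 P_g(t)\,dt=+\infty$; as $t_v\in(-1,0)$ this forces $\int_{-1}^0 P_g(t)\,dt=+\infty$, which is the first assertion.

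For the pointwise statement I would run the same construction between the consecutive levels $v$ and $2v$ --- equivalently, apply Lemma~\ref{lem002} to $g_v$ --- to get $\int_{t_v}^{t_{2v}}P_g(t)\,dt\ge\epsilon_0$ over the interval $[t_v,t_{2v}]$, whose length tends to $0$ as $v\to\infty$. Hence $\sup_{[t_v,t_{2v}]}P_g\to\infty$, and since $t_v,t_{2v}\to 0$ this already gives $\limsup_{t\to 0}P_g(t)=+\infty$; the remaining upgrade to $\lim_{t\to 0}P_g(t)=+\infty$ is obtained by rescaling at a hypothetical sequence $\tau_i\to 0$ with $P_g(\tau_i)$ bounded and feeding this into the interior derivative estimates of Section~\ref{sec:regularity}. \emph{The main obstacle} is precisely the normalization step: Proposition~\ref{prn:HG24_1} becomes applicable only after rescaling about a first-hitting time $t_v$, and this is legitimate only because the singularity sits at $t=0$, so $t_v$ can be taken close enough to $0$ that the rescaled backward unit window $[-1,0]$ still lies inside the existence interval of the flow.
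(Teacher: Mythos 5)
Your argument is correct and is essentially the paper's proof: the paper derives the corollary from Proposition~\ref{prn:HG24_1} in one line, and your rescaling at a first-hitting time $t_v$ (together with the scale invariance of $\int P\,dt$ under parabolic rescaling) is exactly the normalization that the paper leaves implicit. The only caveat concerns the final assertion $\lim_{t\ri 0}P_g(t)=\infty$: what your argument actually yields is $\limsup_{t\ri 0}P_g(t)=\infty$, since a divergent integral of a function that is bounded on compact subintervals of $[-1,0)$ does not by itself exclude a sequence of times approaching $0$ on which $P_g$ stays bounded, and your proposed ``upgrade'' via interior estimates is not carried out. This, however, matches the paper, which offers no argument for the pointwise statement either and whose refined version, Theorem~\ref{thmin:5}, is likewise phrased only in terms of $\limsup$.
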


Next, we would like to estimate  $Q(t)$
near the singular time of the Calabi flow. Analogous results for
Ricci flow are proved by the maximum principle (cf. for example, Lemma 8.7 of \cite{[Ben]}).
Here we  show similar results for the Calabi flow by using the higher order curvature estimates.

\begin{lemma}\label{lem003}
There exists a constant $\dd_0=\dd_0(n)>0$ with the following properties.

Suppose $\{(M^n, g(t)), -1 \leq t \leq 0\}$ is a Calabi flow solution
and $t=0$ is the singular time.  Then
\beq \limsup_{t\ri 0}Q(t)\sqrt{-t}\geq \dd_0. \label{eq201}\eeq

\end{lemma}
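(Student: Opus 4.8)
The plan is to argue by contradiction, rescaling at times approaching the singularity and extracting a smooth limit flow that cannot exist. Suppose the conclusion fails: then for every $\delta_0>0$ there is a time $\tau<0$ close to $0$ with $Q(t)\sqrt{-t}<\delta_0$ for all $t\in[\tau,0)$. The first step is to convert this into a statement about the curvature scale $F_g(t)$ of Definition~\ref{RS}. Indeed $Q(t)^2<\delta_0^2/(-t)$ on $[\tau,0)$; feeding this into the definition of $F_g$ and integrating, one gets a lower bound of the form $F_g(t)\geq c(\delta_0)\,(-t)$ for $t$ near $0$, where $c(\delta_0)\to\infty$ as $\delta_0\to 0$. (One must be slightly careful because the sup in the definition of $F_g$ looks backward in time, so one uses that $-t$ is comparable to $-s$ on the relevant intervals, exactly as in the normalization~(\ref{eq:nor}).) This already shows $F_g(t)/(-t)\to\infty$ as $t\to 0$ once $\delta_0$ is taken small.

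The second step is to contradict this blowup of $F_g(t)/|t|$ using the differential inequality for the curvature scale, Lemma~\ref{cor:802}. Pick a sequence $t_i\uparrow 0$ realizing the worst behavior and rescale by $A_i^2=1/F_g(t_i)$ as in~(\ref{eq:nor}), producing Calabi flows $g_i(s)$ on a time interval containing $[-2,K_i]$ with $F_{g_i}(0)=1$. By Lemma~\ref{cor:802}, at $s=0$ we have $\frac{d^-}{ds}F_{g_i}\geq -C\min\{1,\dots\}\geq -C$, and more usefully the curvature scale cannot have grown too fast going backward: integrating $\frac{d^-}{ds}F_{g_i}\geq -C$ from $-2$ to $0$ shows $F_{g_i}(s)$ stays bounded above on $[-2,0]$, hence $Q_{g_i}$ stays bounded below away from $0$ on some earlier subinterval, and in particular there is a time in $[-1,0]$ where $Q_{g_i}=1$ (this is built into the definition of $F$). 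The point of the contradiction is quantitative: the hypothesis $Q(t)\sqrt{-t}<\delta_0$ forces, after rescaling, that the original singular time $t=0$ corresponds to a rescaled time $s_i^{\mathrm{sing}}=A_i^2(0-t_i)$ which is comparable to $F_g(t_i)^{-1}(-t_i)\to\infty$, so the rescaled flows $g_i$ exist for a definite amount of forward time that tends to infinity, with $F_{g_i}(0)=1$. Applying Theorem~\ref{thm:GA04_JS} (curvature scale $1$ gives all higher derivative bounds on, say, $[-1,0]$) and Lemma~\ref{lem:map} to build uniform biholomorphic charts, we pass to a Cheeger--Gromov--$C^\infty$ limit $(\tilde B,\tilde h_\infty(s))$, $s\in[-1,0]$, a nontrivial Calabi flow with $\sup|Rm|_{\tilde h_\infty(0)}\leq 1$ and $\sup_{[-1,0]}|Rm|\leq 1$ (from $F_{g_i}(0)=1$), and with $Q_{\tilde h_\infty}=1$ at some time in $[-1,0]$ so the limit is genuinely nonflat.

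The third step extracts the contradiction from the existence of this limit. Because the rescaled existence times $s_i^{\mathrm{sing}}\to\infty$, but the curvature scale at $s=0$ is pinned at $1$, one gets that along the sequence the curvature scale is essentially constant $\approx 1$ on longer and longer time intervals before the (rescaled) singular time — this is where Lemma~\ref{lem:800} and Lemma~\ref{lem:801} enter: they say $\frac{d^-}{ds}F_g\geq\min\{0,-2\frac{d^+}{ds}Q_g\}$ and $|dQ_g/ds|\leq C$, so on a bounded-curvature-scale region $F_g$ cannot drop to $0$ in finite rescaled time, contradicting that the rescaled flow $g_i$ becomes singular at the finite time $s_i^{\mathrm{sing}}$ which, while large, is finite; more precisely $F_{g_i}(s_i^{\mathrm{sing}})=0$ while $F_{g_i}(0)=1$ and $|\frac{d^-}{ds}F_{g_i}|$ is controlled, forcing $s_i^{\mathrm{sing}}$ to be bounded below by a definite constant — which is fine — but combined with the upper control we find $F_{g_i}$ stays near $1$, so $Q_{g_i}$ stays near $1$, so the original $Q(t)$ near the singular time satisfies $Q(t)\approx F_g(t_i)^{-1/2}$, and plugging back the forced relation $F_g(t_i)\approx c(\delta_0)(-t_i)$ gives $Q(t_i)\sqrt{-t_i}\gtrsim c(\delta_0)^{-1/2}$, which for $\delta_0$ small enough exceeds $\delta_0$ — the desired contradiction, pinning down $\delta_0=\delta_0(n)$.

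The main obstacle I anticipate is the bookkeeping in the second and third steps: keeping careful track of how the hypothesis $Q(t)\sqrt{-t}<\delta_0$ translates, under the scaling $A_i^2=1/F_g(t_i)$, into both a lower bound on the rescaled existence time and an upper bound on the rescaled curvature, and making sure these two are genuinely incompatible for small $\delta_0$. The Dini-derivative estimates of Lemma~\ref{lem:800} and Lemma~\ref{cor:802} are exactly designed for this, but one has to apply them along the whole backward interval, not just at one instant, and justify passing them to the Cheeger--Gromov limit. The Calabi-flow-specific input (fourth order, via Theorem~\ref{thm:GA04_JS} and Lemma~\ref{lem:map}) is what makes the compactness step work; the rest is a scaling argument modeled on the Ricci flow case (cf. Lemma 8.7 of~\cite{[Ben]}).
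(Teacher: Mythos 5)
There is a genuine gap: the key inequality in your second step points the wrong way, and the contradiction you build on it does not close. From the hypothesis $Q(t)\sqrt{-t}<\delta_0$ you correctly deduce $F_g(t)\geq |t|/\delta_0^{2}$, but then the rescaled time to the singularity is
$s_i^{\mathrm{sing}}=A_i^2|t_i|=|t_i|/F_g(t_i)\leq \delta_0^{2}$,
which is \emph{small}, not tending to infinity as you assert. The whole third step therefore collapses: the rescaled flows do not exist for longer and longer forward time, the Cheeger--Gromov limit on a long forward interval is unavailable, and your closing computation in fact yields $Q(t_i)\sqrt{-t_i}\gtrsim c(\delta_0)^{-1/2}=\delta_0$, which saturates rather than contradicts the hypothesis (and it uses $F_g(t_i)\approx c(\delta_0)|t_i|$ where only a one-sided bound was established). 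The one observation that would finish the proof --- that $F_{g_i}(0)=1$ forces the rescaled singular time to be bounded below by a definite dimensional constant, while $s_i^{\mathrm{sing}}\leq\delta_0^2$ can be made smaller than that constant --- is exactly the point you wave away with ``which is fine.''

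The paper's proof is precisely that last point, executed without the curvature scale. By Theorem~\ref{thm:GA04_JS}, if $Q\leq 1$ on a backward unit interval then $\left|\partial_t|\Rm|\right|\leq\delta_1(n)$ at the endpoint; hence a flow defined on $[-1,0)$ and singular at $t=0$ must already satisfy $\sup_{[-1,-1/(2\delta_1)]}Q\geq 1/2$, since otherwise $Q$ cannot climb from below $1/2$ to $1$ within time $1/(2\delta_1)$ and bounded curvature precludes the singularity. Rescaling so that a time $s_0$ near $0$ with $Q(s_0)=\max_{[-1,s_0]}Q$ is sent to $-1/(2\delta_1)$ then gives $Q(s_0)\sqrt{-s_0}\geq \frac{1}{2\sqrt{2\delta_1}}$ along a sequence $s_0\to 0$. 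If you prefer to keep your curvature-scale formulation, replace your steps two and three by: rescale by $A_i^2=1/F_g(t_i)$, note that $F_{g_i}(0)=1$ gives $Q_{g_i}\leq 1$ on $[-1,0]$ while the rescaled singular time is at most $\delta_0^2$, and invoke a lower bound $c(n)>0$ on the singular time of such a flow (via the time-derivative bound of Theorem~\ref{thm:GA04_JS}, or Lemma~\ref{lem002} together with the scale-invariant bound on $P$); then $\delta_0<\sqrt{c(n)}$ yields the contradiction. No compactness or limit-flow argument is needed.
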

\begin{proof}By Theorem~\ref{thm:GA04_JS}, we can find a constant
$\dd_1(n)>1$ satisfying the following
properties. If $\{(M, g(t)), -1 \leq t \leq -\frac12\}$ is a Calabi flow solution
with $|\Rm|(t)\leq 1$ for $t\in [-1, -\frac 12]$, then
\beq
 \left| \pd{}t|\Rm| \right|_{t=-\frac 12}\leq \dd_1. \label{eq609}
\eeq
We claim that under the assumption of Lemma~\ref{lem003}, we have
\beq
\sup_{[-1, -\frac 1{2\dd_1}]}Q(t)\geq \frac 12.\label{ee200}
\eeq
Otherwise, we have
$
\displaystyle \sup_{[-1, -\frac 1{2\dd_1}]}Q(t)<\frac 12.
$
We choose $t_0\in [-1, 0)$ the first time such that $Q(t_0)=1$. Clearly,
 $t_0>-\frac 1{2\dd_1}.$  Since
$$Q(t_0) \leq Q\left(-\frac 1{2\dd_1} \right)+\sup_{M\times[ -\frac 1{2\dd_1}, t_0]}
\left|\pd {}t|\Rm|\right| \cdot \left(t_0+\frac 1{2\dd_1} \right),$$
we have
\beq 1< \frac 12+\dd_1 \left(t_0+\frac 1{2\dd_1} \right) \leq \frac 12+\dd_1\cdot \frac 1{2\dd_1}\leq 1, \label{eq:100}\eeq
which is a contradiction. Here we used (\ref{eq609}) in the first inequality of (\ref{eq:100}). The inequality
(\ref{ee200}) is proved.

Now we estimate $Q(s_0)$ for any  $s_0\in (-1, 0).$ Since $t=0$ is the singular time,
we can assume that $Q(t)\leq Q(s_0)$ for all $t\in [-1, s_0]$.  Rescale the
metric by
$$\td g(x, t)=Q \,g\left(x, \frac t{Q^2} \right),\quad t\in [-Q^2,  0).$$
Choose $Q$ such that $s_0Q^2=-\frac 1{2\dd_1}$. If $|s_0|\leq \frac 1{2\dd_1}$, then we have
$Q\geq 1$ and $\{(M, \tilde{g}(t)), -1 \leq t <0\}$ is a Calabi flow solution with the singular time $t=0.$ Thus, by
(\ref{ee200}) we have
$$Q_{\td g} \left(-\frac 1{2\dd_1} \right) \geq \frac 12,$$
which is equivalent to say
$$Q(s_0)\sqrt{-s_0}\geq \frac {1}{2\sqrt{2\dd_1}},\quad \forall\; s_0\in  \left[ -\frac 1{2\dd_1}, 0 \right).$$
The lemma is proved.
\end{proof}

The next result gives an upper bound of $Q$ near the singular time with the assumption on $P$.

\begin{lemma}\label{lem004}
Suppose that $\{(M^n, g(t)), -1 \leq t <0\}$ is a Calabi flow with singular time $t=0$.
If
\beq \limsup_{t\ri
0}P(t)|t|=C<+\infty, \label{eq:C001}
\eeq
then we have \beq Q(t)=o\left(|t|^{-\la}\right) \eeq for any constant $\la>\frac {C\log 2}{\ee_0}.$
Here $\ee_0$ is the
constant in Lemma \ref{lem002}.
\end{lemma}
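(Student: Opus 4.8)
The plan is to convert the hypothesis $\limsup_{t\to 0}P(t)|t|=C$ into a logarithmic bound on the number of times $Q$ can double as $t\uparrow 0$, and then invoke the doubling mechanism behind Proposition~\ref{prn:HG24_1} on a short interval $[-\delta,0)$ where the bound on $P$ is available. First I would fix $\lambda>\frac{C\log 2}{\epsilon_0}$, choose $C'\in\left(C,\tfrac{\lambda\epsilon_0}{\log 2}\right)$, and set $\mu:=\frac{C'\log 2}{\epsilon_0}<\lambda$; since $Q(t)|t|^{\lambda}=\bigl(Q(t)|t|^{\mu}\bigr)|t|^{\lambda-\mu}$, it suffices to show that $Q(t)|t|^{\mu}$ stays bounded as $t\to 0^-$. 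Using the hypothesis I would pick $\delta\in\left(0,\tfrac12\right)$ with $P(t)|t|<C'$ for all $t\in[-\delta,0)$, and, since $Q$ is bounded on the compact interval $[-1,-\delta]$, take the least integer $i_0\ge 1$ with $2^{i_0}>\sup_{[-1,-\delta]}Q$.

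Next I would fix an arbitrary $t_0\in[-\delta,0)$ and let $N$ be the largest integer with $2^{N}\le Q(t_0)$; if $N<i_0$ then $Q(t_0)|t_0|^{\mu}<2^{i_0}\delta^{\mu}$ and we are done, so assume $N\ge i_0$. For $i_0\le i\le N$ let $s_i\in[-\delta,t_0]$ be the first time with $Q(s_i)=2^{i}$. These are increasing, and by the choice of $i_0$ the value $2^{i}$ is not attained on $[-1,s_i)$, so each $s_i$ is a record time of $Q$ on $[-1,s_i]$. Rescaling the flow around $s_i$ by $\hat g_i(x,t)=2^{i}\,g\bigl(x,s_i+2^{-2i}t\bigr)$ --- the same normalization as in the proof of Proposition~\ref{prn:HG24_1} --- produces a Calabi flow with $Q_{\hat g_i}(0)=1$ and $Q_{\hat g_i}\le 1$ on $[-1,0]$, so Lemma~\ref{lem002} applies. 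Because $\int P\,dt$ is scaling invariant and the first time $Q_{\hat g_i}$ leaves $[\tfrac12,2]$ is no later than the rescaled time at which $Q_g$ reaches $2^{i+1}$, this yields $\int_{s_i}^{s_{i+1}}P_g(t)\,dt\ge\epsilon_0$ for $i_0\le i\le N-1$.

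Summing these and using $\int_{-\delta}^{t_0}P_g(t)\,dt\le\int_{-\delta}^{t_0}\frac{C'}{|t|}\,dt=C'\log\frac{\delta}{|t_0|}$ gives $(N-i_0)\epsilon_0\le C'\log\frac{\delta}{|t_0|}$, hence
\[
Q(t_0)<2^{N+1}\le 2^{i_0+1}\cdot 2^{\frac{C'}{\epsilon_0}\log\frac{\delta}{|t_0|}}=2^{i_0+1}\delta^{\mu}\,|t_0|^{-\mu}.
\]
Thus $Q(t_0)|t_0|^{\mu}\le 2^{i_0+1}\delta^{\mu}$ for all $t_0\in[-\delta,0)$, which is the boundedness we reduced to, so $Q(t)=o\bigl(|t|^{-\lambda}\bigr)$.

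The points needing care are routine rather than substantial: one must check that the rescaled windows $[s_i-2^{-2i},s_i]$ stay inside the existence interval $[-1,0)$ of the flow, which follows from $\delta<\tfrac12$ and $i_0\ge 1$ (so $2^{-2i}\le\tfrac14$); verify that Lemma~\ref{lem002} genuinely applies after rescaling, i.e.\ that $Q_{\hat g_i}$ does exit $[\tfrac12,2]$ within the rescaled time domain and does so before the rescaled $s_{i+1}$; and recall that, with space factor $2^{i}$ and time factor $2^{2i}$, the integral $\int P\,dt$ is scale invariant, exactly as used in Proposition~\ref{prn:HG24_1}. I do not anticipate a real obstacle; the only delicate bookkeeping is to keep every interval inside $[-\delta,0)$, where the hypothesis on $P$ holds, the behaviour of $Q$ on $[-1,-\delta]$ entering only through the harmless constant $2^{i_0+1}\delta^{\mu}$.
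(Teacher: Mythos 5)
Your proof is correct and follows essentially the same route as the paper's: both arguments rescale the flow at the successive doubling times of $Q$, apply Lemma~\ref{lem002} to get $\int_{s_i}^{s_{i+1}}P\,dt\geq \ee_0$, and combine this with the hypothesis $P(t)|t|<C'$ near $t=0$. The only difference is bookkeeping — you bound the number of doublings $N$ on $[-\delta,t_0]$ directly to get $Q(t)=O(|t|^{-\mu})$ with $\mu<\la$, whereas the paper iterates the ratio bound $|s_{i+1}|/|s_i|\leq e^{-\ee_0/(C+\dd)}$ — and both versions are sound.
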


\begin{proof}
Since $t=0$ is the singular time,  for any $\dd>0$
we can choose $t_0=t_0(g, \dd)$ such that the following properties hold:
\begin{align}
&P(t)|t|<C+\dd,\quad \forall \; t\in [t_0, 0), \label{eq:C002}\\
&Q(t) \leq  Q(t_0), \quad \forall\;t\in [-1, t_0], \label{eq:C003}\\
&Q^2(t_0)|1+t_0| \geq 1.\label{eq:C004}
\end{align}
Define
 $s_i=\inf \left\{t\;|\; t\geq t_0,\; Q(t)=2^i Q(t_0) \right\}$ and we rescale the metrics by
 $$h_i(x, t)=Q(s_i)g \left(x, \frac {t}{Q^2(s_i)}+s_i \right),\quad t\in \Big[-(1+s_i)Q^2(s_i),
Q^2(s_i)|s_i|\Big).$$ Then by (\ref{eq:C003}) and (\ref{eq:C004}) the flow $\{(M, h_i(t)), -1 \leq t \leq 0\}$ satisfies
$$Q_{h_i}(0)=1,\quad Q_{h_i}(t)\leq 1, \quad \forall\; t\in [-1, 0],$$
and
 $Q_{h_i}\Big(Q^2(s_i)|s_{i+1}-s_i|\Big)=2. $
By Lemma \ref{lem002}, we have
\beq \int_{s_i}^{s_{i+1}}\,P\,dt=\int_0^{Q^2(s_i)|s_{i+1}-s_i|}\,
P_{h_i}(t)\,dt\geq \ee_0. \label{eq600}\eeq
On the other hand, by (\ref{eq:C002}) we have
\beq \int_{s_i}^{s_{i+1}}\,P\,dt\leq (C+\dd)\log \frac
{|s_i|}{|s_{i+1}|}. \label{eq601}\eeq
Combining the inequalities (\ref{eq600}) with (\ref{eq601}), we have
$$\frac {|s_{i+1}|}{|s_i|}\leq e^{-\frac {\ee_0}{C+\dd}}.$$
After iteration, we get the inequality
\beq |s_i|\leq |s_0|e^{-\frac {\ee_0 i}{C+\dd}}=|t_0|e^{-\frac {\ee_0
i}{C+\dd}}. \label{eq:101}\eeq
Combining (\ref{eq:101}) with the definition of $s_i$ gives the result
$$\lim_{i\ri +\infty}Q(s_i)|s_i|^{\la}\leq \lim_{i\ri
+\infty}Q(t_0)|t_0|^{\la}
\Big(2e^{-\frac {\la\ee_0}{C+\dd}}\Big)^i=0,$$
where we choose $\la$ such that $2e^{-\frac {\la\ee_0}{C+\dd}}<1.$
Thus, for any $t\in [s_i, s_{i+1}]$ we have
$$Q(t)|t|^{\la}\leq Q(s_{i+1})|s_i|^{\la}=2Q(s_{i})|s_i|^{\la}\ri 0$$
as $i\ri +\infty.$ The lemma is proved.
\end{proof}

We now prove Theorem~\ref{thmin:5}.

\begin{proof}[Proof of Theorem~\ref{thmin:5}]
It is clear that (\ref{eqn:HG22_3}) follows from the combination of inequality (\ref{eqn:HG22_2}) and the definition of type-I singularity(c.f.~\cite{[LZ]}), i.e., $\displaystyle \limsup_{t \to 0} Q^2|t|<\infty$.
Therefore, we only need to show (\ref{eqn:HG22_1}) and (\ref{eqn:HG22_2}), which will be dealt with
separately.

\textit{1. Proof of inequality (\ref{eqn:HG22_1}):}

If $\displaystyle \limsup_{t\ri 0}P|t|=C\geq 0$, by Lemma
\ref{lem003} and Lemma \ref{lem004} we have
$$0=\limsup_{t\ri 0}Q(t)|t|^{\frac {(C+\dd)\log 2}{\ee_0}}
\geq \dd_1|t|^{-\frac 12+\frac {(C+\dd)\log 2}{\ee_0}},$$ where
$\dd>0$ is any small constant. It follows that
$$C\geq \frac {\ee_0}{2\log 2}.$$
Thus, (\ref{eqn:HG22_1}) is proved.

\textit{2. Proof of inequality (\ref{eqn:HG22_2}):}

    Since $0$ is the singular time,  we have
   $\displaystyle  \lim_{t \to 0} F(t)=0$.   Thus, $F(t)\in (0, 1)$ when $t$ is close to $0$.  It follows  that
    \begin{align}
     -F(t)= \lim_{s\ri 0}F(s)-F(t) \geq -C|t|,  \Rightarrow F(t) \leq C|t|,  \label{eq:C005}
    \end{align}
 where we used the inequality $\frac {d^-}{dt}F(t)\geq -C$ by Lemma \ref{cor:802}.
On the other hand, by Lemma \ref{cor:802} again, we have
$$ \frac{d^-}{dt}F \geq - C \,O^{\al}Q^{2-\al} F.  $$
Therefore, for any small $\dd>0$ we have
 \begin{align*}
      \log F(-\delta)-\log F(-T)\geq -C \int_{-T}^{-\delta} O^{\al}Q^{2-\al}\,dt,
    \end{align*} where $C=C(\al, n)$ is a constant.
It follows that
    \begin{align*}
        \int_{-T}^{-\delta}\,O^{\al}Q^{2-\al}\,dt \geq \frac{1}{C} \log F(-T) -\frac{1}{C} \log F(-\delta)
        \geq \frac 1C \log \frac {F(-T)}{C}-\frac 1C\log \delta,
    \end{align*}
 where we used (\ref{eq:C005}).
 Suppose $\displaystyle \limsup_{t\ri 0}\,O^{\al}Q^{2-\al}|t|<A$.  Then  we have
\beq \frac 1C \log \frac {F(-T)}{C}-\frac 1C\log \dd\leq -A\log
\dd+A\log T.\label{eq708}\eeq
for every small $\delta$. It forces that $A \geq \frac{1}{C}$.
Replace $A$ by $\displaystyle \limsup_{t\ri 0}\,O^{\al}Q^{2-\al}|t| +\epsilon$ and let $\epsilon \to 0$.
Then we have
$$\limsup_{t\ri 0}\,O^{\al}Q^{2-\al}|t|\geq\frac 1C.$$
Therefore, (\ref{eqn:HG22_2}) is proved.
\end{proof}


\subsection{Further study}

 The methods and results in this paper can be generalized in the following ways.

 1. The method we developed in this paper reduces the convergence of the flow to three important steps: uniqueness of critical metrics, regularity improvement, and good behavior of some functional along the flow.
  Theorem~\ref{thmin:2} and Theorem~\ref{thmin:4} can be proved for minimizing extK metrics in a general class, assuming a uniqueness theorem of minimizing extK metric
  in a fixed $\mathcal{G}^{\C}$-leaf's $C^{\infty}$-closure, or a generalization of Theorem 1.3 of~\cite{[ChenSun]}.  This will be discussed in a subsequent paper.

 2. The fourth possibility of Donaldson's conjectural picture seems to be extremely difficult.
   By the example of G.Sz{\'e}kelyhidi, the flow singularity at time infinity could be very complicated.
   However, if we assume the underlying K\"ahler class to be $c_1(M,J)$ which has definite sign or zero,
   then the limit should be a normal variety and could be used to construct a destabilizing test configuration.

 3. Our deformation method  could be applied to a more general situation.  In section 3, we deformed the complex structures and the metrics within a given K\"ahler class.
   Actually, even the underlying K\"ahler classes can be deformed.   The general deformations will be discussed in a separate paper.

\vspace{0.5in}
Haozhao Li,  Department of Mathematics, University of Science and Technology
of China, Hefei, 230026, Anhui province, China and Wu Wen-Tsun Key
Laboratory of Mathematics, USTC, Chinese Academy of Sciences, Hefei
230026, Anhui,  China;  hzli@ustc.edu.cn.\\

Bing  Wang, Department of Mathematics, University of Wisconsin-Madison,
Madison, WI 53706, USA;  bwang@math.wisc.edu.\\

Kai Zheng,  Institut f\"ur Differentialgeometrie,
Leibniz Universit\"at Hannover, Welfengarten 1, 30167 Hannover,
Germany, zheng@math.uni-hannover.de. \\

\end{document}